\newtheorem{thm}{Theorem}[section]
\newtheorem{prop}[thm]{Proposition}
\newtheorem{lem}[thm]{Lemma}
\newtheorem{cor}[thm]{Corollary}
\theoremstyle{definition}
\newtheorem{defn}{Definition}[section]
\newtheorem{eg}{Example}[section]
\newtheorem{rmk}[eg]{Remark}
\newcommand{\R}{\mathbb{R}}
\newcommand{\Z}{\mathbb{Z}}
\newcommand{\C}{\mathbb{C}}
\newcommand{\Q}{\mathbb{Q}}
\newcommand{\CP}[1]{\mathbb{CP}^{#1}}
\newcommand{\RP}[1]{\mathbb{RP}^{#1}}
\newcommand{\To}{\rightarrow}
\newcommand{\del}[2]{\frac{\partial #1}{\partial #2}}
\newcommand{\Log}{\mathrm{Log}}
\newcommand{\Arg}{\mathrm{Arg}}
\newcommand{\wedgestar}{\Lambda \!^*}
\def\cG{\mathcal{G}}
\def\id{\mathrm{id}}
\def\G{\mathbb{G}}
\def\fuk{\mathcal{F}uk}
\def\l{\ell}
\def\r{\mathrm{r}}
\def\Hom{\mathrm{Hom}}
\def\cF{\mathcal{F}}
\def\cP{\mathcal{P}}
\def\jmu{i}
\def\Ts{\mathsf{T}}
\date{}
\begin{document}
\title{On the Homological Mirror Symmetry conjecture for pairs of pants}
\author{Nick Sheridan}

\begin{abstract}
The $n$-dimensional pair of pants is defined to be the complement of $n+2$ generic hyperplanes in $\CP{n}$. 
We construct an immersed Lagrangian sphere in the pair of pants and compute its endomorphism $A_{\infty}$ algebra in the Fukaya category. On the level of cohomology, it is an exterior algebra with $n+2$ generators. It is not formal, and we compute certain higher products in order to determine it up to quasi-isomorphism. This allows us to give some evidence for the Homological Mirror Symmetry conjecture: the pair of pants is conjectured to be mirror to the Landau-Ginzburg model $(\C^{n+2},W)$, where $W = z_1 ... z_{n+2}$. We show that the endomorphism $A_{\infty}$ algebra of our Lagrangian is quasi-isomorphic to the endomorphism dg algebra of the structure sheaf of the origin in the mirror. This implies similar results for finite covers of the pair of pants, in particular for certain affine Fermat hypersurfaces.
\end{abstract}

\thanks{This research was partially supported by NSF Grant DMS-0652620.}

\maketitle

\tableofcontents

\section{Introduction}

\subsection{Homological Mirror Symmetry context}

In its original version, Kontsevich's Homological Mirror Symmetry conjecture \cite{kontsevich94} proposed that, if $X$ and $X^{\vee}$ are `mirror' Calabi-Yau varieties, then the Fukaya category of $X$ ($A$-model) should be equivalent, on the derived level, to the category of coherent sheaves on $X^{\vee}$ ($B$-model), and vice-versa.
Complete or partial results in this case are known for elliptic curves \cite{polzas98,polishchuk00}, abelian varieties \cite{fukaya02} (see \cite{absmith10} for the case of the four-torus), Strominger-Yau-Zaslow dual torus fibrations \cite{kontsoib01}, and K3 surfaces \cite{seidel03}.
One aim of this work is to generalize the arguments of \cite{seidel03} to the Fermat hypersurface in a projective space of arbitrary dimension -- we obtain a partial result in Theorem \ref{thm:mirrsymfermat}.

Kontsevich later proposed an extension of the conjecture to cover some Fano varieties \cite{kontsevich98}. 
The mirror of a Fano variety $X$ is a Landau-Ginzburg model $(X^{\vee},W)$, i.e., a variety $X^{\vee}$ equipped with a holomorphic function $W$ (called the superpotential). The definitions of the $A$- and $B$-models on $X$ are (roughly) the same as in the Calabi-Yau case, but the definitions on $(X^{\vee},W)$ must be altered. 
In particular, the $A$-model of $(X^{\vee},W)$ is the Fukaya-Seidel category, see \cite{seidel08}. 
The $B$-model of $(X^{\vee},W)$ is Orlov's triangulated category of singularities of $W$, see \cite{orlov04}.
Complete or partial results in the Fano case are known for toric varieties \cite{abouzaid06,abouzaid09,fltz08}, del Pezzo surfaces \cite{ako06}, and weighted projective planes \cite{ako08}.

More recently, Katzarkov and others have proposed another extension of the conjecture to cover some varieties of general type, see  \cite{katzarkov05,katzarkov10}.
The mirror of a variety $X$ of general type is again a Landau-Ginzburg model $(X^{\vee},W)$.
The definition of the $B$-model on $(X^{\vee},W)$ is as above (the definition of the $A$-model in this case is problematic, but does not concern us).
One direction of this conjecture has been verified for $X$ a curve of genus $g \ge 2$, see \cite{seidel08,efimov09}. 
Namely, the $A$-model of the genus $g$ curve is shown to be equivalent to the $B$-model of a Landau-Ginzburg mirror.
Our main result (Theorem \ref{thm:mirrsym}) gives evidence for the same direction of the conjecture in the case that $X$ is a `pair of pants' of arbitrary dimension.

\subsection{The $A$-model on the pair of pants}

Consider the smooth complex affine algebraic variety
\[\left\{\sum_{j=1}^{n+2} z_j = 0\right\} \subset \CP{n+1} \backslash \bigcup_{j=1}^{n+2} \{z_j = 0 \}.\]  
This is called the ($n$-dimensional) {\bf pair of pants} $\mathcal{P}^n$ (see \cite{mikhalkin04}). 
We equip it with an exact K\"{a}hler form by pulling back the Fubini-Study form on $\CP{n+1}$, and with a complex volume form $\eta$.
Observe that $\mathcal{P}^1$ is just $\CP{1} \setminus \{ \mbox{3 points}\}$, i.e., the standard pair of pants. 

We will consider the $A$-model on $\mathcal{P}^n$, i.e., Fukaya's $A_{\infty}$ category $\mathcal{F}uk(\mathcal{P}^n)$ (see \cite{fukaya93,fooo}). 
Recall that the objects of $\mathcal{F}uk(\mathcal{P}^n)$ are compact oriented Lagrangian submanifolds of $\mathcal{P}^n$, and the morphism space between transversely intersecting Lagrangians $L_1,L_2$ is defined as
\[ CF^*(L_1,L_2) := \bigoplus_{x \in L_1 \cap L_2} \mathbb{K}\langle x \rangle,\]
where $\mathbb{K}$ is an appropriate coefficient ring.
The $A_{\infty}$ structure maps are 
\[ \mu^d: CF^*(L_{d-1},L_d) \otimes \ldots \otimes CF^*(L_0,L_1) \To CF^*(L_0,L_d)[2-d],\]
for $d \ge 1$,
and their coefficients are defined by counts of rigid boundary-punctured holomorphic disks with boundary conditions on the Lagrangians $L_0, \ldots, L_d$.
Observe that, because the symplectic form on $\mathcal{P}^n$ is exact, the Fukaya category of exact Lagrangians is unobstructed (i.e., there is no $\mu^0$).

In general, $\mathbb{K}$ must be a Novikov field of characteristic $2$, and the morphism spaces of the  Fukaya category are $\Z_2$-graded.
If we require that the objects of our category be exact embedded Lagrangians, we remove the need for a Novikov parameter. 
If we furthermore require that our Lagrangians come equipped with a `brane' structure (a grading relative to the volume form $\eta$, and a spin structure), we can assign signs to the rigid disks whose count defines a structure coefficient of the Fukaya category, and therefore remove the need for our coefficient ring to have characteristic $2$.
The grading of Lagrangians also allows us to define a $\Z$-grading on the morphism spaces of the Fukaya category.
Thus, by restricting the objects of the Fukaya category to be exact Lagrangian branes, we can define the category with coefficients in $\C$, and with a $\Z$-grading.
For more details, see \cite{fooo} or \cite{seidel08}.

We construct an exact immersed Lagrangian sphere $L^n: S^n \To \mathcal{P}^n$ with transverse self-intersections, and a brane structure.
In the case $n=1$, we obtain an immersed circle with three self-intersections in $\mathcal{P}^1$, illustrated in Figure \ref{fig:l1} (ignore the additional labels for now).
This immersed circle also appeared in \cite{seidelg2}.

We point out that $L^n$ is not an object of the Fukaya category as just defined, because it is not embedded.
However, we will show (in Section \ref{subsec:afuk}) that one can nevertheless include $L^n$ as an `extra' object of the Fukaya category in a sensible way.

\begin{figure}
\centering
\includegraphics[width=0.5\textwidth]{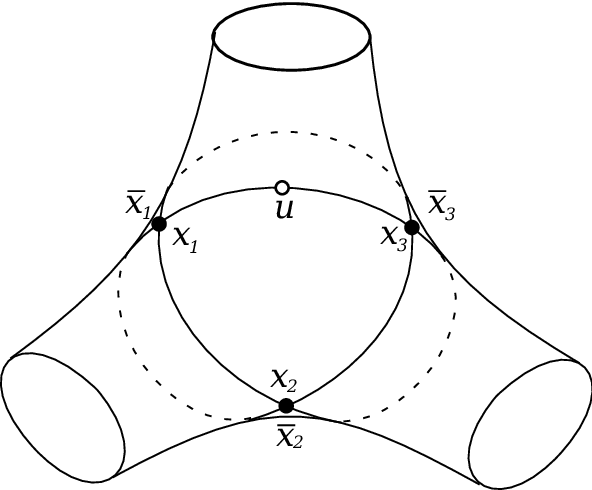}
\caption{The immersed Lagrangian $L^1:S^1 \To \mathcal{P}^1$. The image has been distorted for clarity -- for $L^1$ to be exact, the front and back triangles should have the same area.}
\label{fig:l1}
\end{figure}

We compute the Floer cohomology algebra of $L^n$:

\begin{thm}
\label{thm:zcoeffs}
\[HF^*(L^n,L^n) \cong \wedgestar \C^{n+2}\]
as $\Z_2$-graded associative $\C$-algebras.
\end{thm}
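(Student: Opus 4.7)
The plan is to identify $CF^*(L^n, L^n)$ as a $\C$-vector space with an explicit basis indexed by subsets of $\{1, \ldots, n+2\}$, to argue that the Floer differential $\mu^1$ vanishes, and then to pin down $\mu^2$ by counting holomorphic triangles.

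The first step is to enumerate generators. For an immersed Lagrangian sphere with transverse self-intersections, $CF^*(L^n,L^n)$ has contributions from the Morse cohomology of the normalization $S^n$ (a unit in degree $0$ and a point class in degree $n$), together with ordered pairs of preimages of each self-intersection point. I would verify from the construction of $L^n$ that it has exactly $2^{n+1}-1$ self-intersection points, and set up a labelling of the branch generators by nonempty proper subsets $I \subsetneq \{1, \ldots, n+2\}$, with the complementary ordering at the same point corresponding to $I^c$. Together with the Morse generators labelled by $\emptyset$ and $\{1,\ldots,n+2\}$, this produces a basis $\{X_I\}_{I \subseteq \{1,\ldots,n+2\}}$ of total dimension $2^{n+2}$, matching $\wedgestar \C^{n+2}$.

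The second step is to check the $\Z_2$-grading. Using the brane structure on $L^n$ and the Maslov index of the pair of tangent planes at each self-intersection, one expects $X_I$ to sit in degree $|I| \bmod 2$, so that $X_I$ has the grading of an element of $\Lambda^{|I|} \C^{n+2}$.

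The third step is to show $\mu^1=0$. The natural argument is to exploit the cyclic $\Z/(n+2)$-symmetry of $\mathcal{P}^n$ permuting the coordinates $z_1,\ldots,z_{n+2}$, together with its lift as a symmetry of $L^n$ respecting the brane structure. This action permutes generators by $X_I \mapsto X_{\sigma(I)}$, so $\mu^1$ is equivariant; combined with parity constraints and exactness (ruling out disk bubbling so that all disk energies are strictly positive), I would argue there is no room for a nonzero differential.

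The main obstacle is the fourth step: computing $\mu^2$ explicitly. I want to show that $\mu^2(X_J, X_I) = \pm X_{I \cup J}$ for disjoint nonempty $I, J$, with signs matching the wedge product, and that $\mu^2(X_J, X_I)=0$ when $I \cap J \neq \emptyset$. This reduces to an enumeration of rigid holomorphic triangles with corners at the prescribed generators. The tool I would use is the moment-map/tropical geometry of the pair of pants, which sends holomorphic disks to controlled regions in the $(n+1)$-simplex and reduces the enumeration of rigid triangles to an explicit combinatorial problem. The delicate parts are (a) proving completeness of this classification (ruling out unexpected triangles), and (b) pinning down signs via the brane structure and coherent orientations on moduli spaces, so that the resulting product coincides with wedge multiplication rather than a twisted variant.
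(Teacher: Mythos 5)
Your outline follows the same shape as the paper (enumerate $2^{n+2}$ generators indexed by subsets $K\subseteq[n+2]$, verify the $\Z_2$-grading is $|K|\bmod 2$, show $\mu^1=0$, compute $\mu^2$), and the first two steps match. But the two steps you flag as delicate are exactly where your route diverges from the paper's and where the gaps sit.

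For $\mu^1=0$, cyclic $\Z/(n+2)$-equivariance by itself does not rule out a nonzero differential: an equivariant map could still send $p_K\mapsto\sum c_{K'}p_{K'}$ with coefficients depending only on $(|K|,|K'|)$. The paper instead uses two quantitative constraints. First, every boundary-punctured disk lifts to the universal cover of $\mathcal{P}^n$, giving a weight in $M\cong H_1(\mathcal{P}^n)$ attached to each generator with respect to which all $\mu^k$ are homogeneous (Propositions \ref{prop:weight}, \ref{prop:top}). Second, a choice of complex volume form $\eta$ lifts the $\Z_2$-grading to a $\Z$-grading with the explicit formula $i(p_K)=(2\bm{n}-e_{[n+2]})\cdot e_K$ (Proposition \ref{prop:grading}). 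Combining them forces $\mu^k=0$ unless $k\equiv 2\pmod{n}$ (Corollary \ref{cor:prodzero}), which kills $\mu^1$ and, for free, all $\mu^k$ with $2<k<n+2$.

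For $\mu^2$, your appeal to a moment-map/tropical picture is not developed into a working enumeration and is not obviously going to give one: the moment map of $\CP{n}$ does not directly constrain disks with boundary on an immersed exact Lagrangian in $\mathcal{P}^n$. The paper's actual computational device is a Morse--Bott ``flipping pearly tree'' model (Section \ref{sec:Acalc}): the immersed Lagrangian is degenerated to the double cover $L'$ of $\RP{n}\subset\CP{n}$, so that the relevant pearls are genuine holomorphic disks in $\CP{n}$ with boundary on $\RP{n}$ (with a sheet-switching/admissibility condition encoding the divisor complement). Proposition \ref{prop:pearlenergy} pins the degree of a contributing pearl, Proposition \ref{prop:ak1k2} shows the triangle count for $\mu^2$ is a count of real lines through the three critical points $p_{K_1},p_{K_2},p_{\overline{K_1\sqcup K_2}}$, and there is exactly one such line. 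You do not have an analogous concrete mechanism, and completeness of your hoped-for combinatorial classification is precisely what would require proof.

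Finally, on signs: you propose to verify directly that the product is the untwisted wedge product via coherent orientations, which would require computing every sign $\sigma_K$. The paper sidesteps this entirely. Complex conjugation is an antiholomorphic involution preserving the image of $L^n$, and this yields a quasi-isomorphism $\mathcal{A}\cong\mathcal{A}^{op}$ (Proposition \ref{prop:opfuk}, Corollary \ref{cor:signs}), hence supercommutativity of $H^*(\mathcal{A})$ (Corollary \ref{cor:cohsigns}). Supercommutativity gives a well-defined algebra map $\wedgestar\widetilde{M}_{\C}\to H^*(\mathcal{A})$ sending $e_j\mapsto p_{\{j\}}$; surjectivity follows from the $\pm1$ triangle count, and equality of ranks forces it to be an isomorphism (Theorem \ref{thm:zcoeffs2}). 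This is a genuinely more economical argument than sign-by-sign verification, and you would do well to notice and exploit this symmetry rather than attempt the direct computation.
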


\begin{rmk}
Although both $HF^*(L^n,L^n)$ and $\wedgestar \C^{n+2}$ carry $\Z$-gradings, these gradings only agree modulo $2$.
\end{rmk}

\subsection{The $B$-model on the mirror}

The mirror of $\mathcal{P}^n$ is conjectured to be the Landau-Ginzburg model $(\C^{n+2},W)$, where
\[ W = z_1z_2\ldots z_{n+2}.\]
This paper is concerned with relating the $B$-model on $(\C^{n+2},W)$ to the $A$-model on $\mathcal{P}^n$.

Recall that the $B$-model of $(\C^{n+2},W)$ is described by Orlov's triangulated category of singularities  $D^b_{\mathrm{Sing}}(W^{-1}(0))$ (see \cite{orlov04}).
Note that $0$ is the only non-regular value of $W$.
The triangulated category of singularities is defined as the quotient of the bounded derived category of coherent sheaves, $D^bCoh(W^{-1}(0))$, by the full triangulated subcategory of perfect complexes $\mathrm{Perf}(W^{-1}(0))$.
It is a differential $\Z_2$-graded category over $\C$.

Because $\C^{n+2} = \mathrm{Spec} (R)$ is affine (where $R:= \C[z_1,\ldots,z_{n+2}]$), the triangulated category of singularities of $W^{-1}(0)$ admits an alternative description, which is more amenable to explicit computations.
Namely, it is quasi-equivalent to the category $MF(R,W)$ of `matrix factorizations' of $W$, by \cite[Theorem 3.9]{orlov04}.

An object of $MF(R,W)$ is a finite-rank free $\Z_2$-graded $R$-module $P=P^0 \oplus P^1$, together with an $R$-linear endomorphism $d_P:P \To P$ of odd degree, satisfying $d_P^2 = W \cdot \mathrm{id}_P$.
The space of morphisms from $P$ to $Q$ is the differential $\Z_2$-graded $R$-module of $R$-linear homomorphisms $f:P \To Q$, with the differential defined by
\[ d(f) := d_Q \circ f + (-1)^{|f|} f \circ d_P,\]
and composition defined in the obvious way.
This makes $MF(R,W)$ into a differential $\Z_2$-graded category over $\C$.

Under Homological Mirror Symmetry, our immersed Lagrangian sphere $L^n$ should correspond to $\mathcal{O}_0$, the structure sheaf of the origin in the triangulated category of singularities of $W^{-1}(0)$. 
This corresponds, under the above-described equivalence, to a matrix factorization of $W$, which by abuse of notation we will also denote $\mathcal{O}_0$.

It follows from the computations of \cite[Section 2]{dyckerhoff09} that, on the level of cohomology,
\[H^*\left( \mathrm{Hom}^*_{MF(R,W)}(\mathcal{O}_0,\mathcal{O}_0)\right) \cong \wedgestar \C^{n+2}\]
as $\Z_2$-graded associative $\C$-algebras.
Combining this with Theorem \ref{thm:zcoeffs} establishes an isomorphism between the endomorphism algebras of the alleged mirror objects on the level of cohomology.

The Homological Mirror Symmetry conjecture predicts more: this isomorphism of cohomology algebras should extend to a quasi-isomorphism of $A_{\infty}$ algebras.
Namely, 
\[\mathrm{Hom}^*_{MF(R,W)}(\mathcal{O}_0,\mathcal{O}_0)\]
inherits the structure of a differential $\Z_2$-graded $\C$-algebra from $MF(R,W)$, and a differential graded algebra is a special case of an $A_{\infty}$ algebra.

Our main result (proved by studying the $A_{\infty}$ deformations of the cohomology algebra) is that such a quasi-isomorphism does exist:

\begin{thm}
\label{thm:mirrsym}
There is a quasi-isomorphism
\[CF^*(L^n,L^n) \cong \mathrm{Hom}^*_{MF(R,W)}(\mathcal{O}_0,\mathcal{O}_0)\]
as $\Z_2$-graded $A_{\infty}$-algebras over $\C$.
\end{thm}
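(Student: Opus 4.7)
The plan is to use $A_\infty$-deformation theory. By Theorem~\ref{thm:zcoeffs} and the cited computation of Dyckerhoff, both sides have cohomology the exterior algebra $\wedgestar \C^{n+2}$ with $n+2$ generators in degree one. Applying Kadeishvili's theorem, each side is quasi-isomorphic to a minimal $A_\infty$-model on the graded vector space $\wedgestar \C^{n+2}$ with $\mu^2$ the standard wedge product; the problem therefore reduces to showing that these two minimal models lie in the same $A_\infty$-gauge-equivalence class.

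The formal moduli space of such minimal deformations of $\wedgestar \C^{n+2}$ is controlled by its Hochschild cohomology, so I would import (via an HKR-type computation) enough of $HH^*(\wedgestar \C^{n+2})$ to parametrize the possible deformations explicitly. Although the ambient $A_\infty$-structure is only $\Z_2$-graded, each side carries an additional $\Z$-grading -- the Maslov grading from the volume form $\eta$ on the Fukaya side, and the matrix-factorization grading on the B-side. Working with these $\Z$-gradings, and allowing for the shift noted in the remark after Theorem~\ref{thm:zcoeffs}, should restrict the possible nonzero higher products $\mu^d$ to a very small list of Hochschild classes.

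To cut the classification down further I would use the natural symmetries on each side. The symmetric group $S_{n+2}$ acts on $\mathcal{P}^n$ by permuting its defining hyperplanes, and $L^n$ can be built equivariantly; on the mirror, the monomial $W = z_1 \cdots z_{n+2}$ and its matrix factorization $\mathcal{O}_0$ are manifestly $S_{n+2}$-invariant. Setting up both minimal models equivariantly, together with the grading constraints, should reduce the space of equivalence classes of deformations to essentially a single scalar invariant: the coefficient of $\mu^{n+2}$ applied to the $n+2$ degree-one generators and landing in the top exterior power. Showing that this coefficient is nonzero will also demonstrate that neither $A_\infty$-algebra is formal.

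The main obstacle, and the technical heart of the proof, is computing this distinguishing higher product on the A-side. On the B-side it is a direct manipulation with a Koszul-type matrix factorization of $W$. On the A-side it requires identifying the rigid holomorphic $(n+2)$-gons in $\mathcal{P}^n$ with boundary on the immersed sphere $L^n$ and corners at its self-intersections, setting up brane and spin data carefully enough to fix signs unambiguously, and evaluating the count for all $n$ simultaneously. Once the A- and B-side values are shown to match (modulo the residual finite gauge freedom permitted by the equivariant classification), the deformation-theoretic argument forces $CF^*(L^n,L^n)$ and $\mathrm{Hom}^*_{MF(R,W)}(\mathcal{O}_0,\mathcal{O}_0)$ to be quasi-isomorphic as $\Z_2$-graded $A_\infty$-algebras, proving Theorem~\ref{thm:mirrsym}.
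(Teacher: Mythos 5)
Your high-level strategy matches the paper's: reduce both sides to minimal $A_\infty$-models on the exterior algebra, use Hochschild cohomology to classify deformations subject to grading and symmetry constraints, and pin down the quasi-isomorphism class by computing the single higher product $\mu^{n+2}$ on the degree-one generators. However, there is a genuine gap in the choice of symmetry group, and it is not a cosmetic one.

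You propose to use $S_{n+2}$-equivariance to cut down the deformation space. The paper instead uses equivariance under the torus $\mathbb{T} = \mathrm{Hom}(M,\C^*)$, where $M = \Z^{n+2}/\langle e_{[n+2]}\rangle$. This $\mathbb{T}$-action is not an auxiliary symmetry imposed by hand; it encodes the fact that every generator $p_K$ of $\mathcal{A}$ carries a \emph{weight} $w(p_K) = e_K \in M \cong H_1(\mathcal{P}^n)$, and that the $A_\infty$-products must be homogeneous in this weight because contributing disks lift to the universal cover (Proposition~\ref{prop:top}, Corollary~\ref{cor:tact}). On the $B$-side the analogous $\mathbb{T}$-action is the toric action on $\mathrm{Spec}(R)$. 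This homogeneity is far more restrictive than mere $S_{n+2}$-invariance: for instance $\sum_j z_j^{n+2}$ is $S_{n+2}$-invariant of the same polynomial degree as $W = z_1\cdots z_{n+2}$, so the $S_{n+2}$-equivariant Hochschild group $HH^2(A,A)^{2-(n+2),S_{n+2}}$ has dimension greater than one and your classification would not close. With $\mathbb{T}$-equivariance plus the fractional $\Q$-grading $|p_K| = \tfrac{n}{n+2}|K|$ (Corollary~\ref{cor:grading}), Proposition~\ref{prop:hha} shows $HH^2(A,A)^{2-d,\mathbb{T}}$ is $\C\cdot W$ for $d = n+2$ and zero otherwise, which is precisely what makes the versality argument a one-parameter problem. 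You gesture at the grading shift between the two $\Z$-gradings but don't resolve it; the paper's fractional grading is exactly how that issue is absorbed.

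One further remark: your plan to count rigid holomorphic $(n+2)$-gons directly in $\mathcal{P}^n$ with corners at the self-intersections is in principle possible, but the paper replaces this by a Morse--Bott model (``flipping holomorphic pearly trees''), allowing it to identify the relevant moduli space with a single rational normal curve through $n+3$ points of $\RP{n}$ via the classical Veronese construction. This is how the count of $\mu^{n+2}$ is actually made tractable for all $n$ simultaneously; a direct perturbed-disk count would be much harder to control.
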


\begin{rmk}
Of course the $B$-model $D^b_{\mathrm{Sing}}(\C^{n+2},W)$ cannot be equivalent, in any sense, to the $A$-model $\mathcal{F}uk(\mathcal{P}^n)$ as we define it, because the morphism spaces in the $B$-model can be infinite-dimensional (even on the cohomology level) whereas the morphism space between two compact Lagrangians is always finite-dimensional. 
To get an $A$-model which has a hope of being equivalent to the $B$-model in some sense, we must consider the `wrapped' Fukaya category (see \cite{abseivit}), which also includes non-compact Lagrangians.
\end{rmk}

\subsection{Motivation: the $A$-model on the one-dimensional pair of pants}
\label{subsec:p1}

In this section, we consider the $1$-dimensional case. 
We hope that this will aid the reader's intuition for the subsequent arguments, and provide a link with computations that have previously appeared in the literature (in \cite[Section 10]{seidelg2}), but this section could be skipped without serious harm.

Consider the immersed Lagrangian $L^1:S^1 \To \mathcal{P}^1$ shown in Figure \ref{fig:l1}. 
We outline a description of the $A_{\infty}$ algebra $\mathcal{A} = CF^*(L^1,L^1)$ up to quasi-isomorphism.

$\mathcal{A}$ has generators $u,q$ corresponding respectively to the identity and top class in the Morse cohomology $CM^*(S^1)$, and two generators for each self-intersection point, which we label $x_1,\bar{x}_1,x_2,\bar{x}_2,x_3,\bar{x}_3$ as in Figure \ref{fig:l1}.

Because the homology class of $L^1$ is trivial in $H_1(\mathcal{P}^1)$, the generators of $\mathcal{A}$ come labeled by weights which are elements of the lattice 
\[H_1\left(\mathcal{P}^1\right) \cong \Z \langle e_1,e_2,e_3 \rangle / \langle e_1 + e_2 + e_3 \rangle,\]
so that the $A_{\infty}$ structure maps are homogeneous with respect to these weights. 
This is just because the disk contributing to such a product lifts to the universal cover, so its boundary must lift to a closed loop.
See Definition \ref{defn:weight} and Proposition \ref{prop:top} for the precise definition and argument.
Explicitly, the weight of $u,q$ is $0$, of $x_i$ is $e_i$ and of $\bar{x}_i$ is $-e_i$.
It follows that $\mu^1 = 0$.

The $A_{\infty}$ structure maps count rigid holomorphic disks, which in this case is purely combinatorial. 
Our first step is to determine the cohomology algebra of $\mathcal{A}$, which has the (associative) product defined by
 \[ a \cdot b := (-1)^{|a|} \mu^2(a, b)\]
 (using the sign conventions of \cite{seidel08}).
 
We have the following result:
\begin{lem}
The cohomology algebra of $\mathcal{A}$ is isomorphic (as $\Z_2$-graded associative $\C$-algebra) to the exterior algebra
\[ \wedgestar \C \langle e_1, e_2, e_3 \rangle\]
via the identification 
\begin{eqnarray*}
u & \mapsto & \mathds{1} \\
x_i & \mapsto & (-1)^i e_i \\
\bar{x}_i & \mapsto & (-1)^{i+1} \ast e_i \mbox{ (Hodge star with respect to volume form $e_1 \wedge e_2 \wedge e_3$)}\\
q & \mapsto & - e_1 \wedge e_2 \wedge e_3.
\end{eqnarray*}
\end{lem}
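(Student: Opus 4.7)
My plan is to combine three ingredients---the unit property of $u$, the weight and $\Z_2$-grading constraints on $\mu^2$, and an explicit count of immersed holomorphic polygons in $\mathcal{P}^1$ with boundary on $L^1$---to determine every product $\mu^2(a,b)$ on the generators, after which the stated identification can be verified to intertwine $\mu^2$ with exterior multiplication. Since $\mu^1 = 0$ is already established, the cohomology algebra is just $(\mathcal{A},\mu^2)$ itself, so no spectral sequence or homotopy transfer is needed.

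First, $u$ represents the Morse identity on $S^1$, hence is the cohomological unit of $\mathcal{A}$: $\mu^2(u,a) = a$ and $\mu^2(a,u) = (-1)^{|a|}a$. For the remaining products, weight preservation in the lattice $H_1(\mathcal{P}^1) = \Z\langle e_1,e_2,e_3\rangle/\langle e_1+e_2+e_3\rangle$ combined with the $\Z_2$-grading (with $u, \bar{x}_i$ even and $x_i, q$ odd) pins each product to at most a one-dimensional subspace. For $\{i,j,k\} = \{1,2,3\}$ this gives $\mu^2(x_i,x_j) \in \C \bar{x}_k$, $\mu^2(\bar{x}_i,\bar{x}_j) \in \C x_k$, $\mu^2(x_i,\bar{x}_i), \mu^2(\bar{x}_i,x_i) \in \C q$, $\mu^2(q,q) \in \C u$, $\mu^2(q, x_i), \mu^2(x_i, q) \in \C x_i$, and $\mu^2(q, \bar{x}_i), \mu^2(\bar{x}_i, q) \in \C \bar{x}_i$; all remaining products, such as $\mu^2(x_i,x_i)$ or $\mu^2(x_i,\bar{x}_j)$ with $i \ne j$, vanish because no generator has the required weight.

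Next I would compute the surviving coefficients by counting immersed holomorphic polygons bounded by $L^1$. Since $\mathcal{P}^1$ is a Riemann surface and $L^1$ is an immersed curve, these polygons are just the topological regions cut out of the surface by the image of $L^1$ with convex corners at the prescribed self-intersections, so the problem is purely combinatorial. From Figure \ref{fig:l1}, the image of $L^1$ bounds exactly two ``all-vertex'' triangles with corners at $x_1,x_2,x_3$ (the front and back triangles of the pair of pants), which give $\mu^2(x_i,x_j) = \pm \bar{x}_k$ with both triangles contributing with equal weight by exactness. Two reflected triangles with corners at $\bar{x}_1,\bar{x}_2,\bar{x}_3$ give $\mu^2(\bar{x}_i,\bar{x}_j) = \pm x_k$, and small bigons at each self-intersection capped with a gradient flow line to the $S^1$ maximum contribute $\pm q$ to $\mu^2(x_i,\bar{x}_i)$ and $\mu^2(\bar{x}_i,x_i)$. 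A direct inspection of the configuration shows that no rigid polygon can involve a corner at (or a cap flowing from) the $S^1$ maximum together with corners at the self-intersections, forcing $\mu^2(q,q) = 0$ as well as $\mu^2(q,x_i) = \mu^2(x_i,q) = \mu^2(q,\bar{x}_i) = \mu^2(\bar{x}_i,q) = 0$.

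Finally, the prefactors $(-1)^i$ and $(-1)^{i+1}$ in the identification are engineered precisely so that the triangle orientation signs cancel the Koszul signs in $\wedgestar \C\langle e_1,e_2,e_3 \rangle$ together with the convention $a \cdot b = (-1)^{|a|} \mu^2(a,b)$; verifying this is a finite bookkeeping check for one representative product (say $\mu^2(x_1,x_2)$), the others following by similar direct computations. The main obstacle I anticipate is this sign matching: the orientation signs of the front and back triangles depend on the chosen spin structure and grading of $L^1$, and must be computed explicitly. Once the sign of a single representative product like $\mu^2(x_1,x_2)$ is fixed, the rest of the algebra structure falls out by combinatorial symmetries of the configuration.
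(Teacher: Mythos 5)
The strategy---count immersed polygons bounding $L^1$, constrained by weights and gradings, then fix signs against the target exterior algebra---is essentially the paper's. But several of the ``possibly nonzero'' slots you list are already killed by the very $\Z_2$-grading you assign. Since $\mu^2$ has $\Z_2$-degree $0$: $\mu^2(\bar x_i,\bar x_j)$ has even parity while $x_k$ is odd; $\mu^2(q,x_i)$ and $\mu^2(x_i,q)$ have even parity while $x_i$ is odd; and $\mu^2(q,\bar x_i)$ and $\mu^2(\bar x_i,q)$ have odd parity while $\bar x_i$ is even. All of these therefore vanish for parity reasons alone, before any polygon counting. The most consequential of the resulting errors is the asserted pair of ``reflected triangles with corners at $\bar{x}_1,\bar{x}_2,\bar{x}_3$'' giving $\mu^2(\bar{x}_i,\bar{x}_j)=\pm x_k$: if that product were nonzero the algebra would fail to be the exterior algebra, since $\ast e_i\wedge\ast e_j=0$ for $i\neq j$.

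Concretely, $\mathcal{P}^1\setminus\mathrm{im}(L^1)$ contains exactly two triangular regions, not four, and both contribute to products of the odd generators $x_i$: one gives $\mu^2(x_1,x_2)=\bar x_3$ and its cyclic rotations, the other gives $\mu^2(x_2,x_1)=-\bar x_3$ and its cyclic rotations, consistent with super-commutativity (Corollary \ref{cor:cohsigns}). Hence ``both triangles contributing with equal weight'' to a single $\mu^2(x_i,x_j)$ would double-count; each ordered pair is handled by exactly one of the two triangles, and the products $\mu^2(\bar x_i,\bar x_j)$ come from no polygon at all. Finally, the argument you offer for $\mu^2(q,q)=0$ (``no rigid polygon can involve a cap flowing from the maximum together with corners at the self-intersections'') does not address that product, since $\mu^2(q,q)$ involves no self-intersection corners; it follows instead from the $\Z$-grading $i(q)+i(q)=2\neq 0=i(u)$ of Proposition \ref{prop:grading}, or from Corollary \ref{cor:m1m2top}. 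Once the grading constraints are applied consistently, the genuinely surviving products are exactly those the paper identifies: the two triangles for $\mu^2(x_i,x_j)$, and constant disks for the products involving $u$ and $q$.
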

\begin{proof}\textit{ (sketch -- see \cite{seidelg2} for a more detailed proof)}
The contributions of constant disks give all products involving $u$ and $q$.
The other products come from the two triangles on the front and back of Figure \ref{fig:l1}.
For example, the triangle with vertices in cyclic order $x_1,x_2,x_3$ gives the product
\[ \mu^2(x_1,x_2) =  \bar{x}_3\]
corresponding to
\[ e_1 \cdot e_2 = \ast e_3 = e_1 \wedge e_2.\]
We will not explain how to determine the signs here -- see Section \ref{subsec:signs} (or \cite{seidelg2}) for more detail.
\end{proof}

Furthermore, we have
\[ \mu^3(x_1,x_2,x_3) = - u,\]
but the corresponding product is $0$ for any other permutation of the inputs.
This comes from the degenerate $4$-gon with vertices at $u,x_1,x_2,x_3$. 
Observe that, if we put the marked point $u$ somewhere else on $L^1$, this product would again be equal to $u$, but possibly for a different permutation of the inputs (and would be $0$ on all other permutations).

By choosing a complex volume form $\eta$ on $\mathcal{P}^1$ and computing grading of the generators, one can lift the $\Z_2$-grading of $\mathcal{A}$ (defined by the sign of the intersection point corresponding to the generator) to a $\Z$-grading. 
See \cite{seidelg2} for a formula for the grading that holds in the $1$-dimensional case.
The choice of volume form is not canonical, and hence the choice of $\Z$-grading is not canonical.

We have now shown that $\mathcal{A}$ lies in the set $\mathfrak{A}$ of $A_{\infty}$ algebras satisfying the following conditions:
\begin{itemize}
\item $\mu^1 = 0$;
\item The cohomology algebra is isomorphic to $\wedgestar \C \langle e_1,e_2,e_3 \rangle$ as $\Z_2$-graded associative $\C$-algebra;
\item The $A_{\infty}$ structure maps are homogeneous with respect to the weights as defined above;
\item The $\Z_2$-grading lifts to a $\Z$-grading as defined above.
\end{itemize}
One can show that $\mathfrak{A}$ has a one-dimensional deformation space, in the sense of \cite[Lemma 3.2]{seidel03}.
Furthermore, the deformation class of $\mathcal{A}$ in this deformation space is given by 
\[ \sum_{i,j,k = 1}^3 \mu^3(x_i,x_j,x_k) = \mu^3(x_1,x_2,x_3) = -u\]
by our previous computations. 
In particular, it is non-zero, so $\mathcal{A}$ is versal.
This determines $\mathcal{A}$ up to quasi-isomorphism, in the sense that any $A_{\infty}$ algebra lying in $\mathfrak{A}$, with non-zero deformation class, is quasi-isomorphic to $\mathcal{A}$.

\subsection{Outline of the paper} 

In Section \ref{sec:ln} we introduce some standing notation, and discuss the topology of the pair of pants $\mathcal{P}^n$. 
In particular, we introduce the coamoeba, which encodes topological information about $\mathcal{P}^n$ and is the starting point for understanding the Lagrangian immersion $L^n$.
We give the details of the construction of the Lagrangian immersion $L^n: S^n  \To \mathcal{P}^n$, and some of its properties.

In Section \ref{sec:A}, we explain how to include the Lagrangian immersion $L^n$ as an `extra' object of the Fukaya category of embedded Lagrangians in $\mathcal{P}^n$. 
We define the $A_{\infty}$ algebra $\mathcal{A} := CF^*(L^n,L^n)$, and establish some of its properties -- namely, that it is homogeneous with respect to a certain weighting of its generators, that its $\Z_2$-grading lifts to a $\Z$-grading, and that it has a certain `super-commutativity' property.

In Section \ref{sec:fukdef}, we give an alternative, Morse-Bott definition of the Fukaya category of embedded Lagrangians. 
We define the $A_{\infty}$ structure coefficients by counts of objects called `holomorphic pearly trees', which are Morse-Bott versions of the holomorphic disks usually used (and closely related to the `clusters' of \cite{cornealalonde}).
The technical parts of this section could be skipped at a first reading, but the concept of a pearly tree is important because it is the basis of our main computational technique, which is introduced in Section \ref{sec:Acalc}. 
This section could be read independently of the rest of the paper.

In Section \ref{sec:Acalc}, we introduce a Morse-Bott model $\mathcal{A}'$ for the $A_{\infty}$ algebra $\mathcal{A}$, in which the $A_{\infty}$ structure coefficients are defined by counts of objects called `flipping holomorphic pearly trees'.
We show that $\mathcal{A}'$ is quasi-isomorphic to $\mathcal{A}$. 
We can compute the $A_{\infty}$ structure maps of $\mathcal{A}'$ by explicitly identifying the relevant moduli spaces of flipping holomorphic pearly trees.
In particular, we compute that the cohomology algebra of $\mathcal{A}'$ (hence of $\mathcal{A}$) is an exterior algebra, as well as some of the higher structure maps.
We use our computation of higher structure maps to show that $\mathcal{A}'$ is versal in the class of $A_{\infty}$ algebras with cohomology algebra the exterior algebra, and the homogeneity and grading properties described in Section \ref{sec:A} (compare Section \ref{subsec:p1}).
Thus, applying deformation theory of $A_{\infty}$ algebras, $\mathcal{A}'$ (and hence $\mathcal{A}$) is completely determined up to quasi-isomorphism by the coefficients and properties that we have established.

In Section \ref{sec:matfact}, we describe the $B$-model of the mirror.
We use the techniques of \cite[Section 4]{dyckerhoff09} to construct a minimal $A_{\infty}$ model $\mathcal{B}'$ for the differential $\Z_2$-graded algebra $\mathcal{B} := \mathrm{Hom}^*_{MF(R,W)}(\mathcal{O}_0,\mathcal{O}_0)$.
We find that its cohomology algebra is an exterior algebra, and that it has the same grading and equivariance properties as $\mathcal{A}$. 
We compute higher products to show that $\mathcal{B}'$ is versal in the same class of $A_{\infty}$ algebras as $\mathcal{A}'$, and hence that it is quasi-isomorphic to $\mathcal{A}'$.
This completes the proof of Theorem \ref{thm:mirrsym}.

In Section \ref{sec:app}, we give applications of Theorem \ref{thm:mirrsym}.
In particular, we consider the Homological Mirror Symmetry conjecture for Fermat hypersurfaces.

Let $\widetilde{X}^n$ be the intersection of the Fermat Calabi-Yau hypersurface
\[ \{ z_1^{n+2} + \ldots + z_{n+2}^{n+2} = 0\} \subset \CP{n+1}\]
with the open torus $(\C^*)^{n+1} \subset \CP{n+1}$.
Let $Y^n$ be the singular variety
\[ \{W = 0\} \subset \CP{n+1}\]
(where $W = z_1z_2\ldots z_{n+2}$ as before),
and equip it with the natural action of 
\[ G_n := (\Z_{n+2})^{n+2}/\Z_{n+2}\] 
(where $\Z_{n+2}$ is the diagonal subgroup of $(\Z_{n+2})^{n+2}$) by multiplying coordinates by $(n+2)$th roots of unity.

Then we have the following:

\begin{thm}
\label{thm:mirrsymfermat}
There is a full and faithful $A_{\infty}$ embedding
\[ \mathrm{Perf}_{G_n}(Y^n) \hookrightarrow D^{\pi} \mathcal{F}uk(\widetilde{X}^n)\]
of the category of perfect complexes of $G_n$-equivariant sheaves on $Y^n$ into the derived Fukaya category of $\widetilde{X}^n$.
\end{thm}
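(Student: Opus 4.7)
The plan is to deduce this from Theorem \ref{thm:mirrsym} by equivariantizing both sides. The map
\[ [z_1:\ldots:z_{n+2}] \;\longmapsto\; [z_1^{n+2}:\ldots:z_{n+2}^{n+2}] \]
exhibits $\widetilde{X}^n$ as a $G_n$-Galois cover of $\mathcal{P}^n$. Since the homology class of $L^n$ in $H_1(\mathcal{P}^n)$ is zero (as noted in Section \ref{subsec:p1} for $n=1$, and for the same reason in general), its preimage under the covering map is a disjoint union $\widetilde{L}^n$ of $|G_n|$ Lagrangian spheres, naturally indexed by $G_n$ and permuted freely by deck transformations; lifting the brane structure produces a $G_n$-invariant family of objects in $\mathcal{F}uk(\widetilde{X}^n)$.

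On the $A$-side, the standard covering argument applies: any holomorphic disk in $\widetilde{X}^n$ with boundary on $\widetilde{L}^n$ projects to a disk in $\mathcal{P}^n$ with boundary on $L^n$, and conversely each downstairs disk lifts uniquely once a starting boundary component is fixed. This yields an identification of $A_\infty$ algebras
\[ CF^*(\widetilde{L}^n, \widetilde{L}^n) \;\cong\; CF^*(L^n, L^n) \rtimes G_n, \]
where the right-hand side is the crossed-product (twisted group) $A_\infty$ algebra.

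On the $B$-side, I would invoke Orlov's theorem in the $G_n$-equivariant setting to describe $\mathrm{Perf}_{G_n}(Y^n)$ via $G_n$-equivariant graded matrix factorizations of $W$, viewing $R$ as $G_n$-graded with $z_i$ of weight $e_i$ so that $W$ is of weight zero. The relevant generator is a canonical $G_n$-equivariant lift of the Koszul matrix factorization $\mathcal{O}_0$, and by the same crossed-product reasoning its endomorphism $A_\infty$ algebra is
\[ \mathrm{Hom}^*_{MF(R,W)}(\mathcal{O}_0, \mathcal{O}_0) \rtimes G_n. \]

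With these two identifications in place, the final step is to promote the quasi-isomorphism of Theorem \ref{thm:mirrsym} to a $G_n$-equivariant quasi-isomorphism of crossed products; this should follow from the deformation-theoretic classification of Section \ref{sec:Acalc}, which already respects the weighting by $H_1(\mathcal{P}^n)$ whose relevant finite quotient is Pontryagin dual to $G_n$. Taking split-closures on both sides then delivers the desired full and faithful embedding. The main technical obstacle is precisely this last $G_n$-equivariant enhancement: one has to verify that the classifying maps between $\mathcal{A}$ and $\mathcal{B}$ constructed via versality in Sections \ref{sec:Acalc} and \ref{sec:matfact} can be chosen to intertwine the two $G_n$-actions, which amounts to checking that the deformation and obstruction spaces transform compatibly as $G_n$-representations on the two sides.
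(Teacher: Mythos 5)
Your $A$-side identification and the general idea of equivariantizing the versality argument both match the paper's Section~\ref{subsec:hmscy}: Proposition~\ref{prop:semid} yields $CF^*(\tilde{L}^n,\tilde{L}^n) \cong \mathcal{A}^n \rtimes \Gamma_n^*$ for the cover $\widetilde{X}^n \To \mathcal{P}^n$, and the $\mathbb{T}$-equivariant versality of Proposition~\ref{prop:versal} is indeed what is used (inside Lemma~\ref{lem:semidquasi}) to upgrade a cohomology-level isomorphism to an $A_\infty$ quasi-isomorphism, so your ``main technical obstacle'' is real but is handled in the paper.

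The $B$-side step, however, contains a genuine gap, and it is one the paper itself explicitly flags. You propose to identify $\mathrm{Perf}_{G_n}(Y^n)$ with a category of $G_n$-equivariant (graded) matrix factorizations via an Orlov-type equivalence, with the equivariant lift of $\mathcal{O}_0$ as a generator. But $\mathcal{O}_0$ is the skyscraper at the origin of the \emph{affine} space $\mathrm{Spec}(R) = \C^{n+2}$ and is not an object on the \emph{projective} hypersurface $Y^n \subset \CP{n+1}$; and, more fundamentally, the graded-LG-to-projective form of Orlov's theorem (\cite[Theorem 3.11]{orlov05}) that would bridge singularity categories of $(R,W)$ to (perfect complexes on) $Y^n$ does not apply here, because $Y^n$ is singular. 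This is exactly the dead end noted in the closing remark of Section~\ref{subsec:hmscy}. The paper bypasses matrix factorizations at this stage: it restricts the Beilinson exceptional collection $F_k = \Omega^{n+1-k}(n+1-k)$ from $\CP{n+1}$ to $Y^n$ to obtain generators $E_k = \iota^*F_k$, computes $\mathrm{Hom}^*_{Y^n}(E,E) \cong A \rtimes \Z_{n+2}$ via adjunction and Serre duality (Lemma~\ref{lem:serred}), endows this with an $A_\infty$ structure by homological perturbation on \v{C}ech complexes, identifies it with $\mathcal{A}^n \rtimes \Z_{n+2}$ by the equivariant versality argument (Lemma~\ref{lem:semidquasi}), and only then tensors with $\C[\tilde{\Gamma}_n^*]$ to produce the remaining crossed-product factor. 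You would need to replace your Orlov-theorem step with a direct argument of this kind.
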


We conjecture that this embedding is an equivalence.

{\bf Acknowledgments.} I thank my advisor, Paul Seidel, as well as Mohammed Abouzaid and Grigory Mikhalkin, for stimulating conversations and a number of crucial insights into this work. 
I also thank James Pascaleff and Paul Seidel for reading drafts of this paper in detail and making many useful suggestions. 
I also thank Siu-Cheong Lau for pointing out an error in the proof of Corollary \ref{cor:cohsigns}. 
I also thank Denis Auroux and Katrin Wehrheim for some very helpful discussions, as well as MSRI for the great atmosphere at the tropical and symplectic geometry workshops and conferences of 2009 and 2010, where part of this work was carried out.

\section{The Lagrangian immersion $L^n: S^n  \To \mathcal{P}^n$}
\label{sec:ln}

The aim of this section is to describe the immersed Lagrangian sphere $L^n: S^n  \To \mathcal{P}^n$. 
In Section \ref{subsec:coamoeba} we introduce some standing notation, and describe the topology of the pair of pants $\mathcal{P}^n$. 
We introduce the notion of the {\bf coamoeba} of the pair of pants, which is the starting point for visualising the Lagrangian immersion $L^n$.

In Section \ref{subsec:ln} we construct the Lagrangian immersion $L^n: S^n \To \mathcal{P}^n$ and establish some of its properties.

\subsection{Topology of $\mathcal{P}^n$ and coamoebae}
\label{subsec:coamoeba}

Let $[k]$ denote the set $\{1,2,\ldots,k\}$.
For a subset $K \subset [k]$, let $|K|$ be its number of elements and $\bar{K}\subset [k]$ its complement.
Let $\widetilde{M}$ be the $(n+2)$-dimensional lattice
\[\widetilde{M} := \Z \langle e_1,\ldots,e_{n+2} \rangle.\]
For $K \subset [n+2]$, let $e_K$ denote the element
\[ e_K := \sum_{j \in K} e_j \in \widetilde{M}.\]
Let $M$ be the $(n+1)$-dimensional lattice
\[ M := \widetilde{M}/ \langle e_{[n+2]}\rangle.\]
We will use the notation 
\[M_{P} := M \otimes_{\Z} P\]
for any $\Z$-module $P$.
We will not distinguish notationally between a lattice element $e_K \in \widetilde{M}$ and its image in $M$.
We define maps
\begin{eqnarray*}
\widetilde{\Log}: \widetilde{M}_{\C^*} & \To & \widetilde{M}_{\R}, \\
\widetilde{\Log} (z_1,\ldots,z_{n+2}) &:=& (\log|z_1|, \ldots,\log|z_{n+2}|) \\
\widetilde{\Arg}: \widetilde{M}_{\C^*} & \To & \widetilde{M}_{\R}/2\pi \widetilde{M}, \\
\widetilde{\Arg} (z_1,\ldots,z_{n+2}) &:=& (\arg(z_1), \ldots,\arg(z_{n+2})).
\end{eqnarray*}
These descend to maps
\begin{eqnarray*}
\Log: M_{\C^*} & \To & M_{\R}, \\
\Arg: M_{\C^*} & \To & M_{\R}/2\pi M. \\
\end{eqnarray*}

We can identify 
\[\widetilde{M}_{\C^*} = \C^{n+2} \setminus \bigcup_j \{z_j = 0\}\]
and the quotient by the diagonal $\C^*$ action,
\[M_{\C^*} = \CP{n+1} \setminus D\]
where we denote the divisors $D_j := \{z_j = 0\}$ for $j = 1,\ldots,n+2$, and $D$ is the union of all $D_j$.
Thus we have
\[\mathcal{P}^n = \left\{ \sum_{j=1}^{n+2} z_j = 0\right\} \subset M_{\C^*}.\]

\begin{defn}
The closure of the image $\Arg(\mathcal{P}^n)$ is called the {\bf coamoeba} (also, sometimes, the {\bf alga}) of $\mathcal{P}^n$, and we will denote it $\mathcal{C}^n$ (see, e.g., \cite{fhkv08,passarenilsson}).
\end{defn}

Now we will give a description of the coamoeba $\mathcal{C}^n$ for all $n$. 
It will be described in terms of a certain polytope, which we first describe.

\begin{defn}
Let $Z_n$ be the zonotope generated by the vectors $e_j$ in $M_{\R}$, i.e.,
\[Z_n = \left\{\sum_{j=1}^{n+2} \theta_j e_j :\theta_j \in [0,1]\right\} \subset M_{\R}\]
(this is the projection of the cube $[0,1]^{n+2}$ in $\widetilde{M}_{\R}$).
\end{defn}

\begin{defn}
\label{defn:qncells}
The cells of $\partial Z_n$ are indexed by triples of subsets $J,K,L \subset [n+2]$ such that
\begin{itemize}
\item $J \sqcup K \sqcup L = [n+2]$;
\item $J \neq \phi$ and $K \neq \phi$.
\end{itemize}
Namely, we define the cell
\begin{eqnarray*}
 U_{JKL}&:= &\left\{\sum_{i=1}^{n+2} \theta_i e_i: \theta_j = 0 \mbox{ for $j \in J$, } \theta_k = 1 \mbox{ for $k \in K$, } \theta_l \in [0,1] \mbox{ for $l \in L$}\right\} \\
& \subset& \partial Z_n.
\end{eqnarray*}
\end{defn}

We note that
\[\mathrm{dim}\left(U_{JKL}\right) = |L|,\]
and $U_{J'K'L'}$ is part of the boundary of $U_{JKL}$ if and only if
\[ J \subseteq J', K \subseteq K', \mbox{ and } L \supsetneq L'.\]
In particular, the vertices of $Z_n$ are the $0$-cells $U_{\bar{K},K,\phi} = \{e_K\}$, and are indexed by proper, non-empty subsets $K \subset [n+2]$.

\begin{prop}
\label{prop:coamoeba}
$\mathcal{C}^n \subset M_{\R}/2\pi M$ is the complement of the image of the interior of $\pi Z_n$.
\end{prop}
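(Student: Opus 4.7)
The plan is to identify both sides of the claimed equality as the set of classes $[\phi_1,\ldots,\phi_{n+2}] \in M_\R/2\pi M$ for which the unit complex numbers $e^{i\phi_j}$ all lie in some common open half-plane of $\C$ through the origin.

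For the left-hand side: a class $[\phi_j]$ lies in $\Arg(\mathcal{P}^n)$ iff there exist $r_j > 0$ with $\sum_{j=1}^{n+2} r_j e^{i\phi_j} = 0$, since points of $\mathcal{P}^n$ are represented by $(z_j) \in (\C^*)^{n+2}$ summing to zero with the overall $\C^*$ freedom used to adjust the radii. This is equivalent to $0$ lying in the relative interior of $\mathrm{conv}\{e^{i\phi_j}\} \subset \C$. Passing to the closure, $[\phi_j] \in \mathcal{C}^n$ iff $0 \in \mathrm{conv}\{e^{i\phi_j}\}$, and the complement, by the separating-hyperplane theorem, is precisely the half-plane condition: there exists $\alpha \in \R$ with $\phi_j - \alpha \in (0,\pi) \pmod{2\pi}$ for every $j$.

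For the right-hand side: I would show that $y \in M_\R$ lies in $\mathrm{int}(\pi Z_n)$ iff it admits a representative $(\phi_j) \in \widetilde{M}_\R$ with every $\phi_j \in (0,\pi)$. The preimage of $y \in \pi Z_n$ under $\widetilde{M}_\R \to M_\R$ is an affine line in the $e_{[n+2]}$-direction, and its intersection with $[0,\pi]^{n+2}$ is a closed interval. That interval degenerates to a single point exactly when $\max_j \phi_j - \min_j \phi_j = \pi$, in which case $y$ lies on one of the boundary cells $U_{JKL}$ of Definition \ref{defn:qncells}; otherwise one can shift along the fiber to an interior point of the cube, and small perturbations of $y$ in $M_\R$ can then be lifted to stay in $(0,\pi)^{n+2}$, so $y \in \mathrm{int}(\pi Z_n)$. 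Descending to $M_\R/2\pi M$, the image of $\mathrm{int}(\pi Z_n)$ consists of classes admitting a representative with every $\phi_j \in (0,\pi) \pmod{2\pi}$, which after absorbing an overall diagonal shift is exactly the half-plane condition from the previous paragraph.

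The main obstacle, such as it is, lies in the bookkeeping around the two-stage quotient $\widetilde{M}_\R \to M_\R \to M_\R/2\pi M$: the diagonal $\R$-action (the passage to $M_\R$) plays the role of choosing the half-plane's bounding direction $\alpha$, while the $2\pi\widetilde{M}$-shifts let one move each individual $\phi_j$ into the canonical window $(0,\pi)$. One must also verify that degenerate fibers of $\widetilde{M}_\R \to M_\R$ over points of $\pi Z_n$ correspond exactly to cells $U_{JKL}$ in Definition \ref{defn:qncells} rather than to interior points whose representatives merely happen to touch the cube boundary. Once these correspondences are pinned down, the two characterizations agree and the proposition follows.
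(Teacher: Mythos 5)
Your proposal is correct and follows essentially the same route as the paper's proof: both characterize the complement of $\mathcal{C}^n$ by the condition that the $e^{i\theta_j}$ lie in a common open half-plane (equivalently, the $\theta_j$ fit in an interval of length $<\pi$), and then match this against the image of the open cube $(0,\pi)^{n+2}$ under the projection to $M_\R$. Your treatment of the fiber of $\widetilde{M}_\R \to M_\R$ over points of $\pi Z_n$, and the observation that the fiber interval degenerates precisely on the boundary cells $U_{JKL}$, spells out the step the paper handles tersely by normalizing $\theta_j \in [0,\pi)$, but it is the same argument.
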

\begin{proof}
$\mathcal{C}^n$ is the closure of the set of those 
\[\bm{\theta} = \sum_j \theta_j e_j\]
such that there exist $r_j$ satisfying
\[ \sum_{j=1}^{n+2} \exp(r_j+i\theta_j) = 0.\]
In other words, the convex cone spanned by the vectors $\exp(i \theta_j)$ contains $0$.

Therefore the complement of $\mathcal{C}^n$ consists of exactly those $\bm{\theta}$ such that the coordinates $\theta_1,\ldots,\theta_{n+2}$ are contained in an interval of length $<\pi$. 
By adding a common constant we may assume all $\theta_j$ lie in $[0,\pi)$. 
Thus the complement of $\mathcal{C}^n$ is exactly the image of the interior of $\pi Z_n$.
\end{proof}

\begin{rmk}
\label{rmk:verts}
As we saw in Definition \ref{defn:qncells}, the vertices of $\partial (\pi Z_n)$ are the points $\pi e_K$ where $K \subset [n+2]$ is proper and non-empty.
Observe that the vertices $\pi e_K,\pi e_{\bar{K}}$ get identified because 
\[\pi e_K - \pi e_{\bar{K}} \in 2\pi M.\]
\end{rmk}

We can draw pictures in the lower-dimensional cases (see Figure \ref{fig:coamoebas}).

\begin{prop}
\label{prop:arghomeq}
The map $\Arg: \mathcal{P}^n \To \mathcal{C}^n$ is a homotopy equivalence.
In particular, $\mathcal{P}^n$ has the homotopy type of an $(n+1)$-torus with a point removed.
\end{prop}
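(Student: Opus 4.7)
The plan is to decompose the statement into two pieces: that $\Arg: \mathcal{P}^n \To \mathcal{C}^n$ is a homotopy equivalence, and that $\mathcal{C}^n$ itself is homotopy equivalent to the ambient torus $M_{\R}/2\pi M$ with a point removed. (The ``$n$-torus with a point removed'' in the statement should be interpreted as the $(n+1)$-dimensional torus $M_{\R}/2\pi M$; this matches the $n=1$ case, where $\mathcal{P}^1$ is the thrice-punctured sphere, homotopy equivalent to $T^2 \setminus \{pt\}$.) For the first piece, I would analyze the fibers of $\Arg$. For $\bm{\theta}$ in the interior of $\mathcal{C}^n$, the preimage $\Arg^{-1}(\bm{\theta}) \cap \mathcal{P}^n$ consists of tuples $(z_j) = (r_j e^{i\theta_j})$ with $r_j > 0$ and $\sum z_j = 0$, considered up to the diagonal $\C^*$-action. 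Fixing phases reduces this action to positive scaling, so after normalizing $\sum r_j = 1$ the fiber becomes the intersection of the open simplex with the codimension-two real affine subspace cut out by $\sum r_j \cos\theta_j = 0$ and $\sum r_j \sin\theta_j = 0$. By Proposition \ref{prop:coamoeba}, $\bm{\theta}$ lying in the interior of $\mathcal{C}^n$ is equivalent to the origin lying in the relative interior of the convex hull of $\{e^{i\theta_j}\}$, so this intersection is a non-empty open convex polytope --- hence contractible. The constraints have maximal rank over the interior, so $\Arg$ is a submersion onto $\mathrm{int}(\mathcal{C}^n)$ with contractible fibers.

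For the second piece, Proposition \ref{prop:coamoeba} identifies $\mathcal{C}^n$ as the closed complement in $M_{\R}/2\pi M$ of the image of $\mathrm{int}(\pi Z_n)$. The key lemma is that $\mathrm{int}(\pi Z_n)$ embeds as an open $(n+1)$-ball. Lifting two interior points to $\widetilde{M}_{\R}$ and writing their difference as $\pi \sum_j a_j e_j$ with $a_j \in (-1, 1)$, the condition that this lies in $2\pi M$ forces --- modulo $e_{[n+2]}$ --- the $a_j$ to share a common parity class, and hence (since each $|a_j| < 1$) all to vanish. Thus the only lattice identifications on $\pi Z_n$ are between boundary points, extending the identification $\pi e_K \leftrightarrow \pi e_{\bar{K}}$ of Remark \ref{rmk:verts}. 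Consequently $\mathcal{C}^n$ is the $(n+1)$-torus minus a convex embedded open ball, which deformation retracts onto $M_{\R}/2\pi M \setminus \{pt\}$. The inclusion $\mathrm{int}(\mathcal{C}^n) \hookrightarrow \mathcal{C}^n$ is also a homotopy equivalence (the boundary $\partial(\pi Z_n)$ is a PL subcomplex which can be pushed slightly inward), so chaining all the homotopy equivalences gives $\mathcal{P}^n \simeq \mathcal{C}^n \simeq (M_{\R}/2\pi M) \setminus \{pt\}$.

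The main obstacle is the first piece: since $\mathcal{P}^n$ is non-compact and $\Arg$ is not proper (fibers can degenerate as $\bm{\theta}$ approaches $\partial(\pi Z_n)$ because some $r_j$ is forced to $0$), Ehresmann's theorem does not immediately upgrade the submersion with contractible fibers to a locally trivial fibration. The cleanest workaround is to exhibit an explicit section of $\Arg$ --- for instance, the one sending $\bm{\theta} \in \mathrm{int}(\mathcal{C}^n)$ to the unique tuple $(r_j)$ in its fiber that maximizes the strictly concave functional $\sum_j \log r_j$ on the open fiber polytope --- together with a fiberwise straight-line retraction of $\mathcal{P}^n$ onto the image of this section, which directly produces the desired homotopy equivalence $\mathcal{P}^n \simeq \mathrm{int}(\mathcal{C}^n)$.
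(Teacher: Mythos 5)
Your second piece contains a genuine error. You claim that ``the inclusion $\mathrm{int}(\mathcal{C}^n) \hookrightarrow \mathcal{C}^n$ is also a homotopy equivalence (the boundary $\partial(\pi Z_n)$ is a PL subcomplex which can be pushed slightly inward).'' This is false, and the reason is exactly the vertex identifications of Remark \ref{rmk:verts}. Since $\pi e_K$ and $\pi e_{\bar K}$ are identified in the torus, $\mathcal{C}^n$ has two local sheets meeting only at the image of each such vertex; you cannot ``push the boundary inward'' there, because the two sheets cannot be separated while staying in $\mathcal{C}^n$. The case $n=1$ already refutes the claim: $\mathrm{int}(\mathcal{C}^1)$ is two disjoint open triangles, hence homotopy equivalent to two points, while $\mathcal{C}^1$ is the $2$-torus minus an open hexagonal region, which (as the proposition asserts) is homotopy equivalent to $S^1 \vee S^1$. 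So $\mathrm{int}(\mathcal{C}^n) \hookrightarrow \mathcal{C}^n$ need not be a homotopy equivalence, and the nontrivial topology of $\mathcal{C}^n$ lives precisely at the vertices your retraction ignores.

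The same issue undermines the first piece. You assert that $\Arg$ is ``a submersion onto $\mathrm{int}(\mathcal{C}^n)$ with contractible fibers,'' but the image $\Arg(\mathcal{P}^n)$ does not equal $\mathrm{int}(\mathcal{C}^n)$: the vertices $\pi e_K$ of $\partial(\pi Z_n)$ are in the image (all $\theta_j$ lie in $\{0,\pi\}$ with both values taken, so $0$ is a strictly positive combination of $\pm 1$), while interior points of positive-dimensional boundary faces are not (the imaginary parts $\sin\theta_j$ are nonnegative with at least one positive). Over the vertices, the two real constraints $\sum r_j \cos\theta_j = 0$ and $\sum r_j \sin\theta_j = 0$ degenerate to a single constraint, so the fibers jump dimension and $\Arg$ is not a submersion --- and again, these vertices carry all the interesting topology, so you cannot simply discard them. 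Your section-plus-fiberwise-retraction idea could in principle establish $\mathcal{P}^n \simeq \Arg(\mathcal{P}^n)$ (one would need to check continuity of the section as $\bm{\theta}$ approaches a vertex), but it then remains to show $\Arg(\mathcal{P}^n) \simeq \mathcal{C}^n$, which is essentially as hard as the original claim and not addressed. The paper sidesteps this entire fiber analysis by invoking Hattori's theorem to produce a subset $W \subset \mathcal{P}^n$ on which $\Arg$ restricts to a homotopy equivalence onto an explicit image $\Arg(W)$, and then compares $\Arg(W)$ with $\mathcal{C}^n$ by noting both are strong deformation retracts of the punctured torus; if you wish to avoid that citation, you will need a direct argument that accounts for the behavior of $\Arg$ near the vertex preimages.
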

\begin{proof}
We choose to work in affine coordinates 
\[\tilde{z}_j := \frac{z_j}{z_{n+2}} \mbox{ for $j = 1, \ldots, n+1$}\]
on $\CP{n+1} \setminus D$.
So
\[\mathcal{P}^n \cong \{1+\tilde{z}_1+\ldots + \tilde{z}_{n+1} = 0\} \subset (\C^*)^{n+1}.\]
It is shown in \cite{hattori75} that there exists a subset $W \subset \mathcal{P}^n$, such that the inclusion $W \hookrightarrow \mathcal{P}^n$ is a homotopy equivalence, and the projection 
\[\Arg: W \To M_{\R}/2\pi M\]
is a homotopy equivalence onto its image, which is
\[ \Arg(W) = \left\{(\tilde{\theta}_1,\ldots,\tilde{\theta}_{n+1}): \mbox{ at least one } \tilde{\theta}_j = \pi \right\} \subset M_{\R}/2\pi M.\]
It is easy to see that the inclusion
\[ \Arg(W) \hookrightarrow \mathcal{C}^n\]
is a homotopy equivalence (both are strong deformation retracts of $(M_{\R}/2\pi M) \setminus (0,0,\ldots,0)$). Hence, we have a commutative diagram
\[ \xymatrix{
W \ar[r] \ar[d] & \mathcal{P}^n \ar[d]^{\mathrm{Arg}}\\
\Arg(W) \ar[r] & \mathcal{C}^n}
\]
in which all arrows but the one labeled `$\Arg$' are known to be homotopy equivalences.
It follows that $\Arg: \mathcal{P}^n \To \mathcal{C}^n$ is also a homotopy equivalence.
\end{proof}

\begin{cor}
\label{cor:pi1}
For $n>1$, there are natural isomorphisms
\[\pi_1(\mathcal{P}^n) \cong H_1(\mathcal{P}^n) \cong M.\]
When $n=1$, we still have a natural isomorphism $H_1(\mathcal{P}^1) \cong M$, but the fundamental group is no longer abelian. 
Instead, there is a natural isomorphism
\[ \pi_1(\mathcal{P}^1) \cong \langle a,b,c | abc \rangle.\]
\end{cor}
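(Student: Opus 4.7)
The plan is to deduce the corollary from Proposition~\ref{prop:arghomeq}, whose proof in fact establishes that $\mathcal{P}^n$ is homotopy equivalent (via $\Arg$) to $(M_{\R}/2\pi M) \setminus \{0\}$, i.e.\ the $(n+1)$-torus $M_{\R}/2\pi M$ with a point removed. The natural candidate map $\pi_1(\mathcal{P}^n) \to M$ will be the one induced by $\Arg: \mathcal{P}^n \to M_{\R}/2\pi M$, since $\pi_1(M_{\R}/2\pi M) = H_1(M_{\R}/2\pi M) = M$ canonically, and similarly for $H_1$. The task therefore reduces to understanding the effect of the inclusion $(M_{\R}/2\pi M) \setminus \{0\} \hookrightarrow M_{\R}/2\pi M$ on $\pi_1$ and $H_1$.

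For $n \geq 2$, I will equip $M_{\R}/2\pi M$ with its standard CW structure and arrange the removed point to lie in the interior of the top $(n+1)$-cell. Then $(M_{\R}/2\pi M) \setminus \{0\}$ deformation retracts onto the $n$-skeleton. Since the top cell has dimension $n+1 \geq 3$, attaching it does not affect $\pi_1$, so the inclusion induces an isomorphism $\pi_1\bigl((M_{\R}/2\pi M) \setminus \{0\}\bigr) \cong M$. This yields $\pi_1(\mathcal{P}^n) \cong M$ naturally, and abelianization then gives $H_1(\mathcal{P}^n) \cong M$ as well.

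For $n = 1$ the torus is two-dimensional and $(M_{\R}/2\pi M) \setminus \{0\}$ deformation retracts to its $1$-skeleton $S^1 \vee S^1$, so $\pi_1(\mathcal{P}^1)$ is free of rank two. To recover the three-generator presentation, I take for $a, b, c$ the classes of small loops in $\mathcal{P}^1 = \CP{1} \setminus \{p_1,p_2,p_3\}$ encircling the three punctures, oriented so that each projects under $\Arg$ to the corresponding basis vector $e_j \in M$. The relation $abc = 1$ is geometric: the equator of $\CP{1}$, pushed through the punctured hemisphere, represents $abc$, while on the other side it bounds an unpunctured disk lying in $\mathcal{P}^1$. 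Since any two of $a, b, c$ already freely generate a free group of rank two, $\langle a, b, c \mid abc \rangle$ is a complete presentation of $\pi_1(\mathcal{P}^1)$. Abelianizing and comparing with $M = \widetilde{M}/\langle e_{[3]} \rangle$ yields the natural isomorphism on $H_1$.

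The main subtlety is the $n=1$ case: the extra generator and its single relation $abc = 1$ must be extracted from a geometric picture, and the identification of $a, b, c$ with $e_1, e_2, e_3$ under $\Arg$ must be checked carefully. The $n \geq 2$ case, by contrast, is a routine CW-skeleton argument once Proposition~\ref{prop:arghomeq} is in hand.
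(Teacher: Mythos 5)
Your proposal is correct and takes the route the paper clearly intends: the corollary is stated with no proof, the evident implicit argument being to read it off from Proposition~\ref{prop:arghomeq}, which exhibits $\mathcal{P}^n$ as homotopy equivalent (via $\Arg$) to the $(n+1)$-torus $M_{\R}/2\pi M$ with a point removed. Your CW-skeleton argument for $n\ge 2$, and the punctured-sphere picture with loops around the three divisors giving the relation $abc=1$ for $n=1$, are exactly the standard computations needed to make this explicit; you also correctly note in passing that the torus in question is $(n+1)$-dimensional, not $n$-dimensional as the statement of Proposition~\ref{prop:arghomeq} loosely says.
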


\begin{figure}
\centering
\subfigure[The coamoeba of $\mathcal{P}^1$.]{
\includegraphics[width=0.3\textwidth]{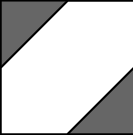}}
\hfill
\subfigure[The coamoeba of $\mathcal{P}^2$. This picture lives in $(S^1)^3$, drawn as a cube with opposite faces identified, and we are removing the zonotope illustrated, which looks somewhat like a crystal.]{
\includegraphics[width=0.45\textwidth]{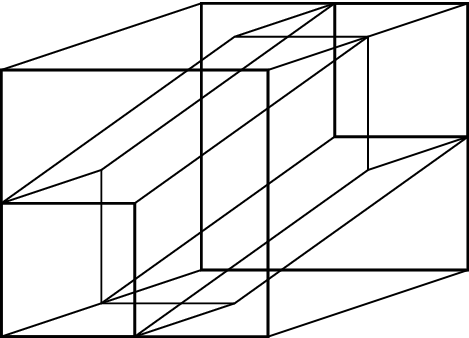}
\label{fig:c2}}
\caption{$\mathcal{C}^1$ and $\mathcal{C}^2$.
\label{fig:coamoebas}}
\end{figure}

\subsection{Construction of the Lagrangian immersion $L^n: S^n  \To \mathcal{P}^n$}
\label{subsec:ln}

We observe that the Lagrangian $L^1: S^1 \To \mathcal{P}^1$ can be seen rather simply in the coamoeba. 
It corresponds to traversing the hexagon which forms the boundary of the coamoeba (see Figure \ref{fig:l1co}).
The two triangles that make up the coamoeba correspond to the holomorphic triangles that give the product structure on Floer cohomology.

\begin{figure}
\centering
\includegraphics[width=0.5\textwidth]{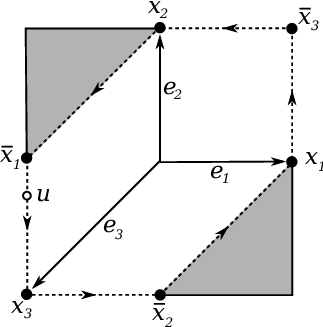}
\caption{The projection of $L^1$ to $\mathcal{C}^1$.}
\label{fig:l1co}
\end{figure}

We will show that a similar picture exists for higher dimensions. 
Namely, by Proposition \ref{prop:coamoeba}, we know that the boundary of $\mathcal{C}^n$ is a polyhedral $n$-sphere that intersects itself at its vertices.
In this section, we will explain how to lift this immersed polyhedral $n$-sphere to an immersed Lagrangian $n$-sphere in $\mathcal{P}^n$.

\begin{rmk}
This is not the first time that the coamoeba has been used to study Floer cohomology. 
It appeared in \cite{fhkv08} (with the name `alga'), where it was used as motivation to construct Landau-Ginzburg mirrors to some toric surfaces. 
It was conjectured in \cite{futueda10} that this picture generalizes to higher dimensions. 
There is a connection between the `tropical coamoeba' of the Landau-Ginzburg mirror $(X,w)$ of projective space, introduced in \cite{futueda10}, and our construction, but we will not go into it.
\end{rmk}

Consider the real projective space 
\[\RP{n} = \left\{\sum_j z_j = 0, z_j \in \R \right\} \subset \left\{\sum_j z_j = 0 \right\} \subset \CP{n+1}.\]
Clearly it is Lagrangian and invariant with respect to the $S_{n+2} \times \Z_2$ action, so by an equivariant version of the Weinstein Lagrangian neighbourhood theorem, there is an $S_{n+2} \times \Z_2$-equivariant symplectic embedding of the radius-$\eta$ disk cotangent bundle
\[D^*_{\eta} \RP{n} \hookrightarrow \left\{\sum_j z_j = 0\right\} \subset \CP{n+1}\]
for some sufficiently small $\eta >0$. 
We may choose this embedding to be $J$-holomorphic along the zero section with respect to the almost-complex structure induced by the standard symplectic form and metric on $D^*_{\eta}\RP{n}$. The $\Z_2$-invariance says that complex conjugation acts on $D^*_{\eta}\RP{n}$ by $-1$ on the covector.

Our immersed sphere $L^n$ will land inside this neighbourhood.
Now consider the double cover of $\RP{n}$ by $S^n$.
Think of $S^n$ as 
\[ S^n = \left\{ \sum_j x_j^2 = 1 \right\} \bigcap \left\{\sum_j x_j = 0\right\}  \subset \R^{n+2},\]
and denote the real hypersurfaces
\[ D_j^{\R} := \{x_j = 0\} \subset S^n.\]
Then the double cover just sends $(x_1,\ldots,x_{n+2}) \mapsto [x_1:\ldots:x_{n+2}]$.
This extends to a double cover $D^*_{\eta} S^n \To D^*_{\eta} \RP{n}$. 
Composing this with the inclusion $D^*_{\eta} \RP{n} \To \CP{n}$ gives a map $i : D^*_{\eta} S^n \To \CP{n}$.

\begin{lem}
\label{lem:transdiv}
Suppose that $f:S^n \To \R$ is a smooth function whose gradient vector field (with respect to the round metric on $S^n$) is transverse to the real hypersurfaces $D_j^{\R}$.
Then for sufficiently small $\epsilon > 0$, the image of the graph $\Gamma(\epsilon df) \subset T^* S^n$ lies inside $D^*_{\eta} S^n$, and its image under the map $i$ into $\CP{n}$ avoids the divisors $D_j$.
\end{lem}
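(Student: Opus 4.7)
The plan is to dispose of the inclusion $\Gamma(\epsilon df) \subset D^*_{\eta} S^n$ immediately by compactness (any $\epsilon < \eta / \sup_{S^n} |df|$ works) and then concentrate on the nontrivial claim that $i(\Gamma(\epsilon df))$ avoids every divisor $D_j$. I will argue this one index $j$ at a time and take $\epsilon$ to be the minimum of the finitely many resulting admissible values.

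Fix $j$. Because $\RP{n}$ lies in the real locus $\{z_j \in \R\}$, the restriction $(z_j \circ i)(\cdot, 0)$ is a real-valued function on $S^n$ whose zero set is precisely $D_j^{\R}$. Choose a tubular neighborhood $U$ of $D_j^{\R}$; on the compact complement $S^n \setminus U$ the modulus $|z_j \circ i|$ is bounded below, so by continuity $(z_j \circ i) \neq 0$ on the portion of the graph $\Gamma(\epsilon df)$ lying over $S^n \setminus U$ for all sufficiently small $\epsilon$. The substantive work is inside $U$, where $(z_j \circ i)(x, 0)$ is small and a first-order Taylor expansion is required.

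Fix $x \in D_j^{\R}$. Under the canonical splitting $T_{(x,0)}(D^*_{\eta} S^n) = T_x S^n \oplus T^*_x S^n$, the almost-complex structure coming from the standard symplectic form and the round metric interchanges the two summands via the musical isomorphisms; concretely, $J(v, 0) = (0, v^{\flat})$. The hypothesis that $i$ is $J$-holomorphic along the zero section then gives
\[ d(z_j \circ i)_{(x,0)}(0, v^{\flat}) = i \cdot d(z_j \circ i)_{(x,0)}(v, 0) \]
for every $v \in T_x S^n$. Taking $v = \nabla f(x)$ so that $v^{\flat} = df(x)$, and noting that $d(z_j \circ i)_{(x,0)}$ on horizontal vectors is just the differential of the real-valued function $(z_j \circ i)(\cdot, 0)$ at $x$, Taylor expansion yields
\[ (z_j \circ i)(x, \epsilon df(x)) = i \epsilon \bigl\langle \nabla (z_j \circ i)(\cdot, 0),\, \nabla f\bigr\rangle(x) + O(\epsilon^2). \]
The leading coefficient is nonzero precisely when $\nabla f(x) \notin T_x D_j^{\R}$, which is exactly the transversality hypothesis; by compactness of $D_j^{\R}$ it is uniformly bounded below in absolute value on a neighborhood of $D_j^{\R}$, so the $O(\epsilon^2)$ remainder cannot cancel it for $\epsilon$ sufficiently small, and $(z_j \circ i)(x, \epsilon df(x)) \neq 0$ throughout $U$.

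The one nontrivial ingredient is the $J$-holomorphy of the Weinstein embedding along $\RP{n}$: it is exactly this that converts an infinitesimal perturbation in the cotangent-fiber direction into an imaginary perturbation of $z_j$, transverse to its real vanishing locus $D_j^{\R}$. Without that property one would have no a-priori reason for the first-order change in $z_j$ to move off the real axis, and the argument would have no bite.
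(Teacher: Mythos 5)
Your proof is correct and takes essentially the same approach as the paper: both identify the first-order displacement of the zero section as $J(\nabla f)$ via the $J$-holomorphy of the Weinstein embedding along $\RP{n}$, and both use transversality of $\nabla f$ to $D_j^{\R}$ to see that this displacement is normal, in holomorphic coordinates, to the real vanishing locus of the local defining equation for $D_j$. The only difference is presentation: you make explicit the near/far decomposition, the Taylor expansion, and the uniformity coming from compactness, where the paper phrases the same estimate more informally (``the imaginary part of the first coordinate will be strictly positive''), and the paper describes the graph as the time-$\epsilon$ Hamiltonian flow of the zero section where you compute directly in the $T_x S^n \oplus T_x^* S^n$ splitting — but these are the same computation.
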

\begin{proof}
Note that the graph of $\epsilon df$ in $D^*_{\eta} S^n$ is the time-$\epsilon$ flow of the zero-section by the Hamiltonian vector field corresponding to $f$, which is exactly $J(\nabla f)$, where $J$ is the standard complex structure on $\CP{n}$ (we observe that the round metric on $S^n$ is exactly the metric induced by the Fubini-Study form and standard complex structure). 
Given a point $q \in D_j^{\R}$, we can holomorphically identify a neighbourhood of its image in $\CP{n}$ with a neighbourhood of $0$ in $\C^n$, in such a way that a neighbourhood of $q$ in $S^n$ gets identified with a neighbourhood of $0$ in $\R^n \subset \C^n$. 
We can furthermore arrange that the divisor $D^j$ corresponds to the first coordinate being $0$.

When we flow $\R^n$ by $J(\nabla f)$, the imaginary part of the first coordinate will be strictly positive (respectively negative) because $\nabla f$ is transverse to $D_j^{\R}$, in the positive (respectively negative) direction.
Therefore the first component can not be zero, so the image avoids $D_j$.
\end{proof}

\begin{defn}
\label{defn:f}
Let $g: \R \To \R$ be a smooth function so that 
\begin{enumerate}
\item $g'(x)>0$;
\item $g(-x)=-g(x)$;
\item $g(x) = x$ for $|x| < \delta$;
\item $g'(x)$ is a strictly decreasing function of $|x|$ for $|x|>\delta$;
\item $g'(x) < \delta$ for $|x| > 2\delta$,
\end{enumerate}
where $0<\delta \ll 1$ (see Figure \ref{fig:g}).
\begin{figure}
\centering
\includegraphics[width=0.5\textwidth]{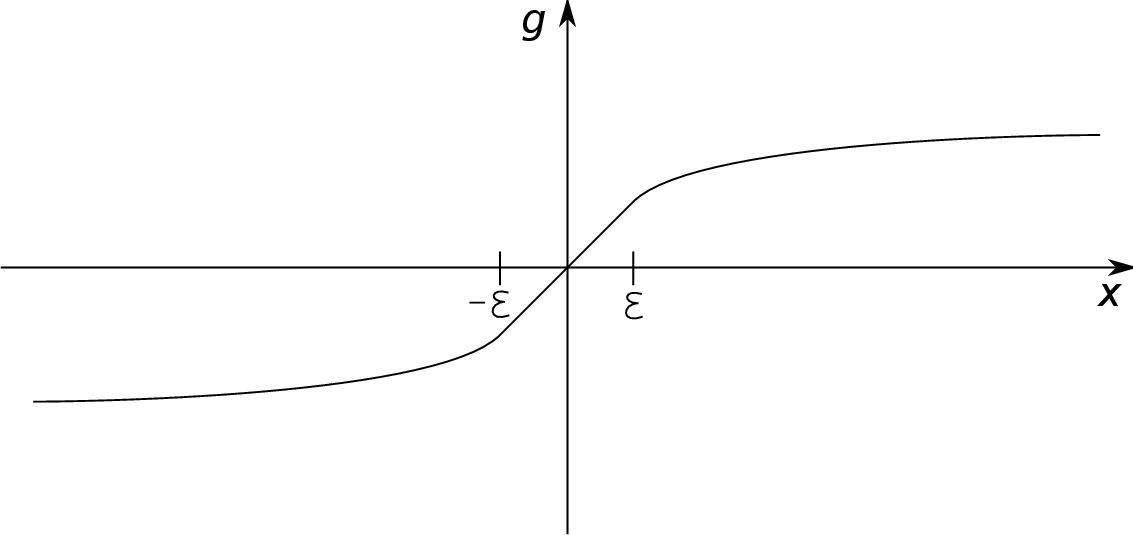}
\caption{The function $g$.}
\label{fig:g}
\end{figure}
We define $f: S^n \To \R$ by restricting the function
\begin{eqnarray*}
\tilde{f}: \R^{n+2} & \To & \R \\
\tilde{f}(x_1,\ldots,x_{n+2}) &=& \sum_{j=1}^{n+2} g(x_j),
\end{eqnarray*}
recalling that $S^n$ sits inside $\R^{n+2}$ as above.
\end{defn}

\begin{lem}
\label{lem:defnf}
$\nabla f$ is transverse to all of the hypersurfaces $D_j^{\R}$ in a positive sense.
\end{lem}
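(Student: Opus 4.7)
The plan is to compute $\nabla f(p)$ explicitly at an arbitrary point $p\in D_j^{\R}$ and show that its $j$-th Euclidean coordinate is strictly positive. Because $\nabla f(p)\in T_pS^n$ and because $T_pD_j^{\R}=T_pS^n\cap\{v_j=0\}$, strict positivity of this coordinate simultaneously yields transversality to $D_j^{\R}$ and the fact that $\nabla f$ flows towards $\{x_j>0\}$, which is exactly the ``positive sense'' claim.

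For the computation, I would use that the round metric on $S^n$ is the restriction of the Euclidean metric from $\R^{n+2}$, so $\nabla f(p)$ is the orthogonal projection of $\nabla\tilde f(p)=(g'(x_1),\ldots,g'(x_{n+2}))$ onto $T_pS^n$. The normal space to $S^n$ at $p$ is spanned by $p$ (of unit length, coming from the sphere condition) and $\mathbf{1}=(1,\ldots,1)$ (of length $\sqrt{n+2}$, coming from the hyperplane $\sum x_k=0$), and these two vectors are orthogonal because $\langle p,\mathbf{1}\rangle=\sum p_k=0$. Hence
$$ \nabla f(p)\;=\;\nabla\tilde f(p)\;-\;\langle\nabla\tilde f(p),p\rangle\, p\;-\;\frac{1}{n+2}\Bigl(\sum_{k=1}^{n+2}g'(x_k)\Bigr)\mathbf{1}. $$
Since $x_j(p)=0$ and $g'(0)=1$, the contribution of the middle term to the $j$-th slot vanishes, and the $j$-th component of $\nabla f(p)$ reduces to
$$ \bigl(\nabla f(p)\bigr)_j\;=\;1\;-\;\frac{1}{n+2}\sum_{k=1}^{n+2}g'(x_k). $$

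The last step is to show this quantity is strictly positive, using the properties of $g$ from Definition~\ref{defn:f}. Conditions (1), (3), and (4), together with the evenness of $g'$ coming from condition (2), imply that $g'(x)\le 1$ for every $x\in\R$, with equality if and only if $|x|\le\delta$. Therefore it suffices to produce a single index $k_0\ne j$ with $|x_{k_0}|>\delta$. On $D_j^{\R}$ the equations $x_j=0$, $\sum_k x_k=0$, $\sum_k x_k^2=1$ force $\sum_{k\ne j}x_k^2=1$, and if $|x_k|\le\delta$ held for every $k\ne j$ we would get $1\le(n+1)\delta^2$, contradicting the standing assumption $0<\delta\ll 1$ (concretely, $\delta<1/\sqrt{n+1}$ suffices). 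The required $k_0$ therefore exists, so $\sum_k g'(x_k)<n+2$ and $(\nabla f(p))_j>0$. The main thing to be careful about is this quantitative use of ``$\delta\ll 1$'', which must be taken small relative to $n$; apart from that the argument is essentially an explicit projection calculation.
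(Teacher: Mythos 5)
Your proof is correct and takes essentially the same route as the paper: both compute $\nabla f$ as the orthogonal projection of $(g'(x_1),\ldots,g'(x_{n+2}))$ onto $T_pS^n$, observe that the $j$-th coordinate at $p\in D_j^{\R}$ reduces to $1-\frac{1}{n+2}\sum_k g'(x_k)$ (the paper's $f_j$), and argue positivity from $g'\le 1$. The paper's proof is terser -- it leaves implicit both that the radial subtraction does not affect the $j$-th slot when $x_j=0$ and the quantitative point that $\delta<1/\sqrt{n+1}$ forces some $|x_{k_0}|>\delta$ -- and you usefully spell out both.
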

\begin{proof}
One can compute that $\nabla f$ is the projection of the vector
\[ \sum_{j=1}^{n+2} f_j \del{}{x_j} \in T \R^{n+1}\]
to $T S^n$, where $\R^{n+1} = \{\sum_j x_j = 0\} \subset \R^{n+2}$ and
\[ f_j := g'(x_j) - \frac{\sum_{k=1}^{n+2} g'(x_k)}{n+2}. \]
By the construction of $g$, one can check that $f_j > 0$ whenever $|x_j| < \delta$. 
The result follows.
\end{proof}

\begin{defn}
Let $L^n_{\epsilon}: S^n \To \CP{n}$ be the graph of $\epsilon df$ in $\CP{n}$, for $\epsilon>0$ sufficiently small.
Note that it lies in $\mathcal{P}^n$ by Lemmas \ref{lem:transdiv} and \ref{lem:defnf}, and is Lagrangian because it is the graph of an exact one-form.
We will frequently fix an $\epsilon$ and write $L^n$.
\end{defn}

\begin{rmk}
\label{rmk:propln}
$L^n$ is $S_{n+2}$-invariant (because $f$ and our Weinstein neighbourhood are).
Furthermore, because $f(-x) = -f(x)$, $df$ is invariant under the $\Z_2$-action 
\[(x,\alpha) \mapsto (a(x), -a^* \alpha)\]
where $a: S^n \To S^n$ is the antipodal map.
Recall that complex conjugation $\tau: \mathcal{P}^n \To \mathcal{P}^n$ induces the $\Z_2$-action $(x,\alpha) \mapsto (x,-\alpha)$ in $D^*_{\eta}S^n$, so $\tau \circ L^n = L^n \circ a$. 
In other words, the image of $L^n$ is preserved by complex conjugation, but it acts via the antipodal map on the domain $S^n$.
\end{rmk}

\begin{prop}
\label{prop:convamoe}
Define the maps
\begin{eqnarray*}
 \iota_{\epsilon}: S^n &\To& M_{\R}/2\pi M,\\
 \iota_{\epsilon} & := & \Arg \circ L^n_{\epsilon},
 \end{eqnarray*}
and
\[ q: \partial Z_n \To M_{\R}/2\pi M\]
(the standard inclusion). 
Then there exist homotopy equivalences $p_{\epsilon}:S^n \To \partial Z_n$, defined for $\epsilon>0$ sufficiently small, such that 
\[ \lim_{\epsilon \To 0} \| \iota_{\epsilon} - q \circ p_{\epsilon} \|_{C^0} = 0.\]
In other words, $\iota_{\epsilon}$ converges absolutely, modulo reparametrisation, to $\partial Z_n$.
\end{prop}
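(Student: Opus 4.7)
First, I would establish a leading-order expansion for $L^n_\epsilon$ using the $J$-holomorphicity of the Weinstein embedding along the zero section. Since $D^*_\eta\RP{n} \hookrightarrow \CP{n+1}$ is $J$-holomorphic along $\RP{n}$, its tangent map at a zero-section point sends a cotangent vector to $i$ times its metric-dual tangent vector. Lifting to $S^n$ and applying this to $\alpha = \epsilon\, df$ gives
\[
L^n_\epsilon(x) \,=\, \bigl[x_1 + i\epsilon v_1(x) : \cdots : x_{n+2} + i\epsilon v_{n+2}(x)\bigr] \,+\, O(\epsilon^2)
\]
in homogeneous coordinates on $\CP{n+1}$, where $v_j(x) = f_j(x)$ is as in the proof of Lemma \ref{lem:defnf}. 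Applying $\Arg$ coordinate-wise yields
\[
\iota_\epsilon(x)_j \,=\, \arg\!\bigl(x_j + i\epsilon v_j(x)\bigr) \,+\, O(\epsilon)
\]
modulo $2\pi\Z$ and the diagonal, with the uniform $O(\epsilon)$ error relying on Lemma \ref{lem:defnf} to ensure $|x_j + i\epsilon v_j(x)| \ge c\epsilon$ whenever $|x_j| < \delta$.

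Next I would reverse-engineer $p_\epsilon$ from this expansion. Set $\theta_j(x,\epsilon) := \arg(x_j + i\epsilon v_j(x))/\pi$, taking the principal branch $\arg \in [0,\pi]$ wherever $v_j(x) > 0$ --- which is unambiguous on $|x_j| < \delta$ by Lemma \ref{lem:defnf} --- and extending continuously to the rest of $S^n$ by noting that for $|x_j| \ge \delta$ the argument is within $O(\epsilon/\delta)$ of $0$ or $\pi$ (mod $2\pi$) depending only on the sign of $x_j$. Let $\widetilde p_\epsilon(x) := \sum_j \theta_j(x,\epsilon)\, e_j \in \widetilde M_\R$. Since $\sum x_j = 0$ and $x \in S^n$, there exist indices $j^{\pm}$ with $x_{j^+} \ge 1/((n+1)\sqrt{n+2})$ and $x_{j^-} \le -1/((n+1)\sqrt{n+2})$; thus for small $\epsilon$ we have $\theta_{j^+} = O(\epsilon)$ and $\theta_{j^-} = 1 + O(\epsilon)$, and a continuous shift of $\widetilde p_\epsilon(x)$ by a multiple of $e_{[n+2]}$ places it in $\partial[0,1]^{n+2}$, with image $p_\epsilon(x) \in M_\R$ lying in $\partial Z_n$. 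By construction $q(p_\epsilon(x))$ equals $\pi \widetilde p_\epsilon(x)$ in $M_\R/2\pi M$, whose $j$th coordinate is exactly $\arg(x_j + i\epsilon v_j(x))$, so $\|\iota_\epsilon - q \circ p_\epsilon\|_{C^0} = O(\epsilon)$ by the first step.

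To show $p_\epsilon$ is a homotopy equivalence, I would compute its degree. For a regular value $y$ in the interior of a top-dimensional cell $U_{\{j\},\{k\},L}$ of $\partial Z_n$ (with $|L| = n$), the preimage in $S^n$ consists of those $x$ with $x_j$ near its positive maximum, $x_k$ near its negative minimum, and the remaining $x_l$ (for $l \in L$) prescribed by $\theta_l(x,\epsilon) = y_l$; for small $\epsilon$ this system has a unique solution, contributing sign $\pm 1$. Hence $p_\epsilon$ has degree $\pm 1$, and since both $S^n$ and $\partial Z_n$ are $n$-spheres, a degree-$\pm 1$ map is a homotopy equivalence.

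The main technical obstacle is organizing the branches of $\arg$ and the continuous translation by $e_{[n+2]}$ into a globally continuous lift $\widetilde p_\epsilon: S^n \to \widetilde M_\R$; the positivity $v_j > 0$ near $D_j^\R$ from Lemma \ref{lem:defnf} is the essential input, ensuring that the principal branch $\arg \in [0,\pi]$ is well-defined across each hypersurface and that the local branch choices fit together.
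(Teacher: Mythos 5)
Your leading-order expansion of $L^n_\epsilon$ and your computation of $\iota_\epsilon$ agree with the paper's, but the construction of $p_\epsilon$ has a genuine gap. Shifting $\widetilde p_\epsilon(x)$ by a multiple of $e_{[n+2]}$ does nothing to its image in $M_\R$, since $e_{[n+2]} = 0$ in $M$; so regardless of shift, $p_\epsilon(x)$ is just $\pi(\widetilde p_\epsilon(x))$, and there is no reason this lies in $\partial Z_n$. In fact it generically does not: the coordinates $\theta_j$ near $0$ (for $x_j \ge \delta$) and near $1$ (for $x_j \le -\delta$) only land at exactly $0$ or $1$ in the limit $\epsilon \to 0$, so the image point sits $O(\epsilon)$-off the zonotope boundary. (Concretely, $q_\epsilon(x) \in \partial Z_n$ would require $\max_j \theta_j - \min_j \theta_j = 1$ exactly, which fails for almost every $x$.) Two further problems follow: (i) even if you could shift a lift onto a facet $\{\theta_j = 0\}$ of $[0,1]^{n+2}$, that facet projects onto \emph{all} of $Z_n$, not onto $\partial Z_n$; and (ii) because $p_\epsilon$ does not land in $\partial Z_n$, your degree computation at the end — while a reasonable replacement for the paper's homotopy-equivalence argument — never gets off the ground.

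The paper's proof sidesteps this entirely. It introduces dual cellular decompositions $\{V_{JKL}\}$ of $S^n$ and $\{U_{JKL}\}$ of $\partial Z_n$, shows via the same $\arg$-estimates that $\iota_\epsilon(V_{JKL})$ lies in an $O(\sqrt{\epsilon})$-neighbourhood of $U_{JKL}$, and then defines $p_\epsilon$ by induction over skeleta as \emph{any} cellular map realizing the isomorphism of cell complexes and respecting the $O(\sqrt\epsilon)$ proximity. Being a cellular isomorphism, $p_\epsilon$ is automatically a homotopy equivalence — no degree argument is needed — and the $C^0$ bound is built into the inductive construction rather than read off a formula. To salvage your approach you would need to genuinely move the point: e.g., show $\iota_\epsilon$ lands outside the open $\pi Z_n$ (true since $L^n_\epsilon \subset \mathcal P^n$, so the image is in $\mathcal C^n$), then compose with a retraction of $\mathcal C^n$ onto $\partial(\pi Z_n)$, and then lift along $q$ — but the last step is awkward precisely because $q$ identifies antipodal vertices $\pi e_K \sim \pi e_{\bar K}$, which is exactly what the paper's cellular bookkeeping handles cleanly.
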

\begin{proof}
We consider a cellular decomposition of $S^n$ which is dual to the cellular decomposition induced by the hypersurfaces $D_j^{\R}$, and is isomorphic to the cellular decomposition of $\partial Z_n$ defined in Definition \ref{defn:qncells}. 
We will show that the image of each cell in the decomposition, under $\iota_{\epsilon}$, converges to the corresponding cell in $\partial Z_n$.

\begin{defn}
\label{defn:sncells}
We define a cellular decomposition of $S^n$ whose cells are indexed by triples of subsets $J,K,L \subset [n+2]$ such that
\begin{itemize}
\item $J \sqcup K \sqcup L = [n+2]$;
\item $J \neq \phi$ and $K \neq \phi$.
\end{itemize}
Namely, we define the cell
\[ V_{JKL} := \left\{ (x_1,\ldots,x_{n+2}) \in S^n: x_j = \max_i\{x_i\} \mbox{ for all $j \in J$}, x_k = \min_i\{x_i\} \mbox{ for all $k \in K$}\right\}\]
(this is dual to the cellular decomposition with cells
\[ W_{JKL} := \left\{ x_j \ge 0 \mbox{ for }j \in J, x_k \le 0 \mbox{ for }k \in K, \mbox{ and }x_l = 0\mbox{ for }l \in L \right\},\]
induced by the hypersurfaces $D_j^{\R}$).
\end{defn}

We now have
\[ \mathrm{dim}(V_{JKL}) = |L|,\]
and $V_{J'K'L'}$ is part of the boundary of $V_{JKL}$ if and only if
\[ J \subseteq J', K \subseteq K' ,\mbox{ and } L \supsetneq L'.\]
Thus, this cellular decomposition is isomorphic to that of $\partial Z_n$ by cells $U_{JKL}$, described in Definition \ref{defn:qncells}.
See Figure \ref{fig:dualcells} for the picture in the case $n=2$.

Our Lagrangian is obtained from the immersion $S^n \To \CP{n}$ by pushing off with the vector field $J(\nabla f)$. 
Thus, by Lemma \ref{lem:defnf}, it is approximately equal (to order $\epsilon^2$) to the composition of the map
\begin{eqnarray*}
S^n & \To & \left\{ \sum_j z_j = 0, z_j \neq 0\right\} \subset \C^{n+2} \\
(x_1,\ldots,x_{n+2}) & \mapsto & (x_1,\ldots,x_{n+2}) + i \epsilon (f_1,\ldots,f_{n+2})
\end{eqnarray*}
with the projection to $\CP{n} \setminus D = \mathcal{P}^n$. 
Thus we have
\begin{eqnarray*}
\iota_{\epsilon}(x_1,\ldots,x_n) & = & (\arg(x_1 + i\epsilon f_1), \ldots, \arg(x_{n+2} + i\epsilon f_{n+2})) + \mathcal{O}(\epsilon^2).
\end{eqnarray*}

Now, when $|x_l|$ is sufficiently large, we have 
\[ \arg(x_l + i \epsilon f_l + \mathcal{O}(\epsilon^2)) \approx \arg(x_l) = 0\mbox{ or }\pi.\]
When $|x_l|$ is sufficiently small, we have
\[ \arg(x_l + i \epsilon f_l + \mathcal{O}(\epsilon^2)) \in (0,\pi)\]
because $f_l > 0$ (by Lemma \ref{lem:defnf}). 
More precisely, we have the following:

\begin{lem}
\label{lem:args}
If we choose $\epsilon>0$ sufficiently small, then we have:
\begin{itemize}
\item If $|x_l| \ge \sqrt{\epsilon}$, then $\arg(x_l + i\epsilon f_l + \mathcal{O}(\epsilon^2)) = \arg(x_l) + \mathcal{O}(\sqrt{\epsilon})$, where $\arg(x_l) = 0$ or $\pi$; 
\item If $|x_l| \le \sqrt{\epsilon}$, then $\arg(x_l + i\epsilon f_l + \mathcal{O}(\epsilon^2)) \in (0,\pi)$, because $f_l$ is strictly positive for $|x_l|$ sufficiently small (by Lemma \ref{lem:defnf}).
\end{itemize}
\end{lem}

Observe that, on the cell $V_{JKL}$, we have
\[ x_j \ge \sqrt{\epsilon} \mbox{ for $j \in J$, and } x_k \le -\sqrt{\epsilon} \mbox{ for $k \in K$,}\]
because $\sum_l x_l^2 = 1$ and $\sum_l x_l = 0$.
Therefore, by Lemma \ref{lem:args},
\begin{eqnarray*}
\arg(x_j + i\epsilon f_j) + \mathcal{O}(\epsilon^2) &=& \mathcal{O}(\sqrt{\epsilon}) \mbox{ for $j \in J$, }\\
\arg(x_k + i\epsilon f_k) + \mathcal{O}(\epsilon^2) &=& \pi +  \mathcal{O}(\sqrt{\epsilon}) \mbox{ for $k \in K$, and}\\
\arg(x_l + i\epsilon f_l) + \mathcal{O}(\epsilon^2) & \in & (0,\pi) + \mathcal{O}(\sqrt{\epsilon}) \mbox{ for $l \in L$.}
\end{eqnarray*}
It follows that $\iota_{\epsilon}(V_{JKL})$ lies in an $\mathcal{O}(\sqrt{\epsilon})$-neighbourhood of $U_{JKL}$.

We are now able to define the map 
\[p_{\epsilon}: S^n \To \partial Z_n\]
to be a cellular map which identifies the cellular decompositions $V_{JKL}$ and $U_{JKL}$ (hence is a homotopy equivalence), and such that 
\[\| \iota_{\epsilon} - q \circ p_{\epsilon} \|_{C^0} = \mathcal{O}(\sqrt{\epsilon}).\]
We assume inductively that a map with these properties has been defined on all cells of dimension $<d$, then extend it to the cells of dimension $d$ relative to their boundaries.
\end{proof}

Now observe that, because $f(-x) = -f(x)$, $df(-x) = -df(x)$ (identifying tangent spaces by the antipodal map), so the only points where $L^n$ has a self-intersection are where $df=0$, i.e., critical points of $f$. A self-intersection point looks locally like the intersection of the graph of $df$ with the graph of $-df$, which is transverse because $f-(-f) = 2f$ is Morse.
We will now describe the critical points and Morse flow of $f$.

\begin{lem}
\label{lem:fcells}
If $x_j>x_k\ge0$, then
\[ (\nabla f) \left(\frac{x_k}{x_j}\right)>0.\]
Similarly, if $x_j<x_k\le 0$, then
\[ (\nabla f) \left(\frac{x_k}{x_j}\right) < 0.\]
\end{lem}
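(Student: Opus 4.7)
The plan is to compute $(\nabla f)(x_k/x_j)$ explicitly in coordinates using the formula for $\nabla f$ derived in Lemma~\ref{lem:defnf}, and then to verify positivity of the resulting scalar by exploiting the monotonicity of $g'$. Recall from the proof of Lemma~\ref{lem:defnf} that $\nabla f$ is the projection to $TS^n$ of the vector field $V := \sum_l f_l\,\partial/\partial x_l$, where $f_l = g'(x_l) - \bar{g}'$ and $\bar{g}' = \frac{1}{n+2}\sum_l g'(x_l)$. The crucial observation is that the function $x_k/x_j$ is homogeneous of degree zero in the coordinates $(x_1,\ldots,x_{n+2})$, so it is annihilated by the radial vector field $\sum_l x_l\,\partial/\partial x_l$. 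Consequently the normal correction in the projection from $V$ to $TS^n$ contributes nothing to $(\nabla f)(x_k/x_j)$, and a direct computation gives
\[
(\nabla f)\!\left(\frac{x_k}{x_j}\right) \;=\; V\!\left(\frac{x_k}{x_j}\right) \;=\; \frac{x_j f_k - x_k f_j}{x_j^2},
\]
so the lemma reduces to showing $x_j f_k - x_k f_j > 0$ whenever $x_j > x_k \ge 0$.

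For this algebraic inequality, the plan is to use the decomposition
\[
x_j f_k - x_k f_j \;=\; x_j\bigl(g'(x_k) - g'(x_j)\bigr) \;+\; (x_j - x_k)\,f_j,
\]
obtained by substituting $f_l = g'(x_l) - \bar{g}'$ and rearranging. Because $g$ is odd with $g(x) = x$ on $[-\delta,\delta]$ and $g'$ is strictly decreasing in $|x|$ for $|x|>\delta$, $g'$ is a weakly decreasing function of $|x|$ globally, so $g'(x_k) \ge g'(x_j)$ and the first summand is non-negative, and strictly positive when $|x_j|>\delta$. The second summand is then handled by cases: if $|x_j|\le\delta$ then also $|x_k|\le\delta$, so the argument of Lemma~\ref{lem:defnf} gives $f_j = 1 - \bar{g}' > 0$ and the whole expression is strictly positive (since $x_j > x_k$); if $|x_j|>\delta$, the strict positivity of the first summand, combined with the hypothesis $\delta\ll 1$ in Definition~\ref{defn:f}---which keeps $\bar{g}'$ and therefore $|f_j|$ controlled---dominates any negative contribution from $(x_j-x_k)f_j$. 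The second statement of the lemma, for $x_j < x_k \le 0$, then follows at once from the $\Z_2$-symmetry $\sigma:\bm x \mapsto -\bm x$ of $S^n$: since $g$ is odd, $f\circ\sigma = -f$, hence $\sigma_*\nabla f = -\nabla f$; applying the first statement to the pair $-x_j > -x_k \ge 0$ and tracking the sign flip yields the required negativity.

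The main obstacle is the quantitative comparison in the second case above: one must verify that the strictly positive contribution $x_j(g'(x_k)-g'(x_j))$ outweighs the possibly negative $(x_j-x_k)f_j$. This is exactly where the fine choice of $g$ in Definition~\ref{defn:f}, especially the condition $g'(x)<\delta$ for $|x|>2\delta$ and $\delta\ll 1$, is doing its work; the estimate is essentially a quantitative form of the concavity of $g$ on $[0,\infty)$.
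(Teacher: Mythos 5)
Your computation of $(\nabla f)(x_k/x_j) = (x_j f_k - x_k f_j)/x_j^2$ is the same as the paper's, and the decomposition $x_j f_k - x_k f_j = x_j\bigl(g'(x_k)-g'(x_j)\bigr) + (x_j-x_k)f_j$ is just a reorganization of the paper's chain $g'(x_j)\le g'(x_k)$, hence $f_j\le f_k$, hence $f_j x_k < f_k x_j$ -- so the approach is the same, and your Case~1 is correct.

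The genuine gap is Case~2, which you flag as the main obstacle but do not close, and the heuristic you offer does not close it either. Knowing that $\delta\ll 1$ keeps $\bar g'$, and hence $|f_j|$, of order at most $\delta$ is not enough, because the positive summand $x_j\bigl(g'(x_k)-g'(x_j)\bigr)$ is governed by the \emph{difference} $g'(x_k)-g'(x_j)$, which can be far smaller than $\delta$: when both $x_k$ and $x_j$ lie well beyond $2\delta$, in the region where $g'$ has already decayed almost to zero, $g'(x_k)-g'(x_j)$ can be minuscule while $(x_j-x_k)\bar g'$ remains of order $\delta$ (because other coordinates $x_l$ are smaller in absolute value and contribute nearly the full weight $1$ to $\bar g'$). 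Conditions (1)--(5) of Definition~\ref{defn:f} do not rule this regime out, so ``the first summand dominates'' is not a routine estimate; one needs either additional quantitative hypotheses on the decay rate of $g'$, or a separate argument that $f_k\ge 0$ on the relevant locus. For what it is worth, the paper's one-line step ``$f_j<f_k$, and hence that $f_j x_k < f_k x_j$'' is equally terse and tacitly relies on the same unverified sign control ($f_j\le f_k$ and $0\le x_k<x_j$ alone do not imply $f_jx_k<f_kx_j$), so your proposal is no thinner than the published proof -- but the step you flagged really is an unfinished estimate, not an appeal to an established fact.
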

\begin{proof}
We prove the first statement.
If $x_j>x_k \ge 0$ then, by the construction of $g$, $g'(x_j) < g'(x_k)$. 
It follows that $f_j < f_k$, and hence that $f_j x_k < f_k x_j$, using the notation from the proof of Lemma \ref{lem:defnf}. 
Thus,
\begin{eqnarray*} 
(\nabla f) \left(\frac{x_k}{x_j}\right) & = & \sum_{l=1}^{n+2} f_l \del{}{x_l} \left( \frac{x_k}{x_j} \right) \mbox{ (since $x_k/x_j$ is constant in the radial direction)} \\
&=& \frac{1}{x_j^2}  (f_k x_j - f_j x_k ) \\
& > & 0.
\end{eqnarray*}
The proof of the second statement is similar.
\end{proof}

\begin{cor}
\label{cor:critf}
There is one critical point $p_K$ of $f$ for each proper, non-empty subset $K \subset [n+2]$, defined by
\[ V_{\bar{K},K,\phi} = \{p_K\}.\]
Explicitly, $p_K$ has coordinates (recalling $\sum_j x_j = 0$) 
\[ x_j = \left\{ \begin{array}{ll}
			-\frac{1}{|K|} & j \in K,\\
			+\frac{1}{|\bar{K}|} & j \in \bar{K},
			\end{array}\right.
\]
up to a positive rescaling so that $\sum_j x_j^2 = 1$.
Observe that $\Arg$ maps $p_K$ to the vertex $\pi e_K$ of $\partial Z_n$.
\end{cor}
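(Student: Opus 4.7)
The plan is to prove the corollary in three stages: verify each $p_K$ is a critical point, show there are no others, and compute $\Arg(p_K)$.

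For existence, I would argue by symmetry. The function $f$ is $S_{n+2}$-invariant, and $p_K$ is fixed by the subgroup $H_K := S_{\bar K} \times S_K$ that permutes the two blocks separately. Hence $\nabla f(p_K)$ must lie in the $H_K$-invariant subspace of $T_{p_K} S^n$. An $H_K$-invariant vector in $T_{p_K}\R^{n+2}$ has the form $(c,\ldots,c,d,\ldots,d)$ with $c$ on the $\bar K$-slots and $d$ on the $K$-slots. The two linear conditions --- lying in $\R^{n+1} = \{\sum x_j = 0\}$ and being orthogonal to $p_K$ --- combine with $a \neq b$ to force $c = d = 0$, so $\nabla f(p_K) = 0$.

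For uniqueness, Lemma \ref{lem:fcells} is to be applied contrapositively. If $p$ is a critical point and $x_j > x_k \ge 0$ at $p$, then the function $\psi := x_k/x_j$ is defined near $p$, and $0 = (\nabla f)_p(\psi) > 0$ is a contradiction. Hence all non-negative coordinates of $p$ coincide, and symmetrically all non-positive coordinates coincide. The constraints $\sum x_j = 0$ and $\sum x_j^2 = 1$ then rule out the possibilities that all coordinates have a single sign or that any coordinate vanishes, forcing a genuine partition $\bar K \sqcup K = [n+2]$ with common positive value $a$ and common negative value $b$. Solving the resulting linear/quadratic system pins down $a,b$ as in the statement.

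For the final claim, $df$ vanishes at $p_K$, so $L^n(p_K)$ lies on the real locus $\RP{n} \subset \mathcal{P}^n$ with real homogeneous coordinates $[x_1 : \cdots : x_{n+2}]$. Since $\arg$ of a nonzero real number is $0$ or $\pi$ according to its sign, one reads off $\Arg(p_K) = \pi \sum_{j \in K} e_j = \pi e_K$ in $M_{\R}/2\pi M$.

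I expect the main obstacle to be the uniqueness step. Lemma \ref{lem:fcells} only yields "all non-negative coordinates agree" and its negative counterpart, and one must combine these carefully with the sphere constraints to rule out degenerate critical candidates in which some coordinates vanish while others of the same sign are strictly nonzero. Once that bookkeeping is done, the remainder of the argument reduces to elementary linear algebra.
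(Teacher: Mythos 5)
Your proof is correct and rests on the same two pillars as the paper's: the $S_{n+2}$-symmetry of $f$ for existence, and Lemma \ref{lem:fcells} for uniqueness. The implementations differ slightly. For existence, the paper argues dynamically -- the Morse flow of $f$ must preserve the finite set $\{p_K, p_{\bar K}\}$ carved out by the coordinate-equalities, so each point of that set is a fixed point -- whereas you compute directly that the $H_K$-invariant part of $T_{p_K}S^n$ is zero, forcing the $H_K$-equivariant vector field $\nabla f$ to vanish there; these are two renderings of the same symmetry observation. For uniqueness, the paper first cites Lemma \ref{lem:defnf} to exclude critical points from the hypersurfaces $D_j^{\R}$ and then applies Lemma \ref{lem:fcells} with strict positivity, whereas you apply the lemma with the non-strict hypothesis $x_j > x_k \ge 0$ and let the sphere constraints $\sum x_j = 0$, $\sum x_j^2 = 1$ kill the degenerate cases (all coordinates of one sign, or some coordinate zero). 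Both routes are sound; yours trades one lemma citation for the extra bookkeeping you correctly flag at the end, and the case analysis you outline does close that gap.
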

\begin{proof}
Critical points of $f$ cannot lie on the hypersurfaces $D_j^{\R}$, since $\nabla f$ is transverse to the hypersurfaces.
Suppose that $x_j>x_k>0$. 
Then by Lemma \ref{lem:fcells},
\[ (\nabla f) \left(\frac{x_k}{x_j}\right) > 0,\]
so $\nabla f \neq 0$.
Hence, at a critical point of $f$, all positive coordinates $x_j$ are equal.
By a similar argument, all negative coordinates are equal.
It follows that the points $p_K$ are the only possiblities for critical points of $f$.

To prove that each $p_K$ is indeed a critical point, observe that by $S_{n+2}$ symmetry, the Morse flow of $f$ must preserve the equalities
\begin{eqnarray*}
x_k =x_l& & \mbox{ for all $k,l \in K$, and}\\
x_k = x_l && \mbox{ for all $k, l \in \bar{K}$}.
\end{eqnarray*}
The set of points satisfying these equalities is exactly $\{p_K,p_{\bar{K}}\}$, hence the Morse flow preserves these points.
Thus each $p_K$ is a critical point of $f$.
\end{proof}

\begin{lem}
\label{lem:morsecells}
Let $\phi: \R \times S^n \To S^n$ denote the flow of $\nabla f$ with respect to the round metric on $S^n$, so that $\phi(0,\cdot) = id$.
Given a proper, non-empty subset $K \subset [n+2]$, we define
\[\mathcal{S}(p_K) := \left\{q \in S^n: \lim_{t \To \infty} \phi(t,q) = p_K\right\}\subset S^n,\]
the stable manifold of $p_K$, and
\[\mathcal{U}(p_K) := \left\{q \in S^n: \lim_{t \To -\infty} \phi(t,q) = p_K\right\}\subset S^n,\]
the unstable manifold of $p_K$.
Then
\[ \mathcal{S}(p_K) = \{(x_1,\ldots,x_{n+2}) \in S^n: \{k \in [n+2]: x_k = \min_l\{x_l\} \} = K\}\]
and
\[ \mathcal{U}(p_K)  = \{ (x_1,\ldots,x_{n+2}) \in S^n: \{k\in [n+2]: x_k = \max_l\{x_l\} \} = \bar{K}\}.\]
\end{lem}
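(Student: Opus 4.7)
The plan is to establish the stable manifold description first, then deduce the unstable one by antipodal symmetry. The key tool is preservation of coordinate ordering along the flow, combined with Lemma \ref{lem:fcells} and the transversality from Lemma \ref{lem:defnf}.

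For each transposition $(j,k) \in S_{n+2}$, the coordinate permutation is an $f$-preserving isometry of $S^n$, hence commutes with the flow $\phi$ of $\nabla f$. Its fixed locus $\{x_j = x_k\}$ is therefore flow-invariant in both time directions, forcing the sign of $x_j(t) - x_k(t)$ to be constant along any trajectory. In particular, the ``min set'' $S(x) := \{k : x_k = \min_l x_l\}$ is constant along the forward flow.

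Now fix $x(0)$ with $S(x(0)) = K$. By the standard fact that a gradient trajectory on a compact manifold with isolated critical points converges to a single critical point, we have $x(t) \to p_{K'}$ for some proper non-empty $K' \subset [n+2]$, and preservation of min set immediately gives $K \subseteq K'$. Suppose for contradiction there exists $l \in K' \setminus K$. If $x_l(0) \geq 0$, then Lemma \ref{lem:defnf} (which gives $\dot{x}_l > 0$ at every crossing of $\{x_l = 0\}$) shows that $x_l(t) \geq 0$ for all $t \geq 0$, contradicting $x_l(\infty) < 0$. If $x_l(0) < 0$ but $x_l$ becomes non-negative later, apply the previous case from that moment. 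Otherwise $x_l(t) < 0$ always; taking any $k \in K$, the ordering $x_k(t) < x_l(t) < 0$ lets us apply the second part of Lemma \ref{lem:fcells}, which gives that $x_l/x_k$ is strictly decreasing. But $l, k \in K'$ would force $x_l(\infty)/x_k(\infty) = 1$, which is incompatible with $x_l/x_k$ starting in $(0,1)$ and strictly decreasing. Hence $K = K'$, so $\{x : S(x) = K\} \subseteq \mathcal{S}(p_K)$.

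Since both $\{\{x : S(x) = K\}\}_K$ and $\{\mathcal{S}(p_K)\}_K$ partition $S^n$ over proper non-empty $K \subset [n+2]$, the inclusion is an equality. For the unstable manifold, $g$ odd gives $f \circ a = -f$, where $a$ is the antipodal map, so $a$ conjugates the flow of $\nabla f$ with that of $-\nabla f$ and sends $p_K$ to $p_{\bar K}$. Therefore $\mathcal{U}(p_K) = a(\mathcal{S}(p_{\bar K})) = \{y : \text{max set}(y) = \bar K\}$. The main obstacle is the sub-case $x_l(0) < 0$ in the stable-manifold argument, where ruling out $l$'s eventual joining of the min set at $t = \infty$ requires the precise inequality of Lemma \ref{lem:fcells}; the other cases follow directly from the transversality in Lemma \ref{lem:defnf}.
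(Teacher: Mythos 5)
Your proof is correct and follows essentially the same route as the paper: the $S_{n+2}$-symmetry argument for preservation of the coordinate ordering, and Lemma~\ref{lem:fcells} to force the ratio $x_l/x_k$ away from $1$, are exactly the key steps in the paper's proof (with the roles of ``min set of the initial point'' and ``min set of the limit'' simply presented in the other order). Your derivation of the unstable-manifold description via the antipodal symmetry $f\circ a=-f$ is a slightly cleaner alternative to the paper's ``the proof of the second statement is analogous,'' and your case split on the sign of $x_l$ is somewhat more explicit than the paper's single sentence ``after flowing for some time, $x_j$ would have to be negative,'' but neither changes the substance of the argument.
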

\begin{proof}
We prove the first statement. 
Suppose we are given $q = (x_1,\ldots,x_{n+2}) \in S^n$.
Let
\[ \lim_{t \To \infty} \phi(t,q) := p_{K},\]
and
\[K':= \{k\in [n+2]: x_k = \min_l\{x_l\}\}. \] 
We will show that $K=K'$.

First observe that, by $S_{n+2}$ symmetry, any equality of the form $x_j = x_k$ is preserved under the forward and backward flow of $\nabla f$.
Consequently any inequality of the form $x_j > x_k$ is also preserved under the (finite-time) flow.
It follows that $K' \subset K$.

We prove that $K \subset K'$ by contradiction: suppose that $j \notin K'$ but $j \in K$.
After flowing for some time, $x_j$ would have to be negative (in order to converge to $p_{K}$).
Then for any $k \in K'$ we would have $x_k<x_j<0$, so by Lemma \ref{lem:fcells} we have
\[ (\nabla f)\left( \frac{x_j}{x_k} \right) < 0.\]
Thus, the ratio $x_j/x_k$ is bounded above away from $1$, so even in the limit $t \To \infty$, $x_j$ can not approach the minimum value $x_k = \min_l\{x_l\}$.
This is a contradiction, hence $K \subset K'$.

Therefore $K=K'$. 
This completes the proof of the first statement.
The proof of the second statement is analogous.
\end{proof}

\begin{cor}
\label{cor:morseindf}
The critical point $p_K$ of $f$ has Morse index
\[ \mu_{\mathrm{Morse}}(p_K) = n+1-|K|.\]
\end{cor}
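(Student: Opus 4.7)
The plan is to read off the Morse index directly from the description of the stable manifold $\mathcal{S}(p_K)$ given in Lemma \ref{lem:morsecells}. The one subtle point is a sign convention: since $\phi$ is defined as the flow of $+\nabla f$ rather than the more usual $-\nabla f$, linearising the flow near $p_K$ shows that $\mathcal{S}(p_K)$ is tangent to the \emph{negative} eigenspace of the Hessian of $f$. Consequently $\dim \mathcal{S}(p_K)$ equals the number of negative eigenvalues of the Hessian, which is by definition $\mu_{\mathrm{Morse}}(p_K)$.

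The remainder is a direct dimension count. By Lemma \ref{lem:morsecells}, $\mathcal{S}(p_K)$ is an open subset of $S^n \cap V_K$, where
\[ V_K := \{x \in \R^{n+2} : x_k = x_{k'} \text{ for all } k, k' \in K\}. \]
The subspace $V_K$ is cut out in $\R^{n+2}$ by $|K|-1$ independent linear equations, so $\dim V_K = n+3-|K|$. The hyperplane $\{\sum_j x_j = 0\}$ does not contain $V_K$ (for instance, $V_K$ contains the all-ones vector), so intersecting with it reduces the dimension by one; intersecting the resulting linear subspace with the unit sphere $\{\sum_j x_j^2 = 1\}$ reduces the dimension by one more, yielding $\dim \mathcal{S}(p_K) = n+1-|K|$.

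Combining these two ingredients immediately gives $\mu_{\mathrm{Morse}}(p_K) = n+1-|K|$. I do not anticipate a real obstacle: essentially all of the geometric work is packaged in Lemma \ref{lem:morsecells}, and the only thing one must be careful about is the nonstandard sign convention on the gradient flow, which swaps the roles of stable and unstable manifolds relative to the textbook Morse-theoretic statement.
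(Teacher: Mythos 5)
Your proof is correct and follows the same route as the paper: read off the Morse index from Lemma~\ref{lem:morsecells} as the dimension of the stable manifold of the flow of $+\nabla f$. The paper states this in one line; you supply the (correct) details, namely the observation that the $+\nabla f$ convention makes the stable (rather than unstable) manifold tangent to the negative eigenspace of the Hessian, and the explicit dimension count of $S^n \cap V_K$.
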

\begin{proof}
The Morse index of $p_K$ is the dimension of the stable manifold of $p_K$, which by Lemma \ref{lem:morsecells} is $n+1-|K|$.
\end{proof}

\begin{rmk}
Observe that, as a consequence of Lemma \ref{lem:morsecells}, 
\[V_{JKL} = \overline{ \mathcal{U}(\bar{J}) \cap \mathcal{S}(K)}\]
(see Figure \ref{fig:dualcells}).
\end{rmk}

\begin{figure}
\centering
\includegraphics[width=\textwidth]{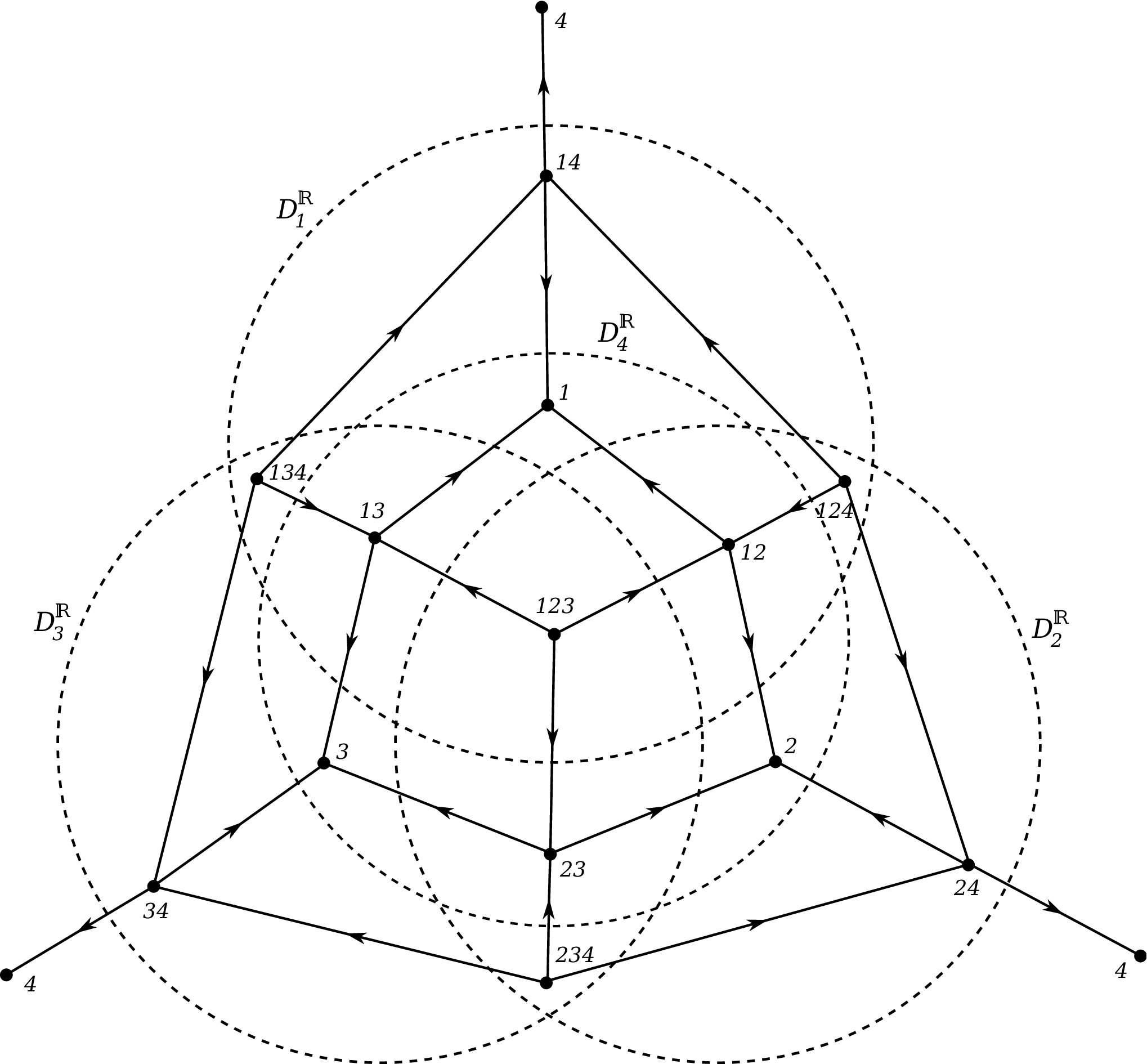}
\caption{The dual cell decompositions for $n=2$. The dashed circles represent the hypersurfaces $D_j^{\R}$ as labeled. Each region is labeled with the list of coordinates that are negative in that region (e.g., the label `$124$' means that $x_1<0,x_2<0,x_3>0,x_4<0$ in that region). The arrows represent the index-1 Morse flow lines of $\nabla f$. The dots represent critical points of $f$. The picture really lives on a sphere, and the three points labeled `$4$' should be identified (at infinity). 
Observe that the flowlines correspond to the edges of the polyhedron $\partial Z_2$, illustrated in Figure \ref{fig:c2}.}
\label{fig:dualcells}
\end{figure}

\section{The $A_{\infty}$ algebra $\mathcal{A} := CF^*(L^n,L^n)$}
\label{sec:A}

This section is concerned with the definition and properties of the $A_{\infty}$ algebra $\mathcal{A}^n := CF^*(L^n,L^n)$. 
We will simply write `$\mathcal{A}$' rather than `$\mathcal{A}^n$' unless we wish to draw attention to the dimension. 

In Section \ref{subsec:afuk}, we will explain why $L^n$, despite being an immersion rather than an embedding, can be regarded as an `extra' object of the Fukaya category of $\mathcal{P}^n$, as defined in \cite[Chapters 8 -- 12]{seidel08}.
This section can {\bf not} be read independently of that reference.
In Sections \ref{subsec:weights} -- \ref{subsec:signs}, we establish certain properties of $\mathcal{A}$.

\subsection{Including $L^n$ as an `extra' object of $\mathcal{F}uk(\mathcal{P}^n)$}
\label{subsec:afuk}

In \cite[Chapters 8 -- 12]{seidel08}, it is shown how to define the Fukaya category of a symplectic manifold $(X,\omega)$ with the following properties and structures:

\begin{itemize}
\item $\omega = d \theta$ is exact;
\item $X$ is equipped with an almost-complex structure $J_0$ in a neighbourhood of infinity, compatible with $\omega$;
\item $X$ is convex at infinity, in the sense that there is a bounded below, proper function $h: X \To \R$ such that
\[ \theta = - dh \circ J_0.\]
\end{itemize}

These assumptions are actually not quite the same as those in \cite{seidel08}, but the arguments and definitions work in the same way.

In particular, $X = \mathcal{P}^n$ has these properties: we equip it with the standard (integrable) complex structure $J_0$, then the restriction of the Fubini-Study form to $\mathcal{P}^n$ is given by $\omega = d \theta$, where $\theta = -dh \circ J_0$, and
\begin{eqnarray*}
h: \mathcal{P}^n & \To & \R, \\
 h([z_1:\ldots:z_{n+2}]) &=& \log \left( \frac{\sum_{j=1}^{n+2} |z_j|^2 }{\left(\prod_{j=1}^{n+2} |z_j|^2 \right)^{\frac{1}{n+2}} } \right)
 \end{eqnarray*}
is proper and bounded below.

With this data, the Fukaya category of compact, exact, embedded, oriented Lagrangians $L$ can be defined over a field of characteristic $2$, and with $\Z_2$ gradings (the `preliminary' Fukaya category of \cite[Chapters 8, 9]{seidel08}).
If $X$ is furthermore equipped with a complex volume form $\eta$ (note: we will not take a quadratic complex volume form as in \cite{seidel08}, because we assume our Lagrangians to be oriented), then the Fukaya category of compact, exact, embedded, oriented Lagrangian branes $L^{\#}$ can be defined over $\C$, and the $\Z_2$ grading can be lifted to a $\Z$ grading. 

We define the Fukaya category of $\mathcal{P}^n$ to include an `extra' object corresponding to the Lagrangian immersion $L^n: S^n \To \mathcal{P}^n$. 

\begin{rmk}
A theory of Lagrangian Floer cohomology for immersed Lagrangians has been worked out in \cite{joyce08} using Kuranishi structures, but we will give a definition that is compatible with the definition of \cite{seidel08} using explicit perturbations, with the aim of using it to make computations in Section \ref{sec:Acalc}.
\end{rmk}

First, we note that $H^1(S^n) = 0$ for $n>1$, so $L^n$ is automatically exact (this is an additional restriction in the case $n=1$ -- see the caption to Figure \ref{fig:l1}).

Now we explain the modifications necessary to the definition of the (preliminary) Fukaya category given in \cite[Chapters 8, 9]{seidel08}, to include the object $L^n$. 

\begin{rmk}

We will not mention brane structures, orientations and gradings for the purposes of this Section \ref{subsec:afuk}, because they work exactly the same as in \cite[Chapters 11, 12]{seidel08}.
We observe that $H^1(S^n)=0$ for $n>1$, so $L^n$ admits a grading (the case $n=1$ is easily checked). 
$S^n$ is also spin, so $L^n$ admits a brane structure. 
These observations, together with the modifications described in this section that show we can include $L^n$ as an extra object of the preliminary Fukaya category, allow us to include $L^n$ as an extra object in the `full' ($\Z$-graded, with $\C$ coefficients) Fukaya category of $\mathcal{P}^n$.
\end{rmk}

\begin{defn}
We define an object $L$ of the (preliminary) Fukaya category to be an exact Lagrangian immersion
\[ L: N \To \mathcal{P}^n\]
of some closed, oriented $n$-manifold $N$ into $\mathcal{P}^n$, which is {\bf either} an embedding {\bf or} the Lagrangian immersion $L^n:S^n \To \mathcal{P}^n$.
\end{defn}

\begin{defn}
We define 
\[\mathcal{H} := C^{\infty}_c(\mathcal{P}^n,\R),\]
the space of smooth, compactly supported functions on $\mathcal{P}^n$ (the space of Hamiltonians), and $\mathcal{J}$, the space of smooth almost-complex structures on $\mathcal{P}^n$ compatible with $\omega$, and equal to the standard complex structure $J_0$ outside of some compact set.
\end{defn}

\begin{defn}
\label{defn:gen}
For each pair of objects $(L_0,L_1)$, we define a Floer datum $(H_{01},J_{01})$ consisting of 
\[H_{01} \in C^{\infty}([0,1],\mathcal{H}) \mbox{ and }J_{01} \in C^{\infty}([0,1],\mathcal{J})\] 
satisfying the following property: if $\phi^t$ denotes the flow of the Hamiltonian vector field of the (time-dependent) Hamiltonian $H_{01}$, then the image of the time-$1$ flow $\phi^1 \circ L_0$ is transverse to $L_1$. 
One then defines a {\bf generator} of $CF^*(L_0,L_1)$ to be a path $y:[0,1] \To \mathcal{P}^n$ which is a flowline of the Hamiltonian vector field of $H_{01}$, together with a pair of points $(\tilde{y}_0,\tilde{y}_1) \in N_0 \times N_1$ such that $L_0(\tilde{y}_0) = y(0)$ and $L_1(\tilde{y}_1) = y(1)$.
One defines $CF^*(L_0,L_1)$ to be the $\C$-vector space generated by its generators. 
\end{defn}

The definition of a perturbation datum on a boundary-punctured disk with Lagrangian labels is the same as in \cite[Section 9h]{seidel08}. 

\begin{defn}
Given a perturbation datum on a boundary-punctured disk $S$ with Lagrangian boundary labels $(L_0,\ldots,L_k)$, some of which may be immersed, we define an {\bf inhomogeneous pseudo-holomorphic disk} to be a smooth map $u:S \To \mathcal{P}^n$ such that
\begin{itemize}
\item $u(C) \in \mathrm{im}(L_C)$ for each boundary component $C$ with label $L_C$, and
\item $u$ satisfies the perturbed holomorphic curve equation \cite[Equation (8.9)]{seidel08} with respect to the perturbation datum,
\end{itemize}
together with a continuous lift $\tilde{u}_C$ of the map $u|_C: C \To \mathrm{im}(L_C)$ to $N_C$:
\[\xymatrix{
& N_C \ar[d]^{L_C}\\
C\ar[ur]^{\tilde{u}_C} \ar[r]_-{u|_C} & \mathrm{im}(L_C)}
\]
for each boundary component $C$ with label $L_C: N_C \To \mathcal{P}^n$.
\end{defn}

\begin{rmk}
Note that the lift $\tilde{u}_C$ exists automatically if $L_C$ is an embedding. 
When $L_C$ is an immersion, the existence of $\tilde{u}_C$ tells us that the boundary map $u|_C$ does not `switch sheets' of the immersion along $C$.
\end{rmk}

\begin{defn}
\label{defn:ascon}
Given generators
\[ y_j \in CF^*(L_{j-1},L_j) \mbox{ for $j = 1,\ldots,k$, } ,\] 
and
\[y_0 \in CF^*(L_0,L_k),\]
we say that an inhomogeneous pseudo-holomorphic disk has {\bf asymptotic conditions} given by $(y_0,\ldots,y_k)$ if, on the strip-like end $\epsilon_j$ corresponding to the $j$th puncture, we have 
\begin{eqnarray*}
 \lim_{s \To + \infty} u(\epsilon_j(s,t)) &=& y_j(t), \\
 \lim_{s \To + \infty} \tilde{u}(\epsilon_j(s,0))& = & (\tilde{y}_j)_0, \mbox{and} \\
 \lim_{s \To + \infty} \tilde{u}(\epsilon_j(s,1))& = & (\tilde{y}_j)_1.\\
\end{eqnarray*}
(and the analogous condition with $s \To - \infty$ when $j = 0$).
We define the moduli space $\mathcal{M}_{S}(y_0,\ldots,y_k)$ to be the set of inhomogeneous pseudo-holomorphic disks with asymptotic conditions given by the generators $(y_0,\ldots,y_k)$.
\end{defn}

To show that $\mathcal{M}_S(y_0,\ldots,y_k)$ is a smooth manifold, we must modify the functional analytic framework of \cite[Section 8i]{seidel08} slightly. 
Namely, we fix $p>2$, and define a Banach manifold $\mathcal{B}_S(y_0,\ldots,y_k)$ as follows.

A point in $\mathcal{B}_S$ consists of:
\begin{itemize}
\item a map $u \in W^{1,p}_{loc}( S , \mathcal{P}^n)$, satisfying $u(C) \in \mathrm{im}(L_C)$;
\item continuous lifts $\tilde{u}_C$ of the continuous maps $u|_C: C \To \mathrm{im}(L_C)$ to $N_C$,
for each boundary component $C$ of $S$,
\end{itemize}
such that $u$ and $\tilde{u}_C$ are asymptotic to the generators $y_j$ along the strip-like ends, in the sense of Definition \ref{defn:ascon}.
Observe that $W^{1,p}$ functions are continuous at the boundary, so the lifting condition makes sense. 

Let $\bm{u} = (u,(\tilde{u}_C)) \in \mathcal{B}_S$ be represented by a smooth map. 
We define charts for the Banach manifold structure in a neighbourhood of $\bm{u}$. 
For each boundary component $C$ of $S$, we have a continuous Lagrangian embedding of vector bundles,
\[ TN_C \overset{(L_C)_*}{\lhook\joinrel\relbar\joinrel\relbar\joinrel\relbar\joinrel\rightarrow} (L_C)^*T \mathcal{P}^n.\]
Thus, we have a continuous Lagrangian embedding
\[ (\tilde{u}_C)^* TN_C \hookrightarrow (\tilde{u}_C)^* (L_C)^* T \mathcal{P}^n \cong (u^* T\mathcal{P}^n)|_C .\]
We define the tangent space to $\mathcal{B}_S(y_0,\ldots,y_k)$ at $\bm{u}$ to be the Banach space
\[T_{\bm{u}} \mathcal{B}_S(y_0,\ldots,y_k) := W^{1,p}(S, u^*T \mathcal{P}^n, \tilde{u}_C ^* TN_C)\]
(with the $W^{1,p}$-norm).
We choose an exponential map $\exp: T\mathcal{P}^n \To \mathcal{P}^n$ that makes the Lagrangian labels totally geodesic, and denote by $\widetilde{\exp}_N: TN \To N$ the corresponding exponential map on each Lagrangian label. 
We then define a map
\[ \phi_{\bm{u}} : T_{\bm{u}} \mathcal{B}_S \To \mathcal{B}_S\]
so that $\phi_{\bm{u}}(\xi)$ consists of the map $\exp(u,\xi(u))$, together with boundary lifts $\widetilde{\exp}_{N_C}(\tilde{u}_C, \xi(\tilde{u}_C))$.
This defines a chart of the Banach manifold structure in a neighbourhood of $\bm{u}$.

\begin{rmk}
Note that we can {\bf not} define a Banach manifold of locally $W^{1,p}$ maps from $S$ to $\mathcal{P}^n$, sending boundary component $C$ to $\mathrm{im}(L_C)$, then impose the lifting condition separately -- this would not define a Banach manifold because the image of $L_C$ may be singular (if $L_C = L^n$). 
\end{rmk}

We now define a Banach bundle $\mathcal{E}_S$ over $\mathcal{B}_S$, and a smooth section given by the perturbed $\bar{\partial}$-operator, as in \cite[Section 8i]{seidel08}. 
The section is Fredholm, because its linearization is a Cauchy-Riemann operator with totally real boundary conditions given by $\tilde{u}_C^* TN_C$. 
Thus, assuming regularity, the moduli space $\mathcal{M}_S(y_0,\ldots,y_k)$ is a smooth manifold with dimension equal to the Fredholm index. 
We can extend these arguments to show that the moduli space $\mathcal{M}_{\mathcal{S}^{k+1}}(y_0,\ldots,y_k)$ of inhomogeneous pseudo-holomorphic disks with arbitrary modulus is also a smooth manifold.

Finally, we must check that Gromov compactness holds. 
The author is not aware of a proof of Gromov compactness with immersed Lagrangian boundary conditions in the literature, but we can give an ad hoc proof in our special case by passing to a cover of $\mathcal{P}^n$. 
Namely, by Corollary \ref{cor:cover}, there is a cover $\widetilde{\mathcal{P}}^n$ of $\mathcal{P}^n$ in which every lift $\widetilde{L}^n$ of $L^n$ is embedded, so all of our lifted boundary conditions are embedded Lagrangians. 
Any family of inhomogeneous pseudo-holomorphic disks in $\mathcal{P}^n$ lifts to a family in $\widetilde{\mathcal{P}}^n$.
Standard Gromov compactness for the family of lifted disks in $\widetilde{\mathcal{P}}^n$, with boundary on the embedded lifts of Lagrangians, implies compactness for the family in $\mathcal{P}^n$. 

Everything else works as in \cite{seidel08}, so this allows us to define the Fukaya category of $\mathcal{P}^n$ with the extra object $L^n$, and show that the $A_{\infty}$ associativity relations hold.

We now consider the $A_{\infty}$ algebra $\mathcal{A} = CF^*(L^n,L^n)$. 
We would like to choose the Floer datum for the pair $(L^n,L^n)$ so that the underlying vector space of $\mathcal{A}$ is as small as possible. 

\begin{lem}
\label{lem:h}
There exists a Hamiltonian $H \in \mathcal{H}$ such that $(L^n)^* H$ is a Morse function on $S^n$ with exactly two critical points, and $X_H|_{\mathrm{im}(L^n)}$ vanishes only at those critical points.
\end{lem}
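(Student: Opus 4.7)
The plan is to first construct a perfect Morse function $\tilde h: S^n \to \R$ whose values satisfy matching conditions at the self-intersection preimages of $L^n$, and then to extend this to a compactly supported Hamiltonian $H$ on $\mathcal{P}^n$ with $(L^n)^* H = \tilde h$ and with $dH = 0$ at the two points $L^n(p_+), L^n(p_-)$.

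The self-intersection preimages of $L^n$ are exactly the critical points $p_K$ of $f$ given by Corollary \ref{cor:critf}, arranged in antipodal pairs $(p_K, p_{\bar K})$ with $L^n(p_K) = L^n(p_{\bar K})$ (recall from Corollary \ref{cor:critf} that $p_{\bar K} = -p_K$). I would first pick $p_+, p_- \in S^n \setminus \{p_K\}_K$ and a gradient-like vector field $V$ with exactly these two zeros, chosen generically so that no flowline of $V$ passes through both points of any pair $(p_K, p_{\bar K})$. Then I would build $\tilde h$ so that $V$ is a gradient-like vector field for it, taking prescribed values $v_K \in (\tilde h(p_-), \tilde h(p_+))$ at each $p_K$ with $v_K = v_{\bar K}$; by the genericity of $V$, at most one $p_K$ lies on any given flowline, so this can be achieved by independently reparametrising each flowline.

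Because $\tilde h(p_K) = \tilde h(p_{\bar K})$, the prescription $H(L^n(x)) := \tilde h(x)$ unambiguously defines $H$ on $\mathrm{im}(L^n)$, and I would extend it to $\mathcal{P}^n$ using a partition of unity over three types of open sets. Away from self-intersections and from $L^n(p_\pm)$, the image $\mathrm{im}(L^n)$ is locally an embedded Lagrangian, and $H$ can be extended to be constant in directions transverse to $L^n$. Near a self-intersection point $y = L^n(p_K) = L^n(p_{\bar K})$, the two sheets of $L^n$ are transverse Lagrangian planes whose tangent spaces sum to $T_y \mathcal{P}^n$, and in local Darboux coordinates I can combine the prescribed values on the two sheets using cutoffs in a standard way. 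Near each $L^n(p_\pm)$, which lies on an embedded portion of $L^n$, I would arrange the normal derivatives of $H$ to vanish in addition to matching $\tilde h$ on $L^n$, so that $dH$ vanishes there. Multiplying by a compactly supported cutoff then places $H$ in $\mathcal{H}$.

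By construction $(L^n)^* H = \tilde h$ is Morse with critical points $\{p_+, p_-\}$, and $X_H(p) = 0$ iff $dH(p) = 0$. If $p = L^n(x) \in \mathrm{im}(L^n)$ satisfies $dH(p) = 0$, then $d\tilde h(x) = dH \circ dL^n|_x = 0$ forces $x \in \{p_+, p_-\}$, while at a self-intersection point $y = L^n(p_K) = L^n(p_{\bar K})$ the restrictions $d\tilde h(p_K), d\tilde h(p_{\bar K})$ are nonzero and, since the two Lagrangian planes span $T_y \mathcal{P}^n$, they force $dH(y) \neq 0$. The main technical issue I anticipate is in the first step: one must arrange the matching conditions $\tilde h(p_K) = \tilde h(p_{\bar K})$ while keeping $\tilde h$ a perfect Morse function (hence strictly monotonic along flowlines of $V$), which requires some care in the choice of $V$ and of the flowline parametrisation, though it is a fairly standard flexibility-of-Morse-functions argument.
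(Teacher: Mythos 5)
Your proof is correct, but it inverts the order of construction relative to the paper's, and this has a real cost. The paper's proof begins by defining $H$ on a \emph{neighborhood in $\mathcal{P}^n$} of each self-intersection of $\mathrm{im}(L^n)$ (arranging $X_H$ transverse to both branches there), and only then pulls back by $L^n$ and extends to a Morse function on $S^n$. Because $H$ is an honest function on the ambient space from the start, the matching constraint $\tilde h(p_K) = \tilde h(p_{\bar K})$ that you identify as your ``main technical issue'' is \emph{automatic} and never needs to be discussed. Your approach instead builds $\tilde h$ on $S^n$ first, so you must engineer equal values on antipodal pairs by hand; this is what forces you into the genericity argument for $V$ (no flowline through two of the $p_K$) and the flowline-by-flowline reparametrization, which, while believable, is noticeably more delicate than anything the paper needs. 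In short: both routes work, but the paper's order of operations dissolves the constraint that your route has to solve. One genuine plus of your write-up is the explicit observation that $X_H(y) \notin T_{p_K}L^n$ is \emph{equivalent} to $dH|_{T_{p_K}L^n} = d\tilde h(p_K) \neq 0$ (by the Lagrangian condition $\omega(X_H,\cdot)|_{T_{p_K}L^n} = dH|_{T_{p_K}L^n}$), so that the transversality to both branches that the paper's proof arranges comes for free once $\tilde h$ is Morse with critical points away from the $p_K$; that clarification is implicit but not spelled out in the paper.
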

\begin{proof}
First define $H$ in a neighbourhood of the self-intersections of $\mathrm{im}(L^n)$, in such a way that $X_H$ is transverse to both branches of the image. 
This defines $(L^n)^* H$ on a neighbourhood of the critical points $p_K$ of $f$ (see Corollary \ref{cor:critf}). 
This function can easily be extended to a Morse function on $S^n$ with the desired properties, then extended to a neighbourhood of $\mathrm{im}(L^n)$, then to all of $\mathcal{P}^n$ using a cutoff function.
\end{proof}

\begin{cor}
\label{cor:arank}
For an appropriate choice of Floer datum, $CF^*(L^n,L^n)$ has generators $p_K$ indexed by all subsets $K \subset [n+2]$.
\end{cor}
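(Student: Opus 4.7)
The plan is to take the Floer datum for $(L^n, L^n)$ to be $(\epsilon H, J)$, where $H$ is the Hamiltonian from Lemma \ref{lem:h}, $\epsilon > 0$ is sufficiently small, and $J$ is a generic compatible almost-complex structure, and then to enumerate the generators directly. By Definition \ref{defn:gen}, a generator is a flowline $y:[0,1] \to \mathcal{P}^n$ of $\epsilon X_H$ equipped with lifts $\tilde{y}_0, \tilde{y}_1 \in S^n$ of its endpoints; equivalently, passing to the time-$\epsilon$ map $\phi^{\epsilon}$, it is a point of transverse intersection of $\phi^{\epsilon} \circ L^n$ with $L^n$ together with lifts of its two preimages to $S^n$.

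I would split the generators into two types according to whether $\tilde{y}_0 = \tilde{y}_1$ or $\tilde{y}_0 \neq \tilde{y}_1$ in the limit $\epsilon \to 0$. In the \emph{off-diagonal} case $\tilde{y}_0 \neq \tilde{y}_1$ but $L^n(\tilde{y}_0) = L^n(\tilde{y}_1)$, so $(\tilde{y}_0, \tilde{y}_1)$ is an ordered pair of distinct preimages of a self-intersection of $L^n$. The discussion just before Lemma \ref{lem:fcells} shows that the self-intersections of $L^n$ occur precisely over the critical points of $f$, which by Corollary \ref{cor:critf} are the points $p_K$ for $K \subset [n+2]$ proper and non-empty; since $f$ is odd, $p_{\bar{K}} = -p_K$, so each self-intersection has fiber $\{p_K, p_{\bar{K}}\}$. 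This yields $2(2^{n+1} - 1) = 2^{n+2} - 2$ ordered pairs, and each persists to a unique generator for small $\epsilon$ by the implicit function theorem, using that these self-intersections are transverse (because $f - (-f) = 2f$ is Morse, as already noted). In the \emph{diagonal} case $\tilde{y}_0 = \tilde{y}_1$ and the limit chord is constant at a point where $X_H$ vanishes on $\mathrm{im}(L^n)$; by Lemma \ref{lem:h} there are exactly two such points, and in a local Weinstein chart around each, $\phi^{\epsilon}(L^n)$ is $C^1$-close to the graph of $\epsilon \, d((L^n)^* H)$ in $T^* S^n$, so Morse-ness of $(L^n)^* H$ produces exactly one transverse intersection per critical point.

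Summing gives $2 + (2^{n+2} - 2) = 2^{n+2}$ generators, matching the number of subsets of $[n+2]$. I would index them by defining $p_K$ for proper non-empty $K$ to be the off-diagonal generator with ordered pair $(p_K, p_{\bar{K}})$ (so $p_{\bar{K}}$ is the reversed pair), and letting $p_\emptyset$ and $p_{[n+2]}$ denote the two diagonal generators. The main technical obstacle is completeness: ruling out extraneous intersections of $\phi^{\epsilon}(\mathrm{im}(L^n))$ with $\mathrm{im}(L^n)$ outside small neighbourhoods of the self-intersections and of the two Morse critical points. This follows from the same Weinstein-chart picture: away from these loci, $d((L^n)^* H)$ is bounded away from zero (by compactness and Lemma \ref{lem:h}), so the graph of $\epsilon \, d((L^n)^* H)$ in $T^* S^n$ misses the zero section for $\epsilon$ sufficiently small, producing no further intersections. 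A generic choice of $J$ then supplies the remaining regularity required by Definition \ref{defn:gen}.
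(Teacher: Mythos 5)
Your proof is correct and takes essentially the same approach as the paper's: scale the Hamiltonian from Lemma \ref{lem:h} to be small, then classify the resulting Hamiltonian chords as either ``diagonal'' ones coming from the two critical points of $(L^n)^*H$ or ``off-diagonal'' ones coming from the transverse self-intersections $(p_K,p_{\bar K})$ of $L^n$. You fill in somewhat more detail (the explicit count $2^{n+2}$, the implicit-function-theorem persistence of the transverse self-intersections, and the argument that no further intersections appear for a small enough scaling), but the underlying decomposition is exactly the one the paper uses; the only notational caution is that your new scaling parameter should not be conflated with the fixed $\epsilon$ already appearing in $L^n=L^n_\epsilon$.
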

\begin{proof}
We scale the $H$ of Lemma \ref{lem:h} so that it is $\ll \epsilon$ (the parameter in the definition of $L^n = L^n_{\epsilon}$), and use it as the Hamiltonian part of our Floer datum for $(L^n,L^n)$. Let $X_H$ denote the corresponding Hamiltonian vector field. 
Now if $\phi^1$ is the time-$1$ flow of $X_H$, we can arrange that $\phi^1(L^n(p)) = L^n(q)$ if and only if {\bf either} 
\[ p=q \mbox{ and } X_H(L^n(p)) = 0,\]
{\bf or}
\[(p,q) \mbox{ corresponds to a pair $(p',q')$ such that $p' \neq q'$ and $L^n(p') = L^n(q')$}\]
(note that the assumption that $H \ll \epsilon$ ensures that the transverse self-intersections $L^n(p') = L^n(q')$ persist under the flow of one branch of $L^n$ by $X_H$).

In the first case, we get generators corresponding to the critical points of the Morse function $(L^n)^* H$. 
We denote the generator corresponding to the minimum, respectively maximum, by $p_{\phi}$, respectively $p_{[n+2]}$. 
In the second case, we get generators corresponding to pairs $(p',q') = (p_K,p_{\bar{K}})$ where $K \subset [n+2]$ is proper and non-empty, by Corollary \ref{cor:critf}. 
We denote the generator corresponding to $(p_K,p_{\bar{K}})$ by $p_{K}$, by slight abuse of notation.
\end{proof}

\subsection{Weights in $M$}
\label{subsec:weights}

\begin{defn}
\label{defn:weight}
(Compare \cite[Section 8b]{seidel03}) Whenever we have an immersed Lagrangian $L:N \To X$ (such that the image of $H_1(N)$ in $H_1(X)$ is trivial), we can assign a weight $w(y) \in H_1(X)$ to each generator $y$ of $CF^*(L,L)$.
Namely, choose a path from $\tilde{y}_1$ to $\tilde{y}_0$ in $N$, and define $w(y)$ be the homology class obtained by composing the image of this path in $X$ with the path $y$ (see Definition \ref{defn:gen}).
\end{defn}

\begin{prop}
\label{prop:weight}
In our case, we have
\[w(p_K) = e_K \in M \cong H_1(\mathcal{P}^n).\]
\end{prop}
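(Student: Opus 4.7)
The plan is to compute $w(p_K)$ by tracing the loop in $\mathcal{P}^n$ associated to $p_K$ and then applying the isomorphism $\Arg_*: H_1(\mathcal{P}^n) \xrightarrow{\cong} H_1(M_\R/2\pi M) = M$ coming from Proposition \ref{prop:arghomeq}. I will handle the Morse generators $p_\emptyset$ and $p_{[n+2]}$ separately: by Corollary \ref{cor:arank} these come from critical points of the Morse function $(L^n)^* H$ rather than self-intersections, so $\tilde y_0 = \tilde y_1$, the flow path $y$ is constant, and the path $\gamma$ in Definition \ref{defn:weight} can be taken null-homotopic (which suffices since $L^n_* H_1(S^n) = 0$). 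Hence $w(p_\emptyset) = 0 = e_\emptyset$ and $w(p_{[n+2]}) = 0 = e_{[n+2]}$ in $M$.

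For $K \subsetneq [n+2]$ non-empty, the generator $p_K$ corresponds via Corollary \ref{cor:arank} to the data $(\tilde y_0, \tilde y_1) = (p_K, p_{\bar K}) \in S^n \times S^n$ mapping to the self-intersection $L^n(p_K) = L^n(p_{\bar K})$ of the immersion. Following Definition \ref{defn:weight}, I choose a path $\gamma: [0,1] \to S^n$ from $p_{\bar K}$ to $p_K$ (say the geodesic arc); then $w(p_K)$ is the class of the loop $L^n(\gamma) \ast y$ in $\mathcal{P}^n$, where $y$ is the short Hamiltonian flow path between the two branches. Per the proof of Corollary \ref{cor:arank} the Hamiltonian can be scaled to make $y$ arbitrarily short, so the class is determined by $L^n(\gamma)$ closed up by a tiny arc at the self-intersection.

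To evaluate $[L^n(\gamma)]$ in $H_1(\mathcal{P}^n) \cong M$, I pass to the argument projection. By Proposition \ref{prop:convamoe}, combined with Corollary \ref{cor:critf} which identifies $p_K$ with the vertex $V_{\bar K, K, \emptyset}$ of the cellular decomposition of $S^n$, the map $\Arg \circ L^n$ sends $p_K$ to a point $O(\sqrt\epsilon)$-close to $\pi e_K$ and $p_{\bar K}$ to a point $O(\sqrt\epsilon)$-close to $\pi e_{\bar K}$; these agree modulo $2\pi M$ by Remark \ref{rmk:verts}, so $\Arg \circ L^n \circ \gamma$ is indeed a loop in $M_\R/2\pi M$. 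Moreover, the same proposition guarantees that this loop stays within an $O(\sqrt\epsilon)$-neighborhood of the compact polytope $\partial(\pi Z_n) \subset M_\R$.

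Lifting the loop to a path in $M_\R$ starting at $\pi e_{\bar K}$, path-lifting continuity together with the compactness of $\partial(\pi Z_n)$ forces the endpoint to be $\pi e_K$ itself rather than a nontrivial $2\pi M$-translate $\pi e_K + 2\pi v$ (for $\epsilon$ small, only $v=0$ places the translate within the relevant neighborhood of $\partial(\pi Z_n)$). The net displacement in $\widetilde M_\R$ is $\pi e_K - \pi e_{\bar K} = 2\pi e_K - \pi e_{[n+2]}$; passing to $M_\R = \widetilde M_\R / \R\langle e_{[n+2]}\rangle$ kills the second term, giving a displacement of $2\pi e_K$ in $M_\R$, hence a class $e_K$ in $M = H_1(M_\R/2\pi M)$. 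This yields $w(p_K) = e_K$, as claimed. The main subtle step is this final argument pinning down the correct $2\pi M$-representative of the lifted endpoint, which rests entirely on the geometric confinement provided by Proposition \ref{prop:convamoe}.
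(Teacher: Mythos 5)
Your overall strategy matches the paper's: reduce to the argument projection, invoke Propositions~\ref{prop:arghomeq} and~\ref{prop:convamoe}, and read off the homology class by lifting to $M_{\R}$. The treatment of $p_{\phi}$ and $p_{[n+2]}$ and the identification $(\tilde y_0,\tilde y_1)=(p_K,p_{\bar K})$ are fine. But the final lifting step contains a genuine gap.

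You assert that ``only $v=0$ places $\pi e_K + 2\pi v$ within the relevant neighborhood of $\partial(\pi Z_n)$.'' This is false: taking $v=-e_K$ gives, in $M_{\R}$,
\[
\pi e_K - 2\pi e_K \;=\; -\pi e_K \;=\; \pi e_{\bar K},
\]
which is also a vertex of $\partial(\pi Z_n)$ --- in fact it is the starting point of your lift. So the argument as written cannot distinguish $w(p_K)=e_K$ from $w(p_K)=0$. The deeper problem is that Proposition~\ref{prop:convamoe} only tells you $\iota_{\epsilon}$ is $C^0$-close to the image of $\partial(\pi Z_n)$ in $M_{\R}/2\pi M$, so a lift of $\iota_{\epsilon}\circ\gamma$ is only forced to stay near the full preimage $\partial(\pi Z_n)+2\pi M$, not near $\partial(\pi Z_n)$ itself. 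Distinct translates of $\partial(\pi Z_n)$ are not at positive distance from one another: they share vertices, since $\pi e_{\bar K} + 2\pi e_K = \pi e_K$ (using $e_{\bar K}=-e_K$ in $M$) shows that $\pi e_K$ lies on both $\partial(\pi Z_n)$ and $\partial(\pi Z_n)+2\pi e_K$. ``Geometric confinement'' alone therefore does not pin the lift to the original copy of the polytope.

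The paper avoids this by using the reparametrization $p_{\epsilon}\colon S^n \to \partial Z_n$ from Proposition~\ref{prop:convamoe} directly. Since $p_{\epsilon}$ is a cellular map with $p_{\epsilon}(p_K)=e_K$ and $\iota_{\epsilon}$ is uniformly close to $q\circ p_{\epsilon}$, the relevant lift is (up to $O(\sqrt{\epsilon})$) just the path $\pi\cdot(p_{\epsilon}\circ\gamma)$. This path genuinely lives inside the single polytope $\partial(\pi Z_n)\subset M_{\R}$ --- not merely near its torus image --- and by construction runs from $\pi e_{\bar K}$ to $\pi e_K$, giving $e_K$ immediately. Your proof would be repaired by invoking $p_{\epsilon}$ in this way, or equivalently by observing that $C^0$-close loops have $C^0$-close lifts and comparing the lift of $\iota_{\epsilon}\circ\gamma$ with that of $q\circ p_{\epsilon}\circ\gamma$.
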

\begin{proof}
By Proposition \ref{prop:arghomeq} and Proposition \ref{prop:convamoe}, $\Arg$ induces a homotopy equivalence between $(\mathcal{P}^n, L^n)$ and $(\mathcal{C}^n, \partial (\pi Z_n))$.
Thus, when $K$ is proper and non-empty, $w(p_K)$ is the class of a path from $\pi e_{\bar{K}}$ to $\pi e_K$ in $H_1(\mathcal{C}^n) \cong M$, which is exactly $e_K$. 
When $K = \phi$ or $[n+2]$ it is clear that $w(p_K) = 0$.
\end{proof}

\begin{cor}
\label{cor:cover}
There exists a finite cover $\widetilde{\mathcal{P}}^n \To \mathcal{P}^n$ in which every lift $\widetilde{L}^n$ of $L^n$ is embedded.
\end{cor}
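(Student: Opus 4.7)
The plan is to exploit the fact, established in Proposition \ref{prop:weight}, that the weights $e_K$ labelling the self-intersection pairs $(p_K, p_{\bar K})$ are non-zero in $M \cong H_1(\mathcal{P}^n)$ whenever $K$ is proper and non-empty. Any cover whose deck group receives these weights as non-zero elements will separate the two sheets at each double point. Concretely, I would take the connected cover $\widetilde{\mathcal{P}}^n \to \mathcal{P}^n$ corresponding to the composition
\[ \pi_1(\mathcal{P}^n) \longrightarrow H_1(\mathcal{P}^n) \cong M \longrightarrow M/2M. \]
Since $M \cong \Z^{n+1}$, this cover is finite of degree $2^{n+1}$.

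Next I would verify that a lift $\widetilde{L}^n: S^n \to \widetilde{\mathcal{P}}^n$ exists. For $n \ge 2$ this is immediate because $\pi_1(S^n) = 0$. For $n=1$, the image of $\pi_1(S^1)$ in $\pi_1(\mathcal{P}^1)$ is generated by the single loop $[L^1]$; by the vanishing of $[L^1]$ in $H_1(\mathcal{P}^1)$ noted in Section \ref{subsec:p1} and implicit in the weight discussion, this image vanishes already under the abelianisation map, hence a fortiori in $M/2M$, so the lifting criterion is satisfied. Because lifts are permuted transitively by the deck action, verifying embeddedness of one lift suffices to establish it for all lifts.

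For embeddedness itself, suppose $\widetilde{L}^n(p) = \widetilde{L}^n(p')$ with $p \ne p'$. Projecting, $L^n(p) = L^n(p')$, so by the analysis of the self-intersections of $L^n$ (Corollary \ref{cor:arank} and the surrounding discussion) the pair $(p,p')$ must be of the form $(p_K, p_{\bar K})$ for some proper non-empty $K \subset [n+2]$. Lifting any path in $S^n$ from $p_K$ to $p_{\bar K}$ and pushing forward by $L^n$, one obtains a loop in $\mathcal{P}^n$ representing precisely the weight $e_K \in M$ of Definition \ref{defn:weight} and Proposition \ref{prop:weight}. Its monodromy in the deck group $M/2M$ governs whether the two sheets separate: the equality $\widetilde{L}^n(p_K) = \widetilde{L}^n(p_{\bar K})$ forces $e_K \equiv 0$ in $M/2M$.

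The final step is therefore the combinatorial check that $e_K$ is non-zero in $M/2M$ for every proper non-empty $K \subset [n+2]$. Lifting to $\widetilde{M} = \Z\langle e_1,\ldots,e_{n+2}\rangle$, one needs to rule out a relation $e_K = 2v + c\, e_{[n+2]}$ with $v \in \widetilde{M}$ and $c \in \Z$; reducing each coordinate mod $2$ forces every entry of $e_K$ to equal $c$ mod $2$, which is possible only when $K = \phi$ or $K = [n+2]$. I expect this to be the only genuinely concrete point in the argument; the main conceptual content is really the identification of deck-group monodromy with the weights $w(p_K) = e_K$, already packaged in Proposition \ref{prop:weight}. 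Once these pieces are in place, the cover $\widetilde{\mathcal{P}}^n \to \mathcal{P}^n$ just constructed resolves all self-intersections simultaneously, so every lift of $L^n$ is embedded.
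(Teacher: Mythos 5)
Your proof is correct and follows the same basic strategy as the paper: pick a finite quotient $\Gamma$ of $\pi_1(\mathcal{P}^n) \cong M$ in which every weight $e_K$ (for $K$ proper, non-empty) has non-zero image, and take the associated cover. The difference is the choice of quotient. You use $M \To M/2M$, producing a cover of degree $2^{n+1}$; the paper instead uses the homomorphism $\rho: M \To \Z_{n+2}$, $\rho(u) = e_{[n+2]}\cdot u$, which is well defined since $\rho(e_{[n+2]}) = n+2 \equiv 0$, and observes that $\rho(e_K) = |K| \not\equiv 0 \pmod{n+2}$ for all proper non-empty $K$, giving a cover of degree only $n+2$. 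Both quotients do the job; the paper's is smaller and the verification that the $e_K$ survive is a one-line cardinality count rather than your mod-$2$ coordinate argument, but your check is equally elementary and correct. Your additional verification that a lift exists (trivially for $n\ge 2$ by simple connectivity, and for $n=1$ via the vanishing of $[L^1]$ in $H_1$) is a reasonable detail to include; the paper leaves it implicit, and the same observation would be needed for the paper's $\Z_{n+2}$ cover too.
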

\begin{proof}
Recall that $\pi_1(\mathcal{P}^n) \cong M$ by Corollary \ref{cor:pi1}.
Consider the group homomorphism
\begin{eqnarray*}
 \rho: M &\To& \Z_{n+2} \\
 \rho(u) &=& e_{[n+2]} \cdot u
\end{eqnarray*}
(this is well defined because $\rho(e_{[n+2]}) \equiv 0 \mbox{ (mod $(n+2)$)}$).
There is a corresponding $(n+2)$-fold cover of $\mathcal{P}^n$, and we have
\[ \rho(w(p_K)) = \rho(e_K) = |K| \neq 0 \mbox{ (mod $(n+2)$)}\]
for all proper non-empty $K \subset [n+2]$, so the two lifts of $L^n$ coming together at an intersection point are distinct.
\end{proof}

\begin{prop}
\label{prop:top}
The $A_{\infty}$ structure maps $\mu^k$ are homogeneous with respect to the weight $w$.
In other words, the coefficient of $p_{K_0}$ in $\mu^k(p_{K_1},\ldots,p_{K_k})$ is non-zero only if
\[ \sum_{j=1}^k e_{K_j} = e_{K_0}.\]
\end{prop}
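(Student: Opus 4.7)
The plan is a covering space argument, parallel to \cite[Section 8b]{seidel03}. The starting observation is that the induced map $(L^n)_*: H_1(S^n) \To H_1(\mathcal{P}^n) \cong M$ vanishes: for $n \ge 2$ this is immediate since $H_1(S^n) = 0$, and for $n=1$ one reads from Figure \ref{fig:l1co} that the fundamental class of $S^1$ maps to the class of the hexagon's boundary, which traverses each coordinate direction once and hence represents $e_1 + e_2 + e_3 = e_{[n+2]} = 0$ in $M$ (equivalently, in the presentation of Corollary \ref{cor:pi1}, the loop $abc$ is a relation). Consequently $L^n$ admits a lift $\bar L^n: S^n \To \bar{\mathcal{P}}^n$ to the maximal abelian cover $\bar{\mathcal{P}}^n \To \mathcal{P}^n$, whose deck group is $M$; the full preimage of $\mathrm{im}(L^n)$ is then the disjoint union of the translates $\{g \cdot \bar L^n(S^n) : g \in M\}$, which are pairwise disjoint embedded copies of $S^n$ (the self-intersections of $L^n$ get resolved because the weights $e_K$ at the self-intersection points are nonzero in $M$).

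Next I would reinterpret Definition \ref{defn:weight} in this cover. Given a generator $y$ of $CF^*(L^n, L^n)$, lifting the chord $y(t)$ to a path in $\bar{\mathcal{P}}^n$ whose initial endpoint lies on $\bar L^n$ over $(\tilde y)_0$, the terminal endpoint lies over $(\tilde y)_1$ on the translate $w(y) \cdot \bar L^n$, directly from the definition of the weight as the class of the concatenated loop. In particular, by Proposition \ref{prop:weight}, the chord associated to $p_{K_j}$ bridges two sheets of $\bar L^n$ differing by $e_{K_j} \in M$.

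The main argument then runs as follows. Let $u: S \To \mathcal{P}^n$ be a rigid inhomogeneous pseudo-holomorphic disk contributing to the coefficient of $p_{K_0}$ in $\mu^k(p_{K_1}, \ldots, p_{K_k})$. Since $S$ is simply connected, $u$ lifts to a map $\bar u: S \To \bar{\mathcal{P}}^n$ once we fix a lift of any single boundary point. Each of the $k+1$ boundary arcs of $\bar u$ between consecutive punctures is connected and lies in the preimage of $\mathrm{im}(L^n)$, hence lies entirely inside a single translate $g_j \cdot \bar L^n(S^n)$ because the translates are disjoint. Travelling counterclockwise around $\partial \bar u$, the sheet label therefore only changes at the punctures: by the strip-like-end conventions of \cite{seidel08} and the preceding identification of weights, the sheet shifts by $+e_{K_j}$ across each input puncture (where the boundary traverses $y_j$ from $(\tilde y_j)_0$ to $(\tilde y_j)_1$) and by $-e_{K_0}$ across the output puncture (where the boundary traverses $y_0$ in the opposite sense, from $(\tilde y_0)_1$ to $(\tilde y_0)_0$). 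Since $\partial \bar u$ is a closed loop in $\bar{\mathcal{P}}^n$, the total shift must vanish, yielding
\[\sum_{j=1}^{k} e_{K_j} - e_{K_0} = 0,\]
as required. The only aspect of the execution requiring real care is the sign asymmetry at the output puncture -- i.e., verifying that the counterclockwise orientation of $\partial S$ at $z_0$ traverses $y_0$ in the reverse direction relative to the strip-like end $\epsilon_0$ -- but this is a formal consequence of the strip-like-end conventions, and it is precisely what makes the input weights sum to (rather than minus) the output weight.
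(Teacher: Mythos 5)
Your argument is essentially the same covering-space argument the paper itself uses -- the paper lifts the disk to the universal cover, you to the maximal abelian cover, and for $n>1$ these coincide (for $n=1$ your choice is marginally cleaner since $\pi_1$ is non-abelian there). The core mechanism -- lift, track sheet labels across the punctures, close the loop -- is a match.

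There is, however, one genuine flaw in the execution. The translates $\{g \cdot \bar L^n(S^n) : g \in M\}$ are \emph{not} pairwise disjoint. Each individual lift $\bar L^n$ is indeed an embedding, since $\bar L^n(p) \neq \bar L^n(q)$ whenever $(p,q)$ is a self-intersection pair (the weight $w(p_K) = e_K$ being nonzero in $M$). But precisely because $\bar L^n(p) = g \cdot \bar L^n(q)$ for some $g \neq 0$ in that situation, the point $\bar L^n(p)$ lies in both $\bar L^n(S^n)$ and $g \cdot \bar L^n(S^n)$: the translates meet at lifts of the self-intersection points. So the step ``each boundary arc lies in a single translate because the translates are disjoint'' is unsupported as written -- a boundary arc might pass through a self-intersection point without changing sheets, and your stated reason does not rule out the lifted arc hopping to a neighbouring translate there. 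The conclusion is nonetheless correct, but the justification should invoke the data carried by the disk: by the definition in Section \ref{subsec:afuk}, a disk with boundary on $L^n$ comes equipped with a continuous boundary lift $\tilde{u}_C: C \To S^n$ on each boundary component $C$, and $\bar L^n \circ \tilde{u}_C$ is then a continuous lift of $u|_C$ to $\bar{\mathcal{P}}^n$; by uniqueness of lifts it agrees with $\bar u|_C$ up to a single global deck transformation, which is what pins each arc to a single translate. With that correction, the sheet-shift accounting at inputs and output and the closure of the boundary loop go through as you wrote them.
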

\begin{proof}
If the coefficient of $p_{K_0}$ in $\mu^k(p_{K_1},\ldots,p_{K_k})$ is non-zero, then there is a topological disk in $\mathcal{P}^n$ with boundary on the image of $L^n$, 
\[u:(D,\partial D) \To (\mathcal{P}^n,\mathrm{im}(L^n)),\]
whose boundary changes `sheets' of $L^n$ exactly at the self-intersection points $p_{K_0},p_{K_1},\ldots,p_{K_k}$ in that order (ignoring any appearance $p_{\phi}$ or $p_{[n+2]}$ on the list). 
This disk must lift to the universal cover, hence its boundary lifts to a loop in the universal cover.

The boundary always lies on lifts of $L^n$, which are indexed by the fundamental group $M$ (think of the homotopy-equivalent picture of $M_{\R} \setminus \{ \pi Z_n + 2\pi M\}$, with the lifts of $L^n$ being $\partial (\pi Z_n) + 2 \pi M$).
When the boundary changes sheets at a point $p_K$, the index of the sheet in $M$ changes by $w(p_K)$ (observe that the points $p_{\phi}$ and $p_{[n+2]}$, at which no sheet-changing occurs, have weight $0$).

Therefore, if the boundary of our disc changes sheets at $p_{K_0},p_{K_1},\ldots,p_{K_k}$, and comes back to the sheet it started on, we must have
\[ -w(p_{K_0}) + \sum_{j=1}^k w(p_{K_j}) = 0.\]
\end{proof}

\begin{cor}
\label{cor:tact}
The character group of $M$,
\[\mathbb{T} := \mathrm{Hom}(M,\C^*),\]
acts on $\mathcal{A}$ via
\[ \alpha \cdot p := \alpha(w(p)) p.\]
The $A_{\infty}$ structure on $\mathcal{A}$ is equivariant with respect to this action.
\end{cor}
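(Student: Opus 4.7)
The plan is to derive this as an essentially formal consequence of Proposition \ref{prop:top}. The point is that the weight function $w$ endows $\mathcal{A}$ with the structure of an $M$-graded vector space (where the generator $p_K$ has degree $e_K \in M$), and Proposition \ref{prop:top} says exactly that the $A_\infty$ structure maps respect this grading. A grading by an abelian group $M$ on a $\C$-vector space is equivalent to an action of the dual group $\mathbb{T} = \mathrm{Hom}(M,\C^*)$, and the claim is simply the translation of ``$\mu^k$ is $M$-homogeneous'' into the language of $\mathbb{T}$-equivariance.

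First I would check that the formula $\alpha \cdot p_K := \alpha(e_K)\, p_K$ extended linearly over basis elements defines a genuine action. Since $\alpha(e_K) \in \C^*$ is a scalar, linearity is automatic, and $(\alpha\beta)\cdot p_K = (\alpha\beta)(e_K)\, p_K = \alpha(e_K)\beta(e_K)\, p_K = \alpha \cdot (\beta \cdot p_K)$ because $\alpha,\beta$ are group homomorphisms to $\C^*$; the trivial character clearly acts as the identity.

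Next I would verify equivariance of each $\mu^k$ on generators. For inputs $p_{K_1},\ldots,p_{K_k}$ we compute
\[
\mu^k(\alpha \cdot p_{K_1},\ldots,\alpha \cdot p_{K_k}) \;=\; \alpha\!\left(\textstyle\sum_{j=1}^k e_{K_j}\right) \mu^k(p_{K_1},\ldots,p_{K_k}),
\]
using multilinearity of $\mu^k$ and the fact that $\alpha$ is a homomorphism. On the other hand, writing $\mu^k(p_{K_1},\ldots,p_{K_k}) = \sum_{K_0} c_{K_0}\, p_{K_0}$, we have
\[
\alpha \cdot \mu^k(p_{K_1},\ldots,p_{K_k}) \;=\; \sum_{K_0} c_{K_0}\, \alpha(e_{K_0})\, p_{K_0}.
\]
By Proposition \ref{prop:top}, the coefficient $c_{K_0}$ vanishes unless $e_{K_0} = \sum_{j=1}^k e_{K_j}$ in $M$, so in every non-vanishing term $\alpha(e_{K_0})$ may be replaced by $\alpha(\sum_j e_{K_j})$, which factors out as a common scalar and matches the previous expression. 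Extending by $\C$-linearity in each input gives equivariance on all of $\mathcal{A}$.

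There is no substantive obstacle here; everything reduces to the grading statement of Proposition \ref{prop:top} together with the standard duality between $M$-gradings and $\mathbb{T}$-actions. The only minor point to be careful about is that the generators $p_\phi$ and $p_{[n+2]}$ have weight $0$ (by Proposition \ref{prop:weight}), so they are fixed by the $\mathbb{T}$-action, which is consistent with their role as (homotopy) unit and top class and with the fact that weights were defined to satisfy $w(p_\phi) = w(p_{[n+2]}) = 0$.
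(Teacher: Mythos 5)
Your proof is correct and is exactly the unpacking the paper leaves implicit: Corollary \ref{cor:tact} is stated without a separate argument precisely because it is the standard translation of ``$\mu^k$ is $M$-homogeneous'' (Proposition \ref{prop:top}) into ``$\mu^k$ is $\mathbb{T}$-equivariant.'' Your verification of the action axioms and of equivariance on generators is the right and complete argument.
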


\subsection{Grading}
\label{subsec:grad}

Recall that, to lift the $\Z_2$-grading on the Fukaya category to a $\Z$-grading, we must equip $\mathcal{P}^n$ with a complex volume form $\eta$. 
We assume that:
\begin{itemize}
\item $\eta$ is compatible with complex conjugation $\tau: \mathcal{P}^n \To \mathcal{P}^n$, in the sense that $\tau^* \eta = \bar{\eta}$;
\item $\eta$ extends to a meromorphic $(n,0)$-form on $\CP{n}$, with a pole of order $n_j$ along the divisor $D_j$ (with the usual convention that a zero of order $k$ is a pole of order $-k$). 
\end{itemize}
We set
\[ \bm{n} := \sum_{j=1}^{n+2} n_j e_j \in \widetilde{M}.\]
Observe that
\begin{eqnarray*}
\bm{n} \cdot e_{[n+2]} &=& \sum_{j=1}^{n+2} n_j \\
 &=& \mathrm{deg}(K_{\CP{n}}) \\
&=& n+1.
\end{eqnarray*} 
Observe that there is no canonical choice for $\eta$, so our $\Z$-grading will not be canonical.

\begin{prop}
\label{prop:grading}
The $\Z$-grading on $\mathcal{A}$ defined by $\eta$ is
\[i(p_K) = (2\bm{n} - e_{[n+2]}) \cdot e_K.\]
In other words, the coefficient of $p_{K_0}$ in $\mu^k(p_{K_1},\ldots,p_{K_k})$ is non-zero only if
\[  i(p_{K_0}) = 2-k + \sum_{j=1}^k i(p_{K_j}).\]
\end{prop}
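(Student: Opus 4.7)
The plan is to compute $i(p_K)$ from the standard recipe for $\Z$-graded Floer degrees (see \cite{seidel08}, Chapter 11): fix a grading $\tilde{\alpha}_{L^n}: S^n \to \R$ lifting the phase function $\alpha_{L^n}: S^n \to \R/\Z$ determined by $\eta|_{L^n}$, and for each transverse intersection $p = L^n(\tilde y_0) = L^n(\tilde y_1)$ write
\[ i(p) \;=\; n \;-\; \bigl(\tilde{\alpha}_{L^n}(\tilde y_1) - \tilde{\alpha}_{L^n}(\tilde y_0)\bigr) \;-\; \tfrac{1}{\pi}\sum_j \theta_j(\tilde y_0,\tilde y_1), \]
where $\theta_j \in (0,\pi)$ are the K\"ahler angles between $(L^n)_*T_{\tilde y_0}S^n$ and $(L^n)_*T_{\tilde y_1}S^n$ inside $T_p\mathcal{P}^n$. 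The two contributions to be computed are thus (a) the phase jump, determined globally by $\eta$ and the $\tau$-symmetry, and (b) the angle contribution, determined locally near $\bar p_K \in \RP^n$.

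First I would handle the two ``Morse-type'' generators $p_\phi$ and $p_{[n+2]}$ (the minimum and maximum of $(L^n)^*H$, see Corollary \ref{cor:arank}): for these the second and third terms above are trivially $0$ and the Floer degree reduces to the Morse index, giving $i(p_\phi)=0$ and $i(p_{[n+2]})=n$. Since $\bm{n}\cdot e_{[n+2]}=n+1$ (by the calculation preceding the proposition), the formula $(2\bm n - e_{[n+2]})\cdot e_K$ evaluates to $0$ and $n$ in these two cases, which serves as a consistency check and in fact fixes the overall additive constant in our choice of $\tilde{\alpha}_{L^n}$.

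Next, for a proper non-empty $K$, I would carry out the local computation (b) at the self-intersection $\bar p_K \in \RP^n$. In a Weinstein chart $D^*_\eta \RP^n$ the two sheets of $L^n$ meeting at $\bar p_K$ are the graphs of $\pm\epsilon df$ (this follows from $f\circ a = -f$ together with the description of the double cover $D^*_\eta S^n \to D^*_\eta \RP^n$). At a critical point these tangent spaces are graph of $\mathrm{Hess}(f)$ and graph of $-\mathrm{Hess}(f)$; their K\"ahler angles are therefore $\theta_j = \pi/2$ for each positive eigendirection of $\mathrm{Hess}(f)$ and $\theta_j = \pi/2$ for each negative eigendirection as well, with the sign of each angle (as an element of $(0,\pi)$) dictated by which of the two sheets is declared ``first.'' Using the orderings coming from Corollary~\ref{cor:critf} and Lemma~\ref{lem:morsecells} (so that the Morse index of $f$ at $p_K$ is $n+1-|K|$), the angle contribution to $i(p_K)$ works out to $-|K|$.

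The remaining task (a) is to compute the phase jump $\tilde\alpha_{L^n}(p_K)-\tilde\alpha_{L^n}(p_{\bar K})$, and this is where the pole orders $n_j$ enter. By our assumption $\tau^*\eta=\bar\eta$ together with $\tau\circ L^n = L^n\circ a$ (Remark \ref{rmk:propln}), the pullback satisfies $(L^n\circ a)^*\eta = \overline{(L^n)^*\eta}$, so $\tilde\alpha_{L^n}\circ a = -\tilde\alpha_{L^n} + c$ for some constant $c$ (fixed by the Morse-type normalization above). To evaluate $\tilde\alpha_{L^n}(p_K)-\tilde\alpha_{L^n}(p_{\bar K})$, integrate the logarithmic derivative $d\log \eta$ along any path on $S^n$ from $p_{\bar K}$ to $p_K$, and use the fact (Proposition \ref{prop:weight}) that its image in $\mathcal{P}^n$ closes up to a loop of class $e_K\in H_1(\mathcal{P}^n)$. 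Since $\eta$ has a pole of order $n_j$ along $D_j$, the argument of $\eta$ decreases by $2\pi n_j$ around $D_j$, and the linking of the loop $e_K$ with $D_j$ is $1$ for $j\in K$ and $0$ otherwise; this gives $\tilde\alpha_{L^n}(p_K)-\tilde\alpha_{L^n}(p_{\bar K}) = -2\bm n\cdot e_K$ (up to overall sign conventions). Inserting this together with the angle contribution $-|K|$ into the Floer degree formula, and simplifying with $n = 2\bm n \cdot e_{[n+2]} - (n+2)$, yields $i(p_K) = (2\bm n - e_{[n+2]})\cdot e_K$. The main obstacle will be getting the signs and conventions aligned: distinguishing which sheet is ``first,'' keeping track of orientations of the two tangent planes of $L^n$ at each self-intersection, and correctly reconciling the $\Z_2$-equivariance of the phase with the linking calculation --- these conventions are fixed precisely by demanding that $p_\phi$ and $p_{[n+2]}$ receive the Morse degrees $0$ and $n$.
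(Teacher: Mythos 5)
Your overall decomposition of the degree into a ``phase jump'' contribution governed by the pole orders $n_j$ and a ``local'' contribution at the self-intersection is the right idea and is essentially what the paper does. However, the local half of your argument is wrong, and consequently the arithmetic does not close up.

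First, the K\"ahler angles at $p_K$ are \emph{not} $\pi/2$. The two sheets are graphs of $\pm\epsilon\,\mathrm{Hess}(f)$, and along an eigendirection with eigenvalue $\mu$ the two branches sit at angles $\pm\arctan(\epsilon\mu)$ to $\R^n$; the resulting K\"ahler angle in $(0,\pi)$ is $2\arctan(\epsilon|\mu|)$ or $\pi - 2\arctan(\epsilon|\mu|)$ according to the sign of $\mu$ and the choice of ordering, so as $\epsilon\to 0$ each angle tends to $0^+$ or $\pi^-$, never to $\pi/2$. Moreover, if all angles really were $\pi/2$, the term $\tfrac{1}{\pi}\sum_j\theta_j$ would equal $n/2$, not $|K|$, so your stated angle contribution $-|K|$ is inconsistent with your stated angle values — you cannot have both. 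And even granting your claimed values $\tilde\alpha(p_K)-\tilde\alpha(p_{\bar K})=-2\bm n\cdot e_K$ and angle sum $|K|$, the formula $i(p)=n-(\text{phase jump})-\tfrac{1}{\pi}\sum\theta_j$ returns $n + 2\bm n\cdot e_K - |K|$, which exceeds the target $(2\bm n - e_{[n+2]})\cdot e_K = 2\bm n\cdot e_K - |K|$ by $n$; the ``simplification'' using $n = 2\bm n\cdot e_{[n+2]}-(n+2)$ does not repair this, and indeed your proposed degree formula is not the one that applies here.

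The paper avoids these pitfalls entirely by invoking \cite[2d(v)]{seidel99}: since the two branches at $p_K$ are the graphs of $df$ and $-df$, the canonical short path between their tangent planes has Maslov index equal to the Morse index $\mu_{\mathrm{Morse}}(p_K)=n+1-|K|$ of the difference function, and then \cite[2d(ii)]{seidel99} gives $i(p_K)=\mu_{\mathrm{Morse}}(p_K)-\alpha^\#_K+\alpha^\#_{\bar K}$. No explicit K\"ahler angle computation is required, and the $\epsilon$-dependence disappears. The phase jump itself is also computed with more care in the paper: one has to separate the winding of $\arg\eta$ around $D_j$ (the $\arg(q^{-n_j})$ term, which contributes $-n_j$ per crossing of $D_j^\R$) from the change in the Lagrangian tangent plane relative to a local holomorphic frame (the $\beta^\#_\epsilon$ term, which is shown to be approximately constant), rather than integrating a single ``$d\log\eta$'' along a loop. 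Note also that with the normalization $\alpha^\#_K = -\bm n\cdot e_K$ forced by the hypersurface-crossing argument, one finds $\alpha^\#_K - \alpha^\#_{\bar K} = -2\bm n\cdot e_K + (n+1)$, differing from your claimed jump by the additive constant $n+1$; this constant is not a ``sign convention'' and must be tracked.
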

\begin{proof}
Recall that the volume form $\eta$ defines a function
\[ \psi: \mathrm{Gr} (T \mathcal{P}^n) \To S^1,\]
where $\mathrm{Gr} (T \mathcal{P}^n)$ is the Lagrangian Grassmannian of $\mathcal{P}^n$ (i.e., the fibre bundle over $\mathcal{P}^n$ whose fibre over a point $p$ is the set of Lagrangian subspaces of $T_p \mathcal{P}^n$). 
If $V \subset T_p \mathcal{P}^n$ is a Lagrangian subspace, then $\psi (V)$ is defined by choosing a real basis $v_1,\ldots,v_n$ for $V$ and defining
\[ \psi(V) := \arg(\eta(v_1,\ldots,v_n)).\]

A grading on $L^n$ is a function $\alpha^{\#}: S^n \To \R$ such that
\[ \pi \alpha^{\#}(x) = \psi(L^n_*(T_x S^n)) \]
(see \cite{seidel99}). 
Recall from the construction of $L^n$ that, away from the hypersurfaces $D_j^{\R}$, the immersion $L^n:S^n \To \CP{n}$ is close to the double cover of the real locus, $\iota: S^n \To \CP{n}$.
So away from the hypersurfaces $D_j^{\R}$, 
\[\psi(L^n_*(T_x S^n)) \approx \psi(\iota_*(T_x S^n)) = 0 \mbox{ or }\pi,\]
because we assumed $\eta$ was invariant under complex conjugation, so $\psi(T\RP{n})$ is real. 
Therefore, away from the hypersurfaces $D_j^{\R}$, $\alpha^{\#}$ is approximately an integer.

The hypersurfaces $D_j^{\R}$ split $S^n$ into regions $S^n_K$ indexed by proper non-empty subsets $K \subset [n+2]$.
Namely, $S^n_K$ is the region where $x_j<0$ for $j \in K$ and $x_j > 0$ for $j \notin K$, and contains the unique critical point $p_K$ of $f$.
Suppose that $\alpha^{\#} \approx \alpha^{\#}_K \in \Z$ in the region $S^n_K$.

How does $\alpha^{\#}_K$ change as we cross a hypersurface $D_j^{\R}$? 
Let $p$ be a point on $D_j^{\R}$, away from the other hypersurfaces $D_k^{\R}$. 
Let us choose a holomorphic function $q$ in a neighbourhood of $\iota(p)$ in $\CP{n}$, compatible with complex conjugation (i.e., $q(\tau(z)) = \overline{q(z)}$), and such that $D_j = \{q = 0\}$.
Because $\eta$ has a pole of order $n_j$ along $D_j$, we have
\[ \eta = q^{-n_j} \eta',\]
where $\eta'$ is a holomorphic volume form compatible with complex conjugation.

In the same way that $\eta$ defines the function $\psi$, $\eta'$ defines a function 
\[ \psi': \mathrm{Gr} (T \CP{n}) \To S^1\]
in a neighbourhood of $\iota(p)$. 
Whereas $\psi$ is not defined on $D_j$, because $\eta$ has a pole there, the function $\psi'$ is defined and continuous on $D_j$, because $\eta'$ is holomorphic.

We have
\[ \psi = \psi' + \arg(q^{-n_j})\]
away from $D_j$. 
We can define real functions $\beta^{\#}_{\epsilon}$ on a neighbourhood of $p$ in $S^n$, for $\epsilon \ge 0$ sufficiently small, so that
\begin{eqnarray*}
\pi \beta_{\epsilon}^{\#}(x) &=& \psi'((L^n_{\epsilon})_*(T_x S^n)).
\end{eqnarray*}
Because $L^n_0 = \iota$, and $\eta'$ is compatible with complex conjugation, $ \beta_{0}^{\#}$ is a constant integer.
Furthermore, away from $D_j^{\R}$, $L^n_{\epsilon} \approx \iota$, so $\beta_{\epsilon}^{\#} \approx \beta_0^{\#}$. 
It follows that $\beta_{\epsilon}^{\#}$ approximately does not change as we cross $D_j^{\R}$. 
So the change in $\alpha^{\#}_K$ as we cross the hypersurface $D_j^{\R}$ comes only from the term $\arg(q^{-n_j})$.

We saw in Proposition \ref{prop:convamoe} that $\Arg \circ L^n$ approximates the boundary of the zonotope $Z^n$. 
Thus, as we cross $D_j^{\R}$, moving from $S^n_K$ to $S^n_{K \sqcup \{j\}}$, $\Arg \circ L^n$ changes from $\pi e_K$ to $\pi e_{K \sqcup \{j\}}$, changing by $\pi e_j$.
It follows that $\arg(q^{-n_j})$ decreases by $ \pi n_j$. 
Therefore, $\alpha^{\#}$ approximately decreases by $n_j$.
So we may assume that 
\[\alpha^{\#}_K = -\bm{n} \cdot e_K.\]

To calculate the index of the generator $p_K$, we observe that the two sheets of $L^n$ that meet at $p_K$ are locally the graphs of the exact $1$-forms $df$ and $-df$.
It follows by \cite[2d(v)]{seidel99} that the obvious path connecting the tangent spaces of the two sheets in the Lagrangian Grassmannian has Maslov index equal to the Morse index 
\[ \mu_{\mathrm{Morse}}(p_K) = n+1 - |K| \mbox{ (see Corollary \ref{cor:morseindf}).}\]
We also need to take into account the grading shift of $\alpha^{\#}_K - \alpha^{\#}_{\bar{K}}$ between the two sheets.
Using \cite[2d(ii)]{seidel99}, we have
\begin{eqnarray*} 
i(p_K) &=& \mu_{\mathrm{Morse}}(p_K) -\alpha^{\#}_K + \alpha^{\#}_{\bar{K}} \\
 &=& n+1-|K| + \bm{n} \cdot e_K - \bm{n} \cdot e_{\bar{K}} \\
&=& n+1 - e_{[n+2]} \cdot e_K + \bm{n} \cdot (e_K - e_{[n+2]} + e_K)\\
&=& (2 \bm{n} - e_{[n+2]}) \cdot e_K \mbox{ (since $\bm{n} \cdot e_{[n+2]} = n+1$).}
\end{eqnarray*}
We also note that this equation works for $p_{\phi}$ and $p_{[n+2]}$, which have their usual gradings of $0$ and $n$ respectively.

The dimension formula for moduli spaces of holomorphic polygons now says that the dimension of the moduli space of $(k+1)$-gons with boundary on $L^n$, a positive puncture at $p_{K_0}$, and negative punctures at $p_{K_1},\ldots,p_{K_k}$ is
\[ \mathrm{dim}(\mathcal{M}_{\mathcal{S}}(p_{K_0},\ldots,p_{K_k})) = k-2+ i(p_{K_0}) - \sum_{j=1}^k i(p_{K_j}).\]
Since we are counting the $0$-dimensional component of the moduli space to determine our $A_{\infty}$ structure coefficients, this dimension should be $0$.
This proves the stated formula, i.e., that $i$ defines a valid $\Z$-grading on $\mathcal{A}$.

We also observe that $i$ lifts the $\Z_2$-grading: the two sheets of $L^n$ that meet at $p_K$ are locally the graphs of the exact $1$-forms $df$ and $-df$, hence the sign of the intersection is
\[ n+1 + \mu_{\mathrm{Morse}}(p_K) \equiv |K| \equiv (2 \bm{n} - e_{[n+2]}) \cdot e_K \mbox{ (mod $2$)}.\]
\end{proof}

\begin{cor}
\label{cor:grading}
The $A_{\infty}$ structure on $\mathcal{A}$ admits the fractional grading
\[|p_K| := \frac{n}{n+2}|K| \in \Q,\]
in the sense that the coefficient of $p_{K_0}$ in $\mu^k(p_{K_1},\ldots,p_{K_k})$ is non-zero only if
\[ 2-k + \sum_{j=1}^k \frac{n}{n+2}|K_j| = \frac{n}{n+2}|K_0|.\]
\end{cor}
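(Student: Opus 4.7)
The plan is to derive the fractional grading directly from Proposition \ref{prop:grading} and Proposition \ref{prop:top} by a purely algebraic manipulation; no additional moduli-theoretic input is needed. The key observation is that although the symmetric choice $n_j = (n+1)/(n+2)$ is not an integer (so does not give an honest meromorphic volume form), it is the only choice that makes the grading $S_{n+2}$-symmetric, and the weight constraint compensates for the failure of $\bm{n}$ to be integral.

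First, I would lift the weight homogeneity from $M$ to $\widetilde{M}$: by Proposition \ref{prop:top}, if the coefficient of $p_{K_0}$ in $\mu^k(p_{K_1}, \ldots, p_{K_k})$ is nonzero, then $\sum_{j=1}^k e_{K_j} = e_{K_0}$ in $M = \widetilde{M}/\langle e_{[n+2]}\rangle$, and hence there exists an integer $m \in \Z$ such that
\[ \sum_{j=1}^k e_{K_j} = e_{K_0} + m\, e_{[n+2]} \quad \text{in } \widetilde{M}.\]
Next, I would take the inner product of this relation with the vector $2\bm{n} - e_{[n+2]}$. Using the computation $(2\bm{n} - e_{[n+2]}) \cdot e_{[n+2]} = 2(n+1) - (n+2) = n$ and the definition $i(p_K) = (2\bm{n} - e_{[n+2]}) \cdot e_K$ from Proposition \ref{prop:grading}, this yields
\[ \sum_{j=1}^k i(p_{K_j}) = i(p_{K_0}) + m n.\]
Comparing with the grading constraint $i(p_{K_0}) = 2 - k + \sum_{j=1}^k i(p_{K_j})$ from Proposition \ref{prop:grading} forces $mn = k - 2$, so $m = (k-2)/n$ whenever the corresponding moduli space is nonempty.

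Finally, I would take the inner product of the lifted weight relation with $\frac{n}{n+2}\, e_{[n+2]}$ instead, giving
\[ \sum_{j=1}^k \tfrac{n}{n+2}|K_j| = \tfrac{n}{n+2}|K_0| + m n = \tfrac{n}{n+2}|K_0| + (k-2),\]
which rearranges to exactly
\[ 2 - k + \sum_{j=1}^k |p_{K_j}| = |p_{K_0}|,\]
as required. There is no genuine obstacle here; the only subtlety worth flagging in the write-up is that the argument implicitly shows that the integer $m$ appearing in the lift is completely determined by $k$ (namely $m = (k-2)/n$), which is a nontrivial constraint on which moduli spaces can contribute but is automatically compatible with our fractional grading.
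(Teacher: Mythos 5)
Your proof is correct and follows essentially the same route as the paper's: lift the weight relation from $M$ to $\widetilde{M}$ via Proposition~\ref{prop:top}, pair with $2\bm{n}-e_{[n+2]}$ and use Proposition~\ref{prop:grading} to pin down the integer $m$ (the paper's $q$) as $(k-2)/n$, then pair with $\tfrac{n}{n+2}e_{[n+2]}$ to conclude. The opening remark about the virtual symmetric choice $n_j=(n+1)/(n+2)$—for which $2\bm{n}-e_{[n+2]}$ would literally equal $\tfrac{n}{n+2}e_{[n+2]}$—is a nice conceptual gloss on why this fractional grading is the natural one, but it is not used in the derivation and could be dropped or relegated to a remark.
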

\begin{proof}
For any such non-zero product, we have
\[ -e_{K_0} + \sum_{j=1}^k e_{K_j} = q e_{[n+2]}\]
for some $q \in \Z$ (Proposition \ref{prop:top} says that the image of this sum in $M$ is $0$, hence it is a multiple of $e_{[n+2]}$ in $\widetilde{M}$).
It then follows from Proposition \ref{prop:grading} that
\begin{eqnarray*}
i(p_{K_0}) &=& 2-k+\sum_{j=1}^k i(p_{K_j}).
\end{eqnarray*}

Hence, we ought to have
\begin{eqnarray*}
 k-2 &=& (2 \bm{n} - e_{[n+2]}) \cdot \left(-e_{K_0} + \sum_{j=1}^k e_{K_j} \right) \\
&=& (2 \bm{n} - e_{[n+2]}) \cdot q e_{[n+2]}\\
&=& nq \mbox{ (since $\bm{n} \cdot e_{[n+2]} = n+1$)}\\
&=& \frac{n}{n+2} e_{[n+2]} \cdot q e_{[n+2]} \\
&=& \frac{n}{n+2} e_{[n+2]} \cdot \left(-e_{K_0} + \sum_{j=1}^k e_{K_j} \right)
\end{eqnarray*}
from which the result follows.
\end{proof}

\begin{cor}
\label{cor:prodzero}
The $A_{\infty}$ products $\mu^k$ are non-zero only when $k = 2+nq$ (where $q \in \Z_{\ge 0}$).
\end{cor}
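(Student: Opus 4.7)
The plan is to derive the statement immediately from Proposition \ref{prop:top} and Corollary \ref{cor:grading}. Suppose the coefficient of $p_{K_0}$ in $\mu^k(p_{K_1},\ldots,p_{K_k})$ is non-zero. By Proposition \ref{prop:top}, $\sum_{j=1}^k e_{K_j} - e_{K_0} = 0$ in $M = \widetilde{M}/\langle e_{[n+2]}\rangle$, so the same combination, viewed in $\widetilde{M}$, equals $q\,e_{[n+2]}$ for a unique $q \in \Z$. Pairing this identity with $e_{[n+2]}$ (under the standard dot product on $\widetilde{M}$) gives $\sum_{j=1}^k |K_j| - |K_0| = q(n+2)$.

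Substituting this into the fractional grading identity of Corollary \ref{cor:grading} yields
\[ k - 2 \;=\; \frac{n}{n+2}\left(\sum_{j=1}^k |K_j| - |K_0|\right) \;=\; nq,\]
so $k = 2 + nq$. To check $q \ge 0$, I would observe that since each $|K_j| \ge 0$ and $|K_0| \le n+2$, we have $q(n+2) \ge -(n+2)$, i.e.\ $q \ge -1$. The equality $q = -1$ saturates both bounds simultaneously and therefore forces every $K_j = \emptyset$ and $K_0 = [n+2]$; in that case $k = 2 - n$, which contradicts $k \ge 1$ as soon as $n \ge 2$.

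The only residual case is $n = 1$, $q = -1$, corresponding to a potential component of $\mu^1(p_\emptyset)$ on $p_{[3]}$. This is the main obstacle in the argument, and I would exclude it by appealing to the fact that $p_\emptyset$, being the minimum of the Morse function in Corollary \ref{cor:arank}, represents the cohomological identity, so its Floer differential vanishes; alternatively one can argue directly that no rigid holomorphic strip realises such a contribution, since its boundary would have to wrap the single intersection configuration in a way incompatible with the exactness of $L^1$. Apart from this low-dimensional check, the statement is a purely formal combination of the homogeneity and grading results already in hand.
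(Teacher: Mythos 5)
Your derivation of $k = 2 + nq$ from Proposition \ref{prop:top} and Corollary \ref{cor:grading} follows exactly the computation the paper points to, and you are right that the paper's one-line proof leaves the inequality $q \ge 0$ unaddressed. Your bound $q \ge -1$ (with the equality case forcing every $K_j = \emptyset$ and $K_0 = [n+2]$) is correct, and combined with $k \ge 1$ it handles all $n \ge 2$; a slightly quicker route to the same conclusion is simply $nq = k - 2 \ge -1$, so $q \ge -1/n > -1$ and hence $q \ge 0$ for integer $q$ once $n \ge 2$.

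You are also right that the case $n=1$, $q=-1$, i.e.\ a potential $p_{[3]}$-component of $\mu^1(p_\phi)$, genuinely survives both the weight constraint and the $\Z$-grading constraint (one checks $i(p_{[3]}) = n = 1 = i(p_\phi) + 1$), so the corollary as stated needs a supplementary argument there, which the paper does not supply. Your two proposed resolutions are in the right spirit but both a bit soft as written: asserting that $p_\phi$ ``represents the cohomological identity, so its Floer differential vanishes'' is mildly circular, since one needs $\mu^1(p_\phi) = 0$ before $p_\phi$ defines a cohomology class at all; and ``incompatible with the exactness of $L^1$'' does not pin down the mechanism, because exactness kills $\mu^0$, not $\mu^1$. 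The cleanest way to close the gap is via the Morse--Bott degeneration built into the paper's setup (Lemma \ref{lem:h}, Corollary \ref{cor:arank}, Remark \ref{rmk:limit}): $p_\phi$ and $p_{[n+2]}$ are the minimum and maximum of a two-critical-point Morse function on $S^n$, and in the small-Hamiltonian limit the restriction of $\mu^1$ to this trivial sector is the Morse differential, which vanishes because the two index-$1$ flowlines on $S^1$ cancel with sign. Equivalently one can invoke Proposition \ref{prop:endo}. Either way, your proof is more complete than the paper's once this residual case is treated carefully.
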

\begin{proof}
This follows from the final set of equations in the proof of Corollary \ref{cor:grading}.
\end{proof}

\begin{rmk}
\label{rmk:2nqsum}
We observe that, when $k = 2 + nq$, we must also have
\[ \sum_{j=1}^{2+nq} e_{K_j} = e_{K_0} + q e_{[n+2]}\]
(note: this is an equation in $\widetilde{M}$, not $M$).
\end{rmk}

\begin{cor}
\label{cor:m1m2top}
$\mu^1$ is trivial, and
\begin{eqnarray*}
\mu^2(p_{K_1},p_{K_2}) &=& \left\{ \begin{array}{cl}
                                       a(K_1,K_2)p_{K_1 \sqcup K_2} & \mbox{if } K_1 \cap K_2 = \phi \\
                                       0 & \mbox{otherwise,}
\end{array}\right.
\end{eqnarray*}
where $a(K_1,K_2)$ are some integers.
\end{cor}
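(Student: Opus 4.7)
The plan is to deduce both statements directly from the degree and weight constraints already established, namely Corollary \ref{cor:prodzero} and Remark \ref{rmk:2nqsum}.

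First I would handle $\mu^1$. By Corollary \ref{cor:prodzero}, $\mu^k$ can be nonzero only when $k = 2 + nq$ for some integer $q \ge 0$. Since $n \ge 1$, the value $k = 1$ never arises in this form, so $\mu^1 = 0$.

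Next I would handle $\mu^2$. The case $k = 2$ corresponds to $q = 0$ in Corollary \ref{cor:prodzero}, and Remark \ref{rmk:2nqsum} then forces the identity
\[ e_{K_1} + e_{K_2} = e_{K_0} \]
in $\widetilde{M}$ whenever the coefficient of $p_{K_0}$ in $\mu^2(p_{K_1}, p_{K_2})$ is nonzero. Writing out this equation coordinate-wise in the basis $e_1, \ldots, e_{n+2}$, the left-hand side has each coefficient in $\{0,1,2\}$ while the right-hand side has each coefficient in $\{0,1\}$. The only way the equation can hold is that no coordinate on the left equals $2$, i.e.\ $K_1 \cap K_2 = \phi$, and in that case $K_0 = K_1 \sqcup K_2$. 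Consequently $\mu^2(p_{K_1}, p_{K_2})$ vanishes unless $K_1 \cap K_2 = \phi$, in which case it is a scalar multiple $a(K_1, K_2) \in \C$ of $p_{K_1 \sqcup K_2}$.

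There is no real obstacle here: both statements are formal consequences of the grading and weight constraints already proved. The only remaining content, which is not claimed by this corollary but will be needed later, is the determination of the actual integers $a(K_1, K_2)$; that requires an explicit count of rigid holomorphic disks and is deferred to the computations of Section \ref{sec:Acalc}.
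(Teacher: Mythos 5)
Your proof is correct and follows essentially the same route as the paper: $\mu^1 = 0$ is immediate from Corollary \ref{cor:prodzero}, and the vanishing and disjointness conditions for $\mu^2$ come from the lattice constraint. The only cosmetic difference is that you invoke Remark \ref{rmk:2nqsum} (the identity in $\widetilde{M}$) directly, whereas the paper separately cites Proposition \ref{prop:top} (the identity in $M$) together with Corollary \ref{cor:grading} ($|K_1|+|K_2|=|K_0|$) and then combines them; these are logically equivalent since the remark is derived from exactly those two ingredients.
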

\begin{proof}
The fact that $\mu^1 = 0$ follows immediately from Corollary \ref{cor:prodzero}.

For the second part of the Proposition, suppose that the coefficient of $p_{K_0}$ in $\mu^2(p_{K_1},p_{K_2})$ is non-zero. 
It follows from Proposition \ref{prop:top} that
\[ e_{K_1}+e_{K_2} = e_{K_0}\]
in $M$, and from Corollary \ref{cor:grading} that
\[ |K_1| + |K_2| = |K_0|.\]
Therefore $K_0 = K_1 \sqcup K_2$, and the result is proven.
\end{proof}

\subsection{Signs}
\label{subsec:signs}

The main aim of this section is to prove that the cohomology algebra of $\mathcal{A}$ is graded commutative. 
The basic reason for this is that complex conjugation $\tau: \mathcal{P}^n \To \mathcal{P}^n$ maps $L^n$ to itself. 
Given a holomorphic disk $u:S \To \mathcal{P}^n$ contributing to the product $a \cdot b$, the corresponding disk $\bar{u}:=\tau \circ u :\bar{S} \To \mathcal{P}^n$ (where $\bar{S}$ denotes the disk $S$ with the conjugate complex structure) contributes to the product $b \cdot a$ with the appropriate relative Koszul sign.

Throughout this section, we use the sign conventions of \cite{seidel08}.

\begin{defn}
\label{defn:op}
Given an $A_{\infty}$ category $\mathcal{C}$, we define its {\bf opposite category} $\mathcal{C}^{op}$ to be the category with the same objects, the `opposite' morphisms 
\[ \mathrm{hom}_{\mathcal{C}^{op}}(A,B) := \mathrm{hom}_{\mathcal{C}}(B,A),\]
and compositions defined by
\[ \mu^k_{op}(x_1,\ldots,x_k) := (-1)^{\ast} \mu^k(x_k,\ldots,x_1),\]
where
\[ \ast = \sum_{j<l}(i(x_j) + 1) \cdot (i(x_l) + 1).\]
It is an exercise to check that $\mathcal{C}^{op}$ is an $A_{\infty}$-category.
\end{defn}

The following proposition is due to \cite{sol10}, and is also proved in \cite[Appendix B]{Sheridan2017}:

\begin{prop}
\label{prop:opfuk}
Let $X = (X,\omega,\eta)$ be an exact symplectic manifold with boundary with symplectic form $\omega$, and complex volume form $\eta$.
Define $\bar{X} := (X,-\omega,\bar{\eta})$.
There is a quasi-isomorphism of $A_{\infty}$-categories
\[c^X:\mathcal{F}uk(X)^{op} \To \mathcal{F}uk(\bar{X}).\]
\end{prop}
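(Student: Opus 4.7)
The functor $\mathcal{G}$ is essentially the identity on the underlying data, with complex conjugation acting on the various pieces of geometric structure. On objects, I would send a Lagrangian brane $L^{\#} = (L, \alpha^{\#}, P^{\#})$ in $X$ to the brane with the same underlying Lagrangian $L$ (which remains Lagrangian because $L$ is Lagrangian for $\omega$ iff it is Lagrangian for $-\omega$), the same spin structure, but with grading function $-\alpha^{\#}$, reflecting the fact that $\bar{\eta}$ replaces $\eta$. On morphisms, for transversely intersecting $L_0, L_1$, the intersection sets $L_0 \cap L_1$ agree, so on generators I would simply use the identity of the underlying vector spaces, with attention only to any sign correction dictated by the brane data.

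The geometric heart of the proof is the correspondence of moduli spaces under complex conjugation of domains. Given a $J$-holomorphic disk $u: S \To X$ with boundary on $(L_0, \ldots, L_k)$ and boundary punctures labeled cyclically by generators $(p_0; p_1, \ldots, p_k)$, the map $\bar{u} := u: \bar{S} \To X^{op}$ is pseudo-holomorphic with respect to the conjugate almost-complex structure on $X^{op}$. Reversing the orientation of $S$ reverses the cyclic order of the boundary components, so $\bar{u}$ contributes to the product $\mu^k_{X^{op}}(p_k, \ldots, p_1)$ with output $p_0$, which is exactly $\mu^k_{op}(p_1, \ldots, p_k)$ on the opposite side. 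This gives a canonical bijection between the relevant moduli spaces, and hence a matching of structure coefficients up to sign.

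The main obstacle is the sign calculation. One must show that the Koszul-type sign
\[
\ast = 1 + \tfrac{k(k-1)}{2} + (k+1)\sum_j i(x_j) + \sum_{j<l} i(x_j) i(x_l),
\]
appearing in Definition \ref{defn:op}, is exactly the ratio between the canonical orientations on the two moduli spaces. To do this, I would fix coherent orientations on determinant lines of linearised Cauchy--Riemann operators with totally real boundary conditions, and track how these transform under (i) the antiholomorphic involution on the domain, (ii) the sign change $\omega \mapsto -\omega$, and (iii) the grading flip $\alpha^{\#} \mapsto -\alpha^{\#}$ induced by $\eta \mapsto \bar{\eta}$. The combinatorial $\tfrac{k(k-1)}{2}$ piece arises from reversing the ordering of the inputs; the $(k+1)\sum i(x_j)$ piece tracks the shift in the output degree that results from the grading flip on each boundary label; and the $\sum_{j<l} i(x_j) i(x_l)$ piece is the standard Koszul correction for reordering graded objects. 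The constant $1$ absorbs the universal sign difference between $\mu^k$ and $\mu^k_{op}$ at the level of orientations on unpunctured disks.

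Once these signs are verified on all $\mu^k$, the $A_{\infty}$-functor equations for $\mathcal{G}$ reduce to the $A_{\infty}$-associativity relations on both sides, which hold by construction of the two Fukaya categories. To promote $\mathcal{G}$ to a quasi-isomorphism it suffices to observe that, on hom-complexes, $\mathcal{G}$ is a chain isomorphism (it is an isomorphism of underlying vector spaces and intertwines $\mu^1$ with $\mu^1_{op}$ up to the sign computed above), and that it is a bijection on isomorphism classes of objects. I would then finish by remarking that the same argument applies to the extended Fukaya category containing the immersed object $L^n$, since complex conjugation preserves the image of $L^n$ and acts by the antipodal map on its domain, as recorded in Remark \ref{rmk:propln}.
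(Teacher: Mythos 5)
Your proposal takes essentially the same route as the paper: send a brane $(L,\alpha^{\#},P^{\#})$ to $(L,-\alpha^{\#},P^{\#})$, identify the two moduli spaces by conjugating the domain $S\mapsto\bar S$, and match the resulting relative sign against the one appearing in Definition \ref{defn:op}. However, two of your four heuristic attributions for the pieces of that sign are wrong. The $(k+1)\sum_j i(x_j)$ term does not ``track the shift in the output degree that results from the grading flip'' --- the degree $i(x)$ of a generator is unchanged under $\mathcal{G}$; that term is instead the difference between the auxiliary sign $i(x_1)+2i(x_2)+\ldots+ki(x_k)$ in Seidel's definition of $\mu^k$ computed on the original ordering of inputs and the same expression on the reversed ordering. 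Likewise the constant $1$ is not a separate ``universal sign difference'': in the paper it is bundled with $\tfrac{k(k-1)}{2}$ as the single contribution $1+\tfrac{k(k-1)}{2}$ recording how conjugation of the domain acts on the chosen orientation of $\mathcal R^{k+1}$. Your attributions of the remaining two pieces ($\tfrac{k(k-1)}{2}$ and $\sum_{j<l}i(x_j)i(x_l)$) agree with the paper's. These are misattributions rather than a failure of strategy --- if you actually carried out the orientation-line bookkeeping you outline, you would correct them --- but as written the explanation of where the sign comes from is off in half its pieces, and that is precisely the part of the proof that needs care.
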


Recall that we equip $L^n$ with a grading and Pin structure $P^\#$ to turn it into a Lagrangian brane $L^\#$. 
If $n>1$ then $P^\#$ is unique, but if $n=1$ there are two possible choices, and we choose $P^\#$ to be the \emph{non}-trivial Pin structure in that case.

Complex conjugation defines an isomorphism $\tau: \cP^n \xrightarrow{\sim} \bar{\cP}^n$. 
We have $\tau \circ L^n = L^n \circ a$, where $a: S^n \to S^n$ is the antipodal map, because the function $f: S^n \to \R$ is odd by construction. 
If $P^\#$ denotes our chosen Pin structure on $S^n$, then a choice of isomorphism $P^\# \cong a^* P^\#$ determines an isomorphism of Lagrangian branes, $j:L^\# \xrightarrow{\sim} \tau L^\#$. 
This determine an algebra isomorphism
\begin{align*}
\label{eqn:theiso} \Hom^*_{\fuk(\cP^n)}(L^\#,L^\#) & \xrightarrow{c^{\cP^n}} \Hom^*_{\fuk(\bar{\cP}^n)^{op}}(L^\#,L^\#) \\
& \xrightarrow{\tau} \Hom^*_{\fuk(\cP^n)^{op}}(\tau L^\#, \tau L^\#) \nonumber \\
& \xrightarrow{j} \Hom^*_{\fuk(\cP^n)^{op}}(L^\#,L^\#). \nonumber
\end{align*}

\begin{lem}[= Lemma \ref{lem:thesign}]
\label{lem:thesign2}
This isomorphism sends
\[ p_K \mapsto (-1)^{1 + \bm{n} \cdot e_K} \cdot p_K.\]
\end{lem}

\begin{rmk}
If $n=1$ and $P^\#$ is the trivial Pin structure, then Lemma \ref{lem:thesign2} is false.
\end{rmk}

\begin{cor}
\label{cor:cohsigns}
The cohomology algebra of $\mathcal{A}$, with the (associative) product
\[ p_{J} \cdot p_{K} := (-1)^{|K|} \mu^2(p_{J},p_{K}),\]
is supercommutative:
\[ p_{J} \cdot p_{K} = (-1)^{|J|\cdot|K|} p_{K} \cdot p_{J}.\]
\end{cor}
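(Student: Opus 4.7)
The plan is to derive supercommutativity as a formal consequence of Corollary \ref{cor:signs}, which gives a quasi-isomorphism $\mathcal{A} \to \mathcal{A}^{op}$ preserving each generator $p_K$ on cohomology, combined with the explicit opposite-sign formula of Lemma \ref{lem:signsA} specialised to the associative product. The geometric content (existence of the antiholomorphic involution $\tau$, matching of brane structures, etc.) has already been absorbed into Corollary \ref{cor:signs}, so what remains is purely a sign chase.

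First I would specialise Lemma \ref{lem:signsA} to $k=2$, $q=0$. By Corollary \ref{cor:m1m2top}, the only $K_0$ for which $\mu^2(p_{K_1},p_{K_2})$ has a nonzero $p_{K_0}$-component is $K_0 = K_1 \sqcup K_2$, so $|K_0|=|K_1|+|K_2|$. Plugging into the formula of Lemma \ref{lem:signsA} gives
\[\mu^2_{op}(p_{K_1},p_{K_2}) = (-1)^{|K_1|+|K_2|+|K_1||K_2|}\, \mu^2(p_{K_2},p_{K_1}).\]

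Next I would translate this into a statement about the associative cohomology product. Define $p_{K_1} \cdot_{op} p_{K_2} := (-1)^{|K_1|}\mu^2_{op}(p_{K_1},p_{K_2})$, which is the induced product on $H^*(\mathcal{A}^{op})$. Substituting the formula above and comparing with $p_{K_2}\cdot p_{K_1} = (-1)^{|K_2|}\mu^2(p_{K_2},p_{K_1})$ yields, after the $(-1)^{|K_1|}\cdot(-1)^{|K_2|}$ cancellation against the $|K_1|+|K_2|$ term in the exponent,
\[p_{K_1} \cdot_{op} p_{K_2} = (-1)^{|K_1||K_2|}\, p_{K_2} \cdot p_{K_1}.\]

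Finally, Corollary \ref{cor:signs} supplies a quasi-isomorphism $\mathcal{A} \to \mathcal{A}^{op}$ whose action on the cohomology generators is the identity $p_K \mapsto p_K$ (the corollary records that orientation lines are preserved without sign change, and the underlying map on generators is just induced by $\tau$ composed with the reversal appearing in Proposition \ref{prop:opfuk}). Any $A_\infty$-quasi-isomorphism induces a unital algebra isomorphism on cohomology, so $p_{K_1} \cdot p_{K_2} = p_{K_1} \cdot_{op} p_{K_2}$, giving the desired supercommutativity. The only real obstacle is bookkeeping the signs through Definition \ref{defn:op} at $k=2$; none of the higher-$q$ cases of Lemma \ref{lem:signsA} enter, and no further geometric input is needed.
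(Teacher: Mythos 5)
Your proof is correct and is exactly the sign chase the paper leaves implicit: the paper deduces Corollary \ref{cor:cohsigns} directly from Lemma \ref{lem:signsA} and Corollary \ref{cor:signs} without writing out the argument, and your specialisation of Lemma \ref{lem:signsA} to $k=2$, $q=0$, followed by transport along the quasi-isomorphism $\mathcal{A}\to\mathcal{A}^{op}$ (which acts as the identity on generators with no sign change and, since $\mu^1=0$ on both sides, strictly intertwines the $\mu^2$'s), is precisely that argument.
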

\begin{proof}
It follows from Lemma \ref{lem:thesign2}, together Corollary \ref{cor:m1m2top} and the definition of the opposite category, that whenever $J$ and $K$ are disjoint we have
\[ \mu^2(p_K, p_J) = (-1)^\dagger \cdot \mu^2(p_J, p_K),\]
where
\begin{align*}
 \dagger &= \left(1 + \bm{n} \cdot e_J\right) + \left(1 + \bm{n} \cdot e_K \right) + \left(1+\bm{n} \cdot e_{J \sqcup K}\right) + (1+|J|) \cdot (1+|K|) \\
 &= |J|\cdot |K| + |J| + |K|.
 \end{align*}
It follows that
\[ p_J \cdot p_K = (-1)^{|J|\cdot |K|} p_K \cdot p_J \]
as required.
\end{proof}

\section{A Morse-Bott definition of the Fukaya category}
\label{sec:fukdef}

The Fukaya $A_{\infty}$ category was introduced in \cite{fukaya93}. 
There are a number of approaches to transversality issues in its definition -- virtual perturbations are used in \cite{fooo}, and explicit perturbations of the holomorphic curve equation are used in \cite{seidel08}. 

In this section, we describe a `Morse-Bott' approach which is a modification of the approach in \cite{seidel08}, combining it with the approach of \cite{abouzaidplumb}. 
The outline of this approach has appeared in \cite[Section 7]{seidelg2}, and is related to the `clusters' of \cite{cornealalonde}. 
However, the geometric situation we consider is simpler than that of \cite{cornealalonde}, namely we work only in exact symplectic manifolds with convex boundary, which for example rules out disk and sphere bubbling.

Our treatment follows \cite[Sections 8 -- 12]{seidel08} closely, explaining at each stage how our construction differs. 
We make use of concepts and terminology from \cite{seidel08} (including abstract Lagrangian branes, strip-like ends and perturbation data) with minimal explanation.
We explain, in Section \ref{subsec:compat}, why our definition of the Fukaya category is quasi-equivalent to that given in \cite{seidel08}.

This section deals only with the Fukaya category of embedded Lagrangians.
In particular, the Lagrangian immersion $L^n: S^n  \To \mathcal{P}^n$ does not fit into this framework.
However, the concepts introduced in this section are the basis for the Morse-Bott computation of $\mathcal{A} = CF^*(L^n,L^n)$ that will be explained in Section \ref{subsec:fpearlyt}.

\subsection{The domain: pearly trees}
\label{subsec:pearlytrees}

In this section, we recall the Deligne-Mumford-Stasheff compactification of the moduli space of disks with boundary punctures, and define the analogous moduli space of pearly trees and its compactification.

Suppose that $k \ge 2$, and $\bm{L} := (L_0,\ldots,L_k)$ is a tuple of Lagrangians in $X$.
We denote by $\mathcal{R}(\bm{L})$ the moduli space of disks with $k+1$ boundary marked points,  modulo biholomorphism, with the components of the boundary between marked points labeled $L_0,\ldots,L_k$ in order. 
The marked point between $L_k$ and $L_0$ is `positive', and all other marked points are `negative'.
We call $\bm{L}$ a set of {\bf Lagrangian labels} for our boundary-marked disk (for the purposes of this section, it is not important that the labels correspond to Lagrangians in $X$ -- we need only assign certain labels to the boundary components and keep track of which of the labels are identical).

\begin{defn}
We denote by $\mathcal{S}(\bm{L}) \To \mathcal{R}(\bm{L})$ the universal family of boundary-punctured disks with Lagrangian labels $\bm{L}$, so that the fibre $\mathcal{S}_r$ over a point $r \in \mathcal{R}(\bm{L})$ is the corresponding disk, with its boundary marked points removed. 
\end{defn}

We define
\[ Z^{\pm} := \R^{\pm} \times [0,1]\]
with the standard complex structure (where $\R^+, \R^-$ are the positive and negative half-lines respectively). 
We will use $s$ to denote the $\R^{\pm}$ coordinate and $t$ to denote the $[0,1]$ coordinate.
We make a {\bf universal choice of strip-like ends} for the family $\mathcal{S}(\bm{L}) \To \mathcal{R}(\bm{L})$, which consists of fibrewise holomorphic embeddings 
\[ \epsilon_j: \mathcal{R}(\bm{L}) \times Z^{\pm} \To \mathcal{S}(\bm{L})\]
to a neighbourhood of the $j$th puncture, for each $j=0,1,\ldots,k$, where the sign $\pm$ is opposite to the sign of the puncture.

\begin{defn}
A {\bf directed $k$-leafed planar tree} $T$ is a directed tree with $k$ semi-infinite `incoming' edges and one semi-infinite `outgoing' edge, together with a proper embedding into $\R^2$.
Isotopic embeddings are regarded as equivalent.
We denote by $V(T)$ the set of vertices of $T$, by $E(T)$ the set of edges, and by $E_i(T) \subset E(T)$ the set of internal (compact) edges.
We say that $T$ has {\bf Lagrangian labels} $\bm{L}$ if the connected components of $\R^2 \setminus T$ are labeled by the Lagrangians of $\bm{L}$, in order. 
A Lagrangian labeling $\bm{L}$ of $T$ induces a labeling $\bm{L}_v$ of the regions surrounding each vertex $v \in V(T)$ (see Figure \ref{fig:treelabels}).
We call a vertex {\bf stable} if it has valence $\ge 3$, and {\bf semi-stable} if it has valence $\ge 2$. 
We call the tree $T$ stable (respectively semi-stable) if all of its vertices are stable (respectively semi-stable).
\end{defn}

\begin{figure}
\centering
\subfigure[A $k$-leafed stable tree $T_S$ is said to have Lagrangian labels $\bm{L}$ if the connected components of $\R^2 \setminus T$ are labeled by the Lagrangians of $\bm{L}$, in order. In this figure, $\bm{L} = (L_0,L_0,L_0,L_1,L_2,L_2,L_1,L_0,L_3)$.
A Lagrangian labeling $\bm{L}$ of $T_S$ induces a labeling $\bm{L}_v$ of the regions surrounding each vertex $v$. In this figure, the induced labeling of the regions surrounding the topmost vertex is $\bm{L}_v = (L_0,L_0,L_1,L_1,L_0,L_3)$.]{
\includegraphics[width=0.45\textwidth]{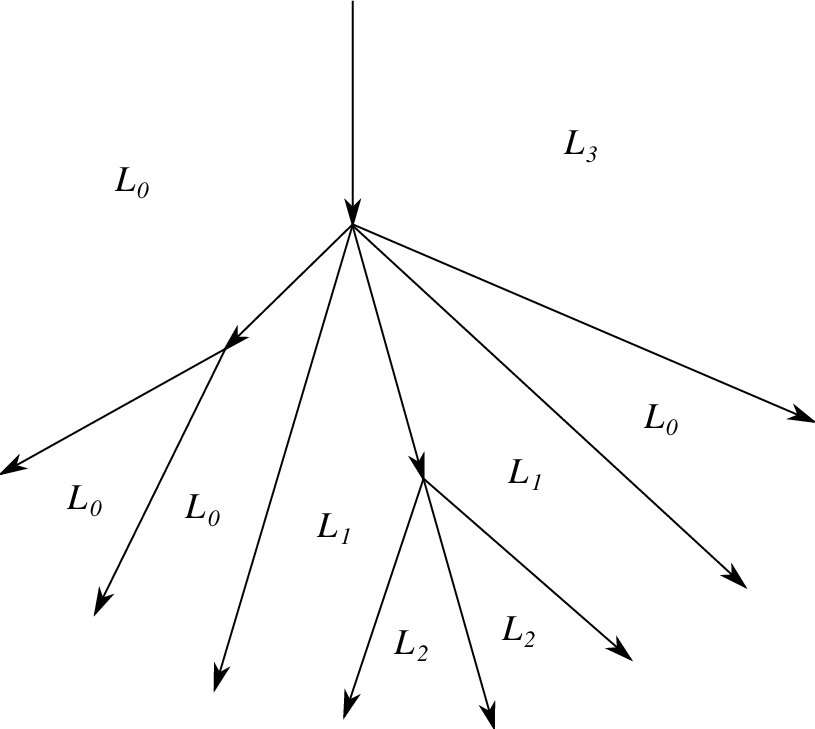}
\label{fig:treelabels}}
\hfill
\subfigure[A pearly tree $S$, with underlying tree $T_S$ and Lagrangian labels as in Figure \ref{fig:treelabels}. Observe that all edges have the same label on either side, while external strips have different labels on either side.]{
\includegraphics[width=0.45\textwidth]{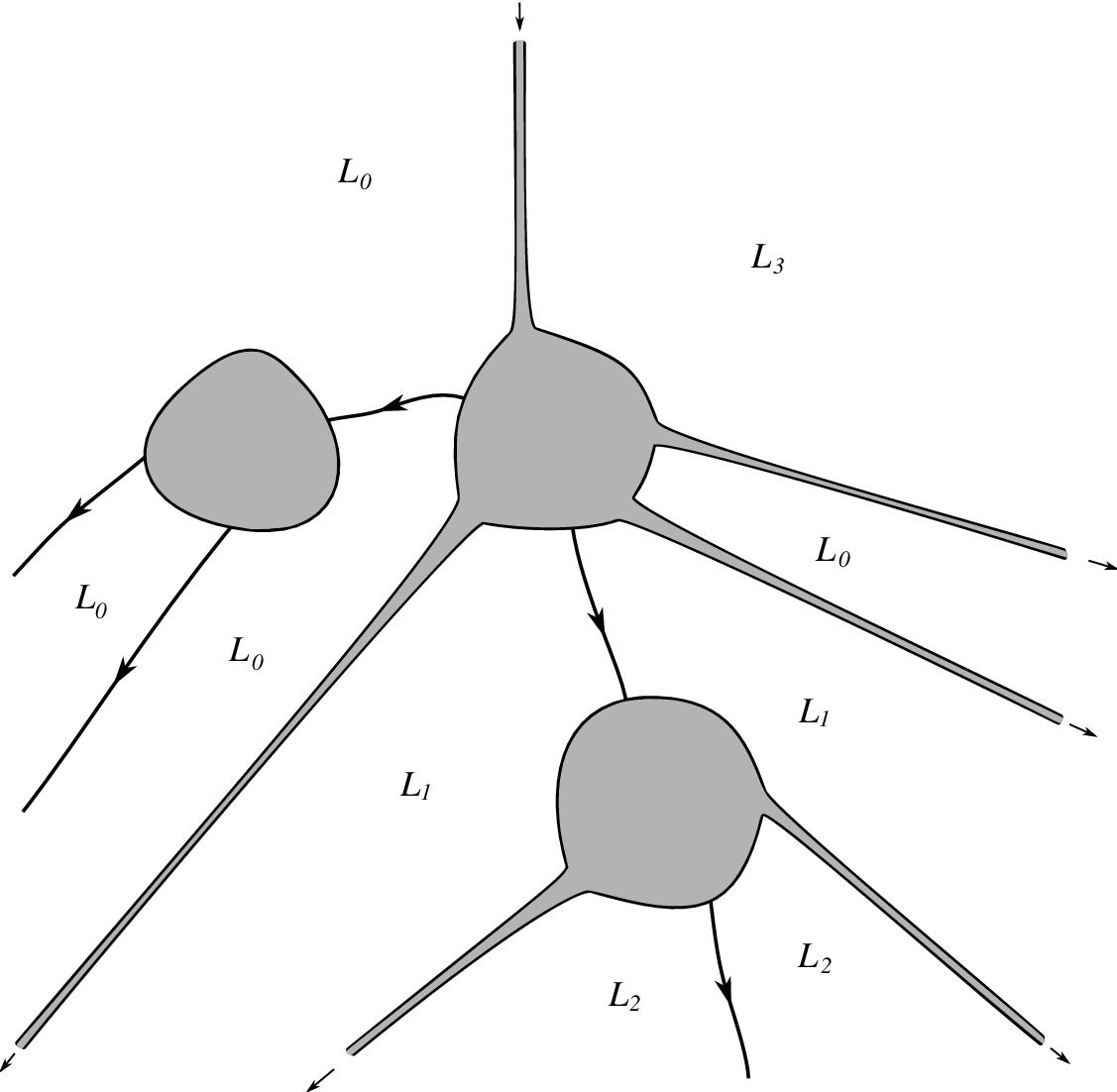}
\label{fig:pearlytree}}
\caption{Pearly trees with Lagrangian labels.
\label{fig:labels}}
\end{figure}

We define
\[ \bar{\mathcal{R}}_T(\bm{L}) := \left(\prod_{v\in V(T)} \mathcal{R}(\bm{L}_v) \right) \times (-1,0]^{E_{i}(T)}.\]
In other words, $\bar{\mathcal{R}}_T(\bm{L})$ consists of the data of the planar tree $T$, a boundary-marked disk $r_v \in \mathcal{R}(\bm{L}_v)$ for each vertex $v$, and a {\bf gluing parameter} $\rho_e \in (-1,0]$ for each internal edge $e$.

Given an internal edge $e$ of $T$ with gluing parameter $\rho_e \in (-1,0)$, we can glue the disks $r_v$ at either end of $e$ together along their strip-like ends with gluing parameter $\rho_e$ (corresponding to the `length' of the gluing region being $l_e := -\log(-\rho_e)$), to obtain an element of $\mathcal{R}_{T/e}(\bm{L})$ (where $T/e$ denotes the tree obtained from $T$ by contracting the edge $e$).
This defines a {\bf gluing map}
\[ \varphi_{T,e}: \{ r \in \bar{\mathcal{R}}_T (\bm{L}): \rho_e \in (-1,0)\} \To \bar{\mathcal{R}}_{T/e}(\bm{L}).\]

\begin{defn}
We denote by $\bar{\mathcal{R}}(\bm{L})$ the Deligne-Mumford-Stasheff compactification of $\mathcal{R}(\bm{L})$ by stable disks:
\[ \bar{\mathcal{R}}(\bm{L}) := \left(\coprod_{T}  \bar{\mathcal{R}}_T(\bm{L})\right) /\sim,\]
where
\[ r \sim \varphi_{T,e}(r) \]
whenever defined. 
Given a boundary-punctured disk $S$ with modulus $r \in \mathcal{R}(\bm{L})$, we call the union of all strip-like ends and gluing regions (under all possible gluing maps) the {\bf thin} part of $S$, and its complement the {\bf thick} part.
\end{defn}

\begin{rmk}
$\bar{\mathcal{R}}$ is the compactification of $\mathcal{R}$ by allowing the gluing parameters $\rho_e$ to take the value $0$. 
This corresponds to allowing the lengths of the gluing regions $l_e$ to be infinite.
$\bar{\mathcal{R}}(\bm{L})$ has the structure of a smooth $(k-2)$-dimensional manifold with corners (where $k := |\bm{L}| - 1$).
The codimension-$d$ boundary strata are indexed by trees $T$ with $d$ internal edges.
Namely, $T$ corresponds to the subset of $\bar{\mathcal{R}}_T$ where all $d$ gluing parameters $\rho_e$ are equal to $0$.
\end{rmk}

\begin{defn}
We denote by $\bar{\mathcal{S}}(\bm{L}) \To \bar{\mathcal{R}}(\bm{L})$ the partial compactification of the universal family $\mathcal{S}(\bm{L}) \To \mathcal{R}(\bm{L})$ of boundary-punctured disks by stable boundary-punctured disks.
\end{defn}

In \cite{seidel08}, the coefficients of the $A_{\infty}$ structure maps
\[ \mu^k: CF^*(L_{k-1},L_k) \otimes \ldots \otimes CF^*(L_0,L_1) \To CF^*(L_0,L_k)\]
are defined by counts of (appropriately perturbed) holomorphic curves $u: \mathcal{S}_r(\bm{L}) \To X$ for some $r \in \mathcal{R}(\bm{L})$.
The structure of the codimension-$1$ boundary of $\bar{\mathcal{R}}(\bm{L})$ leads to the $A_{\infty}$ associativity equations.

When no two of the Lagrangians in $\bm{L}$ coincide, we define the $A_{\infty}$ structure maps in exactly the same way.
However, when some of the Lagrangians in $\bm{L}$ coincide, we alter this definition. 

\begin{defn}
\label{defn:pearlytree}
A {\bf pearly tree} $S$ with Lagrangian labels $\bm{L}$ is specified by the following data:
\begin{itemize}
\item A stable directed $k$-leafed planar tree $T_S$ (the {\bf underlying tree} of $S$) with Lagrangian labels $\bm{L}$, such that the labels on either side of an internal edge are identical;
\item For each vertex $v$, a point $r_v \in \mathcal{R}(\bm{L}_v)$;
\item For each internal edge $e$, a length parameter $l_e \in [0,\infty)$.
\end{itemize}
We denote by $V(S)$ the set of vertices of the tree $T_S$, and by $E_L(S)$ the set of edges of $T_S$ with both sides labeled $L$ (internal or external).
For each vertex $v \in V(S)$, we define $S_v$ to be the boundary-marked disk with modulus $r_v$, with all marked points between distinct Lagrangians punctured (but all marked points between identical Lagrangians remain). 
These are the `{\bf pearls}'.
We define 
\[S^p := \coprod_{v \in V(S)} S_v.\]
For each internal edge $e$, we define $S_e := [0,l_e]$. 
For each external edge $e$ with opposite sides labeled by the same Lagrangian, we define $S_e := \R^{\pm}$, depending on the orientation of the edge.
For each Lagrangian $L \in \bm{L}$, we define
\[ S^e(L) := \coprod_{e \in E_L(S)} S_e,\]
and $S^e$ to be the disjoint union of $S^e(L)$ over all $L$.
For each $L \in \bm{L}$, we define $F_L(S)$ to be the set of flags of $T_S$ with both sides labeled by the same Lagrangian $L$.
We define $F(S)$ to be the union of all $F_L(S)$.
For each $f \in F_L(S)$, there is a corresponding marked point on a boundary component of $S^p$ with Lagrangian label $L$, which we denote by $m(f) \in S^p$. 
Also corresponding to $f$, there is a point $b(f) \in S^e(L)$, which is the boundary point of the edge corresponding to the flag $f$. 
We finally define
\[ S:= (S^p \sqcup S^e)/\sim\]
where
\[m(f) \sim b(f) \mbox{ for all $f \in F(S)$}\]
(see Figure \ref{fig:pearlytree}).
\end{defn}

We now define a topology on the moduli space of pearly trees.

Suppose we are given a stable directed $k$-leafed planar tree $T$ with Lagrangian labels $\bm{L}$.
If the labels on opposite sides of an edge are distinct, we call the edge a {\bf strip edge}, and if they are identical, we call it a {\bf Morse edge}. 
We denote by $E_{i,s}(T) \subset E(T)$ the internal strip edges, and $E_{i,M}(T) \subset E(T)$ the internal Morse edges.
We define
\[ \mathcal{R}^{pt}_T(\bm{L}) := \left(\prod_{v\in V(T)}  \mathcal{R}(\bm{L}_v)\right) \times (-1,0)^{E_{i,s}(T)} \times (-1,1)^{E_{i,M}(T)}\]
(`$pt$'  stands for `pearly tree').

As before, for any internal edge $e \in E_{i}(T)$, we have a `gluing map'
\[ \varphi_{T,e}:\{r \in \mathcal{R}^{pt}_T (\bm{L}): \rho_e \in (-1,0)\} \To  \mathcal{R}_{T/e}^{pt}(\bm{L}).\]
The only difference from the previous construction is that the gluing parameter $\rho_e$ now takes values in $(-1,1)$, rather than $(-1,0)$, for $e$ an internal Morse edge. 

\begin{defn}
\label{defn:pearltop}
We define $\mathcal{R}^{pt}(\bm{L})$, the moduli space of {\bf pearly trees} with Lagrangian labels $\bm{L}$:
\[ \mathcal{R}^{pt}(\bm{L}) := \left(\coprod_{T}  \mathcal{R}^{pt}_T(\bm{L}) \right)/\sim,\]
where
\[r \sim \varphi_{T,e}(r) \]
whenever defined.
A point $r \in \mathcal{R}^{pt}(\bm{L})$ corresponds to a pearly tree $S_r$ as follows: we glue along any edge with gluing parameter $<0$, so that we get a tree $T_S$ whose only internal edges are Morse edges with gluing parameter $\rho_e \in [0,1)$. 
We regard these as edges having length parameter
\[ l_e := -\log(1-\rho_e)\]
(see Figure \ref{fig:morseedge}). 
This defines a topology on the moduli space $\mathcal{R}^{pt}(\bm{L})$. 
Again, we define the {\bf thin} part of $S^p$ to be the union of all strip-like ends and gluing regions (including a strip neighbourhood of each boundary marked point), and the {\bf thick} part of $S^p$ to be its complement.
\end{defn}

\begin{rmk}
We could have defined $\mathcal{R}^{pt}(\bm{L})$ without any reference to strip edges at all, since we can glue along all strip edges. 
However this would not allow us to define the thick and thin regions, and we will need to consider strip edges soon anyway when we define the compactification of $\mathcal{R}^{pt}(\bm{L})$.
\end{rmk}

\begin{defn}
We denote by 
\begin{eqnarray*}
\mathcal{S}^p(\bm{L}) & \To & \mathcal{R}^{pt}(\bm{L}), \\
\mathcal{S}^e_L(\bm{L}) & \To & \mathcal{R}^{pt}(\bm{L}) \mbox{ (for $L \in \bm{L}$), and} \\
\mathcal{S}^{pt}(\bm{L}) & \To & \mathcal{R}^{pt}(\bm{L}) 
\end{eqnarray*} 
the universal families with fibre $S^p_r, S^e_r(L)$ and $S_r$ respectively, over a point $r \in \mathcal{R}^{pt}(\bm{L})$.
\end{defn}

\begin{defn}
We define a universal choice of strip-like ends for the family $\mathcal{S}^{pt}(\bm{L}) \To \mathcal{R}^{pt}(\bm{L})$ to consist of the embeddings
\[ \epsilon_j: \mathcal{R}^{pt}(\bm{L}) \times Z^{\pm} \To \mathcal{S}^{pt}(\bm{L})\]
for each external strip edge, coming from our universal choice of strip-like ends for families of boundary-punctured disks, and
\[ \epsilon_j : \mathcal{R}^{pt}(\bm{L}) \times \R^{\pm} \To \mathcal{S}^{pt}(\bm{L})\]
which are parametrisations of the corresponding external Morse edges (where the sign $\pm$ is determined by the orientation of the edge).
\end{defn}

\begin{figure}
\centering
\includegraphics[width=0.9\textwidth]{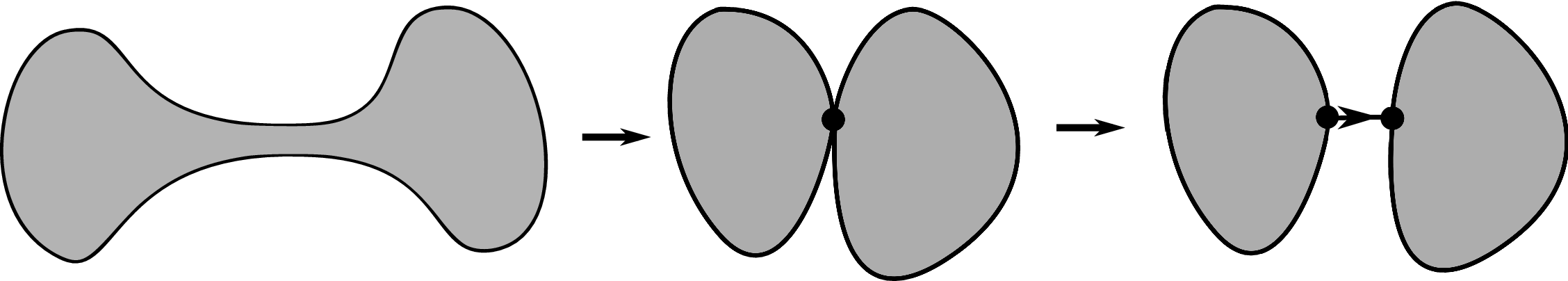}
\caption{In this figure, we show what happens as the gluing parameter $\rho_e$ for a Morse edge in a pearly tree passes from negative to positive.
On the left, $\rho_e<0$, and we have a `thin' region in our disk, corresponding to the edge $e$.
As $\rho_e \To 0^{-}$, the thin region's length becomes infinite, until at $\rho_e = 0$ we have a stable disk (middle picture).
On the right, $\rho_e > 0$, and we have two distinct disks connected by an edge of length $l_e = -\log(1-\rho_e)$.
As $\rho_e \To 0^{+}$, the edge's length goes to $0$, until at $\rho_e = 0$ we have the same stable disk. 
\label{fig:morseedge}}
\end{figure}

\begin{defn}
\label{defn:medge}
Given a tree $T_S$ as above, and a subset $B \subset E(T_S)$, we define $\mathcal{R}^{pt}(T_S,B) \subset \mathcal{R}^{pt}(\bm{L})$ to be the images of pearly trees $S$ with underlying tree $T_S$, with gluing parameter $\rho_e = 0$ for $e \in B$ and $\rho_e >0$ for $e \notin B$ (of course this depends on the Lagrangian labels, but we omit $\bm{L}$ from the notation for readability). 
Each pearly tree $r \in \mathcal{R}^{pt}(\bm{L})$ lies in a unique subset $\mathcal{R}^{pt}(T_S,B)$.
\end{defn}

\begin{defn}
Given $(T_S,B)$ as in Definition \ref{defn:medge}, we define the universal family
\[ \mathcal{S}^{pt}(T_S,B) \To \mathcal{R}^{pt}(T_S,B).\]
\end{defn}

We now define the compactification of $\mathcal{R}^{pt}(\bm{L})$.
Let
\[ \bar{\mathcal{R}}^{pt}_T(\bm{L}) := \left(\prod_{v\in V(T)}  \mathcal{R}(\bm{L}_v)\right) \times (-1,0]^{E_{i,s}(T)} \times (-1,1]^{E_{i,M}(T)}.\]
Note that $ \bar{\mathcal{R}}^{pt}_T(\bm{L})$ contains $ \mathcal{R}^{pt}_T(\bm{L})$ as a dense open subset.

\begin{defn}
\label{defn:stabpearlytree}
We define the the compactification of $\mathcal{R}^{pt}(\bm{L})$, the moduli space of {\bf stable pearly trees},
\[ \bar{\mathcal{R}}^{pt}(\bm{L}) := \left(\coprod_{T}  \bar{\mathcal{R}}^{pt}_T (\bm{L})\right)/\sim,\]
where
\[r \sim \varphi_{T,e}(r) \]
whenever defined.
We also define the universal family $\bar{\mathcal{S}}^{pt}(\bm{L}) \To \bar{\mathcal{R}}^{pt}(\bm{L})$ of stable pearly trees.
\end{defn}

\begin{rmk}
In the spaces $\bar{\mathcal{R}}^{pt}_T$, the gluing parameters of strip (respectively Morse) edges can take the value $0$ (respectively $1$). 
This corresponds to the length of the gluing region $l_e$ becoming infinite (respectively, the length of the edge $l_e$ becoming infinite).
Thus, we are essentially compactifying by allowing the pearls to be stable disks, and the Morse edges to have infinite length.
$\bar{\mathcal{R}}^{pt}(\bm{L})$ has the structure of a smooth $(k-2)$-manifold with corners. 
The codimension-$d$ boundary strata are indexed by trees $T$ with Lagrangian labels $\bm{L}$ and $d$ internal edges. 
Namely, the boundary stratum corresponding to $T$ is the image of the subset of $\bar{\mathcal{R}}^{pt}_T(\bm{L})$ where all gluing parameters $\rho_e$ are $0$ for strip edges and $1$ for Morse edges. 
\end{rmk}

\begin{rmk}
$\bar{\mathcal{R}}^{pt}(\bm{L})$ is obtained from the usual Deligne-Mumford-Stasheff compactification $\bar{\mathcal{R}}(\bm{L})$ by adding a `collar' along each boundary stratum corresponding to a tree with a Morse edge in it.
\end{rmk}

Na\"{i}vely, the structure coefficients of the usual Fukaya category count rigid holomorphic disks $u: \mathcal{S}_r \To X$ for some $r \in \mathcal{R}(\bm{L})$. 
In reality, we must perturb the $J$-holomorphic curve equation to achieve transversality, in particular when two of the Lagrangian boundary conditions coincide.
In \cite{seidel08}, the equation is perturbed by allowing modulus- and domain-dependent almost-complex structures and Hamiltonian perturbations. 

We would like to alter the definition of the Fukaya category so that the structure coefficients are counts of rigid `holomorphic pearly trees' $u: S_r \To X$ for some $r \in \mathcal{R}^{pt}(\bm{L})$.
Na\"{i}vely, a holomorphic pearly tree is a map which is holomorphic on the pearls and given by the Morse flow of some Morse function on the corresponding Lagrangian on each edge. 
Again, in reality, we have to perturb the holomorphic curve and Morse flow equations by modulus- and domain-dependent perturbations in order to achieve transversality. 
We describe how to do this in Sections \ref{subsec:floerdata}-\ref{subsec:pertdatafam}.

\subsection{Floer data and morphism spaces}
\label{subsec:floerdata}

Recall, from Section \ref{subsec:afuk}, that we define the Fukaya category of a symplectic manifold $(X,\omega)$ with the following properties and structures:

\begin{itemize}
\item $\omega = d \theta$ is exact;
\item $X$ is equipped with an almost-complex structure $J_0$, compatible with $\omega$;
\item $X$ is convex at infinity, in the sense that there is a bounded below, proper function $h: X \To \R$ such that
\[ \theta = - dh \circ J_0;\]
\item $X$ is equipped with a complex volume form $\eta$ (note: we will not take a quadratic complex volume form as in \cite{seidel08}, because we will assume our Lagrangians to be oriented).
\end{itemize}

An object of the Fukaya category of $X$ is a compact, exact, embedded Lagrangian brane $L^{\#}$ (we will neglect the superscript $\#$, denoting the brane structure, for notational convenience).

\begin{defn}
We define 
\[\mathcal{H} := C^{\infty}_c(X,\R),\]
the space of smooth, compactly supported functions on $X$ (think of this as the space of Hamiltonians), and $\mathcal{J}$, the space of smooth almost-complex structures on $X$ compatible with $\omega$, and equal to the standard complex structure $J_0$ outside of some compact set.
For future use, for each Lagrangian $L$, we define
\[ \mathcal{V}_L := C^{\infty}(L,TL),\]
the space of smooth vector fields on $L$.
\end{defn}

\begin{defn}
\label{defn:floerdata}
For each {\bf distinct} pair of objects $(L_0,L_1)$, we choose a {\bf Floer datum} $(H_{01},J_{01})$ consisting of 
\[H_{01} \in C^{\infty}([0,1],\mathcal{H}) \mbox{ and }J_{01} \in C^{\infty}([0,1],\mathcal{J})\] 
satisfying the following property: if $\phi^t$ denotes the flow of the Hamiltonian vector field of the (time-dependent) Hamiltonian $H_{01}$, then the time-$1$ flow $\phi^1(L_0)$ is transverse to $L_1$. 
One then defines a {\bf generator} of $CF^*(L_0,L_1)$ to be a path $y:[0,1] \To X$ which is a flowline of the Hamiltonian vector field of $H_{01}$, such that $y(0) \in L_0$ and $y(1) \in L_1$ (these correspond to the transverse intersections of $\phi^1(L_0)$ with $L_1$). 
One defines $CF^*(L_0,L_1)$ to be the $\C$-vector space generated by its generators.
It is $\Z$-graded, as explained in \cite[Chapter 11, 12]{seidel08}. 
\end{defn}

In \cite{seidel08}, the case $L_0 = L_1$ is treated identically, but we will do something different.

\begin{defn}
A {\bf Floer datum} for a pair of identical Lagrangians $(L,L)$ is a Morse-Smale pair $(h_L,g_L)$ consisting of a Morse function $h_L:L \To \R$ and a Riemannian metric $g_L$ on $L$.
One then defines $CF^*(L,L) := C_M^*(L)$, the $\C$-vector space generated by critical points of $h_L$. 
It is $\Z$-graded by the Morse index.
\end{defn}

\begin{rmk}
\label{rmk:limit}
Intuitively, one should think of this as a limiting case of Definition \ref{defn:floerdata}.
Namely, we could choose the almost-complex structure part of the perturbation datum to be a time-independent $J \in \mathcal{J}$ which, when combined with $\omega$, induces a Riemannian metric whose restriction to $L$ is $g_L$.
We could then choose the Hamiltonian part of the perturbation datum to be a time-independent function $\epsilon H$, where $H|_{L} = h_L$, and consider the limit $\epsilon \To 0$. 
\end{rmk}

\begin{defn}
Given a set of Lagrangian labels $\bm{L} = (L_0,\ldots,L_k)$, an {\bf associated set of generators} is a tuple
\[ \bm{y} = (y_0,\ldots,y_k),\]
where $y_j$ is a generator of $CF^*(L_{j-1},L_j)$ for each $1 \le j \le k$, and $y_0$ is a generator of $CF^*(L_0,L_k)$.
We denote the grading of a generator $y$ by $i(y)$, and define
\[ i(\bm{y}) := i(y_0) - \sum_{j=1}^k i(y_j).\]
\end{defn}

\subsection{Perturbation data for fixed moduli}
\label{subsec:pertdatafix}

For the purposes of this section, let $S$ be a pearly tree with Lagrangian labels $\bm{L}$ and fixed modulus $r \in \mathcal{R}^{pt}(\bm{L})$. 

\begin{defn}
A {\bf perturbation datum} for $S$ consists of the data $(K,J,V)$, where:
\begin{itemize}
\item $K \in \Omega^1(S^p,\mathcal{H})$;
\item $J \in C^{\infty}(S^p,\mathcal{J})$;
\item $V$ is a tuple of maps $V_L \in C^{\infty}(S^e(L),\mathcal{V}_L)$ for each $L \in \bm{L}$,
\end{itemize}
such that
\[ K(\xi)|_{L_C} = 0 \mbox{ for all $\xi \in TC \subset T(\partial S^p)$}\]
for each boundary component $C$ of a pearl in $S$ with Lagrangian label $L_C$.

We also impose a requirement that the perturbation datum be compatible with the Floer data on the strip-like ends, in the following senses:
\[\epsilon_j^*K = H_{j-1,j}(t)dt, \,\,\, J(\epsilon_j(s,t)) = J_{j-1,j}(t)\]
on each external strip edge;
\[ V_{L_j}(\epsilon_j(s)) = \nabla h_{L_j}\]
on each external Morse edge.
\end{defn}

\begin{defn}
\label{defn:perthol}
Given a pearly tree $S$ with Lagrangian labels $\bm{L}$ and a perturbation datum $(K,J,V)$, a {\bf holomorphic pearly tree} (or more properly, an inhomogeneous pseudo-holomorphic pearly tree) in $X$ with domain $S$ is a collection $\bm{u}$ of smooth maps 
\begin{eqnarray*}
u_p: S^p & \To&  X \mbox{ and }\\
u_L: S^e(L) & \To & L \mbox{ for all $L$ in $\bm{L}$,}
\end{eqnarray*}
satisfying
\begin{eqnarray*}
u_p(C) &\in& L_C \mbox{ for each boundary component $C$ of $S^p$ with label $L_C$;}\\
u_p(m(f)) &=& u_L(b(f)) \mbox{ for all $f \in F_L(S)$, for all $L$;} \\
(Du_p - Y)^{0,1} &=& 0 \mbox{ on $S^p$;}\\
Du_L-V &=& 0 \mbox{ on $S^e(L)$, for all $L$,}
\end{eqnarray*}
where, for $\xi \in TS$, $Y(\xi)$ is the Hamiltonian vector field of the function $K(\xi)$. 
Note that the second condition says exactly that $\bm{u}$ defines a continuous map $S \To X$. 
\end{defn}

\begin{defn}
Given $\bm{y} = (y_-,y_+)$, where $y_{\pm}$ are generators of $CF^*(L_0,L_1)$, we define the moduli space $\mathcal{M}_Z(\bm{y})$ of solutions of the holomorphic pearly tree equation with domain $Z = \R \times [0,1]$ (if $L_0 \neq L_1$) or $\R$ (if $L_0 = L_1$), translation-invariant perturbation datum given by the corresponding Floer datum, and asymptotic conditions
\[ \lim_{s \To \pm \infty} u(s,t) = y_{\pm}(t)\]
if $L_0 \neq L_1$, and the same without the $t$ variable if $L_0 = L_1$.
We define $\mathcal{M}^*_Z(\bm{y}) := \mathcal{M}_Z(\bm{y})/\R$, where $\R$ acts by translation in the $s$ variable.
\end{defn}

It is standard (see \cite{fhs95,oh97}) that the moduli spaces $\mathcal{M}^*_Z(\bm{y})$ are smooth manifolds for generic choice of Floer data, and their dimension is $i(\bm{y})-1$.

\begin{defn}
\label{defn:asympcond}
Suppose that $k \ge 2$. 
Given a pearly tree $S$ with Lagrangian labels $\bm{L} = (L_0,\ldots,L_k)$, associated generators $\bm{y} = (y_0,\ldots,y_k)$, and a perturbation datum, we consider the moduli space $\mathcal{M}_{S}(\bm{y})$ of holomorphic pearly trees with domain $S$, such that
\[ \lim_{s \To + \infty} u(\epsilon_j(s,t)) = y_j(t)\]
and
\[ \lim_{s \To - \infty} u(\epsilon_0(s,t)) = y_0(t)\]
on external strip edges, and the same (without the $t$ variable) on external Morse edges.
\end{defn}

We wish to show that the moduli spaces $\mathcal{M}_S(\bm{y})$ form smooth, finite-dimensional manifolds for a generic choice of perturbation datum. 

\begin{defn}
\label{defn:banman}
Fix $2 < p < \infty$ and define the Banach manifold $\mathcal{B}_S(\bm{y})$ to consist of collections of maps
\[ \bm{u} = (u_p, \bm{u}_{\bm{L}}) \in W^{1,p}_{loc}(S^p,X) \times \prod_{L \in \bm{L}} W^{1,p}_{loc}(S^e(L),L)\]
such that 
\[ u_p(C) \in L_C\]
for each boundary component $C$ of $S^p$ with label $L_C$, and $\bm{u}$ converges in $W^{1,p}$-sense to $y_j$ on the $j$th strip-like end. 
These boundary and asymptotic conditions make sense because $W^{1,p}$ injects into the space of continuous functions.
Henceforth we omit the $\bm{y}$ from the notation for readability.
Note that the tangent space to $\mathcal{B}_S$ is
\[ T _{\bm{u}} \mathcal{B}_S = W^{1,p}(S^p, u_p^* TX, u_p^*TL_C) \oplus \bigoplus_{L \in \bm{L}} W^{1,p}(S^e(L),u_L^*TL),\]
where for the first component we have used the notation $W^{1,p}(S^p, E, F)$ for the space of $W^{1,p}$ sections of a vector bundle $E$ over $S$, whose restriction to the boundary lies in the distribution $F \subset E|_{\partial S^p}$.
\end{defn}

\begin{defn}
The maps $\bm{u} \in \mathcal{B}_S$ are not necessarily continuous at the points where edges join onto pearls.
We define
\[ \bm{L}^{F(S)}:= \prod_{L \in \bm{L}} L^{F_L(S)}.\]
Then there are evaluation maps
\begin{eqnarray*}
\bm{ev}_m: \mathcal{B}_S&\To& \bm{L}^{F(S)} \\
\bm{ev}_m(\bm{u}) & := & (u_p(m(f)))_{f \in F(S)}
\end{eqnarray*}
and
\begin{eqnarray*}
\bm{ev}_b: \mathcal{B}_S &\To& \bm{L}^{F(S)} \\
\bm{ev}_b(\bm{u}) & := & (u_L(b(f)))_{ f \in F_L(S)}.
\end{eqnarray*}
We define
\begin{eqnarray*}
\bm{ev} : \mathcal{B}_S & \To & \bm{L}^{F(S)} \times \bm{L}^{F(S)} \\
\bm{ev} &:= &(\bm{ev}_m, \bm{ev}_b).
\end{eqnarray*}
We also define
\[\Delta^{S} \subset \bm{L}^{F(S)} \times \bm{L}^{F(S)}\]
to be the diagonal.
An element $\bm{u} \in \mathcal{B}_S$ is continuous at the points where edges join onto pearls if and only if $\bm{u} \in \bm{ev}^{-1}(\Delta^S)$.
We define the linearization of $\bm{ev}$,
\[ D(\bm{ev}): T_{\bm{u}} \mathcal{B}_S \To T_{\bm{ev}(\bm{u})} \left(\bm{L}^{F(S)} \times \bm{L}^{F(S)} \right).\]
Given a point $\bm{u} \in \bm{ev}^{-1}(\Delta^S)$, we define the projection of the linearization to the normal bundle of the diagonal,
\begin{eqnarray*}
D^{ev}_{S,\bm{u}}: T_{\bm{u}} \mathcal{B}_S & \To & T_{\bm{ev}_m(\bm{u})} \bm{L}^{F(S)},\\
D^{ev}_{S,\bm{u}} &:=& D(\bm{ev}_m) - D(\bm{ev}_b).
\end{eqnarray*}
\end{defn}

\begin{defn}
\label{defn:lin}
Define the Banach vector bundle $\mathcal{E}_S(\bm{y}) \To \mathcal{B}_S(\bm{y})$ whose fibre over $\bm{u}$ (again omitting the $\bm{y}$ from the notation) is the space
\[ (\mathcal{E}_S)_{\bm{u}} := L^p(S^p,\Omega^{0,1}_S \otimes u_p^* TX) \oplus \bigoplus_{L \in \bm{L}} L^p(S^e(L),u_L^*TL). \]
There is a smooth section
\begin{eqnarray*}
d_S: \mathcal{B}_S & \To &  \mathcal{E}_S \\
d_S(\bm{u}) & = &((Du_p-Y)^{0,1}, (D\bm{u}_{\bm{L}} - V)).
\end{eqnarray*}
We denote the linearization of $d_S$ at $\bm{u}$ by
\[ D^h_{S,\bm{u}} : T _{\bm{u}}\mathcal{B}_S \To  (\mathcal{E}_S)_{\bm{u}}\]
(the `$h$' stands for `holomorphic').
\end{defn}

Note that $\mathcal{M}_S(\bm{y}) = (\bm{ev},d_S)^{-1}(\Delta^S, \bm{0})$ (where $\bm{0}$ denotes the zero section of the Banach vector bundle $\mathcal{E}_S(\bm{y})$).

\begin{defn}
Given $\bm{u} \in \mathcal{M}_S(\bm{y})$, we denote by
\begin{eqnarray*}
D_{S,\bm{u}}: T _{\bm{u}}\mathcal{B}_S &\To& T_{\bm{ev}_m(\bm{u})} \bm{L}^{F(S)} \oplus (\mathcal{E}_S)_{\bm{u}} 
\end{eqnarray*}
the projection of the linearization 
\[ D_{\bm{u}}(\bm{ev},d_S)\]
to the normal bundle of $(\Delta^S,\bm{0})$.
It is given by
\[D_{S,\bm{u}} = D^{ev}_{S,\bm{u}} \oplus D^h_{S,\bm{u}}.\]
We say that $\bm{u} \in \mathcal{M}_S(\bm{y})$ is {\bf regular} if $D_{S,\bm{u}}$ is surjective, and that $\mathcal{M}_S(\bm{y})$ is regular if every $\bm{u} \in \mathcal{M}_S(\bm{y})$ is regular. 
\end{defn}

It is standard that the operator $D^h_{S,\bm{u}}$ is Fredholm (compare \cite[Section 8i]{seidel08} for the pearls, and \cite[Section 2.2]{schwarz} for the edges).
Therefore, $D_{\bm{u}}(\bm{ev},d_S)$ is Fredholm also, because the codomain of $\bm{ev}$ is finite-dimensional. 
So the map $(\bm{ev},d_S)$ is Fredholm.
Thus, if $\mathcal{M}_S(\bm{y})$ is regular, then it is a smooth manifold with dimension given by the Fredholm index of $D_{S,\bm{u}}$ at each point.

It will follow from our arguments in Section \ref{subsec:transv} that, for a generic choice of perturbation datum, $\mathcal{M}_S(\bm{y})$ is regular.

\subsection{Perturbation data for families}
\label{subsec:pertdatafam}

To define the Fukaya category, we must count moduli spaces of holomorphic pearly trees with varying domain, rather than a fixed domain as in Section \ref{subsec:pertdatafix}.
The first step is to define perturbation data for the whole family $\mathcal{S}^{pt}(\bm{L}) \To \mathcal{R}^{pt}(\bm{L})$.
The following definition is the appropriate notion of a smoothly varying family of perturbation data for each fibre $S_r$.

\begin{defn}
\label{defn:pertfam}
A {\bf perturbation datum} for the family $\mathcal{S}^{pt}(\bm{L}) \To \mathcal{R}^{pt}(\bm{L})$ consists of the data $(K,J,V)$, where:
\begin{itemize}
\item $K \in \Omega^1_{\mathcal{S}^p/\mathcal{R}^{pt}}(\mathcal{S}^p,\mathcal{H})$;
\item $J \in C^{\infty}(\mathcal{S}^p,\mathcal{J})$;
\item $V$ is a tuple of maps $V_L \in C^{\infty}(\mathcal{S}^e_L, \mathcal{V}_L)$ for each $L \in \bm{L}$,
\end{itemize}
such that the restriction of $(K,J,V)$ to each fibre $S_r$ is a perturbation datum. 
We furthermore require some additional, somewhat artificial, conditions to deal with the structure of the moduli space near a point with an edge of length $0$ (the situation illustrated in Figure \ref{fig:morseedge}). 
Namely, for any edge $e$, we require:
\begin{itemize}
\item $V|_{S_e} = 0$ whenever $l_e \in [0,1]$;
\item the perturbation data do not change as $l_e$ varies between $0$ and $1$ (keeping all other parameters fixed);
\item $V|_{S_e} = \nabla h_L$ whenever $l_e \ge 2$;
\item $K \equiv 0$, and $J$ is constant, on a neighbourhood of each Morse edge of length $0$. 
To see what this means, look at Figure \ref{fig:morseedge}: we require that $K \equiv 0$ and $J$ has one fixed value on the long strip on the left, and in a neighbourhood of the boundary marked points at opposite ends of the edge on the right.
\end{itemize}
\end{defn}

We impose the condition $V|_{S_e} = 0$ on edges of length $l_e \le 1$ because it makes the following Lemma true (a similar trick is used in \cite{abouzaidplumb}):

\begin{lem}
\label{lem:constedge}
Suppose that we have chosen a perturbation datum in accordance with Definition \ref{defn:pertfam}, and that $S = S_r$ is a pearly tree with an edge $e$ of length $l_e < 1$. 
Let $S' = \mathcal{S}^{pt}_{r'}$ denote the pearly tree that is identical to $S$, except we shrink the edge $e$ to have length $l_e = 0$. 
Then there is a canonical isomorphism
\[ \mathcal{M}_S(\bm{y}) \equiv \mathcal{M}_{S'}(\bm{y})\]
(where both are defined using the restriction of the perturbation datum on $\mathcal{S}^{pt}$ to the fibres $S, S'$).
\end{lem}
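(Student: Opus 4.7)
The plan is to exploit the artificial constraints in Definition \ref{defn:pertfam} to show that the edge $e$ carries no geometric information when $l_e < 1$, so that solutions on $S$ are rigidly determined by their restriction to $S \setminus S_e$, which is canonically identified with $S' \setminus S'_e$ (and where $S'_e$ is a single point).

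First I would observe the following from Definition \ref{defn:pertfam}: since $l_e < 1$, we have $V|_{S_e} \equiv 0$, and the perturbation data $(K,J,V)$ on all the pearls and on all other edges of $S$ agrees identically with that on $S'$ (because the definition requires that nothing changes as $l_e$ varies in $[0,1]$). Moreover, near the Morse edge of length $0$ in $S'$, $K \equiv 0$ and $J$ is locally constant, so the inhomogeneous holomorphic curve equation near the marked points that now coincide reduces to the ordinary $J$-holomorphic curve equation with a constant $J$.

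The key observation is then that on the edge $S_e = [0,l_e]$, the equation $Du_L - V = 0$ with $V \equiv 0$ forces $u_L$ to be constant along $S_e$. Let $f_+, f_-$ denote the two flags of the tree $T_S$ on either side of $e$. The matching condition in Definition \ref{defn:perthol} reads $u_p(m(f_\pm)) = u_L(b(f_\pm))$, and since $u_L$ is constant these force $u_p(m(f_+)) = u_p(m(f_-))$. Thus every solution $\bm{u} \in \mathcal{M}_S(\bm{y})$ is recorded without loss by its restriction to the pearls and remaining edges, together with the (redundant) requirement that these two pearl-evaluations coincide.

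I would then define the map $\Phi: \mathcal{M}_S(\bm{y}) \to \mathcal{M}_{S'}(\bm{y})$ by keeping $u_p$ and $u_{L'}$ (for $L' \neq L$) identical, and on the degenerate edge $S'_e$ (a single point) setting $u_L$ to equal the common value $u_p(m(f_+)) = u_p(m(f_-))$. Since the pearl and edge perturbation data agree on $S$ and $S'$, this is manifestly a solution of the pearly tree equations on $S'$. The inverse $\Phi^{-1}$ takes $\bm{u}'$ on $S'$ and extends $u_L$ to be the constant function on $S_e$ whose value is $u_p(m(f_+)) = u_p(m(f_-))$ (they agree because $\bm{u}'$ satisfies the matching condition at the collapsed edge); this is a solution because $V|_{S_e} = 0$ makes the constant map automatically satisfy $Du_L - V = 0$. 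Both $\Phi$ and $\Phi^{-1}$ are evidently smooth on the relevant Banach manifolds, and are mutually inverse.

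There is no serious obstacle here: the lemma is really a tautology engineered by the conditions imposed in Definition \ref{defn:pertfam}. The only point that needs care is verifying that the compatibility conditions (asymptotic conditions on external strip-like and Morse edges, boundary conditions on $\partial S^p$) on $S$ transfer bijectively to $S'$, which they do because those conditions all live on the strip-like ends and pearl boundaries, none of which are affected by shrinking the internal edge $e$.
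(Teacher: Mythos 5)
Your proof is correct and takes essentially the same approach as the paper's: both rest on the observation that $V|_{S_e}\equiv 0$ for $l_e<1$ forces $u_L$ to be constant on the edge (so it reduces to a point constraint independent of $l_e$), while the artificial conditions in Definition \ref{defn:pertfam} guarantee the perturbation data elsewhere do not change as $l_e$ varies in $[0,1]$. The paper states this more tersely; you just spell out the resulting bijection explicitly, which is a harmless elaboration.
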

\begin{proof}
The result is clear from the holomorphic pearly tree equation (see Definition \ref{defn:perthol}): because $V|_{S_e} = 0$ for $l_e \in [0,1]$, the corresponding map $\bm{u}|_{S_e}: [0,l_e] \To L$ is necessarily constant. 
Thus the part of the holomorphic pearly tree equation on the edge $e$ reduces to a point constraint, regardless of $l_e$.
Because the perturbation datum does not change as we vary $l_e \in [0,1]$, the equation on the rest of $S$ does not change, so $\mathcal{M}_S(\bm{y})$ and $\mathcal{M}_{S'}(\bm{y})$ can be canonically identified.
\end{proof}

\begin{defn}
\label{defn:modvar}
Given a set of Lagrangian labels $\bm{L} = (L_0,\ldots,L_k)$, associated generators $\bm{y}$,
and a perturbation datum, we consider the moduli space 
\[ \mathcal{M}_{\mathcal{S}^{pt}}(\bm{y}) := \{ (r,\bm{u}): r \in \mathcal{R}^{pt}(\bm{L}) \mbox{ and } \bm{u} \in \mathcal{M}_{S_r}(\bm{y})\}.\]
\end{defn}

We now aim to show that $\mathcal{M}_{\mathcal{S}^{pt}}(\bm{y})$ is a manifold (whether it is possible to construct a smooth manifold structure is unclear, but this is irrelevant for the purposes of defining the Fukaya category). 
The complicated part of this is to understand what happens in a neighbourhood of the Morse edges of zero length, because the nature of the domain changes at those points.
We start by explaining what happens away from the Morse edges of zero length (i.e., when the modulus $r \in \mathcal{R}^{pt}(T_S,B)$ where $B = \phi$).

\begin{defn}
\label{defn:extlin}
Let $U \subset \mathcal{R}^{pt}(\bm{L})$ be a small connected open subset which makes the strip-like ends constant and avoids a neighbourhood of the pearly trees with some Morse edge of length $0$.
We define the trivial Banach fibre bundle $\mathcal{B}_{\mathcal{S}^{pt}|_U} (\bm{y}) \To U$ whose fibre over $r \in U$ is the Banach manifold $\mathcal{B}_{S_r}(\bm{y})$ defined in Definition \ref{defn:banman}. 
There is a Banach vector bundle $\mathcal{E}_{\mathcal{S}^{pt}|_U}(\bm{y}) \To \mathcal{B}_{\mathcal{S}^{pt}|_U}(\bm{y})$ whose restriction (omitting the $\bm{y}$ from the notation) to $\mathcal{B}_{S_r}$ is the Banach vector bundle $\mathcal{E}_{S_r}$ defined in Definition \ref{defn:lin}.
It has a smooth section $d_{\mathcal{S}^{pt}|_U}$ given, over $\mathcal{B}_{S_r}$, by the section $d_{S_r}$ of Definition \ref{defn:lin}.
We have
\[ \mathcal{M}_{\mathcal{S}^{pt}|_U} (\bm{y})= (\bm{ev}|_U,d_{\mathcal{S}^{pt}|_U})^{-1}(\Delta^S, \bm{0})\]
(note that the codomain of $\bm{ev}$ depends on the underlying tree $T_S$ of $S_r$; our requirement that $U$ be connected and avoid Morse edges of length $0$ ensures that $T_S$ is constant on $U$).
Given $(r,\bm{u}) \in \mathcal{M}_{\mathcal{S}^{pt}}(\bm{y})$ with $r \in U$, we denote the linearization of $d_{\mathcal{S}^{pt}|_U}$ at $(r,\bm{u})$ by 
\[D^h_{\mathcal{S}^{pt}|_U,r,\bm{u}}: T_{(r,\bm{u})} \left(\mathcal{B}_{\mathcal{S}^{pt}|_U} \right) \To (\mathcal{E}_{S_r})_{\bm{u}},\]
where we note that
\[T_{(r,\bm{u})} \left(\mathcal{B}_{\mathcal{S}^{pt}|_U}\right) = T_r \mathcal{R}^{pt} \oplus T_{\bm{u}} \mathcal{B}_{S_r}.\]
\end{defn}

\begin{rmk}
The component
\[ T_{\bm{u}} \mathcal{B}_{S_r} \To (\mathcal{E}_{S_r})_{\bm{u}}\]
is just the linearized operator $D^h_{S_r,{\bm{u}}}$ from Definition \ref{defn:lin}. 
The component
\[T_r \mathcal{R}^{pt} \To (\mathcal{E}_{S_r})_{\bm{u}}\]
corresponds to derivatives of the holomorphic curve equation (Definition \ref{defn:perthol}) with respect to changes of the modulus $r$.
\end{rmk}

\begin{defn}
We denote by
\[ D_{\mathcal{S}^{pt}|_U, r,\bm{u}}: T_{(r,\bm{u})} \left(\mathcal{B}_{\mathcal{S}^{pt}|_U}\right) \To 
T_{\bm{ev}_m(\bm{u})} \bm{L}^{F(S)} \oplus (\mathcal{E}_{S_r})_{\bm{u}} \]
the projection of the linearization 
\[D_{r,\bm{u}}(\bm{ev}|_U,d_{\mathcal{S}^{pt}}|_U)\]
to the normal bundle of $(\Delta^S,\bm{0})$. 
It is given by
\[ D_{\mathcal{S}^{pt}|_U, r,\bm{u}}= D^{ev}_{S_r,{\bm{u}}} \oplus D^h_{\mathcal{S}^{pt}|_U,r,{\bm{u}}}.\]
If $S_r$ has no edges of length $0$, we say that $(r,{\bm{u}})$ is a {\bf regular} point of $\mathcal{M}_{\mathcal{S}^{pt}}(\bm{y})$ if $D_{\mathcal{S}^{pt}|_U,r,\bm{u}}$ is surjective (for some open neighbourhood $U$ of $r$ as above). 
We say that the moduli space $\mathcal{M}_{\mathcal{S}^{pt}|_U}(\bm{y})$ is regular if every ${\bm{u}} \in \mathcal{M}_{\mathcal{S}^{pt}|_U}(\bm{y})$ is regular.
\end{defn}

\begin{prop}
\label{prop:fredind}
The operator $D_{\mathcal{S}^{pt}|_U, r,\bm{u}}$ is Fredholm of index
\[ \mathrm{ind}(D_{\mathcal{S}^{pt}|_U,r,\bm{u}}) = k-2 + i(\bm{y})\]
when $U$ avoids a neighbourhood of all pearly trees with edges of length $0$.
\end{prop}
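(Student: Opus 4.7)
The plan is to establish Fredholmness and compute the index $k-2 + i(\bm y)$ by decomposing the operator $D_{\mathcal{S}^{pt}|_U,r,\bm u}$ into natural contributions from pearls, edges, matching flags, and the modulus directions, and summing their Fredholm indices. Since $U$ avoids pearly trees with a Morse edge of length $0$, the underlying tree $T_S$ of each $r \in U$ is locally constant, and $\mathcal{R}^{pt}_{T_S}(\bm L)$ is locally a product of the pearl moduli and edge-length factors of total dimension
\[ \sum_{v \in V(T_S)} (|\bm L_v| - 3) + |E_i(T_S)| = k-2, \]
by the standard Stasheff associahedron stability count.

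First I would show Fredholmness by treating each summand separately. The pearl summand $D^h_{S_v,\bm u}$ is a linearised Cauchy--Riemann operator with totally real boundary conditions on a $W^{1,p}$ space of sections, hence Fredholm by the standard arguments of \cite[Section 8i]{seidel08}. The edge summand $\partial_s - \nabla V_L$ is a Fredholm linear ODE operator on finite, semi-infinite, or doubly-infinite intervals by \cite[Section 2.2]{schwarz}. The matching-evaluation map $D^{ev}_{S_r,\bm u}$ lands in the finite-dimensional space $T_{\bm{ev}_m(\bm u)} \bm L^{F(S)}$, and the modulus contribution $T_r \mathcal{R}^{pt} \to (\mathcal{E}_{S_r})_{\bm u}$ is finite-rank. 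Summing, the full operator is Fredholm.

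For the index I would use a linear gluing/excision argument: view each internal Morse edge as a ``degenerating strip'' (in the spirit of Remark \ref{rmk:limit}) and realise $D_{\mathcal{S}^{pt}|_U,r,\bm u}$ as a compact perturbation of the linearised Cauchy--Riemann operator on a single $(k+1)$-punctured disk with strip-like ends asymptotic to $\bm y$ and Lagrangian labels $\bm L$. Since compact perturbation preserves the Fredholm index, this reduces the computation to the classical disk case, whose index is the well-known $k-2 + i(\bm y)$, with $k-2 = \dim \mathcal{R}(\bm L)$ and $i(\bm y)$ from the asymptotic generator gradings. A direct summation approach gives the same answer: each pearl contributes $+n$ per matching flag (the ``free asymptotic'' degree of freedom), each matching condition subtracts $n$ via the diagonal $D^{ev}_{S_r,\bm u}$, internal Morse edges pair telescopically with Morse-index gradings of the virtual interior asymptotics at their endpoints, and the modulus adds $k-2$.

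The main obstacle will be justifying the compact-perturbation step in the gluing argument, since the edges' Banach spaces $W^{1,p}(S_e, u_L^* TL)$ are genuinely lower-dimensional in the domain than the strip spaces $W^{1,p}(\R \times [0,1], u_p^* TX)$. This should nevertheless be routine given the restrictions built into Definition \ref{defn:pertfam} (notably $V|_{S_e} = 0$ for edges of length $\le 1$ and the localisation of $K$ near Morse-edge corners), which are specifically engineered so that the edge and strip descriptions agree after a standard cut-off, allowing one to invoke the usual linear gluing theorem for Cauchy--Riemann operators.
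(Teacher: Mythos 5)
The paper's own proof is a one-line citation: it invokes \cite[Section 12d]{seidel08} for the pearl (Cauchy--Riemann) contribution and asserts that the Morse flowlines are a trivial additive adjustment. That is a \emph{direct index computation}: sum the Fredholm index contributions of the pearls (from Seidel's analysis of CR operators on punctured disks with strip-like ends), add the ODE-index contributions of the edges, subtract $n$ per flag for the finite-rank matching conditions, and add $\dim T_r\mathcal{R}^{pt} = k-2$ for the modulus directions.

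Your primary route is genuinely different: you propose to degenerate the Morse edges to thin Hamiltonian strips and thereby identify the index with that of the linearised CR operator on a single $(k+1)$-punctured disk, for which the formula $k-2+i(\bm{y})$ is classical. The difficulty is that the phrase ``compact perturbation'' is not the right mechanism here. A compact perturbation relates two operators on the \emph{same} Banach spaces, whereas the pearly-tree domain $W^{1,p}(S^p,\ldots)\oplus\bigoplus_L W^{1,p}(S^e(L),\ldots)$ and the single-disk domain $W^{1,p}(S,u^*TX,u^*TL_C)$ are not the same spaces. What you actually need is a linear gluing (or degeneration) theorem identifying the indices across a family of Fredholm operators whose domains change, and the engineered conditions you cite in Definition \ref{defn:pertfam} are aimed at the \emph{zero-length-edge} boundary ($\rho_e\to 0^+$), not at the ``long thin strip'' limit your argument requires. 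You correctly flag this as the main obstacle, but as written it remains a gap: the degeneration argument is not reduced to a standard citable result, and it would in practice require roughly the same amount of work as the direct summation.

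Your parenthetical ``direct summation approach'' is much closer to what the paper actually does, but its details are not quite right. Finite-length internal Morse edges carry no asymptotic conditions --- the ODE operator on $W^{1,p}([0,l_e],u_L^*TL)$ has index $n$ (kernel of dimension $n$, surjective) --- so there are no ``virtual interior asymptotics'' to pair against. The bookkeeping that actually closes the computation is: each internal edge contributes $+n$, its two flags subtract $2n$, and its two marked points on adjacent pearls, compared with the single-pearl glued picture, account for the remaining $+n$, so the net effect of inserting a finite-length internal Morse edge is index-neutral; external edges contribute the appropriate Morse-index terms that assemble into $i(\bm{y})$. That summation, carried out carefully, is the paper's intended ``trivial addition'' and would be a complete proof; the gluing route, while conceptually attractive, does not become rigorous from the observations you give.
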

\begin{proof}
See \cite[Section 12d]{seidel08} for the pearl component -- the inclusion of the Morse flowlines is a trivial addition.
\end{proof}

It follows that, if $\mathcal{M}_{\mathcal{S}^{pt}|_U}(\bm{y})$ is regular, then it is a smooth manifold with dimension equal to the Fredholm index of $D_{\mathcal{S}^{pt}|_U}$ given above. 
The transition maps between the spaces $\mathcal{B}_{\mathcal{S}^{pt}|_U}$ are not necessarily smooth, so in general it is not possible to define a Banach manifold `$\mathcal{B}_{\mathcal{S}^{pt}|_U}$' over an arbitrarily large open set $U$ avoiding a neighbourhood of the Morse edges of length $0$. 
However, elliptic regularity ensures that the transition maps between spaces $\mathcal{M}_{\mathcal{S}^{pt}|_U}(\bm{y})$ are smooth in the regular case, hence they can be patched together to obtain a smooth manifold $\mathcal{M}_{\mathcal{S}^{pt}|_U}(\bm{y})$ over an arbitrarily large open set $U$ avoiding a neighbourhood of the Morse edges of length $0$ (compare \cite[Remark 9.4]{seidel08}).

Now we must deal with the Morse edges of length $0$, i.e., the case that the modulus $r \in \mathcal{R}^{pt}(T_S,B)$, where $B \neq \phi$ (in the notation of Definition \ref{defn:medge}).

\begin{defn}
\label{defn:tsb}
We define the moduli space
\[ \mathcal{M}_{\mathcal{S}^{pt}(T_S,B)}(\bm{y}) := \{(r,{\bm{u}}) \in \mathcal{M}_{\mathcal{S}^{pt}}(\bm{y}): r \in \mathcal{R}^{pt}(T_S,B)\}.\]
\end{defn}

In order to construct a manifold structure on the moduli space $\mathcal{M}_{\mathcal{S}^{pt}}(\bm{y})$, we are going to arrange that all of the moduli spaces $\mathcal{M}_{\mathcal{S}^{pt}(T_S,B)}(\bm{y})$ are regular, then use them to construct charts for the manifold structure on $\mathcal{M}_{\mathcal{S}^{pt}}(\bm{y})$.

\begin{defn}
Let $U \subset \mathcal{R}^{pt}(T_S,B)$ be a small connected open subset which makes the strip-like ends constant and avoids a neighbourhood of the pearly trees with some Morse edge not in $B$ having length $0$. 
We define $\mathcal{B}_{\mathcal{S}^{pt}(T_S,B)|_U}, \mathcal{E}_{\mathcal{S}^{pt}(T_S,B)|_U}, d_{\mathcal{S}^{pt}(T_S,B)|_U}$ by restricting $\mathcal{B}_{\mathcal{S}^{pt}|_U}, \mathcal{E}_{\mathcal{S}^{pt}|_U}, d_{\mathcal{S}^{pt}|_U}$to $\mathcal{R}^{pt}(T_S,B)$.
We have
\[ \mathcal{M}_{\mathcal{S}^{pt}(T_S,B)|_U}(\bm{y}) = (\bm{ev}|_U, d_{\mathcal{S}^{pt}(T_S,B)|_U})^{-1}(\Delta^S,\bm{0}).\]
The projection of the linearization 
\[ D_{r,\bm{u}}(\bm{ev}|_U, d_{\mathcal{S}^{pt}(T_S,B)|_U})\]
to the normal bundle of $(\Delta^S,\bm{0})$ is the restriction of $D_{\mathcal{S}^{pt}|_U,r,\bm{u}}$ to the codimension-$|B|$ subspace
\[T_r \mathcal{R}^{pt} (T_S,B) \oplus T_{\bm{u}} \mathcal{B}_{S_r} \subset T_r \mathcal{R}^{pt} \oplus T_{\bm{u}} \mathcal{B}_{S_r}.\]
We denote it by $D_{\mathcal{S}^{pt}(T_S,B)|_U,r,\bm{u}}$.
By Proposition \ref{prop:fredind}, it is Fredholm of index
\[ \mathrm{ind}(D_{\mathcal{S}^{pt}(T_S,B)|_U,r,\bm{u}}) = k-2+i(\bm{y}) - |B|.\]
\end{defn}

\begin{defn}
\label{defn:tsbreg}
We say that $(r,{\bm{u}})$ is a regular point of $\mathcal{M}_{\mathcal{S}^{pt}}(\bm{y})$ if $r \in \mathcal{R}^{pt}(T_S,B)$ and the operator $D_{\mathcal{S}^{pt}(T_S,B)|_U,r,\bm{u}} $ is surjective (for some open neighbourhood $U \subset \mathcal{R}^{pt}(T_S,B)$ of $r$ as above). 
We say that the moduli space $\mathcal{M}_{\mathcal{S}^{pt}}(\bm{y})$ is regular if every $(r,{\bm{u}}) \in \mathcal{M}_{\mathcal{S}^{pt}}(\bm{y})$ is regular.
\end{defn}

It follows that, if $\mathcal{M}_{\mathcal{S}^{pt}}(\bm{y})$ is regular, then each moduli space $\mathcal{M}_{\mathcal{S}^{pt}(T_S,B)|_U}$ is a smooth manifold with dimension equal to the Fredholm index of $D_{\mathcal{S}^{pt}(T_S,B)}$ given above.

Assuming regularity, we now construct charts for a manifold structure on $\mathcal{M}_{\mathcal{S}^{pt}}(\bm{y})$.

\begin{defn}
Let $U \subset \mathcal{R}^{pt}(T_S,B)$ be a small connected open subset which makes the strip-like ends constant and avoids a neighbourhood of the pearly trees with some Morse edge not in $B$ having length $0$.
Given $\epsilon>0$, denote by $U_{\epsilon} \subset \mathcal{R}^{pt}$ the image of the map
\[ U \times (-\epsilon,\epsilon)^B \To \mathcal{R}^{pt}\]
obtained by interpreting the parameter in $(-\epsilon,\epsilon)$ corresponding to the edge $e \in B$ as a gluing parameter $\rho_e$ for $e$.
Note that $U_{\epsilon}$ is open in $\mathcal{R}^{pt}$.
\end{defn}

\begin{prop}
\label{prop:charts}
Suppose that $\mathcal{M}_{\mathcal{S}^{pt}}$ is regular.
Then for some $\epsilon>0$ sufficiently small, there is a homeomorphism
\[ \mathcal{M}_{\mathcal{S}^{pt}(T_S,B)|_U} \times (-\epsilon,\epsilon)^B \To \mathcal{M}_{\mathcal{S}^{pt}|_{U_{\epsilon}}}\]
which makes the following diagram commute:
\[ \xymatrix{
\mathcal{M}_{\mathcal{S}^{pt}(T_S,B)|_U} \times (-\epsilon,\epsilon)^B \ar[r] \ar[d] & \mathcal{M}_{\mathcal{S}^{pt}|_{U_{\epsilon}} } \ar[d]\\
U \times (-\epsilon,\epsilon)^B \ar[r] & U_{\epsilon}.}
\]
\end{prop}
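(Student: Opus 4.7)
The plan is to handle the gluing parameters $\rho_e$ for $e \in B$ one at a time (reducing by induction to the case $|B|=1$), and to treat the regimes $\rho_e \in [0,\epsilon)$ and $\rho_e \in (-\epsilon,0)$ separately before checking that the two halves match continuously at $\rho_e = 0$. The mechanisms on the two sides are genuinely different: on the positive side the tree structure is unchanged and only the length of the Morse edge $e$ varies, whereas on the negative side the edge $e$ is absent and the two adjacent pearls are honestly glued along their strip-like ends.

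For the half $\rho_e \in [0,\epsilon)$, the pearly trees all share the same underlying tree $T_S$ and differ only in the length $l_e = -\log(1-\rho_e)$ of the Morse edge $e$. I will choose $\epsilon < 1 - e^{-1}$ so that $l_e \in [0,1]$ throughout this range. The hypotheses imposed in Definition \ref{defn:pertfam} --- namely $V|_{S_e}=0$ and the invariance of the perturbation datum as $l_e$ varies over $[0,1]$ --- allow a direct application of Lemma \ref{lem:constedge}, yielding canonical identifications $\mathcal{M}_{S_r} \equiv \mathcal{M}_{S_{r'}}$ whenever $r,r'$ differ only in $l_e \in [0,1]$. This provides the required homeomorphism on the positive half by the formula $(r_0,\bm{u}_0,\rho_e) \mapsto (r(\rho_e),\bm{u}_0)$, where $r(\rho_e)$ reinserts the Morse edge with the appropriate length.

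For the half $\rho_e \in (-\epsilon,0)$, the Morse edge $e$ disappears: the two pearls at its endpoints are replaced by a single pearl obtained by gluing along the corresponding strip-like ends with gluing parameter $\rho_e$. Producing solutions in this regime is a standard Floer-theoretic gluing construction. Starting from a configuration $(r_0,\bm{u}_0) \in \mathcal{M}_{\mathcal{S}^{pt}(T_S,\{e\})|_U}$ and a small $|\rho_e|$, I will pre-glue $\bm{u}_0$ via cut-off functions on the strip-like ends to obtain an approximate holomorphic pearly tree on the glued domain, then use the regularity hypothesis (Definition \ref{defn:tsbreg}) to produce a uniformly bounded right inverse to the linearized operator and apply Newton iteration in $W^{1,p}$ to deform the approximate solution to a genuine one. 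Standard injectivity and surjectivity statements then identify every genuine holomorphic pearly tree with domain modulus near the broken limit with one coming from the gluing construction.

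The two halves fit together continuously at $\rho_e = 0$: the positive half extends by setting $l_e=0$, which records the point-matching condition between the two pearls at the node; the negative half extends by convergence of solutions on longer and longer strips to the broken configuration, which is exactly a pair of pearls meeting at the node. Both extensions agree with $(r_0,\bm{u}_0)$, so the pieced-together map is continuous and commutes with projection to $U_\epsilon$. I expect the main technical obstacle to be the analytic gluing on the negative side --- specifically, producing a right inverse to the linearized operator whose norm is bounded independently of the length of the gluing strip. This is standard in Floer theory and closely parallels the treatment in \cite[Chapter 10]{seidel08}; since the proposition only claims a homeomorphism, no smoothness analysis of the gluing parameter dependence is required.
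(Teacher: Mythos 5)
Your proof follows the same route as the paper's: on the positive side you invoke Lemma~\ref{lem:constedge} (with $\epsilon < 1-e^{-1}$ so that $l_e \in [0,1)$, exactly the range where the perturbation datum is length-independent), on the negative side you run a standard Floer-theoretic gluing argument together with a compactness/surjectivity statement, and you stitch the two halves along $\rho_e = 0$ by checking both sides limit to the nodal configuration. The only cosmetic divergence is that you reduce to $|B|=1$ by induction, whereas the paper states the multi-edge case directly as "allowing for multiple Morse edges"; the substance is identical.
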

\begin{proof}
If two pearls are joined by a Morse edge $e$ of length zero, then they form a nodal disk. 
In a neighbourhood of the node, the Hamiltonian perturbation is identically $0$ and the almost-complex structure is constant, by the conditions we placed on our perturbation datum. 
A standard gluing argument shows that there is a family of pearls with gluing parameter $\rho_e \in (-\epsilon,0]$, converging to this nodal disk.
A standard compactness argument shows that any sequence of pearls with gluing parameter $\rho_e \To 0^-$ converges to such a nodal disk.
More generally, allowing for multiple Morse edges of length $0$, one can show that there is a homeomorphism
\[ \mathcal{M}_{\mathcal{S}^{pt}(T_S,B)|_U} \times (-\epsilon,0]^B \To \mathcal{M}_{\mathcal{S}^{pt}|_{\mathrm{im}(U \times (-\epsilon, 0]^B)}}\]
for some $\epsilon>0$ sufficiently small.

It then follows from Lemma \ref{lem:constedge} that this map extends to a homeomorphism
\[\mathcal{M}_{\mathcal{S}^{pt}(T_S,B)|_U} \times (-\epsilon,\epsilon)^B \To \mathcal{M}_{\mathcal{S}^{pt}|_{U_{\epsilon}}}\]
with the desired properties.
\end{proof}

We have an open cover of $\mathcal{R}^{pt}$ by the sets of the form $U_{\epsilon}$, for some $U \subset \mathcal{R}^{pt}(T_S,B)$ and some $T_S,B$.
Therefore, we have an open cover of $\mathcal{M}_{\mathcal{S}^{pt}}(\bm{y})$ by sets $\mathcal{M}_{\mathcal{S}^{pt}|_{U_{\epsilon}}}(\bm{y})$ which are homeomorphic to smooth manifolds of dimension $k-2 + i(\bm{y})$. 
So they are the charts of a topological manifold structure on $\mathcal{M}_{\mathcal{S}^{pt}}$.
We have proven:

\begin{prop}
\label{prop:msman}
If $\mathcal{M}_{\mathcal{S}^{pt}}(\bm{y})$ is regular, then it has the structure of a topological manifold of dimension
\[ \mathrm{dim}(\mathcal{M}_{\mathcal{S}^{pt}}(\bm{y})) = k-2 + i(\bm{y}).\]
\end{prop}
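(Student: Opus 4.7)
The strategy is to assemble the topological manifold structure from the charts $\mathcal{M}_{\mathcal{S}^{pt}|_{U_{\epsilon}}}(\bm{y})$ furnished by Proposition \ref{prop:charts}, then verify the dimension count and the compatibility of overlaps. Specifically, I will argue that (i) the $U_{\epsilon}$ constitute an open cover of $\mathcal{R}^{pt}(\bm{L})$, hence the corresponding moduli spaces $\mathcal{M}_{\mathcal{S}^{pt}|_{U_{\epsilon}}}(\bm{y})$ give an open cover of $\mathcal{M}_{\mathcal{S}^{pt}}(\bm{y})$; (ii) each such chart is, via Proposition \ref{prop:charts}, homeomorphic to a product $\mathcal{M}_{\mathcal{S}^{pt}(T_S,B)|_U}(\bm{y}) \times (-\epsilon,\epsilon)^B$; and (iii) regularity (in the sense of Definition \ref{defn:tsbreg}) equips $\mathcal{M}_{\mathcal{S}^{pt}(T_S,B)|_U}(\bm{y})$ with a smooth manifold structure via the implicit function theorem applied to $(\bm{ev}|_U, d_{\mathcal{S}^{pt}(T_S,B)|_U})$, of the correct dimension.

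First I would carry out the dimension count. By the Fredholm index computation stated just before Definition \ref{defn:tsbreg}, surjectivity of $D_{\mathcal{S}^{pt}(T_S,B)|_U, r, \bm{u}}$ combined with the implicit function theorem in Banach manifolds gives that $\mathcal{M}_{\mathcal{S}^{pt}(T_S,B)|_U}(\bm{y})$ is a smooth manifold of dimension $k-2+i(\bm{y})-|B|$. The product with the $|B|$-dimensional cube $(-\epsilon,\epsilon)^B$ then yields a topological manifold of dimension $k-2+i(\bm{y})$, as required. Elliptic regularity, as noted in the discussion preceding Proposition \ref{prop:fredind}, ensures that the moduli spaces (not merely the larger Banach manifolds) are independent of the choice of $W^{1,p}$ setup, so the smooth structure on $\mathcal{M}_{\mathcal{S}^{pt}(T_S,B)|_U}(\bm{y})$ is well defined.

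The main work is in verifying that the transition maps between overlapping charts are continuous, so that the charts patch together into a topological manifold. For two charts of the same combinatorial type $(T_S,B)$ with $U, U'$ overlapping in $\mathcal{R}^{pt}(T_S,B)$, this is immediate from the smooth structure on the unglued moduli space. For two charts of different combinatorial types $(T_S,B)$ and $(T_S',B')$, the overlap lies in the locus where certain gluing parameters $\rho_e$ are nonzero, and continuity of the transition map reduces to the statement that the gluing construction in Proposition \ref{prop:charts} depends continuously on the parameters and on the unglued solution. This in turn follows from the standard Floer-theoretic gluing analysis (combined with Lemma \ref{lem:constedge}, which identifies the $\rho_e>0$ and $\rho_e\le0$ sides of an edge of length zero via the constant-edge trick built into our perturbation data). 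Thus the gluing map is a homeomorphism onto an open subset, and two such homeomorphisms differ on overlaps only by the identification coming from the gluing construction applied to two distinct decompositions of the same glued pearly tree — a map which is continuous by the same gluing-analysis input.

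The principal obstacle I anticipate is in the last paragraph: keeping the bookkeeping straight when several Morse edges simultaneously approach zero length and transition between distinct combinatorial types $(T_S,B)$. One must verify that the stratum-by-stratum regularity in Definition \ref{defn:tsbreg} is exactly what is needed to make the iterated gluing maps into local homeomorphisms of the expected dimension on each stratum, and that the requirements on the perturbation data imposed in Definition \ref{defn:pertfam} (constancy in $l_e \in [0,1]$, vanishing of $K$ and constancy of $J$ near nodes) are strong enough that the standard gluing theorem applies uniformly in all directions. Granting these, assembling the charts produces the claimed topological manifold structure with $\dim \mathcal{M}_{\mathcal{S}^{pt}}(\bm{y}) = k-2+i(\bm{y})$.
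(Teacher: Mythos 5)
Your proof follows essentially the same route as the paper: cover $\mathcal{R}^{pt}$ by the sets $U_{\epsilon}$, use Proposition \ref{prop:charts} to identify each $\mathcal{M}_{\mathcal{S}^{pt}|_{U_{\epsilon}}}(\bm{y})$ with $\mathcal{M}_{\mathcal{S}^{pt}(T_S,B)|_U}(\bm{y})\times(-\epsilon,\epsilon)^B$ (the first factor being smooth of dimension $k-2+i(\bm{y})-|B|$ by regularity and the Fredholm index), and conclude. Your extra care about transition-map continuity is welcome but not strictly required for the stated conclusion, since the claim is only a topological (not smooth) manifold structure and the moduli space already carries a natural topology, so it suffices that the charts are homeomorphisms onto manifolds of the correct dimension.
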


\begin{rmk}
\label{rmk:orient}
One can show that the embeddings of Proposition \ref{prop:charts} respect orientations, and hence that the manifold $\mathcal{M}_{\mathcal{S}^{pt}}(\bm{y})$ is oriented.
\end{rmk}

\subsection{Consistency and compactness}
\label{subsec:consistent}

\begin{defn}
A universal choice of perturbation data is a choice of perturbation datum for each family $\mathcal{S}^{pt}(\bm{L})$ (for all choices of Lagrangian labels $\bm{L}$).
\end{defn}

\begin{defn} (Compare \cite[Section 9i]{seidel08}) Given a tree $T$ with Lagrangian labels $\bm{L}$, the gluing construction defines a map to a collar neighbourhood of the boundary stratum corresponding to $T$:
\[ \left \{ r \in \bar{\mathcal{R}}^{pt}_T: \rho_e \in (-\epsilon,0]\mbox{ for $e$ a strip edge, and }\rho_e \in (1-\epsilon,1]\mbox{ for $e$ a Morse edge}\right\} \To \bar{\mathcal{R}}^{pt}.\]
Because the perturbation data are standard along the strip-like ends (given by the Floer data), we can glue the perturbation data on the families $\mathcal{S}^{pt}(\bm{L}_v)$, for each vertex $v$ of $T$, together to obtain a perturbation datum $(K_T,J_T,V_T)$ on this collar neighbourhood.
Furthermore, this perturbation datum extends smoothly to the boundary stratum corresponding to $T$.
We say that a universal choice of perturbation data is {\bf consistent} if the perturbation datum $(K,J,V)$ on $\mathcal{S}^{pt}(\bm{L})$ also extends smoothly to the compactification $\bar{\mathcal{S}}^{pt}(\bm{L})$, and agrees with the perturbation datum $(K_T,J_T,V_T)$ on the boundary stratum corresponding to $T$, for all such $\bm{L}$ and $T$.
\end{defn}

\begin{prop}
Consistent universal choices of perturbation data exist.
\end{prop}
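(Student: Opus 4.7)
The plan is to construct the universal perturbation data by induction on $k := |\bm{L}|-1$, following the classical strategy of \cite[Section 9i]{seidel08} with additional bookkeeping for Morse edges. In the base case $k=1$ the family $\mathcal{S}^{pt}(\bm{L})$ is either a strip $Z$ or a line $\R$, and the perturbation datum is fixed to be the Floer datum of Section \ref{subsec:floerdata}.

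For the inductive step, suppose consistent perturbation data have been chosen on $\mathcal{S}^{pt}(\bm{L}')$ for every $\bm{L}'$ with $|\bm{L}'| < |\bm{L}|$. Each boundary stratum of $\bar{\mathcal{R}}^{pt}(\bm{L})$ is indexed by a stable tree $T$ with at least one internal edge, and is a product $\prod_{v \in V(T)} \bar{\mathcal{R}}^{pt}(\bm{L}_v)$ with $|\bm{L}_v| < |\bm{L}|$ for every vertex $v$. Thus each vertex already carries a perturbation datum by induction, and the gluing construction described just before the proposition extends these to a perturbation datum $(K_T,J_T,V_T)$ on an open collar neighbourhood $\mathcal{N}_T$ of the corresponding stratum inside $\bar{\mathcal{R}}^{pt}(\bm{L})$.

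The consistency check to carry out next is that on any overlap $\mathcal{N}_T \cap \mathcal{N}_{T'}$ the two candidate perturbation data coincide. Any such overlap lies in the collar of a deeper corner stratum indexed by a common refinement $T''$, and the perturbation datum there is determined by the data on the pearls $\bm{L}_v$ of $T''$ alone, independent of the order in which one glues along the various internal edges. This is the pearly-tree analogue of the associativity of gluing that is verified in \cite[Section 9i]{seidel08}; because our strip-like ends and Morse edge parametrisations are standard and fixed in advance by the Floer data, no new issue arises here. It then remains to extend $(K,J,V)$ from $\bigcup_T \mathcal{N}_T$ to all of $\bar{\mathcal{R}}^{pt}(\bm{L})$, which is possible because the space of admissible perturbation data is the space of sections of a bundle with contractible (indeed convex) fibres, so a smooth extension exists by a partition-of-unity argument.

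The main technical obstacle I anticipate is ensuring that the additional conditions of Definition \ref{defn:pertfam} on Morse edges --- namely that $V|_{S_e}$ vanishes for $l_e \le 1$ and equals $\nabla h_L$ for $l_e \ge 2$, that the perturbation datum is constant in $l_e \in [0,1]$, and that $K$ vanishes and $J$ is constant near every Morse node of length $0$ --- are preserved through the inductive construction. Fortunately each of these conditions is local near one edge, is convex in the perturbation data, and refers only to the length $l_e$ of the edge in question, so it is preserved both by the gluing construction (which leaves the other edge lengths intact) and by the final partition-of-unity interpolation on the interior. Hence no genuine new difficulty appears beyond the argument of \cite[Section 9i]{seidel08}, and the inductive step goes through.
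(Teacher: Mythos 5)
Your proposal is correct and follows essentially the same approach as the paper, which simply cites the argument of Seidel's Lemma 9.5 (inductive construction over boundary strata, using contractibility of the space of admissible perturbation data to extend from collar neighbourhoods to the interior). You have supplied the details that the paper omits, including the observation that the additional Morse-edge conditions of Definition \ref{defn:pertfam} are local, fibrewise convex/contractible constraints and hence compatible with the gluing and partition-of-unity steps; this matches the intended argument.
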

\begin{proof}
The proof is essentially the same as \cite[Lemma 9.5]{seidel08}.
\end{proof}

\begin{defn}
\label{defn:stablepearly}
Suppose we have made a consistent universal choice of perturbation data, and all moduli spaces are regular.
Let $\bm{L}$ be a set of Lagrangian labels and $\bm{y}$ an associated set of generators.
A {\bf stable holomorphic pearly tree} consists of the following data:
\begin{itemize}
\item A semi-stable directed planar tree $T$ with Lagrangian labels $\bm{L}$;
\item For each edge $e$ of $T$, a generator $y_e \in CF^*(L_{r(e)},L_{l(e)})$, where $L_{r(e)}, L_{l(e)}$ are the Lagrangian labels to the right and left of $e$ respectively, such that the generators are given by $\bm{y}$ for the external edges;
\item For each stable vertex $v$ (i.e., $v$ has valence $\ge 3$), an element
\[ (r_v,{\bm{u}}_v) \in \mathcal{M}_{\mathcal{S}^{pt}(\bm{L}_v)}(\bm{y}_v),\]
where $\bm{y}_v$ denotes the set of chosen generators for the edges adjacent to $v$;
\item for each vertex $v$ of valence $2$, an element 
\[ u_v \in \mathcal{M}^*_Z(\bm{y}_v).\]
\end{itemize}
We define $\mathcal{M}_{\bar{\mathcal{S}}^{pt}}^T(\bm{y})$ to be the set of all equivalence classes of stable holomorphic pearly trees modeled on the tree $T$.
\end{defn}

\begin{defn}
We define the moduli space 
\[ \bar{\mathcal{M}}_{\bar{\mathcal{S}}^{pt}} (\bm{y}) := \coprod_T \mathcal{M}_{\bar{\mathcal{S}}^{pt}}^T(\bm{y}) \]
of stable holomorphic pearly trees, as a set.
\end{defn}

\begin{prop}
\label{prop:gromcomp}
$\bar{\mathcal{M}}_{\bar{\mathcal{S}}^{pt}}(\bm{y})$ has the structure of a compact topological manifold with corners. 
Its codimension-$d$ strata are the sets $\mathcal{M}_{\bar{\mathcal{S}}^{pt}}^T$ where $T$ has $d$ internal edges. 
In particular, the open stratum (corresponding to the one-vertex tree) is the moduli space $\mathcal{M}_{\mathcal{S}^{pt}}(\bm{y})$.
\end{prop}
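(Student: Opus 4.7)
The plan is to prove the statement in two parts: first establish sequential compactness (every sequence in $\mathcal{M}_{\mathcal{S}^{pt}}(\bm{y})$ has a subsequence converging to a stable holomorphic pearly tree), then construct charts near each boundary stratum via a gluing theorem. The overall structure parallels \cite[Section 9k]{seidel08} for the pearl (holomorphic disk) parts and \cite{abouzaidplumb} or \cite{schwarz} for the Morse flow parts along edges; the technical content is to combine them consistently.

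For compactness, I would first establish uniform energy bounds: exactness of $\omega$ together with the Floer data control the geometric energy of each pearl component via the asymptotic generators, and each Morse edge segment $[0,l_e]$ has Morse-theoretic energy bounded by $h_L(y_-) - h_L(y_+)$. Given a sequence $(r_n, \bm{u}_n) \in \mathcal{M}_{\mathcal{S}^{pt}}(\bm{y})$, pass to a subsequence so that the underlying combinatorial types stabilize and each gluing or length parameter converges in $[-1,1]$ (resp.\ $[0,\infty]$). On the pearl side, standard Gromov compactness with Lagrangian boundary, applied fibrewise using the consistency of the perturbation data (so that on collar neighbourhoods of boundary strata the data restrict to the gluing of lower-dimensional data), produces either smooth convergence or bubbling/strip-breaking in the standard form. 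Exactness rules out disk and sphere bubbles, so the only pearl degenerations are boundary node formation (gluing parameter of a strip edge tending to $0$) and strip-breaking at the generators (producing a valence-$2$ vertex carrying an element of $\mathcal{M}_Z^*$, as in Definition \ref{defn:stablepearly}). On the edge side, a sequence of Morse flowlines on $[0,l_e^n]$ with $l_e^n \to \infty$ breaks into broken flowlines joined at critical points of $h_L$, again as in standard Morse theory, producing new edges labeled by intermediate generators; sequences with $l_e^n \to 0$ are absorbed by Lemma \ref{lem:constedge} and the point-constraint reformulation. Assembling the pearl and edge limits along the matching conditions at flags yields a stable holomorphic pearly tree in the sense of Definition \ref{defn:stablepearly}; this gives the set-theoretic boundary decomposition $\bar{\mathcal{M}} = \coprod_T \mathcal{M}^T$.

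For the local structure near a boundary point $(r_*,\bm{u}_*) \in \mathcal{M}^T$ corresponding to a tree $T$ with $d$ internal edges, I would run a gluing construction that produces, for small gluing parameters $\bm{\rho} \in (-\epsilon,0]^{E_{i,s}(T)} \times (1-\epsilon,1]^{E_{i,M}(T)}$ together with the breaking parameters for valence-$2$ vertices (treated as infinite-length strips), a uniquely determined preglued map, which a contraction-mapping argument in the appropriate weighted Sobolev setup corrects to an actual solution; the regularity of all strata (Definition \ref{defn:tsbreg}) ensures the relevant linearizations are surjective with uniformly bounded right-inverses, so the Newton iteration converges. Passing to the quotient by the residual translational symmetry on infinite strips and using Proposition \ref{prop:charts} (already proven) in the special case $B \neq \phi$ of pure Morse-edge shrinking, one obtains charts of the form $(\mathcal{M}^T \cap V) \times [0,\epsilon)^d \to \bar{\mathcal{M}}$ compatible with the stratification, which is exactly the manifold-with-corners structure claimed. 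Compactness of $\bar{\mathcal{M}}$ then follows because the gluing charts exhaust a neighbourhood of every boundary stratum and the open stratum $\mathcal{M}_{\mathcal{S}^{pt}}(\bm{y})$ is itself precompact modulo the degenerations we have already enumerated; the dimension count $k-2+i(\bm{y})-d$ on the codimension-$d$ stratum follows from Proposition \ref{prop:msman} applied to each factor of the tree, together with the fact that each internal edge adds one gluing/breaking parameter.

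The main obstacle I anticipate is the Morse-edge gluing: unlike strip-breaking, where the standard Floer-theoretic gluing in \cite{seidel08} applies directly, shrinking or lengthening a Morse edge requires combining finite-length flow segments with the pearl components at the flag points, and ensuring that the linearized operator $D_{\mathcal{S}^{pt}(T_S,B)}$ has a right-inverse whose norm is uniform as the gluing parameter degenerates. The artificial normalizations imposed in Definition \ref{defn:pertfam} (the vanishing of $V$ for $l_e \le 1$, the vanishing of $K$ and constancy of $J$ near length-$0$ Morse edges) are what makes this tractable: Lemma \ref{lem:constedge} identifies length-$0$ edges with point constraints, so the gluing reduces in a neighbourhood of a Morse node to the nodal-disk gluing in \cite[Section 9k]{seidel08}, applied to the pearl on one side of the node and the incidence point on the other. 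Finally, orientations of the gluing charts match the orientation noted in Remark \ref{rmk:orient}, giving the manifold-with-corners structure as an oriented one.
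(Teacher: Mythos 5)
Your proposal is correct and relies on the same technical ingredients as the paper (Gromov compactness for the pearls, Morse-theoretic compactness for the edges, and gluing arguments that exploit the normalization conditions on perturbation data and Lemma~\ref{lem:constedge}), but the compactness argument is organized differently.  You argue sequentially: take a sequence, stabilize combinatorial types along a subsequence, extract limits on each pearl and edge, rule out bubbling by exactness, and reassemble.  The paper instead covers $\bar{\mathcal{M}}_{\bar{\mathcal{S}}^{pt}}(\bm{y})$ by finitely many subsets, one for each underlying tree type $T_S$, and realizes each such subset directly as a closed subset (cut out by the finitely many incidence conditions at the flags) of a finite product of spaces that are already known to be compact: the space of stable pearls at each vertex (Gromov compactness with fixed asymptotics, as in \cite{frauenfelder08}) and the space of broken Morse flowlines along each edge (as in \cite[Section 2.4]{schwarz}).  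This avoids the bookkeeping involved in passing to diagonal subsequences and reassembling limits across the tree, and it immediately yields compactness of all of $\bar{\mathcal{M}}$ rather than just precompactness of the open stratum.  Your sequential version works, but it carries slightly more implicit burden: you must check that the matching conditions at flags survive passage to the limit and that the assembled limit is indeed a stable holomorphic pearly tree, whereas the paper's formulation bakes that in by construction.  On the manifold-with-corners structure, your sketch of the gluing construction is more detailed than the paper (which simply invokes ``standard gluing arguments'' and defines the topology so the gluing maps are continuous), and your discussion of why the Morse-edge gluing is tractable---Lemma~\ref{lem:constedge} reducing zero-length edges to point constraints, so the gluing becomes a nodal-disk gluing---is accurate and matches the role these normalizations play earlier in the section (Proposition~\ref{prop:charts}).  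One small overstatement: you claim the dimension of the codimension-$d$ stratum follows from Proposition~\ref{prop:msman} ``applied to each factor,'' which is fine for the pearl factors but for valence-$2$ vertices one must instead use the dimension formula for $\mathcal{M}_Z^*(\bm{y})$ (namely $i(\bm{y}) - 1$); this is a bookkeeping detail, not a gap.
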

\begin{proof}
Observe that each stratum
\[\mathcal{M}_{\bar{\mathcal{S}}^{pt}}^T(\bm{y})\]
has the structure of a smooth manifold, since it is a product of smooth manifolds. 
By standard gluing arguments, there are maps
\[ \mathcal{M}_{\bar{\mathcal{S}}^{pt}}^T(\bm{y}) \times (-\epsilon,0]^{E(T)} \To \bar{\mathcal{M}}_{\bar{\mathcal{S}}^{pt}} (\bm{y}).\]
We define the topology on $\bar{\mathcal{M}}_{\bar{\mathcal{S}}^{pt}} (\bm{y})$ so that all of these maps are continuous. 
This defines a manifold-with-corners structure on the moduli space of stable holomorphic pearly trees.

We prove compactness by considering each underlying tree type $T_S$ for a pearly tree separately. 
Given $T_S$, consider the moduli space of stable holomorphic pearly trees such that, if we contract all edges of length $0$, we get a tree of type $T_S$. 
The space of possible stable pearls corresponding to vertices of $T_S$ is compact, by standard Gromov compactness as in \cite{frauenfelder08}. 
Similarly, the space of possible broken Morse flowlines corresponding to edges of $T_S$ is compact, by standard compactness results in Morse theory as in \cite[Section 2.4]{schwarz}.
Thus, the full moduli space is a closed subset (defined by the incidence conditions of marked boundary points on pearls and ends of edges) of the compact set of all possible pearl and edge maps.
By considering all possible tree types $T_S$, we obtain a covering of $\bar{\mathcal{M}}_{\bar{\mathcal{S}}^{pt}}(\bm{y})$ by a finite number of compact sets, hence the moduli space is compact.
\end{proof}

\subsection{Transversality}
\label{subsec:transv}

\begin{prop}
The moduli spaces $\mathcal{M}_{\mathcal{S}^{pt}}(\bm{y})$ are regular for generic consistent universal choices of perturbation data.
\end{prop}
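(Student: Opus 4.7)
The plan is a standard Sard-Smale argument, set up so that consistency at boundary strata is preserved by induction on combinatorial complexity. First I would introduce, for each Lagrangian label set $\bm{L}$, a separable Banach space $\mathcal{P}(\bm{L})$ of perturbation data $(K,J,V)$ satisfying all the conditions of Definition \ref{defn:pertfam}, using a Floer-type $C^{\varepsilon}$ topology to obtain a Banach structure while retaining smoothness of the resulting sections. The induction is on $|\bm{L}|$: assuming consistent regular perturbation data have already been chosen on the codimension$\,\ge 1$ boundary strata of $\bar{\mathcal{R}}^{pt}(\bm{L})$, we restrict $\mathcal{P}(\bm{L})$ to those perturbation data extending the collar values determined by gluing, exactly as in \cite[Section 9i]{seidel08}. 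At the base of the induction, the spaces $\mathcal{M}^*_Z(\bm{y})$ are already regular for generic Floer data by \cite{fhs95,oh97}.

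For a fixed pair $(T_S,B)$ indexing a stratum of $\mathcal{R}^{pt}(\bm{L})$ and a fixed tuple of generators $\bm{y}$, I would form the universal moduli space
\[ \mathcal{M}^{univ}_{\mathcal{S}^{pt}(T_S,B)}(\bm{y}) := \left\{(r,\bm{u},\mathfrak{p}) : \mathfrak{p} \in \mathcal{P}(\bm{L}),\ (r,\bm{u}) \in \mathcal{M}_{\mathcal{S}^{pt}(T_S,B)}(\bm{y}; \mathfrak{p})\right\} \]
and show that the universal linearization, obtained from $D_{\mathcal{S}^{pt}(T_S,B)|_U,r,\bm{u}}$ by adjoining the derivative in $\mathfrak{p}$, is surjective at every point. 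This makes $\mathcal{M}^{univ}_{\mathcal{S}^{pt}(T_S,B)}(\bm{y})$ a smooth Banach manifold, and the projection to $\mathcal{P}(\bm{L})$ is Fredholm of the same index as the fixed-parameter operator. The Sard-Smale theorem then provides a comeager set of regular values. Intersecting over the countably many choices of $(T_S,B)$ and $\bm{y}$, and then over the countably many $\bm{L}$ needed in the induction, yields a comeager set of universally regular perturbation data, which is exactly the condition of Definition \ref{defn:tsbreg}.

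The main obstacle is surjectivity of the universal linearization at a given $(r,\bm{u},\mathfrak{p})$. By duality, a nonzero cokernel element is a pair $(\bm{\eta},\bm{\zeta})$ where $\bm{\eta}$ is a covector along $\bm{u}$ in $(\mathcal{E}_{S_r})_{\bm{u}}^\vee$ and $\bm{\zeta}$ is a covector at $\bm{ev}_m(\bm{u}) \in \bm{L}^{F(S)}$. On the pearl component $S^p$, $\bm{\eta}$ satisfies the formal adjoint of a Cauchy-Riemann operator, so unique continuation forces it to be determined by its values on the thick part; I can then pair it against a variation $(\delta K, \delta J)$ supported in the thick part (where perturbations are unconstrained) to contradict $\bm{\eta} \neq 0$, using the classical argument that the image under the map $u_p$ of the injective locus of $S^p$ is somewhere disjoint from the Lagrangians. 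On edges of length $\ge 2$ the vector field $V$ is free, so the edge component of $\bm{\eta}$ is killed by varying $\delta V$ against the solution of a linear ODE. The mismatch covector $\bm{\zeta}$ is killed by perturbations $(\delta K, \delta J)$ supported near the attaching marked points $m(f)$, which move $u_p(m(f))$ freely in $L$. The genuinely delicate case is a Morse edge of length $l_e \in [0,1]$, where the perturbation is frozen and $V \equiv 0$; here Lemma \ref{lem:constedge} identifies the moduli space canonically with its $l_e = 0$ specialization, so the edge contributes only a nodal point-matching constraint, which is again absorbed by the evaluation freedom on the adjacent pearls. Once surjectivity is established, the rest of the argument is formal.
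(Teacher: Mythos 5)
Your overall strategy — Sard--Smale applied to universal moduli spaces built over a Banach space of perturbation data, inducting on the combinatorics so that collar-compatibility is preserved — is the same as the paper's, and the way you isolate the three sources of surjectivity (deformations of $(K,J)$ on the thick part, the edge ODE, and the evaluation constraint) matches the decomposition in the paper's proof. The difference is in how you kill the evaluation cokernel. The paper shows that the \emph{kernel} of the linearized pearl equation surjects onto $T_{\bm{ev}_m(\bm{u})}\bm{L}^{F(S)}$ by exhibiting explicit elements of it: it adapts the McDuff--Salamon Hamiltonian-diffeomorphism trick, acting on pairs (perturbation datum, solution) by diffeomorphisms supported in regions that separate each $m(f)$ from all other marked points and punctures. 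You instead argue by duality and unique continuation, which is a legitimate and in some ways leaner alternative.

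There is, however, one imprecise step that you should tighten. You write that the covector $\bm{\zeta}$ on the evaluation factor is ``killed by perturbations $(\delta K,\delta J)$ supported near the attaching marked points $m(f)$, which move $u_p(m(f))$ freely in $L$.'' As a statement about the linearization this is not literally correct: a variation $\delta t$ of the perturbation datum contributes only to the $L^p(S^p,\cdot)$ factor of the codomain; it does not enter $D^{ev}$ at all, so by itself it cannot pair nontrivially against $\bm{\zeta}$. What actually happens in your duality argument is cleaner: from the orthogonality relation restricted to pure $\xi^p$-variations one obtains $D^{h,*}\bm{\eta}^p = -D(\bm{ev}_m)^*\bm{\zeta}$, whose right-hand side is supported at the finitely many boundary points $m(f)$; pairing against $\delta t$ supported on the thick part $\Omega$ and using unique continuation away from $\{m(f)\}$ forces $\bm{\eta}^p=0$; and once $\bm{\eta}=0$, the relation collapses to $\langle\bm{\zeta},D^{ev}(\xi)\rangle=0$ for all $\xi\in T_{\bm{u}}\mathcal{B}_{S_r}$, which kills $\bm{\zeta}$ simply because evaluation at finitely many boundary points is surjective on $W^{1,p}$ sections with Lagrangian boundary conditions. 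No perturbation near $m(f)$ is needed for that last step; invoking ``moving $u_p(m(f))$'' is borrowing the moduli-space intuition behind the Hamiltonian-diffeomorphism trick without the coupled $(\delta t,\xi)$ variation that makes it precise. Finally, note that your use of $\delta V$ on long edges is permissible but not required: the edge linearization is a first-order ODE with free endpoint values and is already surjective, which is why the paper only perturbs $(K,J)$.
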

\begin{proof}
Make a consistent universal choice of perturbation data. 
For each set of Lagrangian labels $\bm{L}$, we show that it is possible to modify the perturbation data $(K,J,V)$ slightly to make our moduli spaces regular. 
In fact it is sufficient only to perturb $(K,J)$, assuming we have already chosen the Floer data $(h_L,g_L)$ to be Morse-Smale for each $L$. 
Our situation is very similar to that considered in \cite[Section 9k]{seidel08}.

A deformation of $(K,J)$ is given by a choice of:
\begin{itemize}
\item $\delta K \in \Omega^1_{\mathcal{S}^p/\mathcal{R}^{pt}}(\mathcal{S}^p,\mathcal{H})$;
\item $\delta J \in C^{\infty}(\mathcal{S}^p,T_J \mathcal{J})$,
\end{itemize}
such that $(\delta K,\delta J)$ vanish on the strip-like ends and $\delta K(\xi)|_{L_C} = 0$ for each $\xi \in TC$, where $C$ is a boundary component of a pearl and $L_C$ its Lagrangian label.

We choose an open set $\Omega \subset \mathcal{S}^p$ such that, for each $r \in \mathcal{R}^{pt}$, $\Omega \cap \mathcal{S}^p_r$ lies within the `thick' region of Definition \ref{defn:pearltop}, and intersects each connected component of the thick region in a non-empty, connected set that intersects each boundary component (see Figure \ref{fig:Omega}). 
To retain consistency of our perturbation datum, we require that $(\delta K,\delta J)$ are zero outside $\Omega$, and extend smoothly to a pair $(\overline{\delta K}, \overline{\delta J})$ defined on $\bar{\mathcal{S}}^p$ which vanish to infinite order along the boundary.

Let $\mathcal{T}$ denote the space of all such $(\delta K, \delta J)$. 
Given $t \in \mathcal{T}$, we can exponentiate it to an actual perturbation datum, and we define
\[ \mathcal{M}^t_{\mathcal{S}^{pt}}(\bm{y})\]
to be the moduli space of holomorphic pearly trees with respect to this perturbation datum.
We define the universal moduli space 
\[ \mathcal{M}^{univ}_{\mathcal{S}^{pt}}(\bm{y}) := \{ (t,r,{\bm{u}}): t \in \mathcal{T}, (r,{\bm{u}}) \in \mathcal{M}^t_{\mathcal{S}^{pt}}(\bm{y})\}.\]

We have the associated universal linearized operators
\[ D^{univ}_{\mathcal{S}^{pt},r,\bm{u}} : \mathcal{T} \oplus T_r \mathcal{R}^{pt} \oplus T_{\bm{u}} \mathcal{B}_{S_r} \To T_{\bm{ev}_m({\bm{u}})} \bm{L}^{F(S)} \oplus (\mathcal{E}_{S_r})_{\bm{u}},\]
given by
\[D^{univ}_{\mathcal{S}^{pt},r,\bm{u}} = D^{def}_{\mathcal{S}^{pt},r,{\bm{u}}}\oplus  D_{\mathcal{S}^{pt},r,{\bm{u}}},\]
where $D_{\mathcal{S}^{pt},r,{\bm{u}}}$ is as defined in Definition \ref{defn:extlin} and
\[D^{def}_{\mathcal{S}^{pt},r,{\bm{u}}} : \mathcal{T} \To (\mathcal{E}_{S_r})_{\bm{u}}\]
takes the derivative of the holomorphic pearly tree equation with respect to changes in the perturbation datum. 
We should really work in a local trivialization of $\mathcal{S}^{pt}$ over a small set $U$, as we did in Section \ref{subsec:pertdatafam}, but we gloss over this point to make things readable.

We claim that the universal operator $D^{univ}_{\mathcal{S}^{pt},r,\bm{u}}$ is surjective. 
Let $S$ denote the pearly tree with modulus $r$.
The codomain of $D^{univ}_{\mathcal{S}^{pt},r,{\bm{u}}}$ is a direct sum
\[ T_{\bm{ev}_m({\bm{u}})} \bm{L}^{F(S)} \oplus L^p(S^p,\Omega^{0,1}_S \otimes u_p^* TX) \oplus \bigoplus_{L \in \bm{L}} L^p(S^e(L),u_L^*TL). \]
The operator $D_{\mathcal{S}^{pt},r,\bm{u}}$ always maps 
\[W^{1,p}(S^e(L),u_L^*TL) \To L^p(S^e(L),u_L^*TL)\]
surjectively, for each $L \in \bm{L}$ (the moduli spaces of Morse flowlines are always regular -- we are not imposing any boundary conditions here).

The space of deformations $\mathcal{T}$ maps surjectively to 
\[ L^p(S^p,\Omega^{0,1}_S \otimes u_p^* TX)\]
(see \cite[Section 9k]{seidel08}).
To complete the proof of surjectivity, we show that the tangent space to the zero set of the universal section
\[ d^{univ}_S|_{S^p} : \mathcal{T} \times  W^{1,p}_{loc}(S^p,X) \To L^p(S^p,\Omega^{0,1}_S \otimes u_p^* TX)\]
maps surjectively to 
\[T_{\bm{ev}_m(\bm{u})} \bm{L}^{F(S)},\]
using a modification of an argument given in \cite[Section 3.4]{mcduffsalamon}. 
The essential observation is that the group of Hamiltonian diffeomorphisms fixing the Lagrangians in $\bm{L}$ acts on the space of perturbation data and associated holomorphic pearly trees with labels $\bm{L}$.

Let $h: S^p \To \mathcal{H}$ be a smooth function which is locally (in the $z$ coordinates on $S^p$) equal to a constant $H \in \mathcal{H}$ outside of $\Omega \cap S^p$, and such that $h|_{C}$ vanishes on the Lagrangian $L_C$, for any boundary component $C$ of $S^p$ with label $L_C$.
Denote by $\phi_z :X \To X$ the time-$1$ flow of the Hamiltonian $h(z)$, for $z \in S^p$. 
Then we can define a map from
\[ \mathcal{T} \times W^{1,p}_{loc}(S^p,X)\]
to itself by
\begin{eqnarray*}
 u_p(z) &\mapsto& \phi_z (u_p(z)), \\
 K(z) &\mapsto& \phi_z ^*K(z) - dh(z), \\
 J(z) &\mapsto& J(z) \circ \phi_z
 \end{eqnarray*}
where $dh(z)$ denotes the differential of $h(z)$ with respect to the coordinates $z$ on $S^p$. 
In particular, $dh(z)$ is supported in $\Omega$, so the new perturbation datum still lies in $\mathcal{T}$.
One can show that this action preserves the section $d^{univ}_S|_{S^p}$ and in particular preserves its zero set.

By our definition of $\Omega$, for each flag $f \in F(S)$ we can choose a curve in $\Omega$ that cuts the pearl containing $m(f)$ into two regions, one of which contains the marked point $m(f)$ and no other punctures or marked points. 
We can make these curves disjoint for different $f$ (see Figure \ref{fig:Omega}).
Then we can define $h_f: S^p \To \mathcal{H}$ which is supported in the region containing $m(f)$, and constant equal to some Hamiltonian $H_f$ in the portion of that region that lies outside of $\Omega$.
By making different choices of the functions $H_f$, we can independently move the points $\phi_{m(f)}(u_p(m(f)))$ in any direction we please, so the linearization of the evaluation map is surjective from the tangent space to the zero set of $d^{univ}_{S}|_{S^p}$ onto $T_{\bm{ev}_m(\bm{u})}\bm{L}^{F(S)}$.
This completes the proof of surjectivity of the universal linearized operator.

\begin{figure}
\centering
\includegraphics[width=0.9\textwidth]{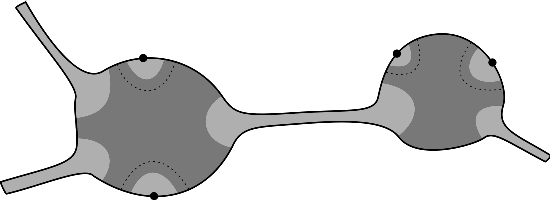}
\caption{The region $\Omega \cap \mathcal{S}^p_r$ (shaded dark grey) inside $\mathcal{S}^p_r$ (shaded light grey), for some $r \in \mathcal{R}^{pt}$. 
Note that $\Omega$ avoids all thin regions. 
The solid circles denote marked points. 
For each marked point $m(f)$, there is a curve inside $\Omega$ (drawn as a dotted line) which separates $m(f)$ from all other marked points and punctures.
\label{fig:Omega}}
\end{figure}

Therefore, the universal moduli spaces $\mathcal{M}^{univ}_{\mathcal{S}^{pt}}$ are Banach manifolds.
Similarly, one can show that the universal moduli spaces
\[ \mathcal{M}^{univ}_{\mathcal{S}^{pt}(T_S,B)}(\bm{y}) :=  \{ (t,r,{\bm{u}}): t \in \mathcal{T},  (r,{\bm{u}}) \in \mathcal{M}^t_{\mathcal{S}^{pt}(T_S,B)}(\bm{y})\}\]
are Banach manifolds for each $(T_S,B)$ (see Definitions \ref{defn:medge}, \ref{defn:tsb}).
The regular values of the projections of each of these universal moduli spaces to $\mathcal{T}$ are of the second category, by the Sard-Smale theorem (see Remark \ref{rmk:frechet}). 
Taking the intersection of regular values of the projection, over all $(T_S,B)$, shows that for a generic choice of deformed perturbation datum in $\mathcal{T}$, the moduli spaces
\[ \mathcal{M}_{\mathcal{S}^{pt}(T_S,B)}(\bm{y})\]
are all simultaneously regular.
This was our definition of regularity of the moduli space $\mathcal{M}_{\mathcal{S}^{pt}}(\bm{y})$ (see Definition \ref{defn:tsbreg}).
\end{proof}

\begin{rmk}
\label{rmk:frechet}
We have glossed over one technical issue: the space of admissible deformed perturbation data is not a Banach space as we have defined it, but rather a Fr\'{e}chet space, and hence the Sard-Smale theorem does not apply.
To fix this, we should work with the Banach spaces of $C^l$ perturbation data, then take the intersection over all $l$ (see \cite[Section 3.1]{mcduffsalamon} for details).
\end{rmk}

\subsection{$A_{\infty}$ structure maps}
\label{subsec:prods}

In this section we give our definition of the Fukaya category. 
We do not discuss signs, but they work in essentially the same way as in \cite{seidel08} (using Remark \ref{rmk:orient}).

We make a choice of Floer data and a consistent universal choice of perturbation data, and assume that all moduli spaces $\mathcal{M}_{\mathcal{S}^{pt}}(\bm{y})$ (as well as those used in the definition of the Floer differential) are regular.

We define the differential 
\[ \mu^1: CF^*(L_0,L_1) \To CF^*(L_0,L_1)\]
to be the standard Floer differential if $L_0,L_1$ are distinct, and to be the Morse differential (for Morse {\bf co}homology) for $(h_L,g_L)$ if $L_0 = L_1  = L$. 

Given Lagrangian labels $\bm{L} = (L_0,\ldots,L_k)$, we define the higher products
\[ \mu^k: CF^*(L_{k-1},L_k) \otimes \ldots \otimes CF^*(L_0,L_1) \To CF^*(L_0,L_k)[2-k]\]
as follows: given an associated set of generators $\bm{y} = (y_0,\ldots,y_k)$, such that
\[ i(y_0) = i(y_1)+ \ldots + i(y_k) + 2 - k,\]
we define the coefficient of $y_0$ in 
\[ \mu^k(y_k,\ldots,y_1)\]
to be the count of points in the moduli space
\[ \mathcal{M}_{\mathcal{S}^{pt}}(\bm{y})\]
(which is $0$-dimensional by Proposition \ref{prop:msman}), with appropriate signs.
Note that the condition on degrees of the $y_j$ mean that the maps $\mu^k$ respect the $\Z$-grading in the appropriate sense for an $A_{\infty}$ category.

\begin{prop}
\label{prop:ainfstruc}
The operations $\mu^k$ satisfy the $A_{\infty}$ associativity equations, with signs and $\Z$-gradings.
\end{prop}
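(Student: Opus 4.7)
The plan is to run the standard Floer-theoretic argument: the $A_{\infty}$ associativity relations are the algebraic shadow of $\partial \bar{\mathcal{M}} = 0$ for one-dimensional moduli spaces. First, I would fix Lagrangian labels $\bm{L} = (L_0,\ldots,L_k)$ and an associated set of generators $\bm{y}$ with $i(\bm{y}) = 3-k$. Proposition \ref{prop:msman} then shows that $\mathcal{M}_{\mathcal{S}^{pt}}(\bm{y})$ is a topological $1$-manifold, and Proposition \ref{prop:gromcomp} shows its compactification $\bar{\mathcal{M}}_{\bar{\mathcal{S}}^{pt}}(\bm{y})$ is a compact $1$-manifold with boundary. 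Consequently the signed count of its boundary points vanishes.

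Next I would identify the codimension-$1$ strata. By Proposition \ref{prop:gromcomp}, they correspond to semistable trees $T$ with exactly one internal edge $e$, carrying some internal generator $y_e$ as in Definition \ref{defn:stablepearly}. Cutting along $e$ splits the configuration into two pieces: either a pair of vertices of valence $\geq 3$, contributing a product of moduli spaces of holomorphic pearly trees; or one such vertex together with a $2$-valent vertex, whose factor is the translation-invariant moduli space $\mathcal{M}^*_Z$ that defines the Floer/Morse differential $\mu^1$. Summing over the internal generator $y_e$, the first type of stratum contributes the nested term $\mu^{k-i+1}(y_k,\ldots,\mu^i(y_{j+i},\ldots,y_{j+1}),\ldots,y_1)$, while the second type contributes the $\mu^1$ terms of the $A_{\infty}$ relation. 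Consistency of the universal perturbation data (established in Section \ref{subsec:consistent}) is precisely what matches the two sides of each boundary point with Floer/Morse data coming from the subtrees.

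The main obstacle is organizing the signs. I would orient each $\mathcal{M}_{\mathcal{S}^{pt}}(\bm{y})$ via the determinant line of the linearized operator $D_{\mathcal{S}^{pt},r,\bm{u}}$ from Definition \ref{defn:extlin}, trivialized using the brane structures, following the conventions of \cite[Sections 11, 12]{seidel08}. The Morse-edge factors carry canonical orientations coming from stable/unstable manifolds, so the sign bookkeeping reduces term-by-term to the polygon-only case of \cite{seidel08}; the gluing maps of Proposition \ref{prop:charts} preserve orientations by Remark \ref{rmk:orient}. Combining this with the vanishing of the signed boundary of $\bar{\mathcal{M}}_{\bar{\mathcal{S}}^{pt}}(\bm{y})$ then yields the $A_{\infty}$ associativity equations for the chosen inputs $\bm{y}$. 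Finally, the degree shift $[2-k]$ in the target is exactly the dimension formula $\dim \mathcal{M}_{\mathcal{S}^{pt}}(\bm{y}) = k-2+i(\bm{y})$ read backwards, so the relations respect the $\Z$-grading.
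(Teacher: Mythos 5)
Your proposal is correct and follows essentially the same route as the paper's proof: both consider the one-dimensional component of $\bar{\mathcal{M}}_{\bar{\mathcal{S}}^{pt}}(\bm{y})$, invoke consistency of the perturbation data to identify its codimension-$1$ boundary strata with stable pearly trees modeled on one-internal-edge trees, and deduce the $A_{\infty}$ relations from the vanishing of the signed boundary count. You spell out the sign and orientation bookkeeping in a bit more detail than the paper does, but it is the same argument.
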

\begin{proof}
The proof follows familiar lines: given a set of generators $\bm{y}$ associated to Lagrangian labels $\bm{L}$, we consider the $1$-dimensional component of the moduli space $\bar{\mathcal{M}}_{\bar{\mathcal{S}}^{pt}}(\bm{y})$. 
The signed count of its boundary components is $0$.
By the results outlined in Section \ref{subsec:consistent}, the codimension-$1$ boundary strata of $\bar{\mathcal{M}}_{\bar{\mathcal{S}}^{pt}}(\bm{y})$ consist of those stable holomorphic pearly trees modeled on trees $T$ with one internal edge. 
The fact that their signed count is $0$ means that the coefficient of $y_0$ in
\[ \sum_{a,b} (-1)^{\star}\mu^{k-a+1}(y_k,\ldots,y_{a+b+1},\mu^a(y_{a+b},\ldots,y_{b+1}),y_b,\ldots,y_1)\]
is $0$, where 
\[ \star = i(y_1)+ \ldots + i(y_b) - b.\] 
This means exactly that the $A_{\infty}$ associativity equations hold.
\end{proof}

\begin{prop}
The Fukaya category is independent, up to quasi-isomorphism, of the choices of strip-like ends, Floer data and perturbation data made in its definition.
\end{prop}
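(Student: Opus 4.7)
The plan is to follow the standard strategy for such independence results (compare \cite[Chapter 10]{seidel08}): given two sets of choices $\mathcal{D}_0$ and $\mathcal{D}_1$ (of strip-like ends, Floer data, and perturbation data), I would construct an explicit $A_\infty$ functor $\mathcal{F}: \mathcal{F}uk(\mathcal{P}^n;\mathcal{D}_0) \To \mathcal{F}uk(\mathcal{P}^n;\mathcal{D}_1)$, then verify that it is a quasi-equivalence. Reversing the roles of $\mathcal{D}_0$ and $\mathcal{D}_1$ and using standard $A_\infty$ nonsense then yields the desired quasi-isomorphism of $A_\infty$ categories.

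To define $\mathcal{F}^k$, I would consider moduli spaces of pearly trees with Lagrangian labels $\bm{L} = (L_0,\ldots,L_k)$ whose $k$ negative ends use the Floer data and strip-like ends coming from $\mathcal{D}_0$, and whose positive end uses those coming from $\mathcal{D}_1$; the perturbation datum on each pearl and each Morse edge is chosen to interpolate smoothly between the two prescriptions in between. As in the definition of the Fukaya category itself, these interpolating choices must be made universally and consistently with respect to the stratifications of $\bar{\mathcal{R}}^{pt}(\bm{L})$. The existence of such choices, and the regularity, compactness, and manifold structure of the resulting parametrized moduli spaces, are established by repeating essentially verbatim the arguments of Sections \ref{subsec:pertdatafam}--\ref{subsec:transv} in the parametrized setting. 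Counting rigid elements of the resulting moduli spaces defines the multilinear maps $\mathcal{F}^k$, and the codimension-one boundary analysis of the one-dimensional components of the corresponding compactified moduli spaces yields exactly the $A_\infty$ functor equations (boundaries now splitting either into an $\mathcal{F}uk(\mathcal{D}_0)$-product followed by an $\mathcal{F}^{k'}$, or an $\mathcal{F}^{k''}$ followed by an $\mathcal{F}uk(\mathcal{D}_1)$-product).

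To check that $\mathcal{F}$ is a quasi-equivalence, it suffices to show that $\mathcal{F}^1$ induces an isomorphism on cohomology. For pairs of distinct Lagrangians this is the standard fact that Floer continuation maps are chain homotopy equivalences; in the case of identical Lagrangians it reduces to the analogous fact for Morse continuation maps between two Morse--Smale pairs $(h_L,g_L)$. In both cases, a homotopy inverse is defined using the reverse interpolation, and the composition is shown to be chain homotopic to the identity by a standard one-parameter family argument. The main obstacle, analogous to the technical issues already handled in the construction of $\mathcal{F}uk(\mathcal{P}^n)$ itself, lies in the parametrized compactness and gluing analysis near strata where a Morse edge of length zero appears inside the interpolating pearly tree; as in the proof of Proposition \ref{prop:charts}, this is managed by retaining the artificial conditions of Definition \ref{defn:pertfam} that force the vector field $V$ on short Morse edges to vanish, so that the gluing reduces to the familiar nodal disk case and Lemma \ref{lem:constedge} applies in the parametrized setting as well.
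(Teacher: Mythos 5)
Your argument is correct, but it follows a genuinely different route from the one the paper invokes. The paper's proof consists of a single pointer to \cite[Chapter 10]{seidel08}, where Seidel avoids constructing an explicit $A_\infty$ continuation functor. Instead, Seidel's Section 10a builds a single ``total'' $A_\infty$ category whose objects are pairs (Lagrangian brane, choice of auxiliary data from $\mathcal{D}_0$ or $\mathcal{D}_1$), with Floer and perturbation data interpolating between the two prescriptions chosen once and for all across mixed tuples of objects. Both $\mathcal{F}uk(X;\mathcal{D}_0)$ and $\mathcal{F}uk(X;\mathcal{D}_1)$ then sit inside this total category as full $A_\infty$ subcategories, and it suffices to show that the two copies of each object are quasi-isomorphic inside the total category — which requires only producing degree-zero cocycles $f_{12}, f_{21}$ (via a single moduli space, essentially the PSS/continuation strip) and checking that $\mu^2(f_{21}, f_{12})$ is cohomologous to the identity. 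This is precisely the mechanism the present paper re-uses in Section \ref{subsec:compat}. The total-category approach buys one a substantial economy: one never needs to build the higher functor terms $\mathcal{F}^k$ or verify the $A_\infty$ functor equations directly; those come for free from the abstract fact that inclusions of full subcategories on quasi-isomorphism-dense sets of objects are quasi-equivalences. By contrast, your continuation-functor construction is more explicit but requires setting up and compactifying a whole hierarchy of parametrized interpolating moduli spaces and tracking their boundary strata to produce the $A_\infty$ functor equations — which is a correct and standard alternative, and you are right that the pearly-tree-specific subtleties (zero-length Morse edges, Lemma \ref{lem:constedge}, Proposition \ref{prop:charts}) recur in this parametrized setting and can be handled in the same way. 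Either route establishes the result.
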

\begin{proof}
Compare \cite[Chapter 10]{seidel08}.
\end{proof}

\begin{prop}
\label{prop:endo}
The $A_{\infty}$ algebra $CF^*(L,L)$ is quasi-isomorphic to the differential graded cohomology algebra $C^*(L)$.
\end{prop}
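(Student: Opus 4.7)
The plan is to reduce the statement to a well-known fact about Morse theory. I would argue in two stages: first, that the moduli spaces defining $\mu^k$ on $CF^{*}(L,L)$ consist only of configurations with constant pearls (so that the $A_{\infty}$ structure is purely Morse-theoretic); and second, that the resulting Morse $A_{\infty}$ algebra on $CM^{*}(L)$ is quasi-isomorphic to the DG algebra $C^{*}(L)$.

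For the first stage, I use that $L$ is an exact Lagrangian in the exact symplectic manifold $(X, d\theta)$. Stokes' theorem implies $\int u^{*}\omega = 0$ for any topological disk $u:(D,\partial D)\to (X,L)$, so there are no non-constant $J$-holomorphic disks with boundary on $L$. Following the picture of Remark~\ref{rmk:limit}, I would rescale the Hamiltonian component $K$ of the perturbation datum on $\mathcal{S}^p$ by a small parameter $\epsilon>0$ (while keeping the edge data $V_L$ fixed; note the rescaling preserves consistency and the vanishing conditions of Definition~\ref{defn:pertfam}). Standard energy estimates for inhomogeneous pseudo-holomorphic maps, combined with $\omega$-exactness and $\theta$-exactness of $L$, bound the energy of any solution on a pearl by $O(\epsilon)$ times the modulus of the perturbation. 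A Gromov compactness argument (as in Proposition~\ref{prop:gromcomp}) then shows that, for $\epsilon$ sufficiently small, every inhomogeneous pseudo-holomorphic pearl in every relevant $0$- and $1$-dimensional moduli space is a constant map. The incidence conditions of Definition~\ref{defn:perthol} therefore reduce to requiring that the (perturbed) Morse flowlines on the edges meet at the common values of the constant pearls, so the pearly trees become precisely the gradient trees on $L$ used in Morse-theoretic models of the $A_{\infty}$ structure.

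For the second stage, the counts of such gradient trees define the classical Morse $A_{\infty}$ structure on $CM^{*}(L)$ studied by Fukaya, Kontsevich--Soibelman, Abouzaid, Mescher and others. On cohomology $\mu^{2}$ recovers the cup product under the canonical isomorphism $HM^{*}(L;\C) \cong H^{*}(L;\C)$, and the higher operations $\mu^{k}$ are determined up to $A_{\infty}$ quasi-isomorphism by homotopy transfer from a chain-level DG model. Thus $CM^{*}(L)$ with its Morse $A_{\infty}$ operations is quasi-isomorphic, as an $A_{\infty}$ algebra over $\C$, to the DG algebra $C^{*}(L;\C)$ of singular cochains (equivalently, de Rham forms) with cup product. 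Composing the two quasi-isomorphisms gives the claim.

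The main obstacle is the uniform energy estimate in stage one: I need to check that the freedom in the perturbation datum is compatible with a global rescaling $K \mapsto \epsilon K$ while preserving consistency along the boundary strata of $\bar{\mathcal{R}}^{pt}(\bm{L})$ (so that the same rescaling works across all families simultaneously). This amounts to scaling the universal choice of Floer and perturbation data uniformly, and then checking that regularity of the unperturbed ($\epsilon=0$) Morse gradient tree moduli, plus a standard continuation argument, produces a consistent choice for all small $\epsilon>0$. Everything else is routine once the pearl components are shown to be constant.
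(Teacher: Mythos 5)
The paper's proof is shorter and avoids the difficulty you run into: it simply takes the Hamiltonian component $K$ of the perturbation datum to be identically zero, so that the perturbed equation on the pearls is the genuine $J$-holomorphic curve equation, and constancy is an immediate consequence of exactness of $L$ (zero energy). Transversality is then achieved by perturbing only the Morse vector-field component $V$, and the resulting counts are, tautologically, the Morse flow-tree counts in Abouzaid's $CM^*(L)$; the quasi-isomorphism with $C^*(L)$ is then cited from \cite[Section~3]{abouzaidplumb}.

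The gap in your version is in stage one, in the claim that for $\epsilon>0$ small ``every inhomogeneous pseudo-holomorphic pearl \dots is a constant map.'' A constant map $u_p\equiv p$ solves $(Du_p-\epsilon Y)^{0,1}=0$ only if $\epsilon Y^{0,1}=0$ at $p$; for generic nonzero $K$ this fails, so constant maps are \emph{not} solutions when $\epsilon>0$. Your energy estimate gives $E(u_p)=O(\epsilon)$, and Gromov compactness then shows that a sequence of solutions with $\epsilon_n\to 0$ has a subsequence converging to a constant map, but it does not show that the solutions at fixed small $\epsilon$ are themselves constant. Consequently the incidence conditions do not literally reduce to those of Morse gradient trees, and the identification of $\mathcal{M}_{\mathcal{S}^{pt}}(\bm{y})$ with a gradient-tree moduli space is not immediate; to push your approach through, you would need an adiabatic degeneration argument in the style of Fukaya--Oh, identifying the moduli spaces for small $\epsilon>0$ with the $\epsilon=0$ Morse moduli spaces up to a cobordism inducing an $A_\infty$ quasi-isomorphism, which is a substantially harder analytic result than what is needed here. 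The cleaner route, and the one the paper takes, is to set $K=0$ from the outset: the ``main obstacle'' you flag (regularity of the $\epsilon=0$ moduli) is then the only thing to check, and it is handled by perturbing $V$ alone, which is exactly what the paper asserts and what Abouzaid's framework supplies. Your second stage is essentially fine, but the imprecise appeal to ``homotopy transfer from a chain-level DG model'' should be replaced by the specific reference to \cite[Section~3]{abouzaidplumb}, which proves precisely the comparison of the Morse $A_\infty$ algebra with $C^*(L)$ that you need.
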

\begin{proof}
We can choose the Hamiltonian perturbations of the moduli spaces used to define $CF^*(L,L)$ to be zero, so that all pearls are constant by exactness of $L$. 
It is not difficult to show that transversality can be achieved with this class of perturbation data, by perturbing $V$.
The definition of $CF^*(L,L)$ then coincides with the definition of the $A_{\infty}$ algebra $CM^*(L)$ given in \cite[Section 2.2]{abouzaidplumb} (by counting Morse flow trees on $L$). 
The result now follows from \cite[Section 3]{abouzaidplumb}.
\end{proof}

\subsection{Compatibility with other definitions}
\label{subsec:compat}

In this section, we explain why our definition of the Fukaya category (which we denote, for the purposes of this section, by $\mathcal{F}uk^1(X)$) is quasi-equivalent to that in \cite{seidel08} (which we denote by $\mathcal{F}uk^2(X)$).
We define an auxiliary $A_{\infty}$ category, $\mathcal{F}uk^{12}(X)$, which contains two objects, $L^1$ and $L^2$, for each object $L$ of the usual Fukaya category. 
We define Floer data for each pair $(L^1,L^1)$ to consist of a Morse-Smale pair on $L$, but for all other pairs of objects $(L_0^i,L_1^j)$, including the case $L_0 = L_1$, we define the Floer data as if the objects were distinct in Definition \ref{defn:floerdata} (i.e., the Floer datum consists of a Hamiltonian component whose time-$1$ flow makes $L_0$ and $L_1$ transverse, and  an almost-complex structure component).
We define the $A_{\infty}$ structure coefficients by counting holomorphic pearly trees as before, but we only allow Morse flowlines if an edge has labels $L^1$ on opposite sides for some $L$.

There are $A_{\infty}$ embeddings
\[ \mathcal{F}uk^1(X) \hookrightarrow \mathcal{F}uk^{12}(X) \hookleftarrow \mathcal{F}uk^2(X)\]
defined by $L \mapsto L^1$, $L \mapsto L^2$ respectively.

\begin{prop}
The objects $L^1, L^2$ are quasi-isomorphic, for any $L$.
\end{prop}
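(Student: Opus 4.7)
The plan is to exhibit closed morphisms $f \in CF^0(L^1, L^2)$ and $g \in CF^0(L^2, L^1)$ whose compositions (in either order) represent the identity elements in $HF^*(L^2, L^2)$ and $HF^*(L^1, L^1)$ respectively.

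First, I would align the Floer data on the mixed pairs with that on $(L^2, L^2)$, setting
\[ (H_{12}, J_{12}) = (H_{21}, J_{21}) = (H_{22}, J_{22}). \]
With this choice, the chain complexes $CF^*(L^1, L^2)$, $CF^*(L^2, L^1)$, and $CF^*(L^2, L^2)$ share the same generators and the same Floer differential: no Morse edges can appear on these Floer strips, since none of the three pairs has label $L^1$ on both sides. Let $u \in CF^0(L^2, L^2)$ be a cocycle representing the cohomological unit $e_{L^2}$, and define $f$ and $g$ to be the elements of $CF^0(L^1, L^2)$ and $CF^0(L^2, L^1)$ corresponding to $u$ under these identifications; both are automatically $\mu^1$-closed.

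Next, I would verify that $\mu^2(f, g) \in CF^*(L^2, L^2)$ is cohomologous to $e_{L^2}$. This product counts pearly trees whose sole pearl is a triangle with arcs labeled $(L^2, L^1, L^2)$. No arc has label $L^1$ on both sides, so no Morse edges attach. Choosing the perturbation datum for this triangle to agree with the one used in the definition of products in $CF^*(L^2, L^2)$, the relevant moduli space is exactly that which defines $\mu^2_{\mathcal{F}uk^2}(u, u)$; by unitality of $u$, the result represents $e_{L^2}$ on cohomology.

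The reverse composition $\mu^2(g, f) \in CF^*(L^1, L^1) = CM^*(L)$ is more delicate. Its defining moduli space consists of triangular pearls with arcs labeled $(L^1, L^2, L^1)$, with strip-like ends at the two $L^2$-adjacent marked points and a Morse edge attached at the output marked point (since the two arcs meeting there both carry the label $L^1$). The Morse edge runs out to a critical point of $h_L$. My plan is to interpret this count as the composition
\[ \mu^2(g, f) \;=\; \Phi\bigl(\mu^2_{\mathcal{F}uk^2}(u, u)\bigr), \]
where $\Phi: CF^*(L^2, L^2) \To CM^*(L)$ is the PSS-type quasi-isomorphism that converts a Floer cocycle into a Morse cocycle by attaching a capping Morse flowline to a half-strip. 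Since $\mu^2_{\mathcal{F}uk^2}(u, u) \simeq u$ and $\Phi$ is a unital quasi-isomorphism (in the spirit of Proposition \ref{prop:endo}), the class $[\mu^2(g, f)]$ equals the Morse unit $e_{L^1}$.

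The principal technical obstacle lies in making rigorous the factorization through $\Phi$ in the second computation: one must verify, using the gluing and compactness theory of pearly trees developed in Sections \ref{subsec:consistent}--\ref{subsec:transv}, that the perturbation data can be chosen so as to realize the triangle-with-Morse-tail moduli space as the output of the desired composition. Once this identification is in place, the conclusion follows from the unitality of $u$ together with the fact that $\Phi$ is a unit-preserving quasi-isomorphism.
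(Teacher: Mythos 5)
Your overall strategy matches the paper's: exhibit $f \in CF^0(L^1,L^2)$ and $g \in CF^0(L^2,L^1)$ representing units and check that both compositions are units on cohomology. Your treatment of $\mu^2(f,g) \in CF^*(L^2,L^2)$ is a nice shortcut --- aligning the Floer data across the three ``distinct'' pairs so that the triangle moduli space is literally the one computing $\mu^2_{\mathcal{F}uk^2}(u,u)$ avoids having to invoke the PSS comparison of the Floer product with the cup product, which is what the paper does at this step.

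The gap you flag in the second composition is genuine, and the claimed identity $\mu^2(g,f) = \Phi(\mu^2_{\mathcal{F}uk^2}(u,u))$ does not hold at chain level: the left side counts a single pearl (two punctures, one boundary marked point) with an attached Morse edge of finite length, while the right side counts a \emph{broken} configuration --- a three-punctured disk glued at its output puncture to a once-punctured half-strip carrying the marked point and Morse tail. These are different $0$-dimensional moduli spaces; they become comparable only as boundary strata of a $1$-parameter family (letting the Morse-edge length go to infinity), so justifying the equality on cohomology requires a gluing/cobordism argument that essentially re-proves a PSS compatibility statement. The paper avoids this entirely. Once $[\mu^2(f,g)] = e_{L^2}$ is known, it observes that $HF^*(L^1,L^1) \cong H^*(L)$ (Proposition \ref{prop:endo}) and $HF^*(L^1,L^2) \cong H^*(L)$ (PSS), so the two spaces have equal finite rank; the map $\alpha \mapsto \mu^2(f,\alpha)$ on $HF^*(L^1,L^1) \to HF^*(L^1,L^2)$ is then surjective (it has section $\beta \mapsto \mu^2(g,\beta)$), hence an isomorphism, and injectivity forces $[\mu^2(g,f)] = e_{L^1}$. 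You should replace your $\Phi$-based computation with this rank argument; it closes the gap with no new moduli-space analysis.
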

\begin{proof}
The Piunikhin-Salamon-Schwarz isomorphism \cite{pss} gives isomorphisms on the level of cohomology,
\[ HF^*(L^1,L^2) \cong HF^*(L^2,L^1) \cong HF^*(L^2,L^2) \cong H^*(L),\]
and says that the product
\[ HF^*(L^1,L^2) \otimes HF^*(L^2,L^1) \To HF^*(L^2,L^2)\]
agrees with the cup product on cohomology (note that the moduli spaces defining this product involve no holomorphic pearly trees, only disks).

In particular, if we choose morphisms
\[ f_{12} \in CF^*(L^1,L^2)\mbox{ , }\,\, f_{21} \in CF^*(L^2,L^1)\]
corresponding to the identity in cohomology, then the PSS isomorphism tells us that the product
\[ \mu^2(f_{21},f_{12}) \in CF^*(L^2,L^2)\]
corresponds to the identity in cohomology. 
Thus, because $HF^*(L^1,L^1)$ and $HF^*(L^2,L^2)$ have the same rank (both are isomorphic to $H^*(L)$), the morphisms $f_{12}$ and $f_{21}$ induce isomorphisms on cohomology.

Thus, $L^1$ and $L^2$ are quasi-isomorphic, as required.
\end{proof}

\begin{cor}
The embeddings 
\[ \mathcal{F}uk^1(X) \hookrightarrow \mathcal{F}uk^{12}(X) \hookleftarrow \mathcal{F}uk^2(X)\]
are quasi-equivalences, and in particular, the $A_{\infty}$ categories $\mathcal{F}uk^1(X)$ and $\mathcal{F}uk^2(X)$ are quasi-equivalent.
\end{cor}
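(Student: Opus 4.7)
The plan is to verify, for each of the two embeddings in turn, that it is an $A_\infty$ quasi-equivalence, and then to extract the second assertion from standard facts about $A_\infty$ quasi-equivalences. Recall that an $A_\infty$ functor is a quasi-equivalence if and only if it is cohomologically fully faithful (its linear part induces a quasi-isomorphism of morphism complexes) and essentially surjective on the cohomology-level category.

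For cohomological full faithfulness I would simply unwind the construction of $\mathcal{F}uk^{12}(X)$. By definition, the Floer datum chosen for a pair $(L_0^1,L_1^1)$ in $\mathcal{F}uk^{12}(X)$ is the very same Morse--Smale pair (when $L_0=L_1$) or Hamiltonian/almost-complex-structure pair (when $L_0\neq L_1$) that was used for $(L_0,L_1)$ in $\mathcal{F}uk^1(X)$, and analogously for $(L_0^2,L_1^2)$ versus the data in $\mathcal{F}uk^2(X)$. The consistent universal perturbation data on pearly-tree moduli spaces are inherited in the same way. Consequently each of the two embeddings is \emph{strict} on morphism complexes: the linear part is a literal isomorphism onto its image, in particular a quasi-isomorphism.

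For essential surjectivity on cohomology, I would observe that the objects of $\mathcal{F}uk^{12}(X)$ are precisely $\{L^1,L^2\}_{L}$. The image of the first embedding contains every $L^1$, and the preceding proposition constructs an explicit cohomology-level isomorphism $L^1\simeq L^2$ in $\mathcal{F}uk^{12}(X)$ for each $L$; so every $L^2$ is cohomologically isomorphic to an object in the image. The argument for the second embedding is entirely symmetric.

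To finish with the final sentence of the corollary, I would invoke the standard fact that any $A_\infty$ quasi-equivalence admits a quasi-inverse (see \cite[Chapter 1]{seidel08}). Composing the first embedding with a chosen quasi-inverse of the second produces a quasi-equivalence $\mathcal{F}uk^1(X)\to\mathcal{F}uk^2(X)$. There is no genuine obstacle in any of this: all of the substantive work (in particular the PSS identification $HF^*(L^1,L^2)\cong H^*(L)$ and the recognition of the unit) was carried out in the preceding proposition, and the present corollary is purely formal once that input is in hand.
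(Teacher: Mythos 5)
Your argument is correct and is exactly what the paper delegates by citing \cite[Section 10a]{seidel08}: construct the doubled category, note that each embedding is full and faithful on the chain level (with perturbation data chosen to restrict compatibly), and use the preceding proposition's quasi-isomorphism $L^1\simeq L^2$ for essential surjectivity, then invoke existence of quasi-inverses for quasi-equivalences. No gap; this is the same route as the reference the paper points to.
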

\begin{proof}
See \cite[Section 10a]{seidel08}.
\end{proof}

\section{Computation of $\mathcal{A}$}
\label{sec:Acalc}

The aim of this section is to prove Theorem \ref{thm:zcoeffs2}, which identifies the cohomology algebra of $\mathcal{A}$ as an exterior algebra, and Proposition \ref{prop:versal}, which gives a description of $\mathcal{A}$ up to quasi-isomorphism.

The outline of the section is as follows: Section \ref{subsec:fpearlyt} gives a Morse-Bott description of $CF^*(L^n,L^n)$. 
We define an $A_{\infty}$ category $\mathscr{C}$ with two objects: one is the Lagrangian immersion $L^n: S^n \To \mathcal{P}^n$, and the other is the Lagrangian immersion $L': S^n \To \CP{n}$ which is the double cover of the real locus $\RP{n}$. 
The situation is analogous to that in Section \ref{subsec:compat}, in which we explained why our Morse-Bott description of the Fukaya category using pearly trees was equivalent to the standard one using disks. 
Namely, we will define the $A_{\infty}$ structure maps so that the $A_{\infty}$ endomorphism algebra of $L^n$ counts holomorphic disks as in Section \ref{subsec:afuk}, and in particular is the same as $\mathcal{A}$, while the $A_{\infty}$ endomorphism algebra of $L'$ counts Morse-Bott objects which we call `admissible flipping holomorphic pearly trees'. 

Recall that one can think of a pearly tree as a degeneration of holomorphic disks, as the Hamiltonian part of the Floer datum for the pair $(L,L)$ converges to $0$ (see Remark \ref{rmk:limit}). 
Similarly, one should think of $L'$ as the limit of $L^n_{\epsilon}$ as $\epsilon \To 0$, i.e., the double cover of $\RP{n} \subset \CP{n}$ by $S^n$ (recall that $L^n_{\epsilon}$ is constructed as the graph of an exact 2-valued $1$-form $\epsilon df$ in the cotangent disk bundle $D^*_{\eta}\RP{n}$ embedded in $\CP{n}$). 
One should think of a flipping holomorphic pearly tree as a degeneration of a holomorphic pearly tree with boundary on $L^n_{\epsilon}$, in the limit $\epsilon \To 0$. 

Because we wish to consider only holomorphic pearly trees which lie inside $\mathcal{P}^n$ (i.e., do not intersect the boundary divisors), we must impose an additional condition (`admissibility') on our flipping holomorphic pearly trees. 
Thus, although the admissible flipping holomorphic pearly trees themselves may intersect the boundary divisors, they should be thought of as degenerations of holomorphic pearly trees which avoid the boundary divisors.

We show that, for sufficiently small $\epsilon > 0$, the objects $L'$ and $L^n_{\epsilon}$ of this $A_{\infty}$ category are quasi-isomorphic, and hence that we can compute $\mathcal{A} := CF^*(L^n_{\epsilon},L^n_{\epsilon})$ up to quasi-isomorphism by computing $\mathcal{A}' := CF^*(L',L')$. 

In Section \ref{subsec:a'prop} we describe some features of pearly trees, which help us to explicitly identify the moduli spaces of flipping holomorphic pearly trees that give the structure coefficients of $\mathcal{A}'$. 
This is possible because the pearls involved are just holomorphic disks in $\CP{n}$ with boundary on $\RP{n}$ (with some additional restrictions), hence well-understood.

In Section \ref{subsec:calc}, we carry this out. 
In particular, we prove Theorem \ref{thm:zcoeffs2}, which identifies the cohomology algebra of $\mathcal{A}'$ (and hence $\mathcal{A}$) as an exterior algebra. 
We also identify certain higher $A_{\infty}$ structure maps of $\mathcal{A}'$.

Finally, in Section \ref{subsec:def}, we show that $\mathcal{A}'$ is versal in the class of $A_{\infty}$ algebras with cohomology algebra the exterior algebra, and the equivariance and grading properties established in Section \ref{sec:A}.
This identifies $\mathcal{A}'$ (and hence $\mathcal{A}$) up to quasi-isomorphism, in the sense that any $A_{\infty}$ algebra in the same class must be quasi-isomorphic to $\mathcal{A}$.

\subsection{Flipping pearly trees}
\label{subsec:fpearlyt}

For the purposes of this section, we think of $\CP{n}$ as the hyperplane
\[\left\{\sum_j z_j = 0\right\} \subset \CP{n+1},\]
$\RP{n}$ as its real locus, $L': S^n \To \CP{n}$ the composition of the double cover of $\RP{n}$ with the inclusion $\RP{n} \hookrightarrow \CP{n}$, and $\{x_j\}$ the real coordinates on $S^n$.
We define an $A_{\infty}$ category $\mathscr{C}$ with two objects: one is the Lagrangian immersion $L^n: S^n \To \mathcal{P}^n$, and the other is the Lagrangian immersion $L': S^n \To \CP{n}$ just defined. 

\begin{defn}
We define Floer data and morphism spaces for the pairs of objects $(L_0,L_1) = (L^n,L^n), (L^n,L')$ or $(L',L^n)$ as in Definition \ref{defn:gen}.
\end{defn}

\begin{defn}
The Floer datum for the pair $(L',L')$ consists of {\bf two} Morse functions on $S^n$: one is $h$, a function whose only critical points are a maximum $p_{[n+2]}$ and minimum $p_{\phi}$. 
The other is $f$, the function constructed in Definition $\ref{defn:f}$, which has critical points $p_K$ for each proper, non-empty subset $K \subset [n+2]$, as shown in Corollary \ref{cor:critf}. 
Both, when paired with the standard round metric $g$ on $S^n$, form a Morse-Smale pair. 
One then defines
\[CF^*(L',L') := CM^*(h) \oplus CM^*(f) \cong \bigoplus_{K \subset [n+2]} \C \langle p_K \rangle.\]
We equip it with the $\Q$-grading
\[ i(p_K) := \frac{n}{n+2} |K|\]
(compare Corollary \ref{cor:grading}).
\end{defn}

\begin{rmk}
Given a complex volume form $\eta$ on $\mathcal{P}^n$, we can define a $\Z$-grading on the morphism spaces $CF^*(L_0,L_1)$ as usual.
\end{rmk}

\begin{defn}
We call generators of $CF^*(L',L')$ corresponding to critical points of $f$ {\bf flipping generators}, and those corresponding to critical points of $h$ {\bf non-flipping generators}. 
\end{defn}

\begin{defn}
Suppose we are given a set of Lagrangian labels $\bm{L}$, consisting only of the objects $L'$ and $L^n$ of $\mathscr{C}$.
We define a pearly tree with labels $\bm{L}$ to be a pearly tree as in Definition \ref{defn:pearlytree}, except that we only allow edges labeled $L'$ (not $L^n$).
\end{defn}

\begin{defn}
We define a perturbation datum $(K,J,V)$ for the family of pearly trees as in Sections \ref{subsec:pertdatafix}, \ref{subsec:pertdatafam}, with one difference. 
Namely, the part of the perturbation datum $V$ (associated to the edges, which all have label $L'$) now consists of two components: the `flipping component'
\[V^f \in C^{\infty}(S^e, \mathcal{V}_{S^n})\]
and the `non-flipping component'
\[V^{nf} \in C^{\infty}(S^e, \mathcal{V}_{S^n}).\]
We require that
\[V^f = V^{nf} = 0\]
on an internal edge $e$ of length $l_e \le 1$, and
\[ V^f = \nabla f \mbox{ and } V^{nf} = \nabla h\]
on an external edge or an edge $e$ of length $l_e \ge 2$.
\end{defn}

\begin{defn} (Compare Definition \ref{defn:perthol}) Given a set of Lagrangian labels $\bm{L}$ and associated generators $\bm{y}$, we define a {\bf flipping holomorphic pearly tree} with labels $\bm{y}$ to consist of the following data: 
\begin{itemize}
\item A designation of certain edges as {\bf flipping} and the remaining edges as {\bf non-flipping}, such that external flipping edges are labeled by flipping generators and external non-flipping edges are labeled by non-flipping generators. We call the marked points attached to flipping edges {\bf flipping marked points} and those attached to non-flipping edges {\bf non-flipping marked points}; 
\item A smooth map
\[u_e: S^e \To S^n\]
satisfying
\begin{eqnarray*}
Du_e-V^f &=& 0 \mbox{ on flipping edges, and}\\
Du_e - V^{nf} &=& 0 \mbox{ on non-flipping edges;}
\end{eqnarray*}
\item  A smooth map
\[u_p : S^p \To \CP{n}\] 
satisfying
\[ (Du_p - Y)^{0,1} = 0,\]
such that
\[u_p(C) \in \mathrm{im}(L_C) \mbox{ for each boundary component $C$ of $S^p$ with label $L_C$;}\]
\item A lift $\tilde{u}_C$ of the map $u_p|_C: C \To \mathrm{im}(L_C)$ to $S^n$,
\[\xymatrix{
& S^n \ar[d]^{L_C}\\
C \ar[ru]^{\tilde{u}_C} \ar[r]_-{u_p|_C} & \mathrm{im}(L_C),}
\]
for each boundary component $C$ with label $L_C$, 
\end{itemize}
satisfying the following conditions:
\begin{itemize}
\item $\tilde{u}_C$ is continuous except at flipping marked points, where it changes sheets of the covering;
\item We have
\[\tilde{u}^{\pm}(m(f)) = u_e(b(f)) \mbox{ for all $f \in F^{\pm}(S)$,}\]
where we denote by $\tilde{u}^{+}$, respectively $\tilde{u}^-$, the right, respectively left, limit of $\tilde{u}$ (this is necessary because $\tilde{u}$ is discontinuous exactly at the flipping marked points), and where $F^+(S)$, respectively $F^-(S)$, denotes the subset of flags whose orientation agrees, respectively disagrees, with the orientation of the tree; 
\item The external edges are asymptotic to the generators $\bm{y}$, in the same sense as in Definition \ref{defn:ascon}.
\end{itemize}
\end{defn}

Recall that $\mathcal{P}^n$ is obtained from $\CP{n}$ by removing the divisor $D$ which is the union of the divisors $D_j = \{z_j = 0\}$ for $j = 1, \ldots ,n+2$. 
We wish to count only flipping holomorphic pearly trees that do not `intersect' the divisors $D_j$. 
We now explain how to do this in a well-defined way.

\begin{defn}
\label{defn:strip}
Given a flipping holomorphic pearly tree $\bm{u}$ as defined above, one obtains a well-defined homology class $[\bm{u}] \in H_2(\CP{n},L^n)$ as follows:
\begin{itemize}
\item Start with the continuous map $\bm{u}: S \To \CP{n}$ associated with the flipping holomorphic pearly tree. 
\item Glue a thin strip along the boundary of the flipping pearly tree (see Figure \ref{fig:strip});
\item If the boundary component or edge has label $L^n$, then it already gets mapped to $L^n$, so we map the strip into $\CP{n}$ by making it constant along its width. 
\item If the boundary component or edge has label $L'$, then by construction, there is a continuous lift of the boundary of the strip to $S^n$. Namely, it is given by the lift $\tilde{u}_C$ along a boundary component $C$ of a pearl with label $L'$; by a flowline of $\nabla f$ and its antipode along the boundary of a strip coming from a flipping edge; and by a flowline of $\nabla h$ on both sides of the boundary of a strip coming from a non-flipping edge.
\item Thus, we can map the strip into $\CP{n}$ by letting it interpolate between the zero section and the graph of $\epsilon df$ in the Weinstein neighbourhood $D^*_{\eta} S^n$ used in the construction of $L^n_{\epsilon}$. 
Thus, boundary components of the strip with label $L'$ now lie on $L^n_{\epsilon}$.
\end{itemize}

We now define the intersection number $\bm{u} \cdot D_j$ to be the topological intersection number of this class $[\bm{u}] \in H_2(\CP{n},L^n)$ with $D_j \in H_{2n-2}(\CP{n})$.
We say that a flipping holomorphic pearly tree $\bm{u}$ is {\bf admissible} if $\bm{u} \cdot D_j = 0$ for all $j$.
\end{defn}

\begin{prop}
\label{prop:calcint}
Let $\bm{u}$ be a flipping holomorphic pearly tree.
Then the intersection numbers $\bm{u} \cdot D_j$ are non-negative.
Furthermore, in nice situations they can be calculated:
Suppose that the boundary lifts $\tilde{u}_C$ of each boundary component $C$ with label $L'$ are transverse to the real hypersurface $D_j^{\R} \subset S^n$, and no flipping marked points lie on $D_j^{\R}$.
Then one can calculate $\bm{u} \cdot D_j$ by counting the usual intersection number for internal intersections of each pearl $u_v$ with $D_j$ (this is positive by positivity of intersections), $+1$ for each time a flipping edge of $\bm{u}$ crosses $D_j^{\R}$, and $+1$ for each time a boundary lift $\tilde{u}_C$ crosses $D_j^{\R}$ in the negative direction.
\end{prop}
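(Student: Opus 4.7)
The plan is to compute $\bm{u}\cdot D_j$ by decomposing $[\bm{u}]\in H_2(\CP{n},L^n)$ into its constituent pieces from Definition~\ref{defn:strip}: one pearl $u_v$ for each vertex of the underlying tree, one strip for each boundary component of each pearl, and one strip for each edge. The intersection pairing is well-defined since $L^n\cap D_j=\emptyset$ (as $L^n\subset\mathcal{P}^n=\CP{n}\setminus D$), and each piece contributes a local count that we can compute. Strips glued along $L^n$-labelled boundaries are constant across their width and map into $L^n$, so they contribute zero. For each pearl $u_v:S_v\to\CP{n}$, the map is $J$-holomorphic, so by positivity of intersections of holomorphic maps with complex hypersurfaces, its interior intersections with $D_j$ are non-negative and, under the transversality hypothesis, coincide with the usual topological intersection number.

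The remaining contributions come from strips along $L'$-labelled boundaries and along flipping edges. For these I would work in local holomorphic coordinates $(z_1,\ldots,z_n)$ near a transverse crossing of $\tilde{u}_C$ (respectively, a flipping edge) with $D_j^{\R}$, chosen so that $D_j=\{z_1=0\}$ and $\RP{n}$ is the real locus. By Lemmas~\ref{lem:defnf} and~\ref{lem:transdiv}, the shift by $\epsilon J(\nabla f)$ used to construct $L^n_{\epsilon}$ carries $\RP{n}$ into the half-space $\{\mathrm{Im}(z_1)>0\}$ near $D_j^{\R}$, so to leading order in $\epsilon$ the strip is parametrised by
\[
z_1(t,s)\;=\;x_1(t)+is\,\epsilon\,f_1(x(t)),\qquad s\in[0,1],
\]
with $x_1(t_0)=0$ at the crossing and $f_1>0$. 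Replacing $D_j$ by the homologous perturbation $\{z_1=i\delta\}$ for small $\delta>0$ (which still avoids $L^n$) preserves the pairing, and the equation $z_1(t,s)=i\delta$ has a unique solution in the interior of the strip, at $t=t_0$ and $s=\delta/(\epsilon f_1(x(t_0)))$.

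A direct Jacobian computation then determines the sign of each such interior intersection; with the strip oriented compatibly with the overall orientation on $[\bm{u}]$, this yields $+1$ for each boundary-lift crossing in the negative direction, and $+1$ for each flipping-edge crossing (where the antipodal symmetry between the two sheets of $L^n_{\epsilon}$ joined by the strip eliminates any direction-dependence). The remaining check is that positive-direction crossings of $\tilde{u}_C$ produce no net contribution: the apparent strip-intersection at such a crossing is cancelled by an additional intersection of the adjacent pearl with the perturbed divisor, which arises precisely because the pearl's complex orientation forces its interior to extend into the opposite half-plane at positive crossings. Summing over all pearls and crossings yields the stated formula, and non-negativity is then immediate since every term is $\geq 0$. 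The hardest part of the proof is this sign bookkeeping, which requires tracing through the relationship between the complex orientation of each pearl, the induced orientation of its boundary, the strip parametrisation, and the chosen perturbation of $D_j$ -- routine but delicate.
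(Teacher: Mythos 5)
Your approach differs from the paper's in a substantive way: the paper perturbs $D_j$ within a \emph{real} linear family $D_j^{t} := \{z_j + t\sum_k \alpha_k z_k = 0\}$ with $\alpha_k \in \R$ and $t$ small, chosen so that $(D_j^{t})^{\R}$ stays transverse to $\nabla f$ and the perturbed divisor still avoids $L^n$. This keeps the divisor cutting through the real locus, so the pearls acquire no new intersections near the boundary, and the entire contribution at each boundary crossing is read off from the projection of the strip (and neighbouring disk) to the $z_j$-plane: flipping edges and negative crossings wind once around the origin, while non-flipping edges and positive crossings produce a ``fold'' contributing $0$. You instead push $D_j$ off the real locus into the imaginary direction $\{z_1 = i\delta\}$. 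That is a genuine alternative, but it forces you to account for the fact that near a \emph{positive} boundary crossing the pearl's interior maps into the upper half-plane (reflection principle plus the induced boundary orientation), exactly where $i\delta$ lives, so the pearl acquires a new intersection with the perturbed divisor that has no analogue in the paper's calculation.

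This is where there is a real gap. You write that ``the apparent strip-intersection at such a crossing is cancelled by an additional intersection of the adjacent pearl,'' but by positivity of intersections the pearl's contribution is $+1$, so cancellation requires the strip's local contribution to be $-1$ at a positive crossing — not $+1$ as your wording implies. That sign is true (a direct Jacobian computation in the strip coordinates $z_1(t,\sigma) = v(t,0) - i\sigma\,\epsilon f_1(v(t,0))$, with the strip oriented compatibly with the glued disk, gives a negative Jacobian when $\partial_t v(t,0) > 0$ and a positive one when $\partial_t v(t,0) < 0$), but it is precisely the delicate step you defer as ``routine.'' Relatedly, your closing remark that ``non-negativity is then immediate since every term is $\geq 0$'' is not accurate in your framework: the strip's raw contribution at positive crossings is $-1$, and only the \emph{post}-cancellation totals are nonnegative. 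The paper's real-direction perturbation sidesteps all of this, which is why it can assert non-negativity term by term. Two further omissions: you never treat non-flipping edge crossings of $D_j^{\R}$, which the paper shows contribute $0$ because the strip there folds onto a single sheet of $L^n$; and $\{z_1 = i\delta\}$ is only a local expression, so you would need a genuine global perturbation of $D_j$ (e.g.\ $\{z_j + it\sum_k \alpha_k z_k = 0\}$) avoiding $L^n_\epsilon$, at which point the sign of the effective local $\delta$ varies from crossing to crossing and both signs of $\delta$ must be analyzed.
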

\begin{proof}
We observe that the first statement follows from the second: in the transverse situation the intersection number is non-negative because the only contributions are positive. 
We can put ourselves in the transverse situation by making a small perturbation of the divisor $D_j$. Namely, define a 1-parameter family of divisors
\[ D_j^{t} := \left\{z_j + t \sum_k \alpha_k z_k = 0\right\}\]
for $t \in [0,\delta]$, where $\alpha_j \in \R$ and $\delta > 0$ is real and sufficiently small that the real part $(D_j^t)^{\R}$ remains transverse to the gradient vector field $\nabla f$, and hence $D_j^t$ avoids the Lagrangian $L^n$ (by Lemma \ref{lem:transdiv}). 
We also make $\delta$ small enough that $D_j^t$ avoids all other Lagrangian labels of the flipping holomorphic pearly tree.
Therefore the intersection number $\bm{u}\cdot D_j^t$ remains constant, so we can compute $\bm{u} \cdot D_j$ by computing $\bm{u} \cdot D_j^{\delta}$.
That $D_j^{\delta}$ can be made transverse to the boundary lifts $\tilde{u}_C$ is an easy application of Sard's theorem.
Furthermore, one can easily make $D_j^{\delta}$ avoid all marked points and critical points of pearls (since these are isolated).

Now we prove the second statement.
Internal intersections of $\bm{u}$ with $D_j$ contribute the usual intersection number (which is positive by positivity of intersections, recalling that the almost-complex structure is standard near the divisors $D_j$).
The other intersections happen near boundary components of $\bm{u}$ with label $L'$:
\begin{itemize}
\item If a flipping edge crosses $D_j^{\R}$, one can see that the image of the surrounding strip under projection to the $z_j$ plane looks like Figure \ref{fig:morseint}, hence contributes $+1$ to the intersection number;
\item If a non-flipping edge crosses $D_j^{\R}$, the image of the strip under projection to the $z_j$ plane looks like Figure \ref{fig:morseint} except that the strip gets folded in two, so that both edges get sent to the same sheet of $L^n$, and the contribution to the topological intersection number is $0$;
\item If a boundary lift $\tilde{u}_C$ crosses $D_j^{\R}$ positively, the projection of the strip and nearby disk to the $z_j$ plane looks like Figure \ref{fig:diskint1} (the projection is a holomorphic map, which by assumption has no singularities near the divisor $D_j$, and its boundary crosses $D_j^{\R}$ positively, hence maps to the upper half plane in a neighbourhood of this point).
There is a `fold' along the real axis, and one can see that the contribution to the topological intersection number with $D_j$ is $0$;
\item If a boundary lift $\tilde{u}_C$ crosses $D_j^{\R}$ negatively, the projection of the strip and nearby disk to the $z_j$ plane looks like Figure \ref{fig:diskint2} (as before, because the disk is holomorphic, non-singular, and its boundary crosses $D_j^{\R}$ negatively, it must get sent to the lower half plane in a neighbourhood of this point).
Thus the contribution to the topological intersection number with $D_j$ is $+1$.
\end{itemize}
This completes the proof.
\end{proof}

\begin{cor}
\label{cor:adm}
In an admissible flipping holomorphic pearly tree, the flipping edges can not cross the hypersurfaces $D_j^{\R}$ and the boundary lifts can only cross $D_j^{\R}$ in the positive direction.
\end{cor}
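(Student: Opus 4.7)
The plan is to obtain Corollary \ref{cor:adm} as a direct consequence of Proposition \ref{prop:calcint} by observing that every contribution to the intersection number $\bm{u} \cdot D_j$ listed in that proposition is non-negative, so admissibility forces each contribution to vanish.

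First I would reduce to the transverse setting of Proposition \ref{prop:calcint}. Given an admissible flipping holomorphic pearly tree $\bm{u}$, the boundary lifts $\tilde{u}_C$ on components with label $L'$ need not be transverse to $D_j^{\R}$, and flipping marked points may a priori lie on $D_j^{\R}$. Following the argument already used in the proof of Proposition \ref{prop:calcint}, I would replace $D_j$ by a small real perturbation $D_j^{\delta}$ of the form
\[ D_j^{\delta} = \left\{z_j + \delta \sum_k \alpha_k z_k = 0\right\}\]
chosen so that $(D_j^{\delta})^{\R}$ is transverse to all boundary lifts $\tilde{u}_C$ and avoids the (finitely many) flipping marked points, while $\delta$ remains small enough that $\nabla f$ is still transverse to $(D_j^{\delta})^{\R}$ and $D_j^{\delta}$ avoids every Lagrangian label appearing in $\bm{u}$. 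By Lemma \ref{lem:transdiv}, the latter conditions guarantee that the deformation $\{D_j^t\}_{t \in [0,\delta]}$ is disjoint from the Lagrangian labels of $\bm{u}$ throughout the homotopy, so the topological intersection number is invariant:
\[ \bm{u} \cdot D_j = \bm{u} \cdot D_j^{\delta} = 0.\]

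Next I would apply the counting formula of Proposition \ref{prop:calcint} to $\bm{u} \cdot D_j^{\delta}$. The formula expresses this number as a sum of three kinds of contributions: internal intersections of pearls with $D_j^{\delta}$, one contribution per flipping-edge crossing of $(D_j^{\delta})^{\R}$, and one contribution per negative crossing of $(D_j^{\delta})^{\R}$ by a boundary lift. Positivity of intersections for $J$-holomorphic maps (the standard almost-complex structure agrees with $J$ near the divisors) makes the first kind non-negative, while the remaining two contributions are each counted with sign $+1$. Since the total vanishes, each contribution must itself be zero: no flipping edge crosses $(D_j^{\delta})^{\R}$, and every crossing of $(D_j^{\delta})^{\R}$ by a boundary lift is positive.

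The only remaining point is to translate these statements from $(D_j^{\delta})^{\R}$ back to $D_j^{\R}$. Because $(D_j^{\delta})^{\R}$ is a small $C^\infty$-perturbation of $D_j^{\R}$ and $\nabla f$ (hence the flipping edge maps) is transverse to $D_j^{\R}$ throughout the perturbation, the set of flipping-edge crossings with $D_j^{\R}$ is in bijection with that of $(D_j^{\delta})^{\R}$ and empty iff the latter is; similarly, the signed crossings of the boundary lifts with $D_j^{\R}$ agree in sign with those of $(D_j^{\delta})^{\R}$ once $\delta$ is small. I expect no real obstacle here since Proposition \ref{prop:calcint} has already done the genuine geometric work; the only thing to be careful about is the perturbation argument in the last step, making sure the correspondence of crossings is well-defined.
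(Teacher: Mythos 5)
Your proof is correct and follows the approach the paper clearly intends: the corollary is stated without proof, as an immediate consequence of Proposition \ref{prop:calcint}, since all the contributions to $\bm{u}\cdot D_j$ listed there are non-negative and admissibility forces each to vanish. The extra care you take with the perturbation to $D_j^{\delta}$ and the transfer back to $D_j^{\R}$ is sensible but not strictly necessary, since the corollary is only ever applied (e.g.\ in Remark \ref{rmk:labelpearl}) in the transverse situations already covered directly by the counting formula.
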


\begin{figure}
\centering
\subfigure[Adding a strip to a flipping pearly tree, to define its homology class in $H_2(\CP{n},L^n)$.]{
\includegraphics[width=0.45\textwidth]{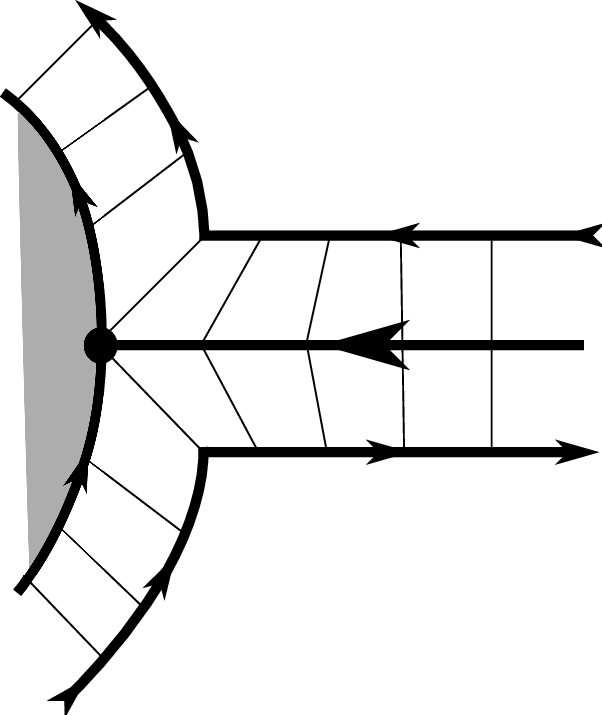}
\label{fig:strip}}
\hfill
\subfigure[Projection of the strip surrounding a flipping edge crossing the hypersurface $D_j^{\R}$ transversely, to the $z_j$ plane. The topological intersection number with $D_j$ (which corresponds to the point $0$ in this projection, drawn as a solid circle) is $+1$.]{
\includegraphics[width=0.45\textwidth]{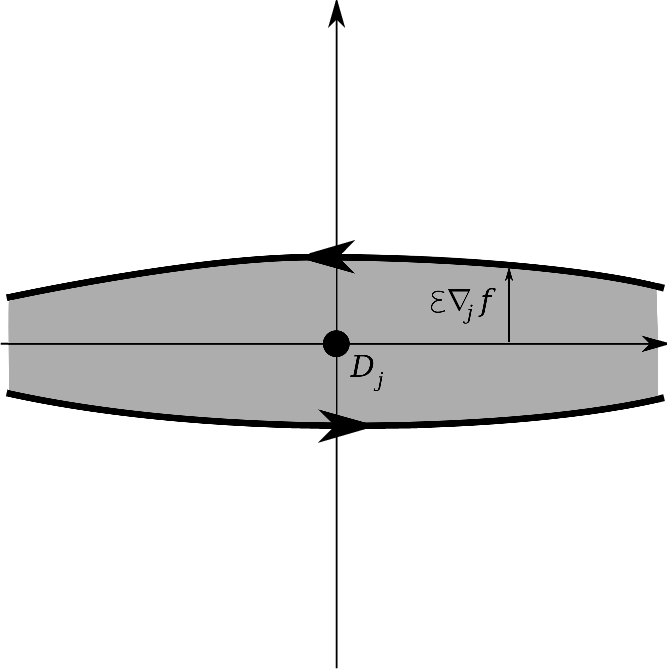}
\label{fig:morseint}}
\hfill
\subfigure[Projection of part of the disk and strip near a positive crossing of a boundary lift $\tilde{u}_C$ with $D_j^{\R}$, to the $z_j$ plane. There is a `fold' along the real axis, so the topological intersection number with $D_j$ is $0$.]{
\includegraphics[width=0.45\textwidth]{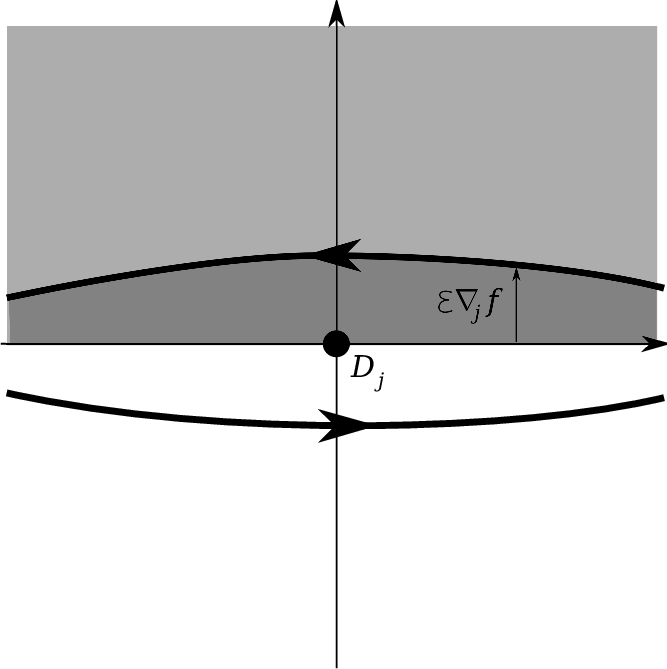}
\label{fig:diskint1}}
\hfill
\subfigure[Projection of part of the disk and strip near a negative crossing of a boundary lift $\tilde{u}_C$ with $D_j^{\R}$, to the $z_j$ plane. The topological intersection with $D_j$ is $+1$.]{
\includegraphics[width=0.45\textwidth]{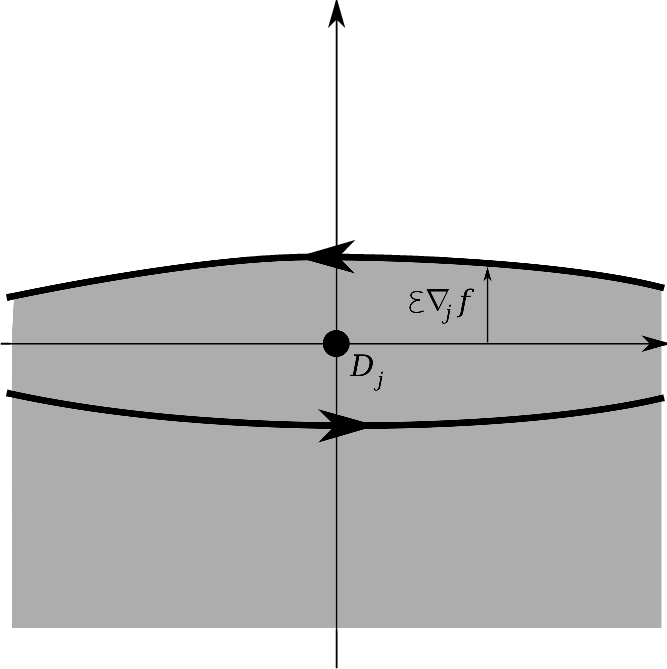}
\label{fig:diskint2}}
\caption{Defining and calculating $\bm{u} \cdot D_j$.
\label{fig:calcint}}
\end{figure}

\begin{defn}
We define the moduli space $\mathcal{M}_{\mathcal{S}^{fpt}}(\bm{y})$ of  admissible flipping holomorphic pearly trees with asymptotic conditions $\bm{y}$, by analogy with Definition \ref{defn:ascon}.
\end{defn}

\begin{rmk}
\label{rmk:smoothapprox}
We remark that it follows from the proof of Proposition \ref{prop:calcint} that, if $\bm{u}$ is an admissible flipping holomorphic pearly tree, then its homology class $[\bm{u}]$ can be represented by a smooth disk in $\mathcal{P}^n$ with boundary on $L^n$. 
Namely, we perturb the divisors $D_j$ to put ourselves in the transverse situation as described. 
The disk defining $[\bm{u}]$ can only intersect the divisors $D_j$ when a boundary lift $\tilde{u}_C$ crosses $D_j^{\R}$ in the positive direction. 
It is obvious from Figure \ref{fig:diskint1} that the disk can be perturbed to avoid the divisor in this case. 
\end{rmk}

It follows that admissible flipping pearly trees inherit any properties of holomorphic disks in $\mathcal{P}^n$ with boundary on $L^n$ that depend only on the topology. 
For example, the energy of an admissible flipping holomorphic pearly tree is given by the differences of symplectic action functionals of input and output generators, and in particular is constant in the moduli space  $\mathcal{M}_{\mathcal{S}^{fpt}}(\bm{y})$.
Furthermore, we can prove the following:

\begin{prop}
\label{prop:dimfpt}
Suppose that $\bm{L}$ is a set of Lagrangian labels and $\bm{y}$ an associated set of generators.
Then, for generic choice of perturbation data, $\mathcal{M}_{\mathcal{S}^{fpt}}(\bm{y})$ is a manifold of dimension
\[\mathrm{dim}(\mathcal{M}_{\mathcal{S}^{fpt}}(\bm{y})) = i(\bm{y}) +  k - 2.\]
\end{prop}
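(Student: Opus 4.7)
The plan is to adapt the framework of Section \ref{sec:fukdef} to the flipping setting, using the treatment of immersed boundary conditions from Section \ref{subsec:afuk}. Fix a combinatorial type of flipping pearly tree, by which I mean an underlying tree $T_S$ together with a designation of which marked points and edges are flipping. First I would set up a Banach manifold $\mathcal{B}_{\mathcal{S}^{fpt}}(\bm{y})$ whose points consist of a modulus $r$, a map $u_p \in W^{1,p}_{loc}(S^p,\CP{n})$ with $u_p(C) \subset \mathrm{im}(L_C)$, boundary lifts $\tilde{u}_C : C \To S^n$ that are continuous except at flipping marked points (where they switch sheets of the double cover), and edge maps $u_e \in W^{1,p}_{loc}(S^e,S^n)$, all meeting the asymptotic conditions $\bm{y}$. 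Exactly as in Section \ref{subsec:afuk}, the pull-back $\tilde{u}_C^*TS^n$ embeds as a totally real sub-bundle of $u_p^*T\CP{n}|_C$ away from the isolated flipping points, and this provides the boundary condition defining the tangent space to $\mathcal{B}_{\mathcal{S}^{fpt}}$. Carrying the lift as part of the data, rather than imposing the singular condition $u_p(C) \subset \mathrm{im}(L')$ directly on the map, is exactly what makes the Banach manifold structure well-defined.

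Next I would package the evaluation map $\bm{ev}$ (enforcing continuity where edges meet pearls) and the perturbed Cauchy-Riemann/Morse section $d_{\mathcal{S}^{fpt}}$ into a Fredholm problem, following Definitions \ref{defn:banman}--\ref{defn:lin}. The linearization is a direct sum of a Cauchy-Riemann operator with totally real boundary conditions on each pearl, a linearized Morse-flow operator on each edge, and the variation with respect to moduli; hence it is Fredholm. The index computation mirrors Proposition \ref{prop:fredind}: the pearl contribution is the usual Maslov-graded disk index, each edge contributes the Morse index difference of its endpoints, and moduli variations contribute $\dim \mathcal{R}^{pt}(\bm{L})$. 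The Maslov shift at a flipping marked point is already absorbed into the degree $i(p_K)$ by the proof of Proposition \ref{prop:grading}, so no correction is required; the totals add to $k-2+i(\bm{y})$.

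For transversality I would run the universal moduli space argument of Section \ref{subsec:transv} separately for each combinatorial type. The deformation space $\mathcal{T}$ of $(K,J)$ supported in the thick region surjects onto the cokernel of the Cauchy-Riemann part by the standard argument of \cite[Section 9k]{seidel08}, which is unaffected by the fact that $L'$ is immersed (one works with $S^n$ rather than with $\mathrm{im}(L')$). Independent Hamiltonian deformations supported on curves separating the marked points $m(f)$ surject onto the normal bundle of $\Delta^S$, as before. Morse-Bott transversality on the edges is generic in $V^f,V^{nf}$. Sard-Smale then yields regularity for generic perturbation data, and the gluing argument of Proposition \ref{prop:charts} furnishes the chart structure at configurations with Morse edges of length zero. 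Finally, the admissibility condition is dealt with for free: the intersection numbers $\bm{u} \cdot D_j$ of Definition \ref{defn:strip} are homotopy invariants of $[\bm{u}] \in H_2(\CP{n},L^n)$, hence locally constant on $\mathcal{M}_{\mathcal{S}^{fpt}}(\bm{y})$, so the admissible locus is a union of connected components and inherits the manifold structure of dimension $k-2+i(\bm{y})$.

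The main technical obstacle is the interplay between the immersed boundary $L'$ (with sheet-switches at flipping points) and the Banach setup. Once the lifts are promoted to honest data, however, the boundary condition becomes an embedded totally real bundle $\tilde{u}_C^*TS^n$ over each boundary arc, and every remaining step is a direct transcription of arguments already in Sections \ref{subsec:pertdatafix}--\ref{subsec:transv} and \ref{subsec:afuk}.
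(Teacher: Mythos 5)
Your proposal reproduces the paper's strategy accurately up to one specific point that the paper singles out as the new difficulty, and that you pass over. You fix a combinatorial type (tree plus flipping/non-flipping designation), build a Banach setup with the lifts carried as data, run Fredholm and Sard--Smale, and then say "the gluing argument of Proposition \ref{prop:charts} furnishes the chart structure." But in the flipping setting that gluing argument does not transcribe directly: the charts from Proposition \ref{prop:charts} extend $\mathcal{M}_{\mathcal{S}^{fpt}(T_S,B)}$ across $\rho_e = 0$ into the region $\rho_e < 0$ where the Morse edge $e \in B$ has been absorbed into a thin neck of a single pearl, and at that point the flipping/non-flipping label of $e$ is no longer part of the data — it must be read off from the map. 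The paper's observation is exactly this: the two boundary arcs of the thin neck carry lifts $\tilde{u}$ that are either antipodal (the degenerating edge is a flipping edge) or coincide (it is non-flipping), and this dichotomy is what makes the gluing map extend the boundary-lift data consistently and keeps the charts from a flipping-edge tree and a non-flipping-edge tree disjoint. Without this the atlas is not well-defined: you would have two a priori distinct charts approaching the same stratum of single pearls, with no rule for which one covers which part. Your treatment of admissibility via local constancy of the intersection numbers $\bm{u}\cdot D_j$ is correct and is the same point the paper makes by saying the gluing maps preserve the homology class of Definition \ref{defn:strip}, but it also relies on the gluing maps being well-defined in the first place, which requires the omitted observation.
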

\begin{proof}
The proof follows that of Proposition \ref{prop:msman} -- we must construct charts from the moduli spaces $\mathcal{M}_{\mathcal{S}^{fpt}(T_S,B)}(\bm{y})$ for each $(T_S,B)$ as in Definition \ref{defn:tsb}, and glue the pieces $\mathcal{M}_{\mathcal{S}^{fpt}(T_S,B)}(\bm{y}) \times (-\epsilon, \epsilon)^B$ together to obtain a manifold, using an analogue of Proposition \ref{prop:charts}.

The dimension is given by the index of the Fredholm operator used to cut out the moduli space. 
One might worry that the index theory of Cauchy-Riemann operators depends on a choice of holomorphic volume form $\eta$ on $\mathcal{P}^n$, and our holomorphic pearls can intersect the boundary divisors $D_j$, where $\eta$ is not defined. 
However, this is dealt with by Remark \ref{rmk:smoothapprox}, which shows how to construct a smooth disk in $\mathcal{P}^n$ with boundary on $L^n$, near any given admissible holomorphic flipping pearly tree. 
One can show that the Fredholm index of the operator cutting out the moduli space of flipping pearly trees is equal to the index of the pseudo-holomorphic curve equation on the nearby disk, which depends only on the homology class of the disk in $\mathcal{P}^n$ relative to its Lagrangian boundary conditions. 
This is sufficient to prove the dimension formula.

Now observe that, when a new Morse edge with label $L'$ is created as in Figure \ref{fig:morseedge}, there are two possibilities: either the lifts $\tilde{u}$ of the two boundary components of the strip on the left are antipodes, in which case a flipping edge is created, or they coincide, in which case a non-flipping edge is created.
With this convention, the gluing maps of Proposition \ref{prop:charts} define boundary lifts $\tilde{u}_C$ as well as the map $u_p$. 
They also preserve the homology class of Definition \ref{defn:strip}, and hence admissibility.
\end{proof}

\begin{defn}
We define a {\bf stable flipping holomorphic pearly tree} by analogy with the definition of stable pearly trees (Definition \ref{defn:stablepearly}). 
The only difference is for edges of trees $T$ with both sides labeled $L'$: these can be broken Morse flowlines of $f$ (for flipping edges) or $h$ (for non-flipping edges).
We define a {\bf stable admissible flipping holomorphic pearly tree} to be a stable flipping holomorphic pearly tree, each component of which is admissible.
\end{defn}

\begin{rmk}
We observe that the admissibility condition rules out sphere bubbling in families of admissible flipping holomorphic pearly trees: any sphere bubble must have intersection number $0$ with the divisors $D_j$ by admissibility, and hence have trivial homology class. 
But then its symplectic area is $0$, so it must be constant.
\end{rmk}

\begin{prop}
The moduli space of stable admissible flipping holomorphic pearly trees has the structure of a compact manifold with corners.
\end{prop}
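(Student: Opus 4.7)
The plan is to mimic the proof of Proposition \ref{prop:gromcomp} closely, inserting at each step the small modifications needed to handle the flipping structure and the admissibility condition. The key extra inputs are that admissibility is a closed condition on stable limits (and in particular rules out sphere bubbling, as in the remark preceding the statement), and that the flipping/non-flipping decoration on edges behaves well under both gluing and degeneration.

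First I would endow each stratum $\mathcal{M}^T_{\bar{\mathcal{S}}^{fpt}}(\bm{y})$ with a smooth manifold structure. A stratum is indexed by a semistable tree $T$ with Lagrangian labels and a flipping/non-flipping decoration on each $L'$-$L'$ edge, together with a choice of asymptotic generator on each internal edge. It is a product of moduli spaces $\mathcal{M}_{\mathcal{S}^{fpt}(\bm{L}_v)}(\bm{y}_v)$ at the positive-valence vertices (which are manifolds of the expected dimension by Proposition \ref{prop:dimfpt}) and of moduli spaces of (unbroken) gradient flowlines of $f$ or $h$ at the two-valent vertices on Morse-edge sub-trees. For generic perturbation data these are all transversely cut out, and so the stratum is a smooth manifold of the correct dimension.

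Next I would define the gluing maps
\[ \mathcal{M}^T_{\bar{\mathcal{S}}^{fpt}}(\bm{y}) \times (-\epsilon,0]^{E(T)} \To \bar{\mathcal{M}}_{\bar{\mathcal{S}}^{fpt}}(\bm{y}),\]
and declare the topology to be the one making all of them continuous. For an internal strip edge this is the standard disk-gluing argument, essentially as in \cite[Section 10]{seidel08}, transplanted to the flipping setting as in the proof of Proposition \ref{prop:dimfpt}; the boundary lifts $\tilde{u}_C$ glue together canonically (antipodally across a flipping marked point, coincidentally across a non-flipping one), which simultaneously matches up the two possible decorations of a newly created $L'$-$L'$ edge. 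For an internal Morse edge one uses the same Morse-theoretic gluing as in \cite[Section 2]{schwarz}, separately for flipping and non-flipping edges, using $\nabla f$ or $\nabla h$ respectively. The charts obtained this way cover $\bar{\mathcal{M}}_{\bar{\mathcal{S}}^{fpt}}(\bm{y})$ and give it the structure of a topological manifold with corners whose codimension-$d$ strata are exactly the $\mathcal{M}^T_{\bar{\mathcal{S}}^{fpt}}(\bm{y})$ for $T$ with $d$ internal edges.

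The main remaining step, and the one I expect to be the principal obstacle, is compactness. Following the proof of Proposition \ref{prop:gromcomp}, I would fix a combinatorial type $T_S$ for the pearly tree and show that the subset of $\bar{\mathcal{M}}_{\bar{\mathcal{S}}^{fpt}}(\bm{y})$ obtained by contracting all edges of length zero to $T_S$ is compact. For the pearls this uses standard Gromov compactness with Lagrangian boundary (cf.\ \cite{frauenfelder08}), but I must rule out both disk and sphere bubbling off into $\CP{n}\setminus\mathcal{P}^n$. This is where admissibility does the work: any bubble must have zero total intersection number with each $D_j$ (since a limit of admissible sequences has the same class in $H_2(\CP{n},L^n)$, and the contributions $\bm{u}\cdot D_j$ are non-negative by Proposition \ref{prop:calcint}), hence its homology class is trivial; by exactness of $\omega$ on $\mathcal{P}^n$ and since the $D_j$-intersection is zero, its symplectic area vanishes and the bubble is constant. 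For the edges one uses the standard compactness of spaces of broken gradient flowlines of the Morse functions $f$ and $h$, noting that the flipping/non-flipping decoration is preserved in limits, and that flipping breakings limit to flipping generators and non-flipping breakings to non-flipping generators. Incidence between pearl marked points and edge endpoints is a closed condition, so the full moduli space is a closed subset of a compact set and is therefore compact. Taking the union over the finitely many combinatorial types $T_S$ that can arise yields a finite cover of $\bar{\mathcal{M}}_{\bar{\mathcal{S}}^{fpt}}(\bm{y})$ by compact subsets, completing the proof.
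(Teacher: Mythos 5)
Your strategy follows the paper's proof of Proposition~\ref{prop:gromcomp} faithfully through the gluing and stratification steps, and your observation that the limit of an admissible sequence is again admissible (because each component's contribution $\bm{u}\cdot D_j$ is non-negative by Proposition~\ref{prop:calcint}, so a zero total forces each piece to be zero) is exactly the argument the paper uses. The sphere-bubble exclusion via admissibility is also correct and matches the preceding remark.

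However, there is a genuine gap at the compactness step: you invoke ``standard Gromov compactness with Lagrangian boundary (cf.\ Frauenfelder)'' for the pearls, but at this stage of the paper the boundary labels can be $L^n$, which is an \emph{immersed} Lagrangian, and there is no Gromov compactness theorem in the literature for immersed Lagrangian boundary conditions. This is precisely the difficulty flagged at the start of the paper's proof. Moreover, the trick used earlier in Section~\ref{subsec:afuk} --- passing to the finite cover $\widetilde{\mathcal{P}}^n$ where $L^n$ lifts to embedded Lagrangians --- is unavailable here, because $L'$ has image $\RP{n}\subset\CP{n}$ which does not lie in $\mathcal{P}^n$, and lifting to a cover of $\CP{n}$ branched along the $D_j$ would produce only a piecewise-smooth Lagrangian. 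The paper instead passes to the quadric $Q^n\subset\CP{n+2}$, a double cover of $\CP{n}$ branched over $\widetilde{Q}^n=\{\sum z_j^2=0\}$, where both $L^n$ and $L'$ lift to \emph{embedded} Lagrangians inside a Weinstein neighbourhood of the real locus $S^n$; one then bounds the intersection number with $\widetilde{Q}^n$ (it depends only on the asymptotics $\bm{y}$), which bounds the genus of the lifted curves, and applies Gromov compactness for curves of bounded genus with boundary on embedded Lagrangians. Without some such device your appeal to standard Lagrangian Gromov compactness is unjustified, and this is the heart of what makes the flipping pearly tree compactness nontrivial.
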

\begin{proof}
As in Section \ref{subsec:afuk}, we run into the problem that we can not appeal to a Gromov compactness theorem for immersed Lagrangians. 
Furthermore, we can not bypass this problem by passing to the cover $\widetilde{\mathcal{P}}^n$ of $\mathcal{P}^n$ defined in Corollary \ref{cor:cover}, as we did in Section \ref{subsec:afuk}, because the image of the Lagrangian immersion $L'$ does not lie in $\mathcal{P}^n$. 
Even if we considered the corresponding branched cover of $\CP{n}$ (branched around the divisors $D_j$), the Lagrangian immersion $L'$ would only lift to a piecewise smooth embedded Lagrangian, with `edges' along the branching divisors $D_j$. 
Again, there is no Gromov compactness theorem that deals with piecewise smooth Lagrangians.

Instead, consider the quadric
\[ Q^n := \left\{ \sum_{j=0}^{n+2} z_j^2 = 0, \sum_{j=1}^{n+2} z_j = 0\right\} \subset \CP{n+2},\]
and the branched double cover 
\begin{eqnarray*}
\rho: Q^n &\To & \CP{n} \\
\rho([z_0:\ldots:z_{n+2}]) &=& [z_1:\ldots:z_{n+2}].
\end{eqnarray*}
The cover is branched along the divisor
\[ \widetilde{Q}^n := \left\{ \sum_j z_j^2 = 0\right\} \subset \CP{n}.\] 

The real locus of $Q^n$ in the affine chart $z_0 = i$ is the unit sphere $S^n$, and $\rho|_{S^n}$ is the double cover of the real locus $\RP{n}$ of $\CP{n}$. 
It is well-known that there is a symplectomorphism
\[ T^* S^n \To Q^n \setminus \{z_0 = 0\},\]
sending the zero section to the real locus. 
This sends the radius-$\eta$ disk bundle $D^*_{\eta}S^n$ to a neighbourhood of $\RP{n}$, as in the construction of $L^n$ (Section \ref{subsec:ln}).
Thus, the lifts of $L^n$ and $L'$ to $T^* S^n \subset Q^n$ are embedded.
$L^n$ lifts as the graphs of the exact one-forms $\pm \epsilon df$, and $L'$ lifts to the zero section via the identity and via the antipodal map.

For any flipping holomorphic pearly tree $ \bm{u} \in \mathcal{M}_{\mathcal{S}^{fpt}}(\bm{y})$, the topological intersection number $[\bm{u}] \cdot \widetilde{Q}^n$ depends only on the generators $\bm{y}$ (compare Proposition \ref{prop:pearlenergy}). 
We can arrange that positivity of intersection with $\widetilde{Q}^n$ holds in our moduli space, for appropriate choice of perturbation datum, and then each flipping holomorphic pearly tree in the moduli space intersects $\widetilde{Q}^n$ some finite number of times, which is bounded above by the topological intersection number. 
Then the lifts of flipping holomorphic pearly trees $ \bm{u} \in \mathcal{M}_{\mathcal{S}^{fpt}}(\bm{y})$ to the branched cover $Q^n$ are branched over some finite number of points, hence have bounded genus. 
Gromov compactness for curves with bounded genus and boundary (see, for example, \cite{ye94,pansu}) then implies that the lifted family has a convergent subsequence, which corresponds to a convergent subsequence downstairs. 

This shows that a sequence of admissible flipping holomorphic pearly trees has a subsequence converging to a stable flipping holomorphic pearly tree whose intersection number with each divisor $D_j$ is $0$. 
The intersection number of the stable flipping holomorphic pearly tree with $D_j$ is the sum of intersection numbers of each component flipping holomorphic pearly tree with $D_j$. 
Since these are all non-negative by Proposition \ref{prop:calcint}, they must all be $0$. 
Thus the limit stable flipping holomorphic pearly tree is also admissible, and we have proven compactness.
\end{proof}

We define $A_{\infty}$ structure maps $\mu^k$ as in Section \ref{subsec:prods}, by counting rigid flipping holomorphic pearly trees. 
The proof that they satisfy the $A_{\infty}$ associativity equations essentially follows that of Proposition \ref{prop:ainfstruc}. 
The proof that the $A_{\infty}$ product is $\Q$-graded relies on Proposition \ref{prop:dimfpt}.

\begin{prop}
\label{prop:ll'}
For sufficiently small $\epsilon >0$, the objects $L'$ and $L^n_{\epsilon}$ are quasi-isomorphic.
\end{prop}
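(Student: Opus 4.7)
The plan is to adapt the strategy of Section \ref{subsec:compat}: I will construct cocycles $e_+ \in CF^*(L', L^n_\epsilon)$ and $e_- \in CF^*(L^n_\epsilon, L')$ whose compositions $\mu^2(e_-, e_+)$ and $\mu^2(e_+, e_-)$ represent the units in $HF^*(L', L')$ and $HF^*(L^n_\epsilon, L^n_\epsilon)$ respectively. Once this is done, the definition of quasi-isomorphism in an $A_\infty$ category is immediately verified, by the same reasoning as in the proof of the final corollary of Section \ref{subsec:compat}.

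To construct $e_\pm$, I would set up Floer data for the pairs $(L', L^n_\epsilon)$ and $(L^n_\epsilon, L')$ so that, for sufficiently small $\epsilon$, a small time-dependent Hamiltonian perturbation renders the Lagrangians transverse. Geometrically, each intersection point then corresponds to a critical point of a Morse function on $S^n$ together with a choice of sheet of the double cover $L'$, yielding a Piunikhin-Salamon-Schwarz (PSS) identification
\[HF^*(L', L^n_\epsilon) \cong HF^*(L^n_\epsilon, L') \cong H^*(S^n) \otimes \C^2.\]
I would choose $e_\pm$ to be cocycles representing the unit $1 \in H^*(S^n)$ on each sheet.

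The key computation is then to verify that the PSS maps intertwine the Floer triangle product with the cup product on singular cohomology (compare the discussion at the end of Section \ref{subsec:compat}); granted this, $\mu^2(e_-, e_+)$ represents the unit, and similarly for $\mu^2(e_+, e_-)$. The main technical obstacle is the PSS construction itself in this immersed setting, since $L'$ is not embedded. One clean way to handle this is to lift the relevant moduli spaces to the branched double cover $Q^n$ of $\CP{n}$, where $L'$ lifts to a pair of embedded Lagrangians (the zero section and its antipode in $Q^n \supset T^*S^n$), exactly as in the compactness argument given earlier in this section; with that in place, transversality and gluing proceed by standard arguments.
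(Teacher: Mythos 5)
The broad strategy — construct cocycles in $CF^*(L',L^n_\epsilon)$ and $CF^*(L^n_\epsilon,L')$ whose compositions give units, following the pattern of Section \ref{subsec:compat} — is indeed the right one, and matches the paper's proof in outline (the paper takes $f_1=f_2=\sum_j p^M_{\{j\}}$, shows $\mu^2(f_1,f_2)=p_\phi$, and closes with a rank comparison between $CF^*(L',L')$ and $CF^*(L^n,L^n)$). However, there are two concrete problems.

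First, the claimed identification $HF^*(L',L^n_\epsilon)\cong H^*(S^n)\otimes\C^2$ is wrong, and the picture behind it is off. Since $\RP n$ and $L^n_\epsilon=\Gamma(\epsilon\,df)$ already intersect transversely at the critical points of $f$, no Hamiltonian perturbation is needed, and the generators of $CF^*(L',L^n_\epsilon)$ are pairs $(p_K,q)$ with $p_K$ a critical point of $f$ (not of a height function $h$ with two critical points) and $q$ either equal to or antipodal to $p_K$. That gives $2(2^{n+2}-2)$ generators, and if $L'$ is to be quasi-isomorphic to $L^n$, the cohomology must have rank $\dim\Lambda^*\C^{n+2}=2^{n+2}$, not $4$. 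The Morse function whose complex you are actually seeing here is $f$, and the ``choice of sheet'' gives the splitting $CF^*(L',L^n_\epsilon)\cong CM^*_M(f)\oplus CM^*_S(f)$, from which the unit representative $\sum_j p^M_{\{j\}}$ is drawn. Luckily the rank claim is not actually load-bearing for your argument, but it signals a misconception about the chain complex.

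Second, and more seriously, you have not addressed admissibility. The morphism spaces in $\mathscr C$ count \emph{admissible} flipping holomorphic pearly trees (Definition \ref{defn:strip}), and the key technical content of the paper's proof of Lemma \ref{lem:morsepearly} is exactly to show that the disks obtained by lifting to the Weinstein neighborhood $D^*_\eta S^n\subset T^*S^n$ project back to admissible objects in $\CP n$. This requires both a monotonicity argument (to keep low-area strips inside $D^*_\eta\RP n$ for $\epsilon$ small) and the Fukaya--Oh result that such strips are $C^0$-close to Morse flowlines of $f$, which cross the hypersurfaces $D_j^{\R}$ positively and hence are admissible by Proposition \ref{prop:calcint}. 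Passing to the branched cover $Q^n$, as you suggest, is useful for Gromov compactness, but it does not by itself produce the admissibility control; the branching means the lifted curves have higher genus rather than being disks, and even after lifting one must still verify that the original curve in $\CP n$ does not intersect the divisors $D_j$. Without this step, you have not shown that $\mu^1(e_\pm)=0$ or identified $\mu^2(e_-,e_+)$, since the relevant moduli spaces are defined by the admissibility constraint. Finally, invoking ``PSS'' for the immersed $L'$ — whose image is all of $\RP n$, so that $L'$ does not even have isolated transverse self-intersections — is nonstandard; the paper sidesteps this by working with the embedded lifts $S^n$ and $\Gamma(\epsilon\,df)$ in $T^*S^n$ and transporting known quasi-isomorphisms there.
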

\begin{proof}
We observe that $\RP{n}$ and $L^n_{\epsilon}$ intersect transversely in the points $p_K$. 
Therefore we can choose the Hamiltonian component of the Floer datum for the pairs $(L', L^n)$ and $(L^n,L')$ to be $0$. 
The morphism space $CF^*(L',L^n)$ is generated by pairs of points $(p,q) \in S^n \times S^n$ that get sent to the same point by the respective Lagrangian immersions defining $L', L^n$. 
Thus $p$ is a critical point of $f$, and $q$ is either equal to $p$ or its antipode. 
As we saw in Corollary \ref{cor:critf}, there is a critical point $p_K$ of $f$ for each proper non-empty subset $K \subset [n+2]$.
Therefore, we can label the generators of $CF^*(L',L^n)$ as $p_K^{M} := (p_K,p_K)$ and $p_K^S:= (p_K,a(p_K))$ ($M$ stands for `Morse' because the generators $p_K^M$ correspond to the Morse cohomology of $L^n$, and $S$ stands for `self-intersection' because the generators $p_K^S$ correspond to the self-intersections of $L^n$). 
So, additively, 
\[CF^*(L',L^n) \cong CM_M^*(f) \oplus CM_S^*(f)\]
and similarly for $CF^*(L^n,L')$.
One can check that the gradings of these generators are
\[ i(p_K^S) = \frac{n}{n+2}|K|, \,\,\,\, i(p_K^M) = n - \mu_M(p_K) = n+1-|K|.\]

Now observe that we have natural inclusions
\begin{eqnarray*}
CM_M^*(f) &\overset{\varphi_1}{\hookrightarrow}& CF^*(L',L^n), \\
CM_M^*(f) & \overset{\varphi_2}{\hookrightarrow}& CF^*(L^n,L')
\end{eqnarray*}
as graded vector spaces.

\begin{lem}
\label{lem:morsepearly}
For sufficiently small $\epsilon > 0$, the inclusions $\varphi_j$ are chain maps.
\end{lem}
\begin{proof}
We first observe that, for sufficiently small $\epsilon >0$, the holomorphic strips 
\[u: Z \To \CP{n}\]
used to define the differential 
\[\mu^1: CF^*(L',L^n) \To CF^*(L',L^n)\]
must remain entirely within the Weinstein neighbourhood $D^*_{\eta} \RP{n}$ used in the construction of $L^n_{\epsilon}$.
To see why, suppose that $u$ passes through some point $p$ of distance $> \eta$ from $\RP{n}$. 
Then for sufficiently small $\epsilon>0$, the ball $B(p; \eta /2)$ is disjoint from $L^n_{\epsilon}$ and $L'$.
Therefore, by the monotonicity lemma (see \cite[3.15]{lawson74}), the symplectic area of the intersection of $u$ with the ball $B(p;\eta / 2)$ is at least $c (\eta / 2)^2$ for some constant $c$. 
However, the symplectic area of $u$ is given by the difference in symplectic actions of the generators (see Remark \ref{rmk:smoothapprox} and its sequel), which is proportional to $\epsilon$ and hence can be made arbitrarily small. 
Thus, for sufficiently small $\epsilon>0$, the strips never leave the Weinstein neighbourhood $D^*_{\eta} \RP{n}$.

Now we observe that any strip $u$ contributing to the differential on $CF^*(L',L^n)$ lifts to the double cover $D^*_{\eta} S^n \To D^*_{\eta} \RP{n}$, because it comes equipped with a lift of one boundary component to $S^n$ by definition. 
This lifted strip contributes to the differential
\[ \mu^1: CF^*(S^n, \Gamma (\epsilon df) ) \To CF^*(S^n, \Gamma (\epsilon df) )\]
in the Fukaya category of $T^* S^n$.
Conversely, any strip $u$ contributing to the differential on $CF^*(S^n,\Gamma(\epsilon df))$ projects to a strip contributing to the differential on $CF^*(L', L^n_{\epsilon})$. 
The only thing to check is that these projected strips are all admissible -- for this one needs a certain amount of control on the topology of $u$. 
It was proven in \cite[Proposition 9.8]{fukayaoh} that, given $\delta > 0$, there exists $\epsilon_0 > 0$ such that for any strip contributing to the differential on $CF^*(S^n, \Gamma (\epsilon df) )$, with $\epsilon < \epsilon_0$, there is a Morse flowline of $f$,
\[ \gamma: \R \To S^n\]
such that
\[ d(u(s,t), \gamma(\epsilon s)) < \delta \mbox{ for all $s,t$.}\]
Because Morse flowlines of $f$ cross the hypersurfaces $D_j^{\R}$ positively, it follows from Proposition \ref{prop:calcint} that all such strips are admissible.

It follows that the inclusion 
\[CF^*(S^n, \Gamma (\epsilon df) ) \hookrightarrow CF^*(L',L^n)\]
(where the left hand side is a morphism space in the Fukaya category of $T^*S^n$ and the right hand side is a morphism space in the Fukaya category of $\mathcal{P}^n$ as we have defined it) is a chain map. 
Now the Lagrangians $S^n, \Gamma(\epsilon df)$ in $T^* S^n$ are Hamiltonian isotopic, hence quasi-isomorphic in the Fukaya category of $T^* S^n$. 
So there is a quasi-isomorphism
\[CF^*(S^n, \Gamma (\epsilon df) ) \cong CF^*(S^n,S^n) \cong CM^*(f)\]
(the second quasi-isomorphism comes from Proposition \ref{prop:endo}).
Thus, there is a chain map
\[ CM_M^*(f) \cong CF^*(S^n, S^n) \cong CF^*(S^n, \Gamma( \epsilon df)) \hookrightarrow CF^*(L',L^n)\]
as required.
\end{proof}

Now consider the elements
\[ f_1 \in CF^*(L^n,L') , \,\,\,\, f_2 \in CF^*(L',L^n)\]
that correspond to the identity in $CM^*_M(S^n)$. 
Explicitly,
\[ f_1 = \sum_{j=1}^{n+2} p_{\{j\}}^M\]
(and the same for $f_2$). 

\begin{lem}
For sufficiently small $\epsilon > 0$, we have 
\begin{eqnarray*}
\mu^1(f_j) &=& 0 \mbox{ for $j=1,2$, and}\\
\mu^2(f_1,f_2) &=& p_{\phi} \in CF^*(L',L')
\end{eqnarray*}
\end{lem}
\begin{proof}
The fact that $\mu^1(f_j) = 0$ follows from Lemma \ref{lem:morsepearly}. 
We now prove that $\mu^2(f_1,f_2) = p_{\phi}$.

Observe that $i(f_1) = i(f_2) = 0$, so $i(\mu^2(f_1,f_2)) = 0$. 
Therefore, $p_{\phi}$ is the only term that can appear in the product $\mu^2(f_1,f_2)$. 
Its coefficient is the signed count of points in the moduli space of flipping holomorphic pearly trees which are holomorphic strips  running between some intersections $p^M_{\{j\}}$ and $p^M_{\{k\}}$ of $L^n$ and $L'$, with one marked point on the boundary labeled $L'$ which gets sent to $p_{\phi}$ (see Figure \ref{fig:comp}). 
As we saw in the proof of Lemma \ref{lem:morsepearly}, such strips must lie inside the Weinstein neighbourhood $D^*_{\eta} \RP{n}$, and lift canonically to the double cover $D^*_{\eta}S^n$. 
The lift is a holomorphic pearly tree contributing to the product
\[ \mu^2: CF^*(\Gamma( \epsilon df),S^n) \otimes CF^*(S^n, \Gamma( \epsilon df)) \To CF^*(S^n,S^n).\]
Conversely, by the same argument as in the proof of Lemma \ref{lem:morsepearly}, any holomorphic pearly tree contributing to this product projects to an admissible flipping holomorphic pearly tree contributing to the product $\mu^2(f_1,f_2)$. 

\begin{figure}
\centering
\includegraphics[width=0.9\textwidth]{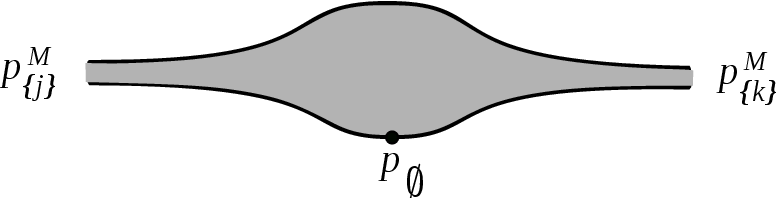}
\caption{The flipping holomorphic pearly trees whose count gives the coefficient of $p_{\phi}$ in $\mu^2(f_1,f_2)$. The solid circle denotes a non-flipping point.
The upper half of the boundary gets sent to $L^n$, and the lower half to $L'$.
\label{fig:comp}}
\end{figure}

It now follows from the quasi-isomorphisms (in the Fukaya category of $T^*S^n$)
\[ CF^*(S^n, \Gamma( \epsilon df)) \cong CF^*(S^n,S^n) \cong CF^*( \Gamma( \epsilon df),S^n)\]
and
\[CF^*(S^n,S^n) \cong CM^*(S^n)\]
that, on the level of cohomology,
\[ [\mu^2(f_1,f_2)] = [p_{\phi}]\]
(product of identity with identity is identity in $CM^*(S^n)$).
But $CF^0(L',L')$ has only the single generator $p_{\phi}$, so we have
\[ \mu^2(f_1,f_2) = p_{\phi}\]
as required. 
\end{proof}

Because $CF^*(L',L')$ and $CF^*(L^n,L^n)$ have the same rank (by Corollary \ref{cor:arank}), it follows that $f_1$ and $f_2$ induce mutually inverse isomorphisms on the level of cohomology, and therefore are mutually inverse quasi-isomorphisms in the category $\mathscr{C}$.
This completes the proof that $L'$ and $L^n$ are quasi-isomorphic, for sufficiently small $\epsilon > 0$.
\end{proof}

\subsection{Properties of the $A_{\infty}$ algebra $\mathcal{A}' := CF^*(L',L')$}
\label{subsec:a'prop}

We define the $A_{\infty}$ algebra $\mathcal{A}' := CF^*(L',L')$. 
It follows from Proposition \ref{prop:ll'} that $\mathcal{A}$ and $\mathcal{A}'$ are quasi-isomorphic $A_{\infty}$ algebras. 
Henceforth we will only be concerned with computing the $A_{\infty}$ structure of $\mathcal{A}'$. 
In particular, we will assume that our flipping holomorphic pearly trees have all boundary components labeled $L'$. 

\begin{lem}
\label{lem:pearltop}
If $\bm{u}$ is an admissible flipping holomorphic pearly tree with associated morphisms $\bm{y} = (p_{K_0},\ldots,p_{K_k})$, then
\[ \sum_{j=1}^k e_{K_j} = e_{K_0}\]
in $M$.
\end{lem}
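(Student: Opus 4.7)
The plan is to reduce to the argument of Proposition \ref{prop:top}. Starting from an admissible flipping holomorphic pearly tree $\bm{u}$, I apply the strip construction of Definition \ref{defn:strip}: attaching a thin strip along every edge and every boundary component of the underlying pearly tree produces a compact oriented surface with boundary, $\Sigma$, together with a continuous map
\[ \tilde{u} : (\Sigma, \partial \Sigma) \To (\CP{n}, \mathrm{im}(L^n)). \]
By construction, the boundary $\partial \Sigma$ is covered by pieces coming from three sources: boundary components of the pearls (equipped with their lifts $\tilde u_C$ to $S^n$), the interior of non-flipping edges (whose two sides lift identically to $S^n$), and the interior of flipping edges (whose two sides lift to antipodal points of $S^n$, i.e., to \emph{different} sheets of $L^n$ in the Weinstein neighbourhood). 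The upshot is that $\partial \Sigma$ carries a continuous lift to $S^n$ \emph{except} at the flipping marked points, and these flipping marked points are exactly the self-intersections $p_{K_0}, p_{K_1}, \ldots, p_{K_k}$ (the non-flipping marked points $p_\phi$ and $p_{[n+2]}$, if any, contribute nothing because they lift continuously across).

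Next, I use admissibility to promote this to a class in $H_2(\mathcal{P}^n, \mathrm{im}(L^n))$. By Corollary \ref{cor:adm}, internal intersections of pearls with $D_j$ and negative crossings of boundary lifts with $D_j^{\R}$ are impossible, and flipping edges cannot cross $D_j^{\R}$ at all. The only interaction with a divisor comes from positive boundary crossings, which by the analysis in the proof of Proposition \ref{prop:calcint} produce folds that project to a half-plane bounded away from $D_j$ in the $z_j$-coordinate; hence (possibly after a $C^0$-small isotopy of the strips, supported near these folds) the surface $\tilde u$ may be taken to land in $\mathcal{P}^n$.

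Now the proof of Proposition \ref{prop:top} applies verbatim. Since $\tilde u : (\Sigma, \partial \Sigma) \To (\mathcal{P}^n, \mathrm{im}(L^n))$ is a topological map, we may lift it to the universal cover of $\mathcal{P}^n$. The boundary of the lift must close up: between any two consecutive marked points it travels along a single sheet of $L^n$ (labelled by an element of $\pi_1(\mathcal{P}^n)$, which contains $M$ as its abelianization), and at each flipping marked point $p_{K_j}$ the sheet label jumps by $w(p_{K_j}) = e_{K_j}$ (Proposition \ref{prop:weight}). With the usual sign conventions (positive puncture at $p_{K_0}$, negative punctures at $p_{K_1}, \ldots, p_{K_k}$), the closing-up condition in $M = H_1(\mathcal{P}^n)$ becomes
\[ -e_{K_0} + \sum_{j=1}^{k} e_{K_j} = 0, \]
which is the desired identity.

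The main potential obstacle is the second step: ensuring that admissibility really does let us view $\tilde u$ as a class in $H_2(\mathcal{P}^n, \mathrm{im}(L^n))$ rather than just in $H_2(\CP{n}, \mathrm{im}(L^n))$. The argument above handles this geometrically via Corollary \ref{cor:adm} and the structure of the folds in Figure \ref{fig:diskint1}; alternatively one can argue purely homologically, using that the vanishing of all intersection numbers $\bm{u} \cdot D_j$ exhibits $\tilde u$ as the image of a relative class in $H_2(\mathcal{P}^n, \mathrm{im}(L^n))$ under the map induced by inclusion.
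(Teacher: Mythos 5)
Your proof is correct and follows essentially the same route as the paper's, which is a one-sentence reference back to Proposition~\ref{prop:top}: the argument there only needs the homology class $[\bm{u}]\in H_2(\CP{n},L^n)$ produced by the strip construction, together with the admissibility condition to transfer the covering-space argument from $\mathcal{P}^n$ to the situation at hand. You have fleshed out the details the paper leaves implicit, which is useful.

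One small caution on the geometric version of your step 2: the phrase ``folds that project to a half-plane bounded away from $D_j$'' is not quite accurate. At a positive boundary crossing the fold lies precisely on $D_j^{\R}\subset D_j$ (the inner boundary of the glued strip sits on the zero section $\RP{n}$, which meets $D_j$ there), so the image of $\tilde u$ genuinely \emph{touches} $D_j$ at those points; it is only the closed upper half-plane, not a set bounded away from the real axis. Pushing off therefore requires moving both the strip and a bit of the pearl near the fold while keeping $\partial\Sigma$ on $L^n$ — possible, but not completely immediate. Your homological alternative at the end is the cleaner way to finish: excision and Lefschetz--Poincaré duality identify $H_2(\CP{n},\mathcal{P}^n)$ with $H^{2n-2}_{BM}(D)\cong\Z^{n+2}$, and the composite $H_2(\CP{n},L^n)\to H_2(\CP{n},\mathcal{P}^n)\cong\Z^{n+2}$ is exactly $[\bm{u}]\mapsto(\bm{u}\cdot D_j)_j$; vanishing of all intersection numbers then places $[\bm{u}]$ in the image of $H_2(\mathcal{P}^n,L^n)$ by exactness of the triple sequence, after which Proposition~\ref{prop:top} applies verbatim. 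I'd recommend leading with that homological argument and treating the isotopy picture as motivation.
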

\begin{proof}
The proof is identical to that of Proposition \ref{prop:top}, since the proof relies only on the homology class $[\bm{u}] \in H_2(\CP{n},L^n)$, which is determined by the admissibility condition.
\end{proof}

\begin{lem}
\label{lem:a'prop}
$\mathcal{A}'$ inherits the following properties of $\mathcal{A}$:
\begin{itemize}
\item It is $\mathbb{T}$-equivariant in the same sense as in Corollary \ref{cor:tact}, by Lemma \ref{lem:pearltop};
\item It has the $\Q$-grading given by $n/(n+2)$ times the normal $\Z$-grading, as in Corollary \ref{cor:grading};
\item As a consequence of these two properties, it satisfies the analogue of Corollary \ref{cor:prodzero}, namely the only non-zero $A_{\infty}$ products are $\mu^{2+nq}$ for $q \in \Z_{\ge 0}$;
\item It satisfies the analogue \ref{cor:cohsigns} (i.e., it is supercommutative).
\end{itemize}
\end{lem}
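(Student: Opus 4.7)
The plan is to establish each of the four properties for $\mathcal{A}'$ in turn, running the analogues of the arguments from Section \ref{sec:A} with admissible flipping holomorphic pearly trees replacing holomorphic disks, and with Lemma \ref{lem:pearltop} playing the role of Proposition \ref{prop:top} in supplying the key topological constraint. Property (1) is immediate: set $w(p_K) := e_K \in M$ as in Definition \ref{defn:weight}; by Lemma \ref{lem:pearltop}, the coefficient of $p_{K_0}$ in any $\mu^k(p_{K_1},\ldots,p_{K_k})$ vanishes unless $\sum_j e_{K_j} = e_{K_0}$ in $M$, and the $\mathbb{T}$-equivariance follows exactly as in Corollary \ref{cor:tact}.

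For property (2), the strategy is to repeat the Maslov-index calculation of Proposition \ref{prop:grading} in the flipping Morse--Bott model. The non-flipping generators $p_\phi, p_{[n+2]}$ are critical points of $h$ with Morse indices $0$ and $n$, which already match $(2\bm{n}-e_{[n+2]})\cdot e_\phi=0$ and $(2\bm{n}-e_{[n+2]})\cdot e_{[n+2]}=n$. For the flipping generators, I would equip $L'$ with a grading function $\alpha^\#$ that records the shifts across the hypersurfaces $D_j^{\R}$: since $L'$ is the $\epsilon\to 0$ limit of $L^n_\epsilon$, and in the proof of Proposition \ref{prop:grading} the grading of $L^n_\epsilon$ jumps by $-n_j$ each time $D_j^{\R}$ is crossed, the same jumps persist in the limit. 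Combined with the Morse index $\mu_{\mathrm{Morse}}(p_K)=n+1-|K|$ of $f$ and the formula $i(p_K)=\mu_{\mathrm{Morse}}(p_K)-\alpha^\#_K+\alpha^\#_{\bar K}$, this reproduces $i(p_K)=(2\bm{n}-e_{[n+2]})\cdot e_K$ for all $K$. Property (3) is then the purely combinatorial consequence of (1) and (2), derived exactly as Corollary \ref{cor:prodzero} follows from Corollaries \ref{cor:tact} and \ref{cor:grading}.

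For property (4), the complex-conjugation argument transfers verbatim: $\tau:\CP{n}\to\CP{n}$ is an antisymplectic involution that maps the image of $L'$ to itself, acting as the antipodal map $a$ on the domain $S^n$ (since $f\circ a=-f$, the flipping Floer datum is $\tau$-invariant). Applying Proposition \ref{prop:opfuk} and repeating the proofs of Corollaries \ref{cor:signs} and \ref{cor:cohsigns} --- using admissibility in place of the embedded-disk topology to control homology classes, and the orientations from Section \ref{sec:fukdef} to handle signs --- yields a quasi-isomorphism $\mathcal{A}'\to(\mathcal{A}')^{\mathrm{op}}$ and hence graded-commutativity of the cohomology algebra.

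The main obstacle is the grading computation for the flipping generators in (2). The Morse--Bott model for $\mathcal{A}'$ mixes two distinct Morse functions on the same Lagrangian $L'$, and the flipping generators $p_K$ do \emph{not} carry the naive Morse-index grading $n+1-|K|$ (which fails to equal $(2\bm{n}-e_{[n+2]})\cdot e_K$ in general). One must therefore justify carefully that the shifts in $\alpha^\#$ along flipping Morse flowlines of $f$ exactly reproduce the self-intersection grading formula from Proposition \ref{prop:grading}; this is what encodes the fact that a flipping edge, although labeled by $L'$ on both sides, should be treated as a morphism between two distinct ``brane structures'' on $L'$ whose relative shift records the crossings of the $D_j^{\R}$.
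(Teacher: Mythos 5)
Your proposal is correct and follows the same route the paper implicitly takes: the paper gives no separate proof of this lemma, leaving it as a direct transfer of the arguments of Section \ref{sec:A} with admissible flipping pearly trees in place of holomorphic disks and Lemma \ref{lem:pearltop} in place of Proposition \ref{prop:top}. You also correctly identify the one genuinely non-obvious point --- that the flipping generators do not carry the naive Morse-index grading of $f$, and that the $\Z$-grading must instead be set up via a piecewise grading function $\alpha^{\#}$ on $L'$ jumping by $-n_j$ across each $D_j^{\R}$, exactly reproducing the formula of Proposition \ref{prop:grading} --- which is the right resolution.
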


We now establish some results about flipping holomorphic pearly trees which will be used in Section \ref{subsec:calc} to identify the moduli spaces that give rise to the $A_{\infty}$ structure coefficients of $\mathcal{A}'$.

\begin{prop}
\label{prop:pearlenergy}
For $K \subset [n+2]$, define
 \[ |K|' = \left\{\begin{array}{ll}
			\frac{n+2}{2} & K = \phi, [n+2] \\
			|K| & \mbox{ otherwise.}
			\end{array}\right.
\]
If $\bm{u}$ is an admissible flipping holomorphic pearly tree with labels $\bm{y} = (p_{K_0},\ldots,p_{K_k})$, then the homology class of $\bm{u}$ in $H_2(\CP{n},\RP{n}) \cong \Z$ is given by the formula
\[d_{\bm{u}} =  2\frac{|K_0|' - \sum_{j=1}^k |K_j|' }{n+2} + k-1. \]
\end{prop}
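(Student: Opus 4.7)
The plan is to combine an additivity argument with a Fredholm index computation. First, I observe that both sides of the formula are additive under decomposition of $\bm{u}$ along an internal edge: if $\bm{u}$ splits into two flipping pearly trees $\bm{u}_1,\bm{u}_2$ meeting at an intermediate generator $p_{K^*}$ (so $K^*$ becomes the output of $\bm{u}_1$ and an input of $\bm{u}_2$), then $d_{\bm{u}} = d_{\bm{u}_1}+d_{\bm{u}_2}$ since the hyperplane class is preserved under gluing. On the right-hand side, the contributions of $|K^*|'$ cancel between the two pieces, and since the number of inputs satisfies $k_1+k_2 = k+1$, the shifts $(k_1-1)+(k_2-1) = k-1$ combine correctly. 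Inducting on the number of pearls reduces the problem to verifying the formula for a flipping pearly tree consisting of a single pearl with $k+1$ external Morse edges.

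For this single-pearl case, I would compute the virtual dimension of $\mathcal{M}_{\mathcal{S}^{fpt}}(\bm{y})$ in two ways. By Proposition \ref{prop:dimfpt} combined with Corollary \ref{cor:prodzero}, the dimension equals $i(\bm{y})+k-2 = 0$ whenever the grading constraints needed for a non-trivial moduli space are met. On the other hand, the Fredholm index theorem gives
\[
0 \;=\; (n+1)d + k - 2 + \mu_M(p_{K_0}) - \sum_{j=1}^k \mu_M(p_{K_j}),
\]
obtained by adding the dimension $n+(n+1)d+k-2$ of $(k+1)$-pointed holomorphic disks in $(\CP{n},\RP{n})$ of hyperplane class $d$ modulo $\mathrm{PSL}_2(\R)$; the dimension of each external Morse edge (the unstable manifold of $p_{K_0}$ of dimension $\mu_M(p_{K_0})$ for the outgoing edge, and the stable manifold of $p_{K_j}$ of dimension $n-\mu_M(p_{K_j})$ for each incoming edge); and subtracting $(k+1)n$ for the codimension-$n$ matching conditions between pearl boundary lifts and edge endpoints at the marked points.

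Substituting the Morse indices from Corollary \ref{cor:morseindf} (namely $\mu_M(p_K) = n+1-|K|$ for $K$ proper non-empty, $\mu_M(p_\emptyset) = 0$, and $\mu_M(p_{[n+2]}) = n$) and writing $\mu_M(p_K) = n+1-|K|' + \delta_K$ with $\delta_K \in \{0,-n/2,+n/2\}$ in the flipping, minimum, and maximum cases respectively, the identity reduces, after invoking the constraint $k = 2+nq$ from Corollary \ref{cor:prodzero}, to the stated formula. Indeed the equivalence between the Fredholm identity and the stated formula is essentially a reformulation of this very constraint. The main obstacle will be the careful bookkeeping of orientations of the external Morse edges (incoming versus outgoing, determining stable versus unstable manifolds in the dimension count) and the separate treatment of the non-flipping generators, whose Morse indices $0$ and $n$ break the uniform pattern $n+1-|K|'$ satisfied by the flipping ones; once these are accounted for, the remaining algebraic manipulation is routine.
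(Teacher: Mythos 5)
Your proposal takes a genuinely different route from the paper. The paper computes $d_{\bm{u}}$ directly as the symplectic area $\frac{1}{2\pi}\omega(\bm{u})$: it exhibits an explicit primitive $\alpha$ of the Fubini--Study form, applies Stokes' theorem (using admissibility to deform away from $D_1$), and integrates $\alpha$ over the lifted boundary, reading off each segment's contribution from the coordinates $\pi e_K$ of the critical points. This is robust: it works for any admissible tree regardless of dimension, and it never touches the Fredholm theory or the $\Z$-grading.

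Your route --- computing a Fredholm index two ways --- is a reasonable instinct, but as written it has several gaps that you have not resolved. First and most seriously, you set the Fredholm index to $0$, invoking Corollary~\ref{cor:prodzero} and the condition $k=2+nq$. But that constraint and the vanishing of the index hold only for \emph{rigid} trees, whereas Proposition~\ref{prop:pearlenergy} is stated for every admissible flipping holomorphic pearly tree. The correct version of your argument would equate the explicit index expression (involving $d$ and the dimensions of stable/unstable manifolds) with the abstract dimension formula $i(\bm{y})+k-2$ of Proposition~\ref{prop:dimfpt}, not with $0$. But then you need to know the $\Z$-grading $i(p_K)$ on $\mathcal{A}'$ precisely, and that grading depends on the choice of volume form $\eta$ (through $\bm{n}$, Proposition~\ref{prop:grading}); the cancellation that removes $\eta$-dependence uses the weight constraint of Proposition~\ref{prop:top}, and working this through is exactly the bookkeeping you defer as ``routine.'' Second, your dimension assignments for the edge ends are reversed relative to the paper's conventions: in Corollary~\ref{cor:morseindf}, $\mu_{\mathrm{Morse}}(p_K)$ is the dimension of the \emph{stable} manifold, so the outgoing edge (which flows out of $p_{K_0}$) lands in $\mathcal{U}(p_{K_0})$ of dimension $n-\mu_{\mathrm{Morse}}(p_{K_0})$, not $\mu_{\mathrm{Morse}}(p_{K_0})$ as you claim, and similarly for the incoming edges. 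Third, the additivity reduction to a single pearl is not as clean as stated: cutting a pearly tree along a generic internal Morse edge produces a point that is \emph{not} a critical point, so the two pieces are not flipping pearly trees with well-defined asymptotics $p_{K^*}$. One would need to argue via degeneration to the boundary stratum where the edge breaks, and then appeal to local constancy of the homology class --- a workable but unaddressed subtlety. Finally, the discrepancy between $|K|$ and $|K|'$ for the non-flipping generators is precisely where the Morse indices of $h$ ($0$ and $n$) diverge from the pattern $n+1-|K|$ governing $f$; you flag this but do not resolve it, and it is not a cosmetic issue since it is the reason $|K|'$ appears in the statement at all.

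In short, your approach could perhaps be made to work, but it trades the paper's three-line Stokes computation for a delicate reconciliation of Fredholm indices, Morse index conventions, and $\eta$-dependent gradings, with several of the crucial steps currently either wrong (the index-$=0$ step, the stable/unstable dimensions) or deferred. I would strongly recommend instead computing $\omega(\bm{u})$ directly from the primitive $\alpha$ as the paper does.
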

\begin{proof}
Note that the Fubini-Study symplectic form $\omega$ acts on $H_2(\CP{n},\RP{n})$, with value $2\pi$ on the generator.
It follows that 
\[ \omega(\bm{u}) = 2\pi d_{\bm{u}},\]
so we can compute $d_{\bm{u}}$ by computing $\omega(u)$.
 
Recall that we add a strip to $\bm{u}$ to obtain a disk $\tilde{u}:(D,\partial D) \To (\CP{n},L^n)$.
Note that the symplectic area of the strip we add is $\mathcal{O}(\epsilon)$.
So we can compute $\omega(\bm{u})$ by evaluating $\omega(\tilde{u})$ in the limit $\epsilon \To 0$.

The Fubini-Study form is given by the K\"{a}hler potential
\[ \rho = \log \left( \sum_{j=1}^{n+2} \left| \frac{z_j}{z_1} \right|^2 \right) = \log \left( \sum_{j=1}^{n+2} e^{2r_j} \right) - 2 r_1\]
on $\CP{n+1}\setminus D_1$, where $z_j = \exp(r_j + i\theta_j)$.
Thus
\[\omega = dd^c \rho,\]
(recall that $d^c \rho = d\rho \circ J$), so we define
\begin{eqnarray*}
\alpha &=& d^c \rho \\
 &=& \frac{ \sum_{j=1}^{n+2} 2 e^{2r_j} d^c r_j}{\sum_{j=1}^{n+2} e^{2r_j}} - 2 d^c r_1\\
 &=& -2\frac{\sum_{j=1}^{n+2} e^{2r_j} d \theta_j}{\sum_{j=1}^{n+2} e^{2r_j}} + 2 d \theta_1.
 \end{eqnarray*}
 Then $\omega = d \alpha$.
 Of course this is really $\pi^* \alpha$, where $\pi: \C^{n+2} - \{ 0\} \To \CP{n+1}$ is the projection.
 
Because $\tilde{u} \cdot D_1 = 0$ by admissibility, we can deform $\tilde{u}$ to avoid $D_1$ then apply Stokes' theorem to obtain
\[ \int_{\partial D} \tilde{u}^* \alpha = \int_D \tilde{u}^* \omega. \]
 
 Now recall the lift of $L^n$ to $\C^{n+2}$ that arose in the construction of $L^n$, namely
 \begin{eqnarray*}
 \left\{ \sum_{j=1}^{n+2} x_j^2 = 1, \sum_{j=1}^{n+2} x_j = 0\right\} & \To & \C^{n+2} \\
 (x_1,\ldots, x_{n+2}) & \mapsto & \left(x_1 + i\epsilon f_1,\ldots,x_{n+2} + i\epsilon f_{n+2}\right) + \mathcal{O}(\epsilon^2).
 \end{eqnarray*}
 We can lift $\partial D$ to $\C^{n+2}$ (the result will not be a cycle, because when $\partial D$ changes sheets of $L^n$ the lift stops and reappears at the antipode).
 Call the lift $l$.
 Then
 \[ \int_{\partial D} \alpha = \int_{\pi_* l} \alpha = \int_l \pi^* \alpha.\]
 
 Observe that on the lift of $L^n$, $d\theta_k$ is small everywhere except for when $r_k$ is small, and when $r_k$ is small then
 \[ \frac{e^{2r_k}}{\sum_{j=1}^{n+2} e^{2r_j}}\]
 is small.
 Thus the first term in $\pi^* \alpha$ is negligible.
 So
 \[ \int_l \pi^* \alpha =  \int_l 2 d\theta_1 + \mathcal{O}(\epsilon).\]
 
 The projection of the lift of the point $p_K$ to the angular variables is $\pi e_K$ (now thought of as living in $\widetilde{M}_{\R}$ rather than $M_{\R}$).
 Thus, as the lift of $\partial D$ travels from $p_{K_i}$ to $p_{\bar{K}_{i+1}}$, the contribution to the integral is (to order $\epsilon$)
 \[\int_{p_{K_j}}^{p_{\bar{K}_{j+1}}} 2 d\theta_1 = 2\pi e_1 \cdot \left(e_{\bar{K}_{j+1}} - e_{K_j}\right).\]
 An exception occurs when $K_j \mbox{ (respectively }\bar{K}_{j+1}) = \phi \mbox{ or }[n+2]$, in which case $p_{K_j}\mbox{ (respectively }p_{\bar{K}_{j+1}}) $ represents the bottom or top cohomology class of $L^n$, so $\partial D$ does not change sheets of $L^n$ as it passes through $p_{K_j}\mbox{ (respectively }p_{\bar{K}_{j+1}}) $.
 In this case we should simply replace $e_1\cdot (e_{K_j})\mbox{ (respectively }e_1 \cdot (e_{\bar{K}_{j+1}}))$ in the expression above by $0$.
 
For the moment, assume that $K_j \neq \phi \mbox{ or } [n+2]$. 
Adding up and regrouping the contributions of each part of $\partial D$, and recalling that $p_{K_0}$ is the `outgoing' point, we obtain:
 \begin{eqnarray*}
 \int_{\partial D} \alpha & = & 2 \pi e_1\cdot \left(e_{K_0} - e_{\bar{K}_0} + \sum_{j=1}^k e_{\bar{K}_j} - e_{K_j} \right) + \mathcal{O}(\epsilon) \\
 &=& 2\pi \left( 2e_1\cdot \left( e_{K_0} - \sum_{j=1}^k e_{K_j} \right) + k-1 \right) + \mathcal{O}(\epsilon)\\
 &=& 2\pi \left( 2\frac{ |K_0| - \sum_{j=1}^k |K_j| }{n+2} + k-1 \right)+ \mathcal{O}(\epsilon)
 \end{eqnarray*}
 (in the last step we used the fact that the vector is a multiple of $e_{[n+2]}$ by Proposition \ref{prop:top}).
 
 Now if $K_j = \phi \mbox{ or } [n+2]$, recall that we must replace  $e_1\cdot (e_{\bar{K}_j} - e_{K_j})$ by $0$ in the first two lines above.
 This is equivalent to replacing $|K_j|$ by $|K_j|'$ in the final line.
This completes our proof.
 \end{proof}

\begin{defn}
Given an admissible flipping holomorphic pearly tree, it is useful to label certain points on its boundary with proper, non-empty subsets of $[n+2]$, as follows:
At each flipping marked point, the boundary immediately before and after the point get sent (by the lift $\tilde{u}$ of the boundary) to antipodal points of $S^n \setminus D^{\R}$.
Thus they lie in the antipodal regions $S^n_K, S^n_{\bar{K}}$ respectively, for some $K \subset [n+2]$ (recall that $S^n_K$ is defined to be the region where $x_j<0$ for $j \in K$ and $x_j>0$ for $j \notin K$). 
We will ignore the case where a flipping marked point lies on some $D_j^{\R}$, but it presents no real additional problem in our subsequent arguments.
We label the point immediately before our flipping marked point with $K$, and the point immediately after with $\bar{K}$.
Non-flipping marked points do not get labels. 
\end{defn}

\begin{rmk}
\label{rmk:labelpearl}
We observe that, because of the condition that Morse flowlines do not cross the hypersurfaces $D_j^{\R}$ (by Corollary \ref{cor:adm}), the labels at opposite ends of an internal flipping Morse flowline are identical. 
Furthermore, at a flipping marked point connected by an incoming edge to the flipping generator $p_K$, the label immediately before is $K$ and the label immediately after is $\bar{K}$.
Also, by Corollary \ref{cor:adm}, the boundary lifts can only cross the hypersurfaces positively.
So as we follow the boundary around anti-clockwise between two adjacent flipping marked points, the label at the beginning of the segment contains (not necessarily strictly) the label at the end of the segment.
Suppose the pearl corresponding to vertex $v$ of the underlying tree has degree $d_v \in H_2(\CP{n},\RP{n}) \cong \Z$. 
Then it must intersect $D_j$ $d_v$ times, and none of the intersections can be internal by admissibility, so the boundary lift must intersect $D_j^{\R}$ $d_v$ times.
It follows that
\[ \sum_{j \,\mathrm{ mod }\, k_v} e_{\bar{K}_{j-1}} - e_{K_{j}} = d_v e_{[n+2]}\]
in $\widetilde{M}$, where $K_1,\ldots,K_{k_v}$ are the labels given to the points immediately before the flipping points (traversing the boundary of the pearl in positive direction) on the pearl corresponding to $v$.
It follows quickly that
\[ \sum_{j=1}^{k_v} e_{K_j} = \frac{k_v - d_v}{2}e_{[n+2]}\]
for each pearl.
Figure \ref{fig:n5} shows a possible labeling of a flipping holomorphic pearly tree.
\end{rmk}

\begin{figure}
\centering
\includegraphics[width=0.8\textwidth]{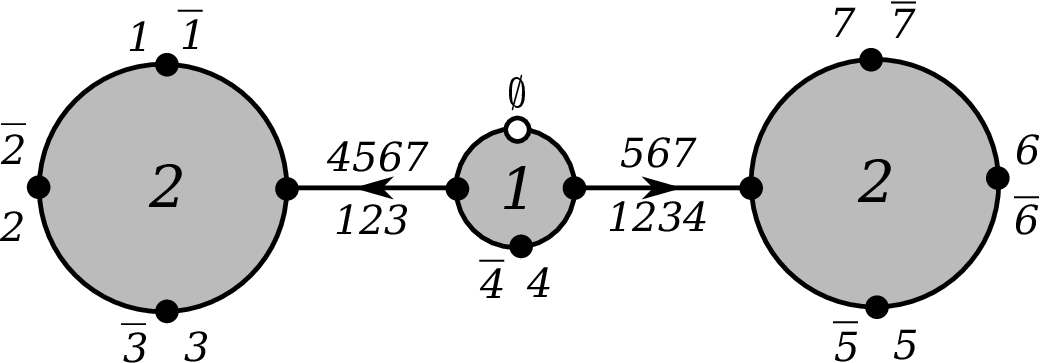}
\caption{An example of a legal labeling of a flipping holomorphic pearly tree, which might contribute to the coefficient of $p_{\phi}$ in the $A_{\infty}$ product $\mu^7(p_{\{1\}},\ldots,p_{\{7\}})$. 
We have illustrated a simple case, in which all external flowlines are constant because the points $p_{\{j\}}$ are maxima of the Morse function $f$.
The external label `$1$' means the set $\{1\}$, while `$\overline{1}$' means the complement $\{2,3,4,5,6,7\}$.
The big label `$1$' in the middle of a pearl means that the pearl has degree $1$.
\label{fig:n5}}
\end{figure}

\begin{rmk}
\label{rmk:autreg}
We will choose the almost-complex structure component of our perturbation data to be equal to the standard integrable complex structure $J_0$, and the Hamiltonian perturbation to be identically $0$.
Then the pearls in a flipping holomorphic pearly tree with labels $L'$ are holomorphic disks with boundary on $\RP{n}$, hence they can be `doubled' to a holomorphic sphere by the Schwarz reflection principle.
It follows from \cite[Proposition 7.4.3]{mcduffsalamon} that the moduli space of holomorphic spheres in $\CP{n}$, in a given homology class, is automatically regular. 
The moduli space of pearls is the real part of the moduli space of spheres, hence also regular. 
It follows that for every $(r, \bm{u}) \in \mathcal{M}_{\mathcal{S}^{fpt}}(\bm{y})$, the linearized operator
\[D^h_{\mathcal{S}^{fpt},r,\bm{u}}: T_{(r,\bm{u})} \left(\mathscr{B}_{\mathcal{S}^{fpt}} \right) \To (\mathscr{E}_{S_r})_{\bm{u}}\]
of Definition \ref{defn:extlin} is automatically surjective.
Thus, to show that a moduli space $\mathcal{M}_{\mathcal{S}^{fpt}}(\bm{y})$ of flipping holomorphic pearly trees is regular, we need only check that the evaluation map
\[ \bm{ev}: \mathrm{ker}(d_{\mathcal{S}^{fpt}}) \To T_{\bm{u}} \left((S^n)^{F(S)}\right)\]
is surjective at each $(r, \bm{u}) \in \mathcal{M}_{\mathcal{S}^{fpt}}(\bm{y})$. 
Note that $\mathrm{ker}(d_{\mathcal{S}^{fpt}})$ is the space of holomorphic pearls and Morse flowlines, without the constraint $\bm{ev}(\bm{u}) \in \Delta^S$.
\end{rmk}

\begin{defn}
\label{defn:fixloc}
The following notation will be useful.
If $K_1,\ldots,K_k$ are disjoint subsets of $[n+2]$, we define 
\[F_{K_1,K_2,\ldots,K_k} := \{ \bm{x} \in S^n: x_l = x_m \mbox{ for all }l,m \in K_i, \mbox{ for all }i\}.\]
\end{defn}

\begin{rmk}
Observe that 
\[F_{K,\bar{K}} = \{p_K,p_{\bar{K}}\}.\]
As we saw in Lemma \ref{lem:morsecells}, the unstable manifold $\mathcal{U}(K)$ of $p_K$ is an open subset of $F_{\bar{K}}$, and the stable manifold $\mathcal{S}(K)$ is an open subset of $F_K$.
\end{rmk}

\subsection{Computation of $\mathcal{A}'$}
\label{subsec:calc}

In this section we compute the $A_{\infty}$ structure of $\mathcal{A}'$.

First, we observe that the analogue of Corollary \ref{cor:m1m2top} holds for $\mathcal{A}'$. 
I.e., $\mu^1 = 0$ and the only possibly non-zero $\mu^2$ products are
\[ \mu_{\mathcal{A}'}^2(p_{K_1},p_{K_2}) = a'(K_1,K_2) p_{K_1 \sqcup K_2}\]
for disjoint $K_1,K_2$. 
The proof is exactly the same, using the corresponding properties of $\mathcal{A}'$ given in Lemma \ref{lem:a'prop}.

\begin{prop}
\label{prop:ak1k2}
We have
\[a'(K_1,K_2) = \pm 1.\]
\end{prop}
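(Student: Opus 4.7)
The proof identifies the moduli space of rigid admissible flipping holomorphic pearly trees contributing to $\mu^2_{\mathcal{A}'}(p_{K_1},p_{K_2})$, splitting into cases based on the degree $d_{\bm u}$ given by Proposition \ref{prop:pearlenergy}. If one of $K_j$ is $\phi$ or $[n+2]$, or if $K_1 \sqcup K_2 = [n+2]$, then $d_{\bm u} = 0$ and only constant pearls contribute; the coefficient $\pm 1$ then follows from the unit axiom (when $K_j = \phi$ or $[n+2]$) or from Poincar\'e duality in Morse cohomology of $S^n$ pairing the antipodal cells of $p_K$ and $p_{\bar K}$ (when $K_1 \sqcup K_2 = [n+2]$, so that the pearly tree reduces to a Morse $Y$-graph computing the top-class product).

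In the generic case where $K_1$, $K_2$, and $K_0 := K_1 \sqcup K_2$ are all proper non-empty subsets of $[n+2]$, Proposition \ref{prop:pearlenergy} gives $d_{\bm u} = 1$. By the dimension formula $\dim \mathcal{M}_{\mathcal{S}^{fpt}}(\bm y) = i(\bm y) + k - 2 = 0$ from Proposition \ref{prop:dimfpt}, a rigid contributing pearly tree consists of a single degree-1 pearl with its three flipping marked points attached by external edges. A degree-1 pearl is a hemisphere of a real line $\ell \subset \CP{n}$. The combinatorial constraints of Remark \ref{rmk:labelpearl} (the labels strictly grow in each arc between consecutive flipping points by Corollary \ref{cor:adm}, and flip between $K_j$ and $\bar K_j$ at each $p_{K_j}$) admit, after a short case analysis of the $2^3 = 8$ possible sign assignments at the three marked points, exactly one consistent labeling: the three arcs (in counterclockwise order from the output) traverse positive boundary crossings of the hypersurfaces $D_j^\R$ for $j \in K_2$, $j \in \bar K_0$, and $j \in K_1$ respectively, giving a partition of $[n+2]$ consistent with the fact that a generic real line in $\CP{n}$ meets each $D_j$ in a single real point.

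This combinatorial data pins down $\ell$ uniquely: it must be the real line whose $\RP 1 = \ell \cap \RP{n}$ passes through the projections of $p_{K_1}, p_{K_2}, p_{K_0}$ in the specified cyclic order, with the prescribed arrangement of hypersurface crossings. Existence and uniqueness follow from the $S_{n+2}$-symmetry of the construction and the fact that a real line is determined by two of its (generic) real points. Of the two hemispheres of $\ell$, only one admits a boundary lift to $S^n$ compatible with the prescribed flipping data, since complex conjugation $\tau$ exchanges the two hemispheres while acting on the $S^n$-lift by the antipodal map (Remark \ref{rmk:propln}); the other hemisphere contributes instead to the distinct product $\mu^2(p_{\bar K_1}, p_{\bar K_2})$. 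Regularity is automatic from Remark \ref{rmk:autreg}, so the moduli space contains exactly one element and $a'(K_1, K_2) = \pm 1$. The main obstacle is verifying the hemisphere-selection argument rigorously, which requires a careful analysis of how the $n+2$ lifted boundary crossings and three flipping points interact under the antipodal action on $S^n$.
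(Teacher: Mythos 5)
Your skeleton matches the paper's: handle the degenerate cases where some $K_i$ or the complement $K_3 := \overline{K_1 \sqcup K_2}$ is trivial, then in the generic case identify the contributing configuration as a single degree-$1$ pearl, i.e., a hemisphere of a real line in $\CP{n}$. But the core of the proof is establishing the \emph{existence} and \emph{uniqueness} of the relevant line, and here your argument is incomplete. You assert that "Existence and uniqueness follow from the $S_{n+2}$-symmetry of the construction and the fact that a real line is determined by two of its (generic) real points," and you frame the line as passing through $p_{K_1}$, $p_{K_2}$, $p_{K_0}$. However, a priori the external Morse flowlines of $f$ from the inputs $p_{K_1}$, $p_{K_2}$ need not be constant; the incidence conditions only constrain the pearl's boundary to meet the unstable manifolds $\mathcal{U}(K_1)$, $\mathcal{U}(K_2)$ and the appropriate stable manifold for the output. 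A real line is determined by two points, but the constraint here is that one circle simultaneously hits three positive-dimensional submanifolds, so the naive count is ambiguous. The paper resolves this with a clean linear-algebra argument: since $\mathcal{U}(K_i) \subset F_{\bar{K}_i}$, any line through $\mathcal{U}(K_1)$ and $\mathcal{U}(K_2)$ lies in $F_{\bar{K}_1} \cap F_{\bar{K}_2} = F_{K_3}$, which meets $F_{\bar{K}_3}$ transversely precisely at $p_{K_3}$; hence the only candidate is $F_{K_1,K_2,K_3}$, and this line passes \emph{exactly} through the critical points $p_{K_1},p_{K_2},p_{K_3}$, which in turn forces the external flowlines to be constant. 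Transversality of those intersections then gives regularity. Your combinatorial $2^3$-case analysis of flipping signs and your appeal to $S_{n+2}$-symmetry don't substitute for this: they constrain labelings, not moduli.

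You also explicitly flag the hemisphere-selection step as an unresolved "main obstacle." In the paper's treatment this is not an issue requiring separate analysis: the explicit boundary lift
\[
p_{K_1} \to p_{K_1 \sqcup K_3} \leadsto p_{K_2} \to p_{K_2 \sqcup K_1} \leadsto p_{K_3} \to p_{K_3 \sqcup K_2} \leadsto p_{K_1}
\]
is written down and checked to cross all $D_j^{\R}$ positively, verifying admissibility via Proposition \ref{prop:calcint} at the same time that it fixes which hemisphere is counted. Finally, your treatment of the degenerate case $K_1 \sqcup K_2 = [n+2]$ via "Poincar\'e duality in Morse cohomology" is plausible in spirit, but note that the constant pearl then carries two flipping marked points (with lift $p$, flip to $a(p)$, flip back to $p$), and the claim needs the resulting incidence constraints to cut out a single transverse point; this should be spelled out rather than attributed to a general duality statement. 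In short: correct strategy, but the decisive geometric uniqueness argument is missing, and you yourself identify one remaining gap.
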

\begin{proof}
Let $K_3:= \overline{K_1 \sqcup K_2}$, so $K_1 \sqcup K_2 \sqcup K_3 = [n+2]$.
If any of $K_1,K_2,K_3$ are $\phi$ or $[n+2]$, the result is easy as the corresponding holomorphic disks are constant.
If that is not the case, then $a'(K_1,K_2)$ is given by a count of flipping holomorphic pearly trees.
The homology class of such a flipping holomorphic pearly tree is
\[ \left( 2\frac{ |K_1 \sqcup K_2|' - |K_1|' - |K_2|' }{n+2} + 2-1 \right) = 1 \]
by Proposition \ref{prop:pearlenergy}.
Therefore the corresponding flipping holomorphic pearly tree has two incoming and one outgoing legs, and a single pearl with the homology class of half of a line in $\CP{n}$ with boundary on $\RP{n}$.

The real part of such a pearl is a line. 
Thus, $a'(K_1,K_2)$ counts lines passing through the unstable manifolds $\mathcal{U}(K_1)$, $\mathcal{U}(K_2)$, $\mathcal{U}(K_3)$.
Recall from Lemma \ref{lem:morsecells} that the unstable manifolds $\mathcal{U}(K_i)$ are contained in the linear spaces $F_{\bar{K}_i}$ (see Definition \ref{defn:fixloc}).

Given points $p_1 \in F_{\bar{K}_1}$ and $p_2 \in F_{\bar{K}_2}$, the line through $p_1$ and $p_2$ is contained in the linear space $F_{\bar{K}_1 \cap \bar{K}_2} = F_{K_3}$.
This space intersects $F_{\bar{K}_3}$ transversely at $p_{K_3}$.
Therefore there is a unique line (namely $F_{K_1,K_2,K_3}$) that intersects $\mathcal{U}(K_1)$, $\mathcal{U}(K_2)$, $\mathcal{U}(K_3)$ (at $p_{K_1}$, $p_{K_2}$, $p_{K_3}$ respectively), and the intersections are transverse so the flipping holomorphic pearly tree is regular.

We check that it is admissible, using Proposition \ref{prop:calcint}.
Firstly, the Morse flowlines are constant at the $p_{K_i}$, hence do not cross the hypersurfaces $D_j^{\R}$.
Secondly, the boundary lifts as
\[ p_{K_1} \leadsto p_{K_2 \sqcup K_3} \To p_{K_2} \leadsto p_{K_1 \sqcup K_3} \To p_{K_3} \leadsto p_{K_1 \sqcup K_2} \To p_{K_1}\]
where $\To$ denotes a straight line connecting two points and $\leadsto$ denotes changing sheet.
This lift clearly crosses all hypersurfaces $D_j^{\R}$ positively (since the label at the beginning of a straight line always contains the label at the end), so the flipping holomorphic pearly tree is admissible and regular.

Thus $a'(K_1,K_2) = \pm 1$ as required.
\end{proof}

We are now in a position to prove Theorem \ref{thm:zcoeffs}. 
It is implied by the following:

\begin{thm}
\label{thm:zcoeffs2}
The cohomology algebra of $\mathcal{A}$ is
\[H^*(\mathcal{A}) \cong \wedgestar \widetilde{M}_{\C}\]
as $\Z_2$-graded associative $\C$-algebras.
The isomorphism is given by
\[ p_{K} \mapsto \sigma_K \underset{j \in K}{\wedge} e_j,\]
for some sign $\sigma_K = \pm 1$.
\end{thm}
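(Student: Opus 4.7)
The plan is to deduce Theorem~\ref{thm:zcoeffs2} from the results of Sections~\ref{sec:A}--\ref{subsec:calc}. By Proposition~\ref{prop:ll'}, $\mathcal{A}$ and $\mathcal{A}'$ are quasi-isomorphic $A_{\infty}$-algebras, so it suffices to compute $H^*(\mathcal{A}')$. Since $\mu^1=0$ in $\mathcal{A}'$ (by the analogue of Corollary~\ref{cor:m1m2top} via Lemma~\ref{lem:a'prop}), the cohomology algebra of $\mathcal{A}'$ coincides with $\mathcal{A}'$ as a graded vector space, equipped with the product $p_{K_1}\cdot p_{K_2}:=(-1)^{|K_1|}\mu^2(p_{K_1},p_{K_2})$. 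The same analogue tells us this product is a scalar multiple of $p_{K_1\sqcup K_2}$ when $K_1\cap K_2=\phi$ and vanishes otherwise.

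The next step is to show that the scalar is $\pm 1$ for every disjoint pair. When $K_1, K_2$ and $\overline{K_1\sqcup K_2}$ are all proper and non-empty, this is exactly Proposition~\ref{prop:ak1k2}. For the remaining boundary cases I would argue as follows: $p_\phi$ is the Morse minimum of $h$ and serves as a (cohomological) unit, so the coefficient is $\pm 1$ for any product involving $p_\phi$; for products with $K_1\sqcup K_2=[n+2]$, a direct count analogous to Proposition~\ref{prop:ak1k2} identifies a unique rigid flipping holomorphic pearly tree whose pearl is a line meeting $\mathcal{U}(K_1)$, $\mathcal{U}(K_2)$, and the singleton stable manifold of $p_{[n+2]}$, again yielding $\pm 1$.

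With this in hand, I would use supercommutativity (the analogue of Corollary~\ref{cor:cohsigns}) to complete the identification. The singletons $p_{\{j\}}$ have odd $\Z_2$-degree by Proposition~\ref{prop:grading}, so supercommutativity gives $p_{\{j\}}\cdot p_{\{k\}} = -p_{\{k\}}\cdot p_{\{j\}}$ and hence $p_{\{j\}}^2=0$. Iterating the product formula from the first paragraph, for each $K=\{j_1<\cdots<j_k\}\subseteq [n+2]$ there is a sign $\sigma_K=\pm 1$ with $p_{\{j_1\}}\cdots p_{\{j_k\}} = \sigma_K\,p_K$. By the universal property of the exterior algebra, the assignment $e_j\mapsto p_{\{j\}}$ extends to a well-defined algebra homomorphism $\wedgestar\widetilde{M}_{\C}\to H^*(\mathcal{A}')$, whose inverse sends $p_K\mapsto\sigma_K\bigwedge_{j\in K}e_j$. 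Since this is a bijection on the natural bases (both sides have dimension $2^{n+2}$), it is the desired isomorphism.

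The main obstacle will be treating the boundary cases in the coefficient computation, particularly those involving $p_{[n+2]}$, since the pearl argument of Proposition~\ref{prop:ak1k2} was set up under the assumption that all three relevant subsets $K_1,K_2,\overline{K_1\sqcup K_2}$ are proper and non-empty. I expect these can be handled by a case-by-case adaptation of the same argument, together with an appeal to Lemma~\ref{lem:morsecells} to identify the degenerate stable/unstable manifolds; the $\mathbb{T}$-equivariance from Corollary~\ref{cor:tact} and the $\Q$-grading from Corollary~\ref{cor:grading} constrain the answer enough that only the sign requires attention.
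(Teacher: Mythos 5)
Your proposal is correct and follows essentially the same route as the paper: reduce to $\mathcal{A}'$ via Proposition~\ref{prop:ll'}, use $\mu^1=0$ and the weight/grading constraints to see that $\mu^2(p_{K_1},p_{K_2})$ is a scalar multiple of $p_{K_1\sqcup K_2}$ for disjoint $K_1,K_2$, invoke Proposition~\ref{prop:ak1k2} to see the scalar is $\pm 1$, and then use supercommutativity (Corollary~\ref{cor:cohsigns}) together with a dimension count to get the exterior-algebra isomorphism. The paper phrases the last step as "the map from the tensor algebra descends to $\wedgestar\widetilde{M}_\C$, is surjective since the $p_{\{j\}}$ generate, and both sides have the same rank," which is the same content as your universal-property-plus-basis-bijection argument.

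One correction to the "main obstacle" you flagged: Proposition~\ref{prop:ak1k2} is stated for \emph{all} disjoint pairs $K_1,K_2$, and its proof disposes of the boundary cases (where one of $K_1,K_2,\overline{K_1\sqcup K_2}$ is $\phi$ or $[n+2]$) immediately at the outset, because in those cases the pearl is constant (of homology degree $0$ by Proposition~\ref{prop:pearlenergy}), not a line. So your sketch for the case $K_1\sqcup K_2 = [n+2]$ — looking for a rigid pearl which is a line through $\mathcal{U}(K_1)$, $\mathcal{U}(K_2)$, and $\{p_{[n+2]}\}$ — is not quite right (the degree computation forces the pearl to be constant there), but this is moot since the cited proposition already handles it and no extra work is needed.
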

\begin{proof}

We define a homomorphism of $\C$-algebras from the tensor algebra of $\widetilde{M}_{\C}$ to the cohomology algebra of $\mathcal{A}$, by
\begin{eqnarray*} 
\bigoplus_{k=1}^{\infty} (\widetilde{M}_{\C})^{\otimes k} & \To & H^*(\mathcal{A}), \\
e_j & \mapsto & p_{\{j\}} \mbox{ for all $j \in [n+2]$.}
\end{eqnarray*}
By Corollary \ref{cor:cohsigns}, this descends to a homomorphism
\[ \wedgestar \widetilde{M}_{\C} \To H^*(\mathcal{A}).\]
It follows from Proposition \ref{prop:ak1k2} that the elements $p_{\{j\}}$ generate the algebra $H^*(\mathcal{A}')$, and hence the corresponding elements generate $H^*(\mathcal{A})$, by Proposition \ref{prop:ll'}.
Therefore this homomorphism is surjective, so because both sides have the same rank it must be an isomorphism.
\end{proof}

Now we consider the next non-trivial $A_{\infty}$ product in $\mathcal{A}'$, $\mu^{n+2}$.
We aim to compute 
\[\mu^{n+2}(p_{\{\sigma(1)\}},\ldots,p_{\{\sigma(n+2)\}}),\]
where $\sigma$ is a permutation of $[n+2]$ (these are the important products to compute in order to apply deformation theory, because they determine the deformation class of the $A_{\infty}$ structure  (see Section \ref{subsec:def}).

\begin{prop}
\label{prop:mn2}
In $\mathcal{A}'$, we have
\[\mu^{n+2}(p_{\{\sigma(1)\}},\ldots,p_{\{\sigma(n+2)\}}) = \pm p_{\phi},\]
for exactly one permutation $\sigma$ of $[n+2]$.
For all other permutations, the result is $0$.
A different choice of the point $p_{\phi}$ (the minimum of the Morse function $h$) will lead to a different permutation $\sigma$.
\end{prop}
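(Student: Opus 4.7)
The plan is to directly identify the zero-dimensional moduli space, generalizing the one-dimensional argument of Section \ref{subsec:p1} via the classical $(n+3)$-point theorem for rational normal curves. By Proposition \ref{prop:pearlenergy} and the weight and fractional-grading constraints of Lemma \ref{lem:a'prop}, any rigid admissible flipping holomorphic pearly tree contributing to $\mu^{n+2}(p_{\{\sigma(1)\}},\dots,p_{\{\sigma(n+2)\}})$ has total homology class $n$ in $H_2(\CP{n},\RP{n}) \cong \Z$ and output in $\C\langle p_\phi\rangle$. Proposition \ref{prop:dimfpt} gives expected dimension zero, and I would argue that the only rigid contributions come from single-pearl configurations in which each external Morse flowline collapses to zero length, so that the boundary marked points map directly to the $n+3$ real points $p_{\{1\}},\dots,p_{\{n+2\}},p_\phi$. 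Multi-pearl contributions I would rule out by noting that, since the $p_{\{k\}}$ are in general position in $\CP{n}$ (their only linear relation being $\sum_k p_{\{k\}} = 0$), a pearl of degree $d<n$ cannot have enough marked points at the $p_{\{k\}}$'s to simultaneously account for the tree combinatorics and the degree sum.

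The rigid configurations are then degree-$n$ holomorphic disks $u:(D,\partial D)\to(\CP{n},\RP{n})$ with $n+3$ boundary marked points at these $n+3$ fixed points, in some cyclic order determining $\sigma$. By the Schwarz reflection principle, $u$ extends to a degree-$n$ holomorphic map $\CP{1}\to\CP{n}$ defined over $\R$, i.e., a real rational normal curve $C\subset\CP{n}$; the classical theorem that $n+3$ points in general position in $\CP{n}$ lie on a unique rational normal curve of degree $n$ pins $C$ down uniquely (general position being verified from the explicit coordinates of Corollary \ref{cor:critf} together with a generic $p_\phi$). The real locus $C_\R := C\cap\RP{n}$ is a real algebraic circle containing the $n+3$ marked points in a definite cyclic order, and the two hemispheres of $\CP{1}$ bounded by $\RP{1}$ give two candidate disks with opposite boundary orientations, hence two opposite candidate permutations. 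Admissibility (Corollary \ref{cor:adm}) together with Proposition \ref{prop:calcint} selects exactly one: the hemisphere for which every crossing of each $D_j^\R$ by the boundary lift is positive, the other having all such crossings negative. Automatic regularity of holomorphic disks with boundary on $\RP{n}$ (Remark \ref{rmk:autreg}) supplies a single regular point contributing with sign $\pm 1$, while for all other permutations the moduli space is empty by the uniqueness of $C$.

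The dependence on $p_\phi$ is then immediate: replacing $p_\phi$ by a different point on $L'$ changes one of the $n+3$ nodes defining $C$, hence the rational normal curve, the cyclic order on $C_\R$, and $\sigma$. The main technical obstacle I expect is verifying that for the specific points $p_{\{k\}}$ and every generic $p_\phi$, the $n$ intersection points of $C$ with each divisor $D_j$ are all real, so that $C\cdot D_j$ sits entirely on $C_\R$ rather than having complex conjugate pairs in the interior of a hemisphere: this reality property is what ensures that admissibility is achieved by one of the two hemispheres, rather than by neither. I would attack this using the $S_{n+2}$-symmetric structure of the $p_{\{k\}}$ and a sign-change analysis of each coordinate polynomial $z_j(t)$ along $C_\R$, with $t$ a real parameter on $\CP{1}$.
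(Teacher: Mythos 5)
Your single-pearl analysis matches the paper in essence: both invoke Proposition~\ref{prop:pearlenergy} to get degree $n$, use Veronese's $(n+3)$-point theorem, and correctly identify that the heart of the matter is verifying all $n$ intersections of the rational normal curve with each $D_j$ land on the real locus with the right sign. The paper settles this not by the coordinate-by-coordinate sign analysis you sketch at the end, but by writing the curve in the explicit rational parametrization $z_k(t)=\tfrac{n+1}{t-\nu_k}-\sum_{j\neq k}\tfrac{1}{t-\nu_j}$ from Harris, observing that sheet-changes occur precisely at the $\nu_j$, and using the QM--AM inequality to show that every real zero of $z_k$ is a positive crossing; it also distinguishes the two hemispheres not via admissibility (as you propose) but simply because one has boundary lift through $p_{\phi}\in S^n$ and the other through its antipode. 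Your hemisphere-selection criterion (all positive crossings vs.\ all negative) would still need justification and is not what the paper uses.

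The genuine gap is your exclusion of multi-pearl configurations. The claim that a degree-$d$ pearl with $d<n$ ``cannot have enough marked points at the $p_{\{k\}}$'s to account for the tree combinatorics and the degree sum'' is not correct, and the paper's argument shows exactly why: such configurations \emph{do} exist. A pearl carrying inputs $p_{\{1\}},\dots,p_{\{k+1\}}$ and one internal flipping edge at $q$ has degree $k$ by the labeling constraints of Remark~\ref{rmk:labelpearl}, and degree-$k$ curves through those $k+1$ points do pass through a $k$-dimensional family of points $q$. What kills these contributions is not a counting obstruction but a transversality failure: any degree-$k$ curve through $p_{\{1\}},\dots,p_{\{k+1\}}$ is forced into the linear subspace $F_{\overline{[k+1]}}$ (a B\'ezout argument), so the evaluation map at $q$ has image in $F_{\overline{[k+1]}}\cap S^n$. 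Crucially, this locus is preserved by the negative gradient flow of $f$ --- a consequence of the $S_{n+2}$-equivariance of $f$ --- so the flowline emanating from $q$ never leaves $F_{\overline{[k+1]}}$. Hence the evaluation map at the other end of the edge has image of dimension one less than expected, and for a generic choice of $p_{\phi}$ (and the paper then argues this persists under small perturbations of the datum) the moduli space is empty. Without this use of the equivariance of the Morse function, the multi-pearl terms have no reason to vanish, and the computation of $\mu^{n+2}$ would not close.
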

\begin{proof}
First, note that $p_{\phi}$ is the only term that can appear in this product, for grading reasons (Corollary \ref{cor:grading}). 

Note also that $\mathcal{U}(p_{\{j\}}) = \{p_{\{j\}}\}$ and $\mathcal{S}(p_{\phi}) = \{p_{\phi}\}$,
so the external gradient flowlines of the flipping holomorphic pearly trees contributing to the coefficient of $p_{\phi}$ in this product are constant.
We split the proof into two parts: counting the flipping holomorphic pearly trees with a single `pearl' (we show that these give the desired answer) and proving that there are no `multiple-pearl trees' contributing to the product.

For the first part, Proposition \ref{prop:pearlenergy} shows that a disk contributing to this product must have degree $n$. 
By pairing such a disk with its conjugate we obtain a degree-$n$ curve through the $n+3$ points $p_{\{1\}},\ldots,p_{\{n+2\}}, p_{\phi}$.
It is a classical theorem of Veronese that there is a unique rational normal curve through $n+3$ generic points in $\CP{n}$.
A constructive proof is given in \cite[p. 10]{harrisbook}.
We just need to check that this curve satisfies the conditions required for the definition of an admissible flipping holomorphic pearly tree -- namely, the curve should be real, and its real part should admit a lift to $S^n$ which changes sheet at each point $p_{\{j\}}$ and crosses the hypersurfaces $D_k^{\R}$ positively.

By the construction in \cite{harrisbook}, we can parametrize our curve as $u: \CP{1} \To \CP{n}$,
\begin{eqnarray*}
u(z) & := & \left( \begin{array}{cccc}
				n+1 & -1 &\ldots & -1 \\
				-1 & n+1 & \ldots & -1 \\
				\vdots & \vdots & \ddots & \vdots \\
				-1 & -1 & \ldots & n+1
				\end{array} \right) 
				\left( \begin{array}{c}
						(z-\nu_1)^{-1} \\
						(z-\nu_2)^{-1} \\
						\vdots \\
						(z-\nu_{n+2})^{-1}
						\end{array} \right) \\
		& =& \left[ \frac{n+1}{z-\nu_1} - \sum_{j \neq 1} \frac{1}{z-\nu_j} : \frac{n+1}{z-\nu_2} - \sum_{j \neq 2} \frac{1}{z-\nu_j} : \ldots :   \frac{n+1}{z-\nu_{n+2}} - \sum_{j \neq n+2} \frac{1}{z-\nu_j} \right]. 
\end{eqnarray*}
Observe that this curve has degree $n$: if we clear denominators, the leading coefficients $z^{n+1}$ in all factors cancel, leaving polynomials of degree $n$.
Furthermore, we have
\[u(\nu_j) = [-1:-1:\ldots:n+1:\ldots:-1] = p_{\{j\}}.\]
We choose the $\nu_j$ so that $u(0) = p_{\phi}$, i.e.,
\[\left( \begin{array}{cccc}
				n+1 & -1 &\ldots & -1 \\
				-1 & n+1 & \ldots & -1 \\
				\vdots & \vdots & \ddots & \vdots \\
				-1 & -1 & \ldots & n+1
				\end{array} \right) 
				\left( \begin{array}{c}
						\nu_1^{-1} \\
						\nu_2^{-1} \\
						\vdots \\
						\nu_{n+2}^{-1}
						\end{array} \right) = p_{\phi}.\]

Note that this parametrization automatically gives a lift of the boundary $\RP{1}$ to $\R^{n+1}\setminus\{0\}$ and hence to $S^n$.
Furthermore, the parametrization changes sheets exactly at the flipping points $\nu_j$, because the sign of the dominant term $(z-\nu_j)^{-1}$ changes there.
We just have to check that it crosses all of the real hypersurfaces $D_k^{\R}$ positively.
This is true because if
\[ \frac{n+1}{z-\nu_k} - \sum_{j \neq k} \frac{1}{z-\nu_j} = 0,\]
then the derivative
\[ -\frac{n+1}{(z-\nu_k)^2} + \sum_{j \neq k} \frac{1}{(z-\nu_j)^2} > 0\]
by the quadratic-arithmetic mean inequality (alternatively one can graph the function).

Thus, the two halves of this curve are the only disks that can contribute to such a product, and only one passes through $p_{\phi}$ (the other has the opposite lift of the boundary, hence passes through the antipode of $p_{\phi}$).
The permutation $\sigma$ is determined by the ordering of the coordinates of the chosen point $p_{\phi}$.
 
It is clear from our construction that this pearl is regular. 
Namely, because we have exhibited a construction of a degree-$n$ curve through $n+3$ arbitrary generic points in $\RP{n}$, if we fix all boundary points $p_{\{j\}}, p_{\phi}$ except for one, then the evaluation map at the remaining point is transverse to the point.

Now we proceed with the second part of the proof, namely showing that multiple-pearl trees do not contribute.
Suppose we have a contribution from a multiple-pearl tree.
The tree must contain a pearl with exactly one internal edge attached. 
Without loss of generality it has input flipping generators $p_{\{1\}},\ldots,p_{\{k+1\}}$ and a single Morse flowline attached at point $q$, as shown in Figure \ref{fig:deg} (it may also have the `output' point $p_{\phi}$ on its boundary, but whether it does or not is irrelevant to the following argument).
If $q$ is non-flipping then it follows from Remark \ref{rmk:labelpearl} that $k=n+1$, so this is not a multiple-pearl tree.
If $q$ is flipping, then it follows by Remark \ref{rmk:labelpearl} that it has degree $k$, where we assume $k<n$.

\begin{figure}
\centering
\includegraphics[width=0.6\textwidth]{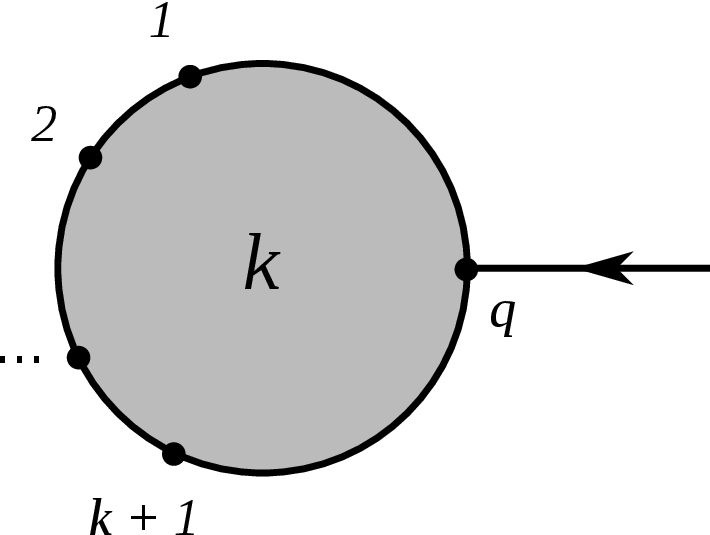}
\caption{Part of a multiple-pearl tree that may contribute to $\mu^{n+2}$.
The label `$j$' on a marked point means that marked point gets mapped to $p_{\{j\}}$, while the big label `$k$' in the middle of the pearl means that pearl has degree $k$.
\label{fig:deg}
}
\end{figure}

Any degree-$k$ curve in $\CP{n}$ is contained in a linear subspace of dimension $k$ (this can be proved by induction on $n$: choose any $k+1$ points on the curve and a hyperplane through those points, then the hyperplane intersects the degree-$k$ curve in more than $k$ points so the curve is contained in the hyperplane by Bezout's Theorem).
In our case, there is a unique dimension-$k$ linear subspace through the points $p_{\{1\}},\ldots,p_{\{k+1\}}$, namely $F_{\overline{[k+1]}}$ (to clarify: $\overline{[k+1]} = \{k+2,\ldots,n+2\}$).

Therefore our pearl is a degree-$k$ curve in a $k$-dimensional projective space, so by the first half of the argument, the evaluation map at $q$ runs over an open subset of $F_{\overline{[k+1]}}$.
But this subspace is preserved by the Morse flow of $f$, by the equivariance of $f$ with respect to the $S_{n+2}$ action.
Hence the Morse flow at $q$ is parallel to the evaluation map, so the evaluation map at $p_{\phi}$ has dimension (at least) $1$ less than expected.
Thus, for a generic choice of $p_{\phi}$, the moduli space will be empty.

Thus the only contributions to the product come from the single-pearl tree, which gives the advertised result.
\end{proof}

\begin{rmk}
We observe that the final argument, in which we showed that multiple-pearl flipping holomorphic pearly trees do not contribute to the product, remains true even if we make a small change in our perturbation data: 
observe that, by Remark \ref{rmk:labelpearl}, $q$ lies in the region $S^n_{[k+1]}$.
If we perturb the holomorphic curve equation by a small amount, the perturbed evaluation map at $q$ can be made arbitrarily $C^0$-close to the unperturbed one. 
Thus, the image of the perturbed evaluation map at $q$ is contained in an arbitrarily small open neighbourhood of $F_{\overline{[k+1]}} \cap S^n_{[k+1]}$.

Now the Morse flowline emanating from $q$ remains inside the region $S^n_{[k+1]}$, since flipping flowlines cannot cross the hypersurfaces by Corollary \ref{cor:adm}.
But $F_{\overline{[k+1]}}\cap S^n_{[k+1]}$ is exactly the intersection of the unstable manifold of $p_{[k+1]}$ with $S^n_{[k+1]}$, so the flowline remains inside an arbitrarily small open neighbourhood of $F_{\overline{[k+1]}} \cap S^n_{[k+1]}$.
Given that, for generic $p_{\phi}$, the evaluation map at the other end of the Morse flowline misses $F_{\overline{[k+1]}}$, it also misses a sufficiently small neighbourhood of it.
Therefore, for a sufficiently small perturbation, the moduli space remains empty.
\end{rmk}

\subsection{Versality of $\mathcal{A}'$}
\label{subsec:def}

We aim to prove Theorem \ref{thm:mirrsym} by applying the techniques of \cite[Section 3]{seidel03}, in the equivariant setting.
All our conventions on signs and gradings are taken from that paper.
We review some necessary definitions and results.

\begin{defn}
\label{defn:a}
Consider the $\Q$-graded algebra 
\[A := \wedgestar \left( \widetilde{M}_{\C} \right),\]
where the grading is given by $n/(n+2)$ times the normal ($\Z$-)grading.
Define an action of the character group of $M$, 
\[\mathbb{T} := \mathrm{Hom}(M,\C^*),\]
on $A$ by 
\[ \alpha \cdot e := \alpha(e)e.\]
Let $\mathfrak{A}(A)$ denote the set of $\Q$-graded, $\mathbb{T}$-equivariant $A_{\infty}$-algebras with underlying graded vector space $A$, $\mu^1 = 0$ and
\[ \mu^2(a_2,a_1) = (-1)^{|a_1|}a_2 \wedge a_1.\]
\end{defn}

\begin{prop}
\label{prop:hha}
Recall that the ($\mathbb{T}$-equivariant) Hochschild cohomology of $A$ is given by the Hochschild-Kostant-Rosenberg isomorphism \cite{hkr}:
\[ HH^{s+t}(A,A)^{t,\mathbb{T}} \cong \bigoplus _{\frac{2}{n+2}s + \frac{n}{n+2}j = s+t} \left(Sym^s(\widetilde{M}_{\C} ^{\vee}) \otimes \Lambda ^j \left(\widetilde{M}_{\C}  \right)\right)^{\mathbb{T}}.\]
For $d>2$, we have
\[HH^2(A,A)^{2-d,\mathbb{T}} = \left\{ \begin{array}{ll}
                                          \C \cdot W & \mbox{ for $d = n+2$} \\
                                          0 & \mbox{ otherwise,}
                                          \end{array} \right.\]
where $W = z_1\ldots z_{n+2} = z^{e_{[n+2]}}$ is the superpotential of the mirror, viewed as an element of the symmetric tensor product $Sym^{n+2}(\widetilde{M}_{\C}^{\vee})$.
\end{prop}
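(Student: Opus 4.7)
The strategy is to establish the general HKR-type isomorphism first, then extract the specific bigraded piece and impose $\mathbb{T}$-invariance. For the first step, I would use the Koszul bimodule resolution of $A = \wedgestar \widetilde{M}_{\C}$: since $A$ is a Koszul algebra with Koszul dual $\mathrm{Sym}(\widetilde{M}_{\C}^{\vee})$, it admits a minimal free resolution as an $A^e$-module of the form
\[ \cdots \To A \otimes \mathrm{Sym}^s(\widetilde{M}_{\C}^{\vee})^{\vee} \otimes A \To \cdots \To A \otimes \widetilde{M}_{\C}^{\vee\vee} \otimes A \To A \otimes A \To A.\]
Applying $\mathrm{Hom}_{A^e}(-,A)$ collapses the differential to zero (this is the content of formality of the Koszul complex in the commutative case and an elementary verification here), yielding
\[ HH^*(A,A) \cong \mathrm{Sym}^*(\widetilde{M}_{\C}^{\vee}) \otimes \wedgestar \widetilde{M}_{\C}\]
as a bigraded algebra, where $s$ counts symmetric powers (the length degree of the Hochschild cochain) and $j$ counts the exterior power of the output.

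Next I would carefully track the gradings. An element of $\mathrm{Sym}^s(\widetilde{M}_{\C}^{\vee}) \otimes \Lambda^j \widetilde{M}_{\C}$ represents an $s$-linear Hochschild cochain whose output lies in $\Lambda^j \widetilde{M}_{\C}$. With the $\Q$-grading $|e_j| = n/(n+2)$ (and hence $|e_j^{\vee}| = -n/(n+2)$ as an internal degree), the cochain degree $s + t$ and the internal degree $t$ are computed as follows: the $s$ inputs contribute $-s \cdot n/(n+2)$ to internal degree but raise the Hochschild length by $s$, so their total cohomological contribution is $s(1 - n/(n+2)) = 2s/(n+2)$, while the output in $\Lambda^j \widetilde{M}_{\C}$ contributes $nj/(n+2)$ to both internal and total degree. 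This yields exactly the indexing equation $\frac{2}{n+2}s + \frac{n}{n+2}j = s+t$ with the required $\mathbb{T}$-equivariant restriction.

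For the specific claim, set $s + t = 2$ and $t = 2-d$, so $s = d$. The constraint becomes $nj = 2(n+2-d)$, which forces $j \geq 0$ and $d \leq n+2$, and moreover requires $n \mid 2(n+2-d)$. I would now impose $\mathbb{T}$-invariance on $\mathrm{Sym}^d(\widetilde{M}_{\C}^{\vee}) \otimes \Lambda^j \widetilde{M}_{\C}$: a monomial $z^{\alpha} \otimes e_{K}$ (with $|\alpha| = d$, $|K| = j$) has weight $-\sum \alpha_i e_i + e_K \in M$, so invariance means $\sum \alpha_i e_i - e_K$ is an integer multiple of $e_{[n+2]}$ in $\widetilde{M}$. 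A quick accounting shows that with $|\alpha| = d$ and $|K| = j$ satisfying $2d + nj = 2(n+2)$, the only solution with all $\alpha_i \geq 0$ is $d = n+2$, $j = 0$, $\alpha = (1,1,\ldots,1)$, giving the one-dimensional space spanned by $W = z_1 \cdots z_{n+2}$.

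The main obstacle I anticipate is the careful bookkeeping of the grading conventions so that the exponents $(s,j)$ correctly match up with the cohomological and internal degrees $(s+t,t)$ in the form stated. The computation of Hochschild cohomology via the Koszul resolution is standard, and the $\mathbb{T}$-invariance argument is combinatorial; the subtle point is fixing signs and conventions so that the element $z_1 \cdots z_{n+2}$ really lives in $HH^2(A,A)^{2-(n+2),\mathbb{T}}$ and not some neighbouring piece.
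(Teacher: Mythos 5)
Your proposal is correct and, for the combinatorial heart of the argument, follows essentially the same route as the paper. The paper simply cites the HKR-type computation $HH^*(\Lambda V, \Lambda V) \cong \mathrm{Sym}(V^\vee)\otimes \Lambda V$ and then does the degree and equivariance bookkeeping; you add a sketch of why the HKR isomorphism holds (via the Koszul bimodule resolution of the exterior algebra). That is a reasonable supplement, though one should note that $\Lambda V$ is not a smooth commutative algebra, so the citation is to the formula rather than to the literal HKR theorem — your Koszul-resolution argument is in fact the cleaner way to see it. The grading computation $s+t = \tfrac{2}{n+2}s + \tfrac{n}{n+2}j$ is carried out correctly.

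One place to tighten the exposition: the sentence \emph{``the only solution with all $\alpha_i\ge 0$ is $d=n+2$, $j=0$''} is only true after imposing all three constraints simultaneously ($2d+nj=2(n+2)$, $\mathbb{T}$-invariance $\alpha = e_K + q e_{[n+2]}$ with $q\in\Z_{\ge 0}$, and $d>2$), and it glosses over the case $j=1$, which (for $n$ even) is consistent with $2d+nj=2(n+2)$ and positivity but is killed only by $\mathbb{T}$-invariance, since it forces $q=1/2\notin\Z$. Similarly $j=2$ is killed only by $d>2$. The paper's proof organizes this more compactly by working with $(K,q)$ from the start, deducing $|K|+2q=2$ directly and then using $d = 2+nq > 2 \Rightarrow q>0 \Rightarrow (K,q)=(\phi,1)$, so you might mention that substitution as a shortcut. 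But these are presentation points, not gaps; the argument goes through.
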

\begin{proof}
Suppose we have a generator
\[ z^a \underset{k \in K}{\wedge} e_k \in HH^2(A,A)^{2-d,\mathbb{T}}.\]
Here $a \in \widetilde{M}_{\ge 0}^{\vee}$, $K \subset [n+2]$ and $d = \mathrm{deg}(z^a) > 2$.
$\mathbb{T}$-equivariance simply says that
\[ a = e_K + qe_{[n+2]}\]
for some $q \in \Z_{\ge 0}$ (here we identify $\widetilde{M}^{\vee}$ with $\widetilde{M}$ in the natural way).
To lie in $HH^2$ we must have
\begin{eqnarray*}
2&=&\frac{2}{n+2} \mathrm{deg}(z^a) + \frac{n}{n+2}|K| \\
&=& \frac{2}{n+2}(|K| + q(n+2)) + \frac{n}{n+2}|K| \\
&=& |K| + 2q.
\end{eqnarray*}
Now we have
\[2< d = \mathrm{deg}(z^a) = |K| + (n+2)q = 2 + nq,\]
hence $q>0$.
Therefore, we must have $K = \phi,q=1$ and $a = e_{[n+2]}$.
Thus the generator is $z^a = W$.
\end{proof}

\begin{prop}
\label{prop:versal}
$\mathcal{A}'$ is a versal element of $\mathfrak{A}(A)$, in the sense of a $\mathbb{T}$-equivariant version of \cite[Lemma 3.2]{seidel03}, with deformation class $\pm W \in HH^2(A,A)^{-n,\mathbb{T}}$.
In particular, any element of $\mathfrak{A}(A)$ with the same deformation class is quasi-isomorphic to $\mathcal{A}'$.
\end{prop}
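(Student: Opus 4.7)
The plan is to prove versality in two main steps, both following the framework of \cite[Section 3]{seidel03} adapted to the $\mathbb{T}$-equivariant setting.

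First, I will identify the deformation class of $\mathcal{A}'$. By Lemma \ref{lem:a'prop}, $\mathcal{A}' \in \mathfrak{A}(A)$ and its higher products $\mu^d$ vanish unless $d = 2 + nq$ for some $q \in \Z_{\ge 0}$; consequently the first potentially non-trivial higher product is $\mu^{n+2}$, which defines a $\mathbb{T}$-equivariant Hochschild $2$-cocycle of internal degree $-n$. By Proposition \ref{prop:hha} its cohomology class lies in $HH^2(A,A)^{-n,\mathbb{T}} = \C \cdot W$. Tracing through the HKR isomorphism, the cocycle assigned to $W = z_1 \cdots z_{n+2}$ vanishes on any pure tensor $e_{\sigma(1)} \otimes \cdots \otimes e_{\sigma(n+2)}$ except for a single distinguished cyclic ordering, on which it equals $\pm 1 \in A$. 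Proposition \ref{prop:mn2} tells us that $\mu^{n+2}(p_{\{\sigma(1)\}}, \ldots, p_{\{\sigma(n+2)\}}) = \pm p_\phi$ for exactly one permutation $\sigma$ and vanishes on all others, matching exactly the profile of the HKR image of $W$. Hence $[\mu^{n+2}] = \pm W$ and, crucially, is non-zero.

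Second, I will apply a $\mathbb{T}$-equivariant version of \cite[Lemma 3.2]{seidel03} to deduce the versality statement. Given any $\mathcal{B} \in \mathfrak{A}(A)$ whose deformation class also equals $\pm W$, the aim is to build a $\mathbb{T}$-equivariant $A_\infty$ quasi-isomorphism $\phi: \mathcal{A}' \To \mathcal{B}$ order by order, starting from $\phi^1 = \mathrm{id}_A$. Assuming $\phi$ has been constructed up to order $d-1$, the obstruction to extending it at order $d$ is a $\mathbb{T}$-equivariant Hochschild $2$-cocycle of internal degree $2-d$, whose class lies in $HH^2(A,A)^{2-d,\mathbb{T}}$. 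By Proposition \ref{prop:hha}, this group vanishes for every $d > 2$ with the sole exception $d = n+2$; at that one level, the obstruction represents the difference of the two deformation classes, which is zero by hypothesis. Thus at every order the obstruction is a coboundary and $\phi^d$ can be adjusted to realize the extension, yielding the desired quasi-isomorphism.

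The main technical obstacle will be the careful sign-and-permutation bookkeeping needed to verify that $[\mu^{n+2}]$ corresponds under the HKR isomorphism precisely to $\pm W$ in the $\mathbb{T}$-equivariant Hochschild complex; once that identification is pinned down, the Hochschild vanishing of Proposition \ref{prop:hha} makes every obstruction beyond the leading order automatically trivial, so the inductive construction of the quasi-isomorphism is essentially formal.
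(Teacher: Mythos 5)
Your proposal is correct and follows essentially the same route as the paper: verify $\mathcal{A}'\in\mathfrak{A}(A)$ via Lemma \ref{lem:a'prop}, use the vanishing $\mu^k=0$ for $2<k<n+2$ together with Proposition \ref{prop:mn2} to identify the deformation class as $\pm W$ under HKR (the paper does this by computing $\mu^{n+2}(\bm{z},\ldots,\bm{z})=\pm W$, which is cleaner than your "profile-matching" phrasing, but the content is the same), and then invoke the vanishing of $HH^2(A,A)^{2-d,\mathbb{T}}$ for $d>2$, $d\neq n+2$ from Proposition \ref{prop:hha} to kill all higher obstructions. Your explicit order-by-order obstruction argument is exactly the content of the $\mathbb{T}$-equivariant analogue of \cite[Lemma 3.2]{seidel03} that the paper cites without spelling out.
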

\begin{proof}
The fact that $\mathcal{A}'$ lies in $\mathfrak{A}(A)$ follows from our previous results, namely Lemma \ref{lem:a'prop}:
\begin{itemize} 
\item $\mu^1 = 0$ as the only non-zero $A_{\infty}$ products are $\mu^{2+nq}$ for $q \in \Z_{\ge 0}$;
\item the underlying algebra is $A$ (Theorem \ref{thm:zcoeffs2});
\item the grading on $A$ is $n/(n+2)$ times the usual grading;
\item it is equivariant with respect to the action of $\mathbb{T}$.
\end{itemize}

The fact that $\mathcal{A}'$ is versal follows from the results:
\begin{itemize}
\item $\mu^k = 0$ for $2<k<n+2$ (by the analogue of Corollary \ref{cor:prodzero});
\item The first non-trivial higher product $\mu^{n+2}$ satisfies
\[\mu^{n+2}(e_1,\ldots,e_{n+2})= \pm 1\]
(without loss of generality) but is $0$ on all other permutations of the generators $e_i$ (Proposition \ref{prop:mn2}).
Therefore the deformation class of $\mathcal{A}'$ in $HH^2(A,A)^{-n}$ is given (by the HKR isomorphism) by
\[\mu^{n+2}(\bm{z},\ldots,\bm{z}) = \pm z_1\ldots z_{n+2} = \pm W(z),\]
where $\bm{z} = \sum_j z_j e_j$.
Combining this with Proposition \ref{prop:hha} gives the result.
\end{itemize}
\end{proof}

\section{Matrix factorizations}
\label{sec:matfact}

We now consider the other side of mirror symmetry. 
Recall (from the Introduction) that the putative mirror to $\mathcal{P}^n$ is the Landau-Ginzburg model $(\mathrm{Spec} (R),W)$, 
where
\begin{eqnarray*}
R &:= & \C \left[ \widetilde{M} \right] \\
 W &= &z^{e_{[n+2]}}.
 \end{eqnarray*}
Observe that there is a natural action of $\mathbb{T}$ on $R$ that preserves $W$ (recall $\mathbb{T}  := \mathrm{Hom}(M,\C^*)$).

Also recall (from the Introduction) that the $B$-model on $(\mathrm{Spec} (R), W)$ is given by the triangulated category of singularities of $W^{-1}(0)$, which is quasi-equivalent (by \cite[Theorem 3.9]{orlov04}) to the category $MF(R,W)$ of matrix factorizations of $W$.
The object corresponding to our Lagrangian $L^n$ is the skyscraper sheaf at the origin,
\[ \mathcal{O}_0 \in D^b_{\mathrm{Sing}}(W^{-1}(0)).\]
Henceforth, we work entirely in the category $MF(R,W)$.
We abuse notation, and denote also by $\mathcal{O}_0$ the matrix factorization corresponding to $\mathcal{O}_0$ under the above quasi-equivalence.

To prove Theorem \ref{thm:mirrsym}, we must show that the differential $\Z_2$-graded algebra of endomorphisms of $\mathcal{O}_0$,
\[\mathcal{B} := \mathrm{Hom}^*_{MF(R,W)}(\mathcal{O}_0,\mathcal{O}_0),\]
is quasi-isomorphic to $\mathcal{A}$.

It is explained in \cite{dyckerhoff09} how to compute a minimal $A_{\infty}$ model for the endomorphism algebra of $\mathcal{O}_0$.
That paper focuses on the case where $W$ has an isolated singularity at $0$, which is certainly not true in our case, but the computation of the minimal $A_{\infty}$ model does not rely on this assumption.
We briefly review the construction, explaining how the $\mathbb{T}$-action enters the picture.

The matrix factorisation corresponding to $\mathcal{O}_0$ is the Koszul resolution of $\mathcal{O}_0$
\[ R \otimes \wedgestar \widetilde{M}\]
with the deformed differential
\[ \delta := \iota_{u} + v \wedge \cdot \]
where
\begin{eqnarray*}
 u &=& \sum_{j=1}^{n+2} z_j \theta_j^{\vee} \in R \otimes \wedgestar \widetilde{M}^{\vee} \\
v &=& \sum_{j=1}^{n+2} a_j \frac{W}{z_j} \theta_j \in R \otimes \wedgestar \widetilde{M}
\end{eqnarray*}
where $\{\theta_j\}$ is a relabeling of the canonical basis for $\widetilde{M}$, $\{ \theta_j^{\vee} \}$ is the dual basis of $\widetilde{M}^{\vee}$, and $a_j$ are numbers adding up to $1$. 
Alternatively, we can write this matrix factorisation as
\[ \left( R \left\langle \theta_1,\ldots,\theta_{n+2} \right\rangle, \delta \right),\]
where
\[ \delta = \sum_j z_j \del{}{\theta_j} + a_j \frac{W}{z_j} \theta_j.\]

The endomorphism algebra of $\mathcal{O}_0$ is the algebra
\[ R \otimes \wedgestar \widetilde{M}^{\vee} \otimes \wedgestar \widetilde{M}.\]
This can be thought of as the commutative algebra of differential operators
\[ \mathcal{B} := R \left\langle \theta_1,\ldots,\theta_{n+2},\del{}{\theta_1},\ldots, \del{}{\theta_{n+2}} \right\rangle\]
with the differential given by $d = [\delta, -]$.
One can check that
\begin{eqnarray*}
d(\theta_j) &=& z_j \\
d \left( \del{}{\theta_j} \right) &=& a_j \frac{W}{z_j}.
\end{eqnarray*}
Thus the cohomology algebra $H^*(\mathcal{B},d)$ is generated by the elements
\[ \bar{\partial}_j : = \del{}{\theta_j} - a_j \frac{W}{z_j z_k} \theta_k\]
for some $k \neq j$ (this is proven in \cite{dyckerhoff09} by constructing an explicit homotopy contracting $\mathcal{B}$ onto the subcomplex generated by the $\bar{\partial}_j$).
The generators $\bar{\partial}_j$ supercommute, so the cohomology algebra can be naturally identified with
\[ A = \wedgestar(\widetilde{M}_{\C})\]
via
\[ \bar{\partial}_j \mapsto e_j.\]
This proves that
\[H^*\left( \mathrm{Hom}^*_{MF(R,W)}(\mathcal{O}_0,\mathcal{O}_0)\right) \cong \wedgestar \C^{n+2}\]
as $\Z_2$-graded associative $\C$-algebras.

We observe that the action of $\mathbb{T}$ extends in the natural way to $\mathcal{B}$, and that $\delta$ is invariant under the action of $\mathbb{T}$, so the differential algebra structure of $\mathcal{B}$ is $\mathbb{T}$-equivariant.

Furthermore, observe that if we assign $\Q$-gradings
\[ |z_j| = \frac{2}{n+2}, \,\,\, |\theta_j| = -\frac{n}{n+2}, \,\,\, \left| \del{}{\theta^j} \right| = \frac{n}{n+2},\]
then the product structure on $\mathcal{B}$ respects the grading (because $|\theta_j| + |\partial / \partial \theta_j| = 0$), and the differential on $\mathcal{B}$ has degree $|\delta| = +1$.
Therefore $(\mathcal{B},d)$ is a $\mathbb{T}$-equivariant differential ($\Q$-)graded algebra.
Observe that the grading on the cohomology algebra $A$ is $n/(n+2)$ times the usual one, as
\[ | \bar{\partial}_j | = \frac{n}{n+2}.\]

In \cite[Section 4]{dyckerhoff09}, it is shown how to construct a homotopy contracting $\mathcal{B}$ onto its cohomology, and hence (via the homological perturbation lemma) a minimal $A_{\infty}$ model for $\mathcal{B}$.
The homotopy used is manifestly $\mathbb{T}$-equivariant in our setting (see \cite{dyckerhoff09} to check this), so the resulting minimal model is also $\mathbb{T}$-equivariant.
Furthermore, the homotopy has degree $0$ with respect to the grading introduced above, so the $\Q$-grading is preserved under the perturbation lemma construction (in the sense that the $A_{\infty}$ product $\mu^k$ has degree $2-k$ with respect to this grading).
Thus we obtain a $\mathbb{T}$-equivariant, $\Q$-graded minimal $A_{\infty}$ model for $\mathcal{B}$, which we shall denote by $\mathcal{B}'$.
It is clear from our discussion that $\mathcal{B}'$ satisfies the necessary conditions to lie in $\mathfrak{A}(A)$.

\begin{prop}
\label{prop:aprimeversal}
$\mathcal{B}'$ is a versal element of $\mathfrak{A}(A)$, in the same $\mathbb{T}$-equivariant sense as in Proposition \ref{prop:versal}.
It has the same deformation class as $\mathcal{A}'$.
\end{prop}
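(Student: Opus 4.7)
The argument mirrors that of Proposition \ref{prop:versal}. First, the purely algebraic reasoning used to derive Corollary \ref{cor:prodzero}---which relied only on $\mathbb{T}$-equivariance and the $\Q$-grading built into the definition of $\mathfrak{A}(A)$---applies verbatim to any element of $\mathfrak{A}(A)$, and in particular to $\mathcal{B}'$. Hence $\mu^k_{\mathcal{B}'}=0$ for $2<k<n+2$, and the deformation class of $\mathcal{B}'$ lies in the one-dimensional space $HH^2(A,A)^{-n,\mathbb{T}}=\C\cdot W$ computed in Proposition \ref{prop:hha}. Any scalar rescaling of this class can be absorbed into a quasi-isomorphism obtained by uniformly rescaling the generators $\bar\partial_j$, so both conclusions of the proposition---versality of $\mathcal{B}'$, and the matching of its deformation class with that of $\mathcal{A}'$---reduce to the single statement that the deformation class of $\mathcal{B}'$ is \emph{nonzero}.

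Next, I would compute $\mu^{n+2}_{\mathcal{B}'}(\bar\partial_1,\ldots,\bar\partial_{n+2})$ explicitly via the homological perturbation lemma, applied to the contracting homotopy $(i,\pi,h)$ of $\mathcal{B}$ onto its cohomology $A$ constructed in \cite[Section 4]{dyckerhoff09}. By $\mathbb{T}$-equivariance (the total weight of the inputs is $e_{[n+2]}\equiv 0$ in $M$) and by the $\Q$-grading (total degree $0$), the result is forced to be a scalar multiple of the unit $1\in H^0(\mathcal{B})\cong A$; the task is to verify this scalar is nonzero. HPL expresses it as a sum over planar binary trees, each internal node contributing the dg-algebra product in $\mathcal{B}$ and each internal edge an application of $h$. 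Because $h$ and the multiplication on $\mathcal{B}$ both preserve the $\mathbb{T}$-weighting, only very few combinatorial configurations can produce an output of trivial weight, and the resulting calculation---essentially analogous to those carried out in \cite[Section 4.2]{dyckerhoff09} for matrix factorizations---yields an explicit nonzero scalar.

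The principal technical burden is the combinatorial and sign bookkeeping inside the HPL tree sum, which could in principle introduce unexpected cancellations. A cleaner alternative is to bypass the full computation and merely exhibit a single nontrivial Massey product in $\mathcal{B}$: by the one-dimensionality of the deformation space, the existence of \emph{any} nonvanishing higher $A_\infty$-operation in $\mathcal{B}'$ forces its deformation class to be a nonzero multiple of $W$, which is all that is required. Producing such a Massey product is elementary since one can write down explicit representatives for $\bar\partial_{j_1},\bar\partial_{j_2},\bar\partial_{j_3}$ and a primitive for their pairwise products in $\mathcal{B}$, then check that the resulting secondary product is cohomologically nonzero. Either route completes the proof.
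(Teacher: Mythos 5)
Your main route coincides with the paper's: vanishing of $\mu^k$ for $2<k<n+2$ from the $\Q$-grading and $\mathbb{T}$-equivariance, then computation of $\mu^{n+2}(\bar\partial_1,\ldots,\bar\partial_{n+2})$ via the homological perturbation lemma applied to Dyckerhoff's contracting homotopy, citing \cite[Section 4]{dyckerhoff09}. Your observation that a nonzero scalar in the deformation class can be absorbed by uniformly rescaling the generators (changing the class by a factor of $\lambda^{n+2}$, hence achieving any nonzero scalar over $\C$) is correct and slightly cleaner than the paper's ``$\pm W$'' formulation, which leaves the sign ambiguity implicit.

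However, your proposed ``cleaner alternative'' via a triple Massey product does not work. For distinct indices $j_1,j_2,j_3$, the pairwise products $\bar\partial_{j_1}\bar\partial_{j_2}$ and $\bar\partial_{j_2}\bar\partial_{j_3}$ represent the \emph{nonzero} cohomology classes $e_{j_1}\wedge e_{j_2}$ and $e_{j_2}\wedge e_{j_3}$ in $\Lambda \widetilde{M}_{\C}$, so they have no primitive in $\mathcal{B}$ and the triple Massey product is simply undefined. If you repeat indices so that the pairwise products do vanish (forcing $j_1=j_2=j_3=j$), the resulting secondary product carries $\mathbb{T}$-weight $3e_j\in M$, which is not of the form $e_K+me_{[n+2]}$ for any $K\subset[n+2]$, $m\in\{0,1\}$, hence vanishes by equivariance. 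More fundamentally, the grading and equivariance constraints that put $\mathcal{B}'$ in $\mathfrak{A}(A)$ already force $\mu^3=0$ for $n>1$, so every defined triple Massey product vanishes. The first potentially nonzero higher operation is $\mu^{n+2}$, so there is no lower-order shortcut: detecting non-formality requires engaging with the full $(n+2)$-ary operation, which is exactly what your primary route (and the paper, citing \cite[Theorem 4.8]{dyckerhoff09}) does. Drop the alternative; the primary route is complete and matches the paper.
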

\begin{proof}
The fact that $\mathcal{B}'$ lies in $\mathfrak{A}(A)$ follows from the preceding discussion.
The fact that $\mathcal{B}'$ is versal with the same deformation class as $\mathcal{A}'$ follows from the results:
\begin{itemize}
\item $\mu^k = 0$ for $2<k<n+2$ because of the grading and $\mathbb{T}$-equivariance (exactly as in Corollary \ref{cor:prodzero});
\item The first non-trivial higher product $\mu^{n+2}$ satisfies
\[\mu^{n+2}(e_1,\ldots,e_{n+2})= \pm 1\]
for an appropriate choice of contracting homotopy $h$ (see \cite[Theorem 4.8]{dyckerhoff09})
 but is $0$ on all other permutations of the generators $e_j$ (by similar computations -- one can show that only one tree gives a non-zero contribution to such a product).
Therefore the deformation class of $\mathcal{B}'$ in $HH^2(A,A)^{-n}$ is given (by the HKR isomorphism) by
\[\mu^{n+2}(\bm{z},\ldots,\bm{z}) = \pm z_1\ldots z_{n+2} = \pm W(z),\]
where $\bm{z} = \sum_j z_j e_j$.
\end{itemize}
Combining this with Propositions \ref{prop:hha} and \ref{prop:versal} gives the result.
\end{proof}

\begin{cor}
There are quasi-isomorphisms
\[ \mathcal{A} \cong \mathcal{A}' \cong \mathcal{B}' \cong \mathcal{B}.\]
In particular, Theorem \ref{thm:mirrsym} is proved.
\end{cor}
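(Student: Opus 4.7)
The plan is to assemble the corollary as a chain of three quasi-isomorphisms, each of which has essentially been established in the preceding sections; what remains is to justify why each link in the chain holds and to observe that together they yield Theorem \ref{thm:mirrsym}.

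First, I would justify $\mathcal{A} \cong \mathcal{A}'$. This is the content of Proposition \ref{prop:ll'}: inside the auxiliary $A_\infty$-category $\mathscr{C}$ containing both $L^n_\epsilon$ and $L'$ as objects, the morphisms $f_1 \in CF^*(L^n,L')$ and $f_2 \in CF^*(L',L^n)$ corresponding to the identity class in $CM^*_M(S^n)$ are mutually inverse quasi-isomorphisms for $\epsilon > 0$ sufficiently small. Since quasi-isomorphic objects in an $A_\infty$-category have quasi-isomorphic endomorphism algebras, we conclude $\mathcal{A} = CF^*(L^n_\epsilon,L^n_\epsilon) \cong CF^*(L',L') = \mathcal{A}'$ as $A_\infty$-algebras.

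Next, the link $\mathcal{B}' \cong \mathcal{B}$ follows from the construction of $\mathcal{B}'$. By \cite[Section 4]{dyckerhoff09}, the minimal $A_\infty$-model $\mathcal{B}'$ produced from the $\mathbb{T}$-equivariant contracting homotopy and the homological perturbation lemma is quasi-isomorphic to the differential $\Z_2$-graded algebra $\mathcal{B}$ by the standard output of that lemma. The $\mathbb{T}$-equivariance and $\Q$-grading of the homotopy, as discussed in Section \ref{sec:matfact}, ensure this quasi-isomorphism respects the structure relevant for versality.

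The main content is then the middle link $\mathcal{A}' \cong \mathcal{B}'$, which I would deduce from the deformation-theoretic rigidity established in Propositions \ref{prop:versal} and \ref{prop:aprimeversal}. Both $\mathcal{A}'$ and $\mathcal{B}'$ lie in the class $\mathfrak{A}(A)$ of Definition \ref{defn:a}: each has vanishing $\mu^1$, cohomology algebra $A = \wedgestar \widetilde{M}_\C$ with the $\Q$-grading $\tfrac{n}{n+2}|K|$, and is $\mathbb{T}$-equivariant with $\mu^2$ given by the exterior product (with the Koszul sign). By Proposition \ref{prop:hha}, the ambient deformation space $HH^2(A,A)^{2-d,\mathbb{T}}$ is concentrated in degree $d = n+2$ and spanned by the superpotential $W = z_1\cdots z_{n+2}$. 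Propositions \ref{prop:versal} and \ref{prop:aprimeversal} identify the deformation classes of $\mathcal{A}'$ and $\mathcal{B}'$ in $HH^2(A,A)^{-n,\mathbb{T}}$ as $\pm W$ (extracted from the computation $\mu^{n+2}(e_1,\ldots,e_{n+2}) = \pm 1$ in both cases). The signs can be matched by rescaling the generators if necessary, and versality (the $\mathbb{T}$-equivariant analogue of \cite[Lemma 3.2]{seidel03}) then gives a quasi-isomorphism $\mathcal{A}' \cong \mathcal{B}'$.

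The only subtlety — and the place where I would expect to do the most careful bookkeeping — is verifying that the signs of the deformation classes match up, so that both are literally $+W$ (rather than one being $+W$ and the other $-W$); this is resolved by a choice of generators, since the $\mathbb{T}$-equivariant automorphisms of $A$ act transitively on the scaling of each $e_j$. Composing the three quasi-isomorphisms yields $\mathcal{A} \cong \mathcal{B}$, which is precisely the statement of Theorem \ref{thm:mirrsym}.
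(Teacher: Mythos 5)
Your proof is correct and follows exactly the paper's decomposition into the three links $\mathcal{A}\cong\mathcal{A}'$ (Proposition \ref{prop:ll'}), $\mathcal{A}'\cong\mathcal{B}'$ (Propositions \ref{prop:versal} and \ref{prop:aprimeversal} via $\mathbb{T}$-equivariant versality), and $\mathcal{B}'\cong\mathcal{B}$ (by construction from the homological perturbation lemma), which is precisely how the paper proves it. Your additional remark about matching signs of the deformation class via a rescaling automorphism of $A$ is a sound way to address a point the paper leaves implicit.
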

\begin{proof}
That $\mathcal{A} \cong \mathcal{A}'$ follows from Proposition \ref{prop:ll'}.
That $\mathcal{A}' \cong \mathcal{B}'$ follows from Propositions \ref{prop:versal} and \ref{prop:aprimeversal}, by a $\mathbb{T}$-equivariant version of \cite[Lemma 3.2]{seidel03}.
That $\mathcal{B}' \cong \mathcal{B}$ follows by construction.
\end{proof}

\section{Applications}
\label{sec:app}

\subsection{Covers of $\mathcal{P}^n$}

We recall the behaviour of the Fukaya category with respect to covers, from \cite[Section 8b]{seidel03} and \cite[Section 9]{seidelg2}.

Suppose that 
\[\rho: M \To \Gamma\]
is a homomorphism onto a finite abelian group $\Gamma$.
Let $\mathcal{P}^n_{\Gamma} \To \mathcal{P}^n$ be the associated abelian cover, with covering group $\Gamma$ (recalling that $\pi_1(\mathcal{P}^n) \cong M$).
There is a natural action of $\Gamma^*$ on $\mathcal{A}$, inherited from the action of $\mathbb{T}$ on $\mathcal{A}$ and the embedding $\Gamma^{\ast} \hookrightarrow \mathbb{T}$ (here $\Gamma^{\ast} := \mathrm{Hom}(\Gamma,\C^*)$ is the character group of $\Gamma$). 

\begin{defn}
We define the object 
\[\tilde{L}^n \in Ob(D^{\pi}(\mathcal{F}uk(\mathcal{P}^n_{\Gamma})))\]
to be the direct sum of all lifts of $L^n$.
We define its $A_{\infty}$ endomorphism algebra
\[ \tilde{\mathcal{A}} := CF^*\left( \tilde{L}^n,\tilde{L}^n \right).\]
\end{defn}

\begin{prop}[See \cite{seidel03} or \cite{seidelg2}]
\label{prop:semid}
We have
\[ \tilde{\mathcal{A}} \cong \mathcal{A} \rtimes \Gamma^{\ast}.\]
\end{prop}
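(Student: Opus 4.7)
The plan is to construct a direct isomorphism $\tilde{\mathcal{A}} \cong \mathcal{A} \rtimes \Gamma^{\ast}$ by matching the underlying $\Z_2$-graded vector spaces and the $A_{\infty}$ structure maps blockwise.

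First, I would index the lifts of $L^n$ by $\Gamma$, writing $\tilde{L}^n = \bigoplus_{\gamma \in \Gamma} L^n_{\gamma}$ in the idempotent completion. Since $\pi_1(\mathcal{P}^n) \cong M$ (Corollary \ref{cor:pi1}) and the cover corresponds to $\rho: M \To \Gamma$, each self-intersection $p_K$ of $L^n$, of weight $e_K \in M$ by Proposition \ref{prop:weight}, lifts to an intersection between $L^n_{\gamma}$ and $L^n_{\gamma + \rho(e_K)}$ for each $\gamma \in \Gamma$. This yields the canonical block decomposition
\[ \tilde{\mathcal{A}} = \bigoplus_{\gamma, \gamma' \in \Gamma} CF^*\bigl(L^n_{\gamma}, L^n_{\gamma'}\bigr), \]
in which the $(\gamma,\gamma')$-block is spanned by the $p_K$ with $\rho(e_K) = \gamma' - \gamma$, and is therefore canonically identified with the isotypic component $\mathcal{A}_{\gamma' - \gamma}$ for the $\Gamma^{\ast} \hookrightarrow \mathbb{T}$-action of Corollary \ref{cor:tact}.

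Next I would lift moduli spaces. Any rigid pseudo-holomorphic disk contributing to the coefficient of $p_{K_0}$ in $\mu^k(p_{K_1}, \ldots, p_{K_k})$ satisfies $\sum_j e_{K_j} = e_{K_0}$ in $M$ by Proposition \ref{prop:top}, so its total $\Gamma$-monodromy vanishes. Hence its boundary lifts to a closed loop in $\mathcal{P}^n_{\Gamma}$ and the whole disk lifts in exactly $|\Gamma|$ ways, indexed by the choice of initial lift $\gamma_0 \in \Gamma$ of one boundary arc (the other labels $\gamma_j$ being then determined by $\rho(e_{K_1}), \ldots, \rho(e_{K_k})$). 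Conversely, any disk upstairs projects to one of the counted disks downstairs. Choosing Floer and perturbation data on $\mathcal{P}^n_{\Gamma}$ as the pullback of those on $\mathcal{P}^n$, the cover is a local symplectic isomorphism, so the lift is a bijection of moduli spaces that preserves regularity, dimension, and orientations. Consequently the structure coefficients of $\tilde{\mathcal{A}}$ in each block agree with those of $\mathcal{A}$ on the appropriate isotypic component.

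Finally, I would match the answer with $\mathcal{A} \rtimes \Gamma^{\ast}$. Since $\Gamma^{\ast}$ acts on $\mathcal{A}$ by strict $A_{\infty}$ automorphisms (Corollary \ref{cor:tact}), the semidirect product is a well-defined $A_{\infty}$ algebra on $\mathcal{A} \otimes \C[\Gamma^{\ast}]$. Fourier duality decomposes $\C[\Gamma^{\ast}] = \bigoplus_{\gamma \in \Gamma} \C \cdot e_{\gamma}$ into orthogonal idempotents $e_{\gamma} := |\Gamma|^{-1} \sum_{\chi \in \Gamma^{\ast}} \chi(-\gamma)\,\chi$. A short direct computation with the semidirect-product multiplication shows
\[ e_{\gamma}\bigl(\mathcal{A} \rtimes \Gamma^{\ast}\bigr) e_{\gamma'} \;\cong\; \mathcal{A}_{\gamma - \gamma'}, \]
and that the restrictions of the $\mu^k_{\rtimes}$ to these blocks coincide with the operations of $\mathcal{A}$ on the corresponding isotypic components. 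Combined with Step~2 (and a change of indexing $(\gamma,\gamma') \leftrightarrow (-\gamma',-\gamma)$), this gives the required $A_{\infty}$ isomorphism.

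The main obstacle is not conceptual but a careful bookkeeping check: one must verify that the orientation signs, $\Z_2$-gradings, and equivariance conventions used in the algebraic definition of $\mathcal{A} \rtimes \Gamma^{\ast}$ exactly match those arising from the Fukaya-theoretic lift. This is the $A_{\infty}$ analogue of Seidel's abelian-cover construction in \cite[Section 8b]{seidel03} and, because $\Gamma^{\ast}$ acts strictly (not merely up to homotopy), no higher coherence is required.
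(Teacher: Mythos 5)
Your argument is correct and is essentially the standard finite-abelian-cover construction that the paper defers to by citing \cite{seidel03} (Section 8b) and \cite{seidelg2} (Section 9): decompose $\tilde{\mathcal{A}}$ into Floer blocks $CF^*(L^n_\gamma,L^n_{\gamma'})$, observe that pseudo-holomorphic disks lift bijectively (up to a choice of initial sheet) because the cover is a local symplectomorphism and the weight constraint from Proposition \ref{prop:top} makes every contributing disk's boundary lift to a loop, and then match this block structure with the Peirce decomposition of $\mathcal{A}\rtimes\Gamma^*$ under the idempotents $e_\gamma$. The remaining bookkeeping of signs, gradings, and orientations that you flag is exactly the content worked out in those references, and the fact that $\Gamma^*$ acts by \emph{strict} $A_\infty$ automorphisms (Corollary \ref{cor:tact}) is precisely what makes the semidirect product well-defined without higher coherence data.
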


Now let us consider the mirror statement to Proposition \ref{prop:semid}.
Taking a cover with covering group $\Gamma$ corresponds, on the mirror, to considering $\Gamma^*$-equivariant objects (sheaves or matrix factorizations), where $\Gamma^*$ acts on $\C[\widetilde{M}]$ via the natural embedding $\Gamma^* \hookrightarrow \mathbb{T}$.

\begin{defn}
Define the object
\[ \tilde{\mathcal{O}}_0 := \mathcal{O}_0 \otimes \C[\Gamma^*] \in D^b_{\mathrm{Sing},\Gamma^*}(W^{-1}(0)).\]
We define its endomorphism algebra
\[ \tilde{\mathcal{B}} := \mathrm{Hom}^*_{D^b_{\mathrm{Sing}, \Gamma^*}}(\tilde{\mathcal{O}}_0,\tilde{\mathcal{O}}_0).\]
\end{defn}

Corresponding to Proposition \ref{prop:semid}, we have the result

\begin{prop}[See \cite{seidelg2}]
\label{prop:semid2}
We have
\[ \tilde{\mathcal{B}} \cong \mathcal{B} \rtimes \Gamma^{\ast}.\]
\end{prop}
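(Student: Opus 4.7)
The plan is to interpret $\tilde{\mathcal{O}}_0$ as a $G$-equivariantly \emph{induced} object (where $G := \Gamma^*$) and then derive the smash product structure from a standard adjunction, paralleling the A-side statement in Proposition \ref{prop:semid}. First I would verify that $G$ acts on the dg category $MF(R,W)$: the embedding $G \hookrightarrow \mathbb{T}$ gives an action on $R$ preserving $W$, hence pullback functors $g^* : MF(R,W) \to MF(R,W)$. This induces a $G$-action on the triangulated category of singularities, and one defines $D^b_{\mathrm{Sing}}^{G}(W^{-1}(0))$ as the corresponding equivariant category.

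Next I would observe that there is an induction functor $\mathrm{Ind}(E) := \bigoplus_{g\in G} g^* E$ (with $G$ acting by permuting summands), left adjoint to the forgetful functor from equivariant objects to ordinary objects. Because the origin of $\mathrm{Spec}(R)$ is $G$-fixed, each $g^*\mathcal{O}_0$ is canonically isomorphic to $\mathcal{O}_0$, so $\mathrm{Ind}(\mathcal{O}_0) \cong \mathcal{O}_0 \otimes \C[G] = \tilde{\mathcal{O}}_0$, with $G$ acting on the second factor by the left regular representation. The adjunction then computes
\[
\mathrm{Hom}^*_{D^b\mathrm{Sing}, G}(\tilde{\mathcal{O}}_0, \tilde{\mathcal{O}}_0) \;=\; \mathrm{Hom}^*_{MF}\!\Bigl(\mathcal{O}_0, \bigoplus_{g\in G} g^*\mathcal{O}_0\Bigr) \;\cong\; \mathcal{B} \otimes \C[G]
\]
as a complex of vector spaces. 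Unwinding the composition, one finds $(b_1 \otimes g_1)(b_2 \otimes g_2) = b_1 \cdot (g_1 \cdot b_2) \otimes g_1 g_2$, where $g_1 \cdot b_2$ denotes the induced $G$-action on $\mathcal{B}$. This is exactly the smash product $\mathcal{B} \rtimes G$.

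The main obstacle is that the statement must hold at the $A_{\infty}$-level (not just on cohomology), since the whole mirror-symmetry picture we have built in Sections \ref{sec:Acalc}--\ref{sec:matfact} lives there. Concretely, one must check that the smash product construction is compatible with passage to the minimal model $\mathcal{B}'$ provided by \cite[Section 4]{dyckerhoff09}. This reduces to the observation that the contracting homotopy used in that construction can be chosen $G$-equivariantly: the entire perturbation lemma input (the Koszul differential $\delta$, the grading, and the contracting homotopy) is manifestly $\mathbb{T}$-equivariant, hence equivariant under the finite subgroup $G \subset \mathbb{T}$. Granting this, the homological perturbation lemma transports the $G$-action to $\mathcal{B}'$, and the formal adjunction argument above then applies verbatim at the $A_{\infty}$ level, completing the proof along the lines of \cite[Section 8b]{seidel03} and \cite[Section 9]{seidelg2}.
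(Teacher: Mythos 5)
The paper does not actually prove this proposition — it simply cites \cite{seidelg2}, so there is no in-paper argument to compare against. Your core argument (the induction/restriction adjunction for a finite group $G = \Gamma^*$ acting on $MF(R,W)$, the identification $\mathrm{Ind}(\mathcal{O}_0) \cong \mathcal{O}_0 \otimes \C[G]$ because the origin is $G$-fixed, and the resulting smash-product composition law) is the standard proof of this kind of statement and is almost certainly what the cited reference does, modulo one routine check you elide: that the choice of isomorphisms $g^*\mathcal{O}_0 \cong \mathcal{O}_0$ can be made coherently so that conjugation really defines a $G$-action on $\mathcal{B}$ — which works here because $\mathcal{O}_0$ sits at a $G$-fixed point.

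Your final paragraph, however, is aimed at a problem that does not exist in this proposition. Both $\tilde{\mathcal{B}} = \mathrm{Hom}^*_{D^b\mathrm{Sing},\Gamma^*}(\tilde{\mathcal{O}}_0,\tilde{\mathcal{O}}_0)$ and $\mathcal{B} = \mathrm{Hom}^*_{MF(R,W)}(\mathcal{O}_0,\mathcal{O}_0)$ are honest differential $\Z_2$-graded algebras (morphism complexes in dg categories), and the adjunction isomorphism you wrote down is an isomorphism of dg algebras on the nose, not merely a quasi-isomorphism. The minimal $A_\infty$ model $\mathcal{B}'$ of Dyckerhoff's construction plays no role in the statement $\tilde{\mathcal{B}} \cong \mathcal{B} \rtimes \Gamma^*$, so the worry about compatibility of the smash product with the homological perturbation lemma is misplaced. (Such compatibility does get used later in the proof of Theorem \ref{thm:hmscy}, but that is separate bookkeeping, not part of this proposition.) In short: your first two paragraphs are the proof; the third should be cut.
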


\begin{cor}
\label{cor:mirrcov}
There is a quasi-isomorphism
\[ CF^*(\tilde{L}^n,\tilde{L}^n) \cong \mathrm{Hom}^*_{D^b_{\mathrm{Sing},\Gamma^*}}(\tilde{O}_0,\tilde{O}_0).\]
\end{cor}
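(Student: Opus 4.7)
The plan is to assemble the corollary from three ingredients already established: Theorem \ref{thm:mirrsym} ($\mathcal{A} \cong \mathcal{B}$), Proposition \ref{prop:semid} ($\tilde{\mathcal{A}} \cong \mathcal{A} \rtimes \Gamma^*$), and Proposition \ref{prop:semid2} ($\tilde{\mathcal{B}} \cong \mathcal{B} \rtimes \Gamma^*$). The whole point is that all of the work in Sections \ref{sec:Acalc} and \ref{sec:matfact} was carried out $\mathbb{T}$-equivariantly, so the comparison extends painlessly to the semidirect products with any subgroup of $\mathbb{T}$.

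First, I would observe that the homomorphism $\rho: M \To \Gamma$ induces an embedding of character groups $\Gamma^* \hookrightarrow \mathbb{T}$, so every $\mathbb{T}$-equivariant structure restricts to a $\Gamma^*$-equivariant one. Next, I would revisit the proof of Theorem \ref{thm:mirrsym}: the identification $\mathcal{A} \cong \mathcal{B}$ was obtained by showing that both $\mathcal{A}'$ and $\mathcal{B}'$ lie in the class $\mathfrak{A}(A)$ of $\mathbb{T}$-equivariant $\Q$-graded $A_{\infty}$-deformations of the exterior algebra $A$ (Definition \ref{defn:a}), and that they have equal (non-zero) deformation class in $HH^2(A,A)^{-n,\mathbb{T}}$ (Propositions \ref{prop:versal} and \ref{prop:aprimeversal}). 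The $\mathbb{T}$-equivariant version of \cite[Lemma 3.2]{seidel03} thus produces a $\mathbb{T}$-equivariant quasi-isomorphism $\mathcal{A}' \To \mathcal{B}'$, and the intermediate quasi-isomorphisms $\mathcal{A} \To \mathcal{A}'$ (Proposition \ref{prop:ll'}) and $\mathcal{B}' \To \mathcal{B}$ (the homological perturbation construction of Section \ref{sec:matfact}) are also manifestly $\mathbb{T}$-equivariant.

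Restricting this $\mathbb{T}$-equivariant quasi-isomorphism $\mathcal{A} \To \mathcal{B}$ along $\Gamma^* \hookrightarrow \mathbb{T}$ gives a $\Gamma^*$-equivariant $A_\infty$ quasi-isomorphism. Since the formation of the semidirect product with a finite group is a functor on $\Gamma^*$-equivariant $A_\infty$-algebras that preserves quasi-isomorphisms (one checks this directly on the bar construction, or invokes it as a standard fact), we obtain a quasi-isomorphism
\[ \mathcal{A} \rtimes \Gamma^* \,\,\longrightarrow\,\, \mathcal{B} \rtimes \Gamma^*. \]
Combining this with Propositions \ref{prop:semid} and \ref{prop:semid2} yields the desired quasi-isomorphism $CF^*(\tilde{L}^n,\tilde{L}^n) \cong \mathrm{Hom}^*_{D^b\mathrm{Sing},\Gamma^*}(\tilde{\mathcal{O}}_0,\tilde{\mathcal{O}}_0)$.

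The only potentially delicate point is confirming that every step in the comparison $\mathcal{A} \leftrightarrow \mathcal{B}$ really is $\mathbb{T}$-equivariant (not just that it could be made so). On the $A$-side this is clear from Corollary \ref{cor:tact} and the fact that our perturbation data and Morse functions were chosen $S_{n+2}$-symmetrically, which in particular respects the weight decomposition; on the $B$-side it follows because the contracting homotopy of \cite[Section 4]{dyckerhoff09} commutes with the diagonal $\mathbb{T}$-action on the Koszul resolution. Once this is verified, the corollary is essentially formal.
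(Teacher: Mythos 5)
Your proposal is correct and takes essentially the same route as the paper, which proves the corollary in a single line: ``Follows from Propositions \ref{prop:semid}, \ref{prop:semid2} and Theorem \ref{thm:mirrsym}.'' You have simply made explicit the two facts the paper leaves unstated but relies on, namely that the quasi-isomorphism $\mathcal{A}\cong\mathcal{B}$ of Theorem \ref{thm:mirrsym} is $\mathbb{T}$-equivariant (hence restricts along $\Gamma^*\hookrightarrow\mathbb{T}$) and that the semidirect product construction $-\rtimes\Gamma^*$ carries equivariant quasi-isomorphisms to quasi-isomorphisms.
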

\begin{proof}
Follows from Propositions \ref{prop:semid}, \ref{prop:semid2} and Theorem \ref{thm:mirrsym}.
\end{proof}

\subsection{Affine Fermat hypersurfaces}
\label{subsec:hmscy}

\begin{defn}
Let $X^n$ be the Calabi-Yau Fermat hypersurface 
\[ X^n := \{ z_1^{n+2}+ \ldots + z_{n+2}^{n+2} = 0\} \subset \CP{n+1} = \mathbb{P}\left( \widetilde{M}_{\C} \right) .\]
We define the divisor
\[ X^n_{\infty} := \bigcup_j \{z_j = 0\},\]
and the affine part,
\[ \widetilde{X}^n := X^n \cap M_{\C^*} = X^n \setminus X^n_{\infty}.\]
\end{defn}

There is a covering
\begin{eqnarray*}
\widetilde{X}^n & \overset{\pi}{\To} & \mathcal{P}^n \\
{[}z_1:\ldots:z_{n+2}{]} & \mapsto & {[}z_1^{n+2}: \ldots :z_{n+2}^{n+2}{]}
\end{eqnarray*}
with corresponding group homomorphism
\[\rho: M  \To  \Gamma_n := M \otimes \Z_{n+2}.\]

\begin{defn}
Define the map $\Gamma_n \To \Z_{n+2}$ by taking the sum of the entries (this is well-defined because $e_{[n+2]} \mapsto 0$).
Call its kernel $\tilde{\Gamma}_n$, so we have a short exact sequence
\[ 0 \To \tilde{\Gamma}_n \To \Gamma_n \To \Z_{n+2} \To 0.\]
\end{defn}

\begin{defn}
Let $Y^n$ be the singular Calabi-Yau hypersurface
\[ Y^n := \{W = 0\} \subset \CP{n+1},\]
where $W = z_1\ldots z_{n+2}$ as before.
There is an action of $\tilde{\Gamma}_n^*$ on $Y^n$, inherited from the action of $\Gamma_n^*$ (which comes from the embedding $\Gamma_n^* \hookrightarrow \mathbb{T}$), because the kernel of the map $\Gamma_n^* \To \tilde{\Gamma}_n^*$ acts trivially on projective space.
\end{defn}

\begin{thm}
\label{thm:hmscy}
There is a fully faithful $A_{\infty}$ embedding,
\[ \mathrm{Perf}_{\tilde{\Gamma}_n^*}(Y^n) \To D^{\pi}(\mathcal{F}uk(\widetilde{X}^n)),\]
where the left-hand side denotes the category of $\tilde{\Gamma}_n^*$-equivariant perfect complexes on $Y^n$.
\end{thm}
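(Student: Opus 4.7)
The plan is to combine Corollary~\ref{cor:mirrcov}, applied to the $\Gamma_n$-cover $\pi:\widetilde{X}^n \To \mathcal{P}^n$, with Orlov's theorem \cite{orlov04} relating the graded singularity category of $(\C^{n+2},W)$ to the derived category of the projective hypersurface $Y^n$.

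First, Corollary~\ref{cor:mirrcov} supplies an $A_\infty$ quasi-isomorphism
\[CF^*(\tilde{L}^n,\tilde{L}^n) \;\cong\; \mathrm{Hom}^*_{D^b\mathrm{Sing},\,\Gamma_n^*}(\tilde{\mathcal{O}}_0,\tilde{\mathcal{O}}_0).\]
By an $A_\infty$ Yoneda argument, this extends to a fully faithful $A_\infty$ embedding of the split-closed triangulated subcategory of $D^b\mathrm{Sing}^{\Gamma_n^*}(W^{-1}(0))$ generated by $\tilde{\mathcal{O}}_0$ into $D^\pi \mathcal{F}uk(\widetilde{X}^n)$.

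Next, I would invoke Orlov's theorem in the Calabi-Yau regime: $W$ is homogeneous of degree $n+2$ in $n+2$ variables (all weights equal to one), so the sum of the weights equals the degree and one obtains an equivalence between the graded singularity category of $(\C^{n+2},W)$ and $D^b\mathrm{coh}(Y^n)$. The scaling $\mathbb{G}_m$ acts on $\C^{n+2}$ diagonally; its finite subgroup $\mu_{n+2}$ sits inside $\Gamma_n^*$ as the diagonal subgroup $\Z_{n+2}^*$. Using the short exact sequence
\[0 \To \Z_{n+2}^* \To \Gamma_n^* \To \tilde{\Gamma}_n^* \To 0,\]
the $\Gamma_n^*$-equivariance decomposes as a grading (supplied by $\mu_{n+2}$) together with a residual $\tilde{\Gamma}_n^*$-symmetry, which descends to $Y^n \subset \CP{n+1}$ (since the diagonal $\mu_{n+2}$ acts trivially on projective space). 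Running Orlov's construction equivariantly then produces a full triangulated subcategory
\[\mathrm{Perf}_{\tilde{\Gamma}_n^*}(Y^n) \;\subset\; D^b\mathrm{Sing}^{\Gamma_n^*}\bigl(W^{-1}(0)\bigr).\]

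Finally, I would identify the image of $\tilde{\mathcal{O}}_0 = \mathcal{O}_0 \otimes \C[\Gamma_n^*]$ on the $B$-side. Under Orlov's functor $\mathcal{O}_0$ together with its $(n+2)$ grading shifts corresponds to the sequence of line bundles $\mathcal{O}_{Y^n}(k)$ for $0 \le k \le n+1$, and tensoring with $\C[\Gamma_n^*]$ contributes all $\tilde{\Gamma}_n^*$-equivariant twists. Since the tensor powers of the ample line bundle $\mathcal{O}_{Y^n}(1)$ split-generate $\mathrm{Perf}(Y^n)$, and adding all equivariant twists split-generates the equivariant refinement $\mathrm{Perf}_{\tilde{\Gamma}_n^*}(Y^n)$, the image of $\tilde{\mathcal{O}}_0$ split-generates $\mathrm{Perf}_{\tilde{\Gamma}_n^*}(Y^n)$. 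Composing with the Yoneda embedding from the first step then yields the desired fully faithful $A_\infty$ embedding $\mathrm{Perf}_{\tilde{\Gamma}_n^*}(Y^n) \hookrightarrow D^\pi \mathcal{F}uk(\widetilde{X}^n)$.

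\textbf{Main obstacle.} The principal technical difficulty is setting up the correct equivariant form of Orlov's theorem — in particular, matching the $\mu_{n+2}$-grading supplied by $\Z_{n+2}^* \subset \Gamma_n^*$ with the $\mathbb{G}_m$-weighting in Orlov's original formulation, and keeping track of the extension $\Z_{n+2}^* \to \Gamma_n^* \to \tilde{\Gamma}_n^*$ throughout — and then pinning down the image of $\tilde{\mathcal{O}}_0$ on $Y^n$ precisely enough to certify that it split-generates $\mathrm{Perf}_{\tilde{\Gamma}_n^*}(Y^n)$, rather than some intermediate subcategory of $D^b\mathrm{coh}_{\tilde{\Gamma}_n^*}(Y^n)$.
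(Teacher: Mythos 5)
Your plan hinges on Orlov's LG/CY theorem relating the graded singularity category of $(\C^{n+2},W)$ to $D^b\mathrm{coh}(Y^n)$, but this is precisely the route the paper explicitly notes is unavailable: $Y^n = \{z_1\cdots z_{n+2}=0\}\subset\CP{n+1}$ is singular, and the relevant (equivariant) form of Orlov's theorem does not exist in the literature for singular $Y^n$ (see the remark at the end of Section~\ref{subsec:hmscy}, citing \cite[Theorem 3.11]{orlov05}). The paper instead bypasses Orlov's theorem entirely: it restricts the Beilinson exceptional collection $F_k=\Omega^{n+1-k}(n+1-k)$ from $\CP{n+1}$ to $Y^n$, computes the endomorphism algebra $B=\mathrm{Hom}^*_{Y^n}(E,E)\cong A\rtimes\Z_{n+2}$ directly via adjunction, the Koszul resolution of $\mathcal{O}_{Y^n}$, and Serre duality (Lemma~\ref{lem:serred}), and then runs a second $\mathbb{T}$-equivariant versality argument (Lemma~\ref{lem:semidquasi}) to identify the $A_\infty$ structure on $B$ with $\mathcal{A}^n\rtimes\Z_{n+2}$. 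Only then does it take semidirect products with $\tilde{\Gamma}_n^*$ and match with $CF^*(\tilde{L}^n,\tilde{L}^n)\cong\mathcal{A}^n\rtimes\Gamma_n^*$ from Proposition~\ref{prop:semid}.

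There are two further difficulties with your proposal even setting aside the singularity issue. First, the claim that ``$\mathcal{O}_0$ together with its $(n+2)$ grading shifts corresponds to the sequence of line bundles $\mathcal{O}_{Y^n}(k)$'' under Orlov's functor is not correct even in the smooth case; what Orlov's comparison produces is an identification of generators that are \emph{not} simply the twists $\mathcal{O}_{Y^n}(k)$, and in the CY ($a=0$) case extra care is needed to describe the image of the residue field. Second, the target of the theorem is $\mathrm{Perf}_{\tilde\Gamma_n^*}(Y^n)$, which is strictly smaller than $D^b\mathrm{coh}_{\tilde\Gamma_n^*}(Y^n)$ because $Y^n$ is singular; the Beilinson restrictions $E_k=\iota^*F_k$ are vector bundles on $Y^n$ and hence perfect, which is why the paper lands in $\mathrm{Perf}$, but a naïve application of Orlov's equivalence would target $D^b\mathrm{coh}$ and you would need a separate argument to cut this down. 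In short, the key missing ingredient is a replacement for Orlov's theorem in the singular equivariant setting, and the paper supplies it by hand (Beilinson collection plus a second deformation-theory step), not by invoking Orlov.
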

\begin{proof}
On the Fukaya category side, let $\tilde{L}^n \in Ob(D^{\pi}(\mathcal{F}uk(\widetilde{X}^n)))$ be the direct sum of all lifts of $L^n$ under the covering $\pi$ (there are $|\Gamma_n| = (n+2)^{n+1}$ of them).
By Proposition \ref{prop:semid}, we have
\[CF^*(\tilde{L}^n,\tilde{L}^n) \cong \mathcal{A}^n \rtimes \Gamma_n^*.\]

On the other side, we repeat the argument of \cite[Section 10d]{seidel03}. 
Namely, consider the Beilinson exceptional collection
\[F_k := \Omega^{n+1-k}(n+1-k)\]
for $k=0,\ldots,n+1$ on $\CP{n+1}$.
It was shown in \cite{beilinson78} that $F_k$ generate $D^bCoh(\CP{n+1})$, and that
\[ \mathrm{Hom}^*(F_j,F_k) \cong \Lambda^{k-j} \left(\widetilde{M}_{\C}\right)\]
concentrated in degree $0$.

Now let $\iota: Y^n \To \CP{n}$ denote the inclusion, $E_k: = \iota^* F_k$, 
\[ E := \bigoplus_{k=0}^{n+1} E_k,\]
and
\[ B:= \mathrm{Hom}^*_{Y^n}(E,E).\]
We observe that $E$ generates $\mathrm{Perf}(Y^n)$, by \cite[Lemma 5.4]{seidel03}.

\begin{lem}
\label{lem:serred}
With appropriate grading shifts, there is an isomorphism of $\Q$-graded algebras,
\[B \cong A \rtimes \Z_{n+2},\]
where $A$ is the $\Q$-graded exterior algebra of Definition \ref{defn:a}.
\end{lem}
\begin{proof}
We compute that (writing $P$ for $\CP{n}$ and $Y$ for $Y^n$)
\begin{eqnarray*}
\mathrm{Hom}_{Y}^*(E_j,E_k) & = & \mathrm{Hom}_{Y}^*(\iota^* F_j, \iota^*F_k) \\
& \cong & \mathrm{Hom}_P^*(F_j, \iota_* \iota^* F_k) \mbox{ (adjunction)}\\
& \cong & \mathrm{Hom}_P^*(F_j, \mathcal{O}_Y \otimes_{\mathcal{O}_P} F_k) \\
& \cong &  \mathrm{Hom}_P^*(F_j, \left\{ \mathcal{K}_P \overset{W}{\To} \mathcal{O}_P \right\} \otimes_{\mathcal{O}_P} F_k) \mbox{ (resolving $\mathcal{O}_Y$ as $\mathcal{O}_P$-module)} \\
& \cong &  \left\{\mathrm{Hom}_P^*(F_j, \mathcal{K}_P \otimes F_k ) \To \mathrm{Hom}_P^*(F_j, F_k)\right\}\\
&\cong& \mathrm{Hom}_P^*(F_k,F_j)^{\vee}[-n] \oplus \mathrm{Hom}_P^*(F_j,F_k) \mbox{ (Serre duality)}\\
& \cong & \Lambda^{j-k} \left( \widetilde{M}_{\C}^{\vee} \right)[-n] \oplus \Lambda^{k-j}\left( \widetilde{M}_{\C} \right).
\end{eqnarray*}
We can naturally identify
\[
 \Lambda^j\left( \widetilde{M}_{\C}^{\vee} \right) \xrightarrow{\iota_{e_1 \wedge \ldots \wedge e_{n+2}}} \Lambda^{n+2-j} \left( \widetilde{M}_{\C} \right),\]
and hence
\[ \mathrm{Hom}_Y^*(E_j,E_k) \cong \Lambda^{k-j} \left( \widetilde{M}_{\C} \right),\]
where $k-j$ is taken modulo $n+2$ (when $k=j$ we have both $\Lambda^0 \oplus \Lambda^{n+2}$).
One can check that the composition rule is the obvious one.
We have thus computed that
\[B \cong \Lambda \left( \widetilde{M}_{\C} \right) \rtimes \Z_{n+2}\]
as an algebra.

However, this is not an isomorphism of graded algebras: for example, the morphisms in $B$ are concentrated in degrees $0$ and $n$. 
To fix this, we shift $E_k$ by the rational number $nk/(n+2)$ (compare \cite{seidel03,seidelg2,caldararutu10}), so that 
\[ \mathrm{Hom}\left(E_j \left[\frac{nj}{n+2}\right],E_k\left[\frac{nk}{n+2}\right]\right) \cong \Lambda^{k-j} \left(\widetilde{M}_{\C}\right)\]
is concentrated in degree $n(k-j)/n+2$, where $k-j$ is taken modulo $n+2$.
If we correspondingly multiply the standard grading on $\Lambda ( \widetilde{M}_{\C})$ by $n/(n+2)$, then the isomorphism above becomes an isomorphism of graded algebras.
\end{proof}

Now we obtain an $A_{\infty}$ structure on $B$, by applying the homological perturbation lemma \cite{guglamstash,kontsoib01} to the \v{C}ech complex whose cohomology computes $B$.
We denote the resulting $A_{\infty}$ algebra by $\mathcal{B}$.

\begin{lem}
\label{lem:semidquasi}
The algebra isomorphism of Lemma \ref{lem:serred} lifts to a quasi-isomorphism of $A_{\infty}$ algebras
\[\mathcal{B} \cong \mathcal{A}^n \rtimes \Z_{n+2}.\]
\end{lem}
\begin{proof}
The proof is similar to the arguments of Section \ref{sec:matfact}, in that we apply a $\mathbb{T}$-equivariant version of \cite[Lemma 3.2]{seidel03}.
First, we observe that there is a natural action of $\mathbb{T}$ on $\CP{n}$ (it is the algebraic torus action on the toric variety $\CP{n}$). 
Furthermore, $Y^n = W^{-1}(0)$ is a $\mathbb{T}$-equivariant divisor (it corresponds to the boundary of the moment polytope), and the sheaves $F_k$ are $\mathbb{T}$-equivariant.
It follows that the $\mathbb{T}$ action descends to $Y^n$ and the sheaves $E_k$, and therefore that the $A_{\infty}$ structure of $\mathcal{B}$ is $\mathbb{T}$-equivariant.

As in Corollary \ref{cor:prodzero}, by considering the grading, we can show that the only non-zero $A_{\infty}$ products $\mu^l$ in $\mathcal{B}$ occur for $l = 2 + nq$.
We use \cite[Proposition 4.2]{seidel03} and essentially identical arguments to those proving Proposition \ref{prop:hha} to compute the $\mathbb{T}$-equivariant Hochschild cohomology
\begin{eqnarray*}
HH^{2}(A \rtimes \Z_{n+2},A \rtimes \Z_{n+2})^{2-d,\mathbb{T}} &\cong &\bigoplus _{\frac{2}{n+2}d + \frac{n}{n+2}j = 2} \left(Sym^d(\widetilde{M}_{\C} ^{\vee}) \otimes \Lambda ^j \left(\widetilde{M}_{\C}  \right)\right)^{\mathbb{T}}\\
&\cong &  \left\{ \begin{array}{ll}
                                          \C \cdot W & \mbox{ for $d = n+2$} \\
                                          0 & \mbox{ otherwise}
                                          \end{array} \right.
\end{eqnarray*}
(observe that the right hand side would usually be restricted to the $\Z_{n+2}$-equivariant part, but because we are already considering only $\mathbb{T}$-equivariant structures, the $\Z_{n+2}$-equivariance is subsumed in the $\mathbb{T}$-equivariance).

It is clear from the fact that $\mathcal{A}^n$ is versal (Proposition \ref{prop:versal}) that $\mathcal{A}^n \rtimes \Z_{n+2}$ is versal (i.e., has non-zero deformation class in the above Hochschild cohomology group).
The proof of the fact that $\mathcal{B}$ is versal carries over exactly as in \cite[Lemma 10.8]{seidel03}. 
This completes the proof, by the $\mathbb{T}$-equivariant version of \cite[Lemma 3.2]{seidel03}.
\end{proof}

Now we apply the analogue of Proposition \ref{prop:semid2} to the $\tilde{\Gamma}_n^*$-equivariant sheaf
\[ \tilde{E} := E \otimes \C[\tilde{\Gamma}_n^*].\]
Combined with Lemma \ref{lem:semidquasi}, it shows that we have $A_{\infty}$ quasi-isomorphisms
\begin{eqnarray*}
\mathrm{Hom}^*_{Y^n,\tilde{\Gamma}_n^*}(\tilde{E},\tilde{E}) &\cong& \mathcal{B} \rtimes \tilde{\Gamma}_n^* \\
&\cong & \left(\mathcal{A}^n \rtimes \Z_{n+2}\right) \rtimes \tilde{\Gamma}_n^*\\
&\cong& \mathcal{A}^n \rtimes \Gamma_n^* \\
& \cong& CF^*(\tilde{L}^n,\tilde{L}^n).
\end{eqnarray*}

Therefore, if we map $\tilde{E} \mapsto \tilde{L}^n$, we define a quasi-isomorphism of the $A_{\infty}$ subcategories generated by these respective objects.
Since $\tilde{E}$ generates the category $\mathrm{Perf}_{\tilde{\Gamma}_n^*}(Y^n)$ (by the equivariant version of \cite[Lemma 5.4]{seidel03}), this extends to the desired $A_{\infty}$ embedding.
\end{proof}

\begin{rmk}
If we could prove that $\tilde{L}^n$ split-generates $\mathcal{F}uk(\widetilde{X}^n)$, we would have shown that this is a quasi-equivalence of $A_{\infty}$ categories.
So far we have been unable to prove this.
\end{rmk}

\begin{rmk}
By Corollary \ref{cor:mirrcov}, there is a quasi-isomorphism between the subcategory of
\[D^b_{\mathrm{Sing},\Gamma_n^*}(W^{-1}(0))\]
 generated by $\mathcal{O}_0 \otimes \C[\Gamma_n^*]$ and the subcategory of $D^{\pi}(\mathcal{F}uk(\widetilde{X}^n))$ generated by $\tilde{L}^n$.
It would be nice to prove Theorem \ref{thm:hmscy} by using a version of \cite[Theorem 3.11]{orlov05} to compare $B$-branes on $W^{-1}(0)$ with perfect complexes on $Y^n$, but such a result does not exist in the literature (the theorem does not apply in our case because $Y^n$ is singular).
\end{rmk}

\subsection{Projective Fermat hypersurfaces} 

This paper was conceived as a step towards a proof of Homological Mirror Symmetry for hypersurfaces in projective space (not necessarily Calabi-Yau). 
The author has made considerable progress in this direction, which will appear in a forthcoming preprint \cite{Sheridan2011}. 
In this section we give a brief outline of our anticipated results in the Calabi-Yau case, borrowing the terminology of \cite{seidel03}.

\begin{rmk}
Independently, Nohara and Ueda \cite{noharaueda} have considered the important special case of the quintic threefold, using the results of this paper together with techniques of \cite{seidel03}. 
\end{rmk}

The relative Fukaya category $\mathcal{F}uk(X^n, X^n_{\infty})$ is a one-parameter deformation of the affine Fukaya category $\mathcal{F}uk(\widetilde{X}^n)$.
Thus the $A_{\infty}$ endomorphism algebra of the object $\tilde{L}^n$ in the relative Fukaya category is a one-parameter deformation of $\mathcal{A}^n \rtimes \Gamma_n^*$, which we denote by $\tilde{\mathcal{A}}^n_q$ ($q$ is the parameter of the deformation). 

On the mirror side, we have the corresponding one-parameter deformation of the singular hypersurface $Y^n := W^{-1}(0)$, given by $Y^n_q := W_q^{-1}(0)$, where
\[ W_q = z_1 \ldots z_{n+2} + q\left(z_1^{n+2}+\ldots+z_{n+2}^{n+2}\right).\]
Observe that $W_q$ is preserved by the action of $\tilde{\Gamma}_n^*$, so this group acts on $Y^n_q$.
We denote the $A_{\infty}$ endomorphism algebra of the restriction of the Beilinson exceptional collection to $Y^n_q$ by $\mathcal{B}_q$. 
It is a one-parameter deformation of $\mathcal{B}$.

Following the approach to one-parameter deformations of $A_{\infty}$ algebras of \cite{seidel03}, we can show that $\mathcal{B}_q$ is a versal deformation of $\mathcal{B}$ (here we must take the $\Gamma_n^*$ equivariance, the fractional grading, and also the $S_{n+2}$ action into account). 

We then prove that $\tilde{\mathcal{A}}^n_q$ is a versal deformation of $\mathcal{A}^n \rtimes \Gamma_n^*$, in the same class as $\mathcal{B}_q$, so the two endomorphism algebras are quasi-isomorphic (up to a formal change of variables in the parameter $q$). 
Finally, we need to prove generation results on both sides, in order to show that there is a quasi-equivalence of $D^b Coh_{\Gamma_n^*}(Y^n_q)$ with $D^{\pi}\mathcal{F}uk(X^n)$.

\appendix

\section{Proof of Lemma \ref{lem:thesign2}}

\subsection{Note on conventions}

We will use results from \cite{Sheridan2017}, whose sign conventions differ slightly from those of \cite{seidel08} which we use in this paper. 
Namely, if $\mu^*$ denotes the $A_\infty$ structure maps in the Fukaya category, defined using the sign conventions of \cite{seidel08}, and $\eta^*$ are those defined using the sign conventions of \cite{Sheridan2017}, then we have
\[ \eta^k(a_1,\ldots,a_k) = \mu^k(a_k,\ldots,a_1).\]

\subsection{Morse critical points}

Let $(V,\Omega)$ be a symplectic vector space, and $\cG V$ its Lagrangian Grassmannian.
Let $\tau: V \to V$ be an anti-symplectic involution, i.e., a linear map satisfying $\tau^*\Omega = -\Omega$ and $\tau^2 = \id$. 
Let $\Lambda$ be the $+1$ eigenspace of $\tau$, and let us denote the $-1$ eigenspace of $\tau$ by $i\Lambda$ (this is purely notational: our constructions do not depend on a choice of complex structure on $V$). 
Both $\Lambda$ and $i\Lambda$ are Lagrangian, and $\Omega$ defines a perfect pairing $\Lambda \otimes i\Lambda \to \R$. 

Now let $B$ be a nondegenerate quadratic form on $\Lambda$. 
Let $\Lambda = \Lambda_+ \oplus \Lambda_-$ be the orthogonal decomposition into positive/negative eigenspaces with respect to some inner product on $\Lambda$. 
The Morse index $\mu(B)$ is defined to be the dimension of $\Lambda_-$. 
Let $A: \Lambda \to i\Lambda$ be the corresponding linear isomorphism, such that $B(v) = \Omega(v,Av)$. 
There is a corresponding path $\rho: [0,1] \to \cG V$, given by $\rho(t) = \mathrm{graph}((2t-1)A)$: this path has Maslov index $\mu(B)$ (compare \cite[\S 2d (iv)]{seidel99}). 
We will call it the `short' path between these two Lagrangian subspaces.

Associated to this short path $\rho$ we have the orientation operator $D_{H,\rho}$ on the upper half-plane $H$ (see \cite[\S B.2]{Sheridan2017}). 
Evaluation at the boundary marked point $0 \in \partial H$ defines an isomorphism of graded lines
\begin{equation}
\label{eqn:Dmorse}
 \lambda(D_{H,\rho}) \cong \lambda(\Lambda_-)
 \end{equation}
(compare \cite[Equation (11.20)]{seidel08}; this isomorphism appears in the Lagrangian PSS isomorphism between Morse cohomology of $L$ and the Floer endomorphism algebra of $L$, see \cite[Equation (12.14)]{seidel08}).
In particular, as the isomorphism \eqref{eqn:Dmorse} is a graded isomorphism, the Fredholm index of $D_{H,\rho}$ is equal to the Morse index $\mu(B)$. 

We have an isomorphism
\begin{align}
\label{eqn:Dtau}
D_{H,\rho} & \to D_{H,\rho} \\
u(s+it) & \mapsto \tau(u(-s+it)),
\end{align}
which is well-defined because  $\tau(\rho(1-t)) = \rho(t)$.
This induces an isomorphism of determinant lines:
\begin{equation}
\label{eqn:lambtau}
 \lambda(D_{H,\rho}) \to \lambda(D_{H,\rho}).\end{equation}
The isomorphism \eqref{eqn:Dtau} clearly respects the isomorphisms \eqref{eqn:Dmorse}. 
Therefore, since $\tau$ acts trivially on $\Lambda_-$, it follows that the isomorphism \eqref{eqn:lambtau} is the identity. 

\subsection{Warmup: the case of cotangent bundles}

Let $L$ be a closed $n$-manifold and $X := T^*L$. 
The tangent bundle $TX$ is trivial as a complex vector bundle, so we have a canonical holomorphic volume form $\eta$ up to homotopy, which induces a $\Z$-grading of the Fukaya category (in the language of \cite{Sheridan2017}, it defines a morphism of grading data $p: \G(X) \to \Z$ which we can use to push forward the $\G(X)$-grading to a $\Z$-grading).
This allows us to define $\fuk_{\l\r}(X)$, a version of the Fukaya category equipped with compatible leftwards and rightwards shift functors (see \cite[\S B.4]{Sheridan2017}). 
We will denote $\fuk(X) := \fuk_{\l\r}(X)$.
We have an isomorphism of categories
\[ c^X: \fuk(X) \to \fuk(\bar{X})^{op}\]
by \cite[Lemma B.10]{Sheridan2017}, where $\bar{X} = (X,-\omega,\bar{\eta})$ is the `opposite' symplectic manifold to $X = (X,\omega,\eta)$, and `$op$' denotes the opposite $A_\infty$ category.

We have an isomorphism $\tau: (X,-\omega,\bar{\eta}) \xrightarrow{\sim} (X,\omega,\eta)$, which reverses the sign of the covector in each fibre. 
This gives an isomorphism of categories
\[ \tau: \fuk(\bar{X})^{op} \to \fuk(X)^{op}.\]

Following the convention of \cite[Appendix B]{Sheridan2017}, an object of $\fuk(X)$ is a tuple $(L,\iota,\tilde{\iota},P^\#)$ where $L$ is a smooth manifold, $\iota: L \hookrightarrow X$ a Lagrangian embedding, $\tilde{\iota}$ a grading, and $P^\#$ a Pin structure on $L$. 
If we assume that $L$ is simply-connected and equipped with a Pin structure $P^\#$, then the inclusion $\iota: L \hookrightarrow T^* L$ as the zero-section can be equipped with a grading $\tilde{\iota}$, so we obtain an object $(L,\iota,\tilde{\iota},P^\#)$ of $\fuk(X)$. 

Since $\tau \circ \iota = \iota$, there is a canonical isomorphism of graded Lagrangian branes, $j: \tau L^\# \to L^\#$. 
Thus we have an algebra isomorphism
\begin{align}
\label{eqn:taufix}
 \Hom^*_{\cF(X)}(L^\#,L^\#) &\xrightarrow{c^X} \Hom^*_{\cF(\bar{X})^{op}}(L^\#,L^\#)  \\
 & \xrightarrow{\tau} \Hom^*_{\cF(X)^{op}}(\tau L^\#, \tau L^\#)  \nonumber \\
 & \xrightarrow{j} \Hom^*_{\cF(X)^{op}}(L^\#,L^\#).  \nonumber
\end{align}

The endomorphism algebra of $L^\#$ depends on a choice of Floer data (see \cite[\S 8e]{seidel08}): generators of the endomorphism algebra correspond to time-$1$ Hamiltonian chords from $L$ to $L$.
If we choose the Hamiltonian component of the Floer datum to be an extension of a small Morse function $h: L \to \R$ to a Weinstein neighbourhood of $L$, then each such chord $y$ corresponds to a critical point  $x$ of $h$. 
The associated orientation line is
\begin{equation*} o_y := \lambda(D_{H,\rho}) \otimes \mathsf{Pin}(\rho),\end{equation*}
where $\rho$ is a short path of the kind considered above (it is a valid choice for the orientation operator because its Maslov index is equal to the degree of $y$: see \cite[Definitions B.4 and B.5]{Sheridan2017}). 
The isomorphism \eqref{eqn:taufix} is defined to send $o_y \mapsto o_y$ by the conjugation isomorphism $\lambda(D_{H,\rho}) \to \lambda(D_{H,\rho})$, tensored with the isomorphism $\mathsf{Pin}(\rho) \to \mathsf{Pin}(\rho)$ which reverses the direction of $\rho$, multiplied by $-1$. 
We have argued that the first isomorphism is the identity (since $\rho$ is homotopic to the short path), and the second obviously sends the trivial Pin structure to itself, hence is also the identity. 
Therefore \eqref{eqn:taufix} is $-\id$.

Using the definition of the opposite category \cite[\S A.1]{Sheridan2017}, this means
\begin{equation*}
 \mu^2(\alpha,\beta) = (-1)^{\maltese} \mu^2(\beta,\alpha)
\end{equation*}
where
\[\maltese = 1+1+1+|\alpha|'\cdot|\beta|' = |\alpha|\cdot|\beta| + |\alpha| + |\beta|.\]
In terms of the associative algebra structure $\alpha \cdot \beta := (-1)^{|\beta|} \cdot \mu^2(\alpha,\beta)$, this means that
\[\alpha \cdot \beta = (-1)^{|\alpha|\cdot|\beta|} \beta \cdot \alpha,\]
i.e., the endomorphism algebra of $L^\#$ is super-commutative.

On the other hand we have the Lagrangian PSS isomorphism \cite[\S 12e]{seidel08}:
\[ \Hom^*_{\fuk(X)}(L^\#,L^\#) \cong H^*(L),\]
which also implies that the endomorphism algebra of $L^\#$ is super-commutative (since the cup product on cohomology always is). 
This provides a useful consistency check.

\subsection{The immersed Lagrangian sphere}

We consider the object $L^\# := (S^n,\iota,\tilde{\iota},P^\#)$, where $\iota:S^n \to \cP^n$ is the immersion of the Lagrangian sphere, $\tilde{\iota}$ a grading, and $P^\#$ a Pin structure on $S^n$ (the non-trivial one, if $n=1$).
The endomorphism algebra $\Hom^*_{\fuk(\cP^n)}(L^\#,L^\#)$ has a basis $\{p_K\}$, where $p_K$ has degree $2\bm{n} \cdot e_K - |K|$ by Proposition \ref{prop:grading}.

Let $a: S^n \to S^n$ denote the antipodal map, and $\tau: \cP^n \to \cP^n$ complex conjugation; we observe that $\tau \circ \iota = \iota \circ a$, so a choice of isomorphism $P^\# \cong a^* P^\#$ determines an isomorphism of anchored Lagrangian branes, $j:L^\# \xrightarrow{\sim} \tau L^\#$. 
Thus we have an isomorphism
\begin{align}
\label{eqn:theiso} \Hom^*_{\fuk(\cP^n)}(L^\#,L^\#) & \xrightarrow{c^{\cP^n}} \Hom^*_{\fuk(\bar{\cP}^n)^{op}}(L^\#,L^\#) \\
& \xrightarrow{\tau} \Hom^*_{\fuk(\cP^n)^{op}}(\tau L^\#, \tau L^\#) \nonumber \\
& \xrightarrow{j} \Hom^*_{\fuk(\cP^n)^{op}}(L^\#,L^\#), \nonumber
\end{align}
analogous to \eqref{eqn:taufix}. 

\begin{lem}
\label{lem:thesign}
The isomorphism \eqref{eqn:theiso} sends
\[ p_K \mapsto (-1)^{1 + \bm{n} \cdot e_K} \cdot p_K.\]
\end{lem}
\begin{proof}
By definition, the isomorphism \eqref{eqn:theiso} sends
\[ p_K \mapsto (-1)^{\sigma_K} \cdot p_K\]
for some $\sigma_K \in \Z/2$, which we now determine.

First we deal briefly with the cases $K = \emptyset, [n+2]$ (although these cases can be deduced from the others by associativity of the product).
The generators $p_\emptyset, p_{[n+2]}$ correspond to $H^*(S^n)$. 
The antipodal map $a$ acts trivially on $H^0(S^n)$ and by $(-1)^{n-1}$ on $H^n(S^n)$. 
There is an additional $-$ sign that goes into the definition of $c^{\cP^n}$, so \eqref{eqn:theiso} sends 
\begin{align*}
p_{\emptyset} & \mapsto - p_{\emptyset}, \\
p_{[n+2]} & \mapsto (-1)^n \cdot p_{[n+2]}
\end{align*}
as required (recall that $\bm{n} \cdot e_{[n+2]} = n+1$).

Now we consider the cases $K \neq \emptyset, [n+2]$.
The orientation operator associated to $p_K$ is
\begin{equation} o(p_K) := \lambda(D_{H,\rho}) \otimes \mathsf{Pin}(\rho) \end{equation}
where $\rho$ is a path in the Lagrangian Grassmannian of Maslov index $-|K| + 2\bm{n} \cdot e_K$. 
However $p_K$ corresponds to a critical point of the function $f: S^n \to \R$ of Morse index $n+1-|K|$ by Corollary \ref{cor:morseindf}, so we can not use the short path of Lagrangian subspaces $\rho$ when defining the orientation operator associated to $p_K$.
 
However we would like to be able to use the particularly simple form of the isomorphism $c^\cP$ for short paths of Lagrangian subspaces, so we apply a shift to one copy of $L^\#$. 
For any $\jmu$ we have the following commutative diagram (using notation from \cite{Sheridan2017}):
\begin{equation}
\label{eqn:comm}
 \xymatrix{ \Hom^{*}_{\cF(\cP)}(L^\#,L^\#[\jmu]) \ar[d]^{c^\cP} \ar[r]^-{s_\r^{\jmu}} & \Ts_\jmu|\Hom^{*}_{\cF(\cP)}(L^\#,L^\#) \ar[d]^{c^\cP} 
 \\
\Hom^{*}_{\cF(\bar{\cP})^{op}}(L^\#[-\jmu],L^\#) \ar[d]^{\tau} \ar[r]^{s_\l^{\jmu}} &  \Ts_\jmu|\Hom^*_{\cF(\bar{\cP})^{op}}(L^\#,L^\#) \ar[d]^-{\tau} \\
\Hom^{*}_{\cF(\cP)^{op}}(\tau L^\#[-\jmu],\tau L^\#) \ar[d]^j \ar[r]^{s_\l^{\jmu}} &  \Ts_\jmu|\Hom^*_{\cF(\cP)^{op}}(\tau L^\#,\tau L^\#) \ar[d]^j  \\
\Hom^{*}_{\cF(\cP)^{op}}(L^\#[-\jmu],L^\#) \ar[d]^{s^{-\jmu}} \ar[r]^{s_\l^{\jmu}} & \Ts_\jmu|\Hom^*_{\cF(\cP)^{op}}(L^\#,L^\#)  \\
 \Hom^*_{\cF(\cP)^{op}}(L^\#,L^\#[\jmu]) \ar[ur]_{(-1)^{\dag_\jmu}\cdot s_\r^{\jmu}},  
 }
\end{equation}
where $\dag_\jmu = \jmu(\jmu-1)/2$.
In this diagram, the top square commutes by the fact that $c^{\cP^n}$ respects shift maps \cite[Lemma B.12]{Sheridan2017}. 
The next square commutes by naturality of shift maps under symplectomorphisms. 
The next square commutes by naturality of shift maps under isomorphism of branes. 
The bottom triangle commutes by \cite[Lemma B.3]{Sheridan2017}.

Let $\jmu_K = n+1-2\bm{n} \cdot e_K$. 
Let $q_K$ be the generator of $\Hom^*_{\fuk(\cP^n)}(L^\#,L^\#[\jmu_K])$ corresponding to $p_K$: i.e., $s_\r^{\jmu_K}(q_K) = 1|p_K$. 
The isomorphism running down the right-hand side of the diagram \eqref{eqn:comm} is \eqref{eqn:theiso}, so it sends $1|p_K \mapsto (-1)^{\sigma_K} \cdot 1|p_K$ by definition of $\sigma_K$.
By commutativity, the isomorphism running down the left-hand side sends $q_K \mapsto (-1)^{\dag_{\jmu_K} + \sigma_K} q_K$. 

The orientation operator associated to $q_K$ is
\begin{equation*} o(q_K) := \lambda(D_{H,\rho}) \otimes \mathsf{Pin}(\rho)\end{equation*}
where $\rho$ is a path of Maslov index $\deg(q_K) = \deg(p_K)+\jmu_K=n+1-|K|$, which is equal to the Morse index of the critical point corresponding to $p_K$: so now we may choose $\rho$ to be the `short' path.
Thus the isomorphism $o(q_K) \to o(q_K)$ is the tensor product of isomorphisms
\begin{align}
\lambda(D_{H,\rho}) & \xrightarrow{\id} \lambda(D_{H,\rho}), \\
\label{eqn:pinsiso}\mathsf{Pin}(\rho) & \to \mathsf{Pin}(\rho),
\end{align}
multiplied by $-1$.
To determine the sign of the isomorphism \eqref{eqn:pinsiso}, we must follow through the isomorphisms of Pin structures down the left side of \eqref{eqn:comm}. 

Let us suppose that $p_K$ represents a constant chord connecting $x \in S^n$ to $a(x) \in S^n$. 
Let $j_x: P^\#_x \to P^\#_{a(x)}$ denote our chosen isomorphism of Pin structures $P^\# \cong a^*P^\#$, for $x \in S^n$. 
$\mathsf{Pin}(\rho)$ is the $\Z/2$-torsor
\[ \mathsf{Iso}(P^\#_x, P^\#_{a(x)}) \otimes \lambda(T_{a(x)}S^n)^{\otimes \jmu_K},\]
where $\mathsf{Iso}(P^\#_x,P^\#_{a(x)})$ is the torsor of isomorphisms of principal homogeneous $\mathrm{Pin}_n$ spaces covering the antipodal map. 

The isomorphism running down the left side of \eqref{eqn:comm} acts trivially on $\lambda(T_{a(x)} S^n)$ (because parallel transport along the short path $\rho$ coincides with the differential of the antipodal map). 
It sends the isomorphism $f: P^\#_x \to P^\#_{a(x)}$ to the isomorphism $g$ which makes the following diagram commute:
\[ \xymatrix{P^\#_x \ar[r]^-{f} \ar[d]^{j_x} & P^\#_{a(x)} \ar[d]^{j_{a(x)}} \\
P^\#_{a(x)}  & P^\#_{x}\ar[l]^-{g}. }
\]
In particular, it sends $f = j_x$ to $g = j_{a(x)}^{-1}$. 
Thus the isomorphism running down the left side of \eqref{eqn:comm} acts by $(-1)^\ddag$ on $\mathsf{Pin}(\rho)$, where
\[ j_{a(x)} \circ j_x = (-1)^{\ddag} \cdot \id.\]

Now we claim that $\ddag = n(n+1)/2$. 
To prove this, observe that $j_{a(x)} \circ j_x = \id$ if and only if our Pin structure on $S^n$ is pulled back from one on $\RP{n}$. 
When $n \ge 2$, the Pin structure on $S^n$ is unique, so $j_{a(x)} \circ j_x = \id$ if and only if $\RP{n}$ admits a Pin structure. 
This happens if and only if $w_2(T(\RP{n}))$ vanishes, i.e., if and only if $n(n+1)/2$ is even. 
When $n=1$, of course $\RP{1}$ admits two Pin structures, but both pull back to the trivial Pin structure on $S^1$: since we have chosen the non-trivial Pin structure on $S^1$, it follows that $j_{a(x)} \circ j_x = -\id = (-1)^{(1)(1+1)/2} \cdot \id$ in this case also.

Combining the signs, we have shown that
\begin{align*}
 \sigma_K & = \dag_{\jmu_K} + \ddag + 1 \\
 &= \frac{(n+1-2\bm{n} \cdot e_K)(n-2\bm{n} \cdot e_K)}{2} + \frac{n(n+1)}{2} + 1 \\
 &= 1+ \bm{n} \cdot e_K
 \end{align*}
as required.
\end{proof}

\bibliographystyle{plain}
\bibliography{biblio}

\end{document}